\newtheorem{theorem}{Theorem}
\newtheorem{proposition}{Proposition}
\newtheorem{lemma}{Lemma}
\newtheorem{remark}{Remark}
\newtheorem{assumption}{Assumption}
\newtheorem{example}{Example}
\providecommand{\eref}[1]{\eqref{eq:#1}}  % call \eqref from amstex
\providecommand{\cref}[1]{Chapter~\ref{chap:#1}}
\providecommand{\sref}[1]{Section~\ref{sec:#1}}
\providecommand{\fref}[1]{Figure~\ref{fig:#1}}
\providecommand{\R}{\ensuremath{\mathbb{R}}}
\providecommand{\C}{\ensuremath{\mathbb{C}}}
\providecommand{\N}{\ensuremath{\mathbb{N}}}
\providecommand{\abs}[1]{\lvert#1\rvert}
\providecommand{\absb}[1]{\big\vert#1\big\vert}
\providecommand{\norm}[1]{\lVert#1\rVert}
\providecommand{\norms}[1]{\Vert#1\Vert}
\providecommand{\inprod}[1]{\langle#1\rangle}
\providecommand{\set}[1]{\left\{#1\right\}}
\providecommand{\bydef}{\overset{\text{def}}{=}}
\renewcommand{\vec}[1]{\ensuremath{\boldsymbol{#1}}}
\providecommand{\mat}[1]{\ensuremath{\boldsymbol{#1}}}
\providecommand{\mA}{A} 
\providecommand{\mB}{B}
\providecommand{\mI}{I}  
\providecommand{\mM}{\mat{M}}
\providecommand{\mY}{Y}
\providecommand{\mZ}{\mat{Z}}
 \providecommand{\ve}{\vec{e}}
\providecommand{\vg}{\vec{g}}
\providecommand{\vu}{\vec{u}} 
\providecommand{\vx}{\vec{x}} \providecommand{\vy}{\vec{y}}
\providecommand{\vz}{\vec{z}} \providecommand{\vi}{\vec{i}}
\providecommand{\vv}{\vec{v}}
\numberwithin{equation}{section}
\newcommand{\charfn}{\mathds{1}}
\renewcommand{\bydef}{\coloneqq}
\providecommand{\co}{\mu}
\providecommand{\coa}{\gamma_a}
\providecommand{\cob}{\gamma_b}
\providecommand{\coc}{\gamma_c}
\providecommand{\coch}{\widehat\gamma_c}
\providecommand{\mi}{^{[i]}}
\providecommand{\mn}{^{[n]}}
\providecommand{\idi}{\alpha}
\providecommand{\idj}{\beta}
\providecommand{\iq}{\alpha}
\providecommand{\jq}{\beta}
\providecommand{\ratio}{\kappa}
\renewcommand{\i}{\mathrm{i}}
\providecommand{\et}{\omega}
\providecommand{\rp}{\phi}
\DeclareMathOperator{\diag}{diag}
\DeclareMathOperator{\sign}{sign}
\DeclareMathOperator{\tr}{tr}
\DeclareMathOperator{\rank}{rank}
\newcommand{\unif}[1]{\mathsf{Unif}(#1)}
\newcommand{\taumeasure}{\mathsf{s}_{d}^1}
\providecommand{\sph}{\mathcal{S}^{d-1}}
\providecommand{\unifsp}{\unif{\sph}}
\providecommand{\qlp}[2]{C(#1, #2)}
\providecommand{\Al}{A_\mathrm{LO}}
\providecommand{\Ah}{A_\mathrm{HI}}
\providecommand{\SH}{S}
\newcommand*{\tran}{^{\mkern-1.5mu\mathsf{T}}}
\providecommand{\EE}{\mathbb{E}}
\renewcommand{\P}{\mathbb{P}}
\providecommand{\Nk}[1][k]{N_{#1}}
\providecommand{\har}[2]{Y_{#1, #2}}
\providecommand{\normp}[2]{\norm{#1}_{L^{#2}}}
\providecommand{\re}{\mathfrak{Re}}
\providecommand{\im}{\mathfrak{Im}}
\providecommand{\Eid}{t}
\providecommand{\rad}{\nu}
\providecommand{\Gm}{J}
\providecommand{\xcoeff}{\beta}
\providecommand{\dxi}{\mathcal{E}}
\providecommand{\qt}{\widetilde{q}}
\title{An Equivalence Principle for the Spectrum of Random Inner-Product Kernel Matrices with Polynomial Scalings}
\author{Yue M. Lu and Horng-Tzer Yau}%
\affil{Harvard University}
\date{\today}
\begin{document}

%\markboth{}{Lu and Yau: Equivalence Principle}

\maketitle

\begin{abstract}
We investigate random matrices whose entries are obtained by applying a nonlinear kernel function to pairwise inner products between $n$ independent data vectors, drawn uniformly from the unit sphere in $\R^d$. This study is motivated by applications in machine learning and statistics, where these kernel random matrices and their spectral properties play significant roles. We establish the weak limit of the empirical spectral distribution of these matrices in a polynomial scaling regime, where $d, n \to \infty$ such that $n / d^\ell \to \ratio$, for some fixed $\ell \in \N$ and $\ratio \in (0, \infty)$. Our findings generalize an earlier result by Cheng and Singer, who examined the same model in the linear scaling regime (with $\ell = 1$).

Our work reveals an equivalence principle: the spectrum of the random kernel matrix is asymptotically equivalent to that of a simpler matrix model, constructed as a linear combination of a (shifted) Wishart matrix and an independent matrix sampled from the Gaussian orthogonal ensemble. The aspect ratio of the Wishart matrix and the coefficients of the linear combination are determined by $\ell$ and the expansion of the kernel function in the orthogonal Hermite polynomial basis. Consequently, the limiting spectrum of the random kernel matrix can be characterized as the free additive convolution between a Marchenko-Pastur law and a semicircle law. We also extend our results to cases with data vectors sampled from isotropic Gaussian distributions instead of spherical distributions.
\end{abstract}

\tableofcontents

% !TEX root = equivalence.tex

\section{Introduction}

Let $\vx_1, \ldots, \vx_n \in \R^d$ be a set of independent data vectors drawn uniformly from the unit sphere $\mathcal{S}^{(d-1)}$. Consider a random matrix $A \in \R^{n \times n}$ with entries
\begin{equation}\label{eq:A_f}
A_{ij} \bydef \begin{cases} \frac{1}{\sqrt{n}} \, f_d(\sqrt{d}\, \vx_i\tran \vx_j) &\text{if } i \ne j\\
0&\text{if } i = j
\end{cases},
\end{equation}
where $f_d: \R \mapsto \R$ is a (nonlinear) ``kernel'' function. Its empirical spectral distribution (ESD) is denoted by
\begin{equation}\label{eq:esd_def}
\varrho_A(x) = \frac{1}{n} \sum_{i = 1}^n \delta_{\lambda_i}(x)
\end{equation}
where $\lambda_1, \ldots, \lambda_n$ are the eigenvalues of $A$, and $\delta_{\lambda}$ denotes the counting measure. In this work, we establish the weak-limit of the ESD $\varrho_A$ when $d, n \to \infty$ such that $n / d^\ell  \to \ratio \in (0, \infty)$, for some fixed $\ratio \in \R$ and $\ell \in \N$.

Our study of this model is motivated by recent problems in machine learning, statistics, and signal processing, where random matrices like \eref{A_f} and their spectral properties play crucial roles. Examples include kernel methods (such as kernel-PCA \cite{scholkopf1998NonlinearComponent} and kernel-SVM \cite{scholkopf2002Learningkernels}), covariance thresholding procedures \cite{bickel2008Covarianceregularization,deshpande2014Sparsepca}, nonlinear dimension reduction \cite{belkin2003LaplacianEigenmaps}, and probabilistic matrix factorization \cite{mnih2007ProbabilisticMatrix}. Moreover, the closely-related non-Hermitian version of \eref{A_f}, where $A_{ij} = \frac{1}{\sqrt n} f_d(\sqrt{d} \vx_i \vy_j)$ for two sets of vectors $\{\vx_i\}_{i \le n}$ and $\{\vy_j\}_{j \le p}$, appears in the random feature model \cite{rahimi2007RandomFeatures,louart2017RandomMatrix,hastie2020SurprisesHighDimensional,penningtonNonlinearRandomMatrix2019}, an interesting theoretical model for large random neural networks.

In this work, we examine a general high-dimensional polynomial scaling regime, where the number of samples $n$ can grow to infinity proportional to $d^\ell$, for some positive integer $\ell$. The linear regime, with $\ell =1$, was previously studied by \citet{cheng2013SpectrumRandom}, who showed that the ESD of $A$ converges to a deterministic limit distribution, with its Stieltjes transform characterized as the solution to a cubic equation (see \sref{related} for further discussions on related work in the literature). Interestingly, \citet{fan2017SpectralNorm} observed that the limiting ESD of $A$ (as determined by the aforementioned cubic equation) is equivalent to that of a simpler matrix model, which consists of the linear combination of a (shifted) Wishart matrix and an independent matrix sampled from the Gaussian orthogonal ensemble (GOE). This observation leads to an intriguing interpretation: the \emph{nonlinear} kernel $f_d$ is asymptotically equivalent to a \emph{linear} and \emph{noisy} transformation.

%\begin{equation}\label{eq:B_f_linear}
%B = \frac{\co \sqrt{d}}{\sqrt{n}}  (X\tran X - I) + \gamma H,
%\end{equation}
%where $X = [\vx_1, \ldots, \vx_n] \in \R^{d \times n}$ is a matrix whose columns are the data vectors, $H$ is an independent matrix sampled from the Gaussian orthogonal ensemble (GOE), and $\co, \gamma$ are two constants that depend on $f_d$. 

%Comparing the models in \eref{A_f} and \eref{B_f_linear}, one may interpret the effect of the \emph{nonlinear} kernel function $f_d$ as a \emph{noisy linear} transformation applied to the shifted Gram matrix $X\tran X - I$.

%The equivalence between the ESDs of $A$ and $B$ has an interesting interpretation, namely, the \emph{nonlinear} kernel $f_d$ is asymptotically equal to a \emph{linear} and \emph{noisy} transformation.

The main contribution of this work is to demonstrate that the aforementioned characterization, derived under the linear scaling regime, represents a special case of a more general \emph{equivalence principle}. This principle holds when $n / d^\ell  \to \ratio \in (0, \infty)$ for any $\ell \in \N$ and under mild conditions on the kernel $f_d$. Specifically, we establish in Theorem~\ref{thm:equivalence_f} that the ESD of $A$ is asymptotically equivalent to that of
\[
B =  \frac{\co_\ell}{\sqrt{n N_\ell}} (W\tran W - N_\ell I) + \gamma_\ell H
\]
where $W \in \R^{N_\ell \times n}$ is an i.i.d. standard Gaussian matrix with aspect ratio $N_\ell / n \to 1/(\ratio \ell!)$, and $H$ is a GOE matrix independent of $W$. Both constants $\co_\ell$ and $\gamma_\ell$ depend on $\ell$ and can be determined by expanding $f_d$ in the orthogonal Hermite polynomial basis. As a direct consequence of this equivalence principle, the limiting ESD of $A$ can be characterized simply as a free additive convolution between a (shifted) Marchenko-Pastur (MP) law and a semicircle law.

The remainder of the paper is organized as follows. In \sref{equivalence}, we begin by examining the special case of polynomial kernel functions, with the equivalence principle formalized in Theorem~\ref{thm:equivalence} and a heuristic explanation provided in \sref{heuristics}. We address the case of more general nonlinear functions in \sref{general_f}, stating the corresponding asymptotic characterizations in Theorem~\ref{thm:equivalence_f}. \sref{numerical} presents several numerical experiments to illustrate our theory, while \sref{related} discusses related work in the literature. We dedicate \sref{proof_equivalence} to the proof of Theorem~\ref{thm:equivalence} and gather auxiliary results in the appendix. In \sref{gaussian}, we extend our findings to cases where data vectors are sampled from the isotropic Gaussian distribution rather than the spherical distribution. Finally, we conclude the paper in \sref{summary}, discussing potential extensions of our results and open problems.

\subsection{Notation}

Before delving into the technical details, we first establish some notations employed throughout this paper.

\emph{Common sets}: $\N$ represents the set of positive integers, and $\N_0 = \N \cup \set{0}$. For each $n \in \N$, $[n] \coloneqq \set{1, 2, \ldots, n}$, and $(n)_k \bydef n(n-1)\ldots (n-k+1)$ denotes the falling factorial. For $i,j \in \N$, $\charfn_{ij}$ represents the Kronecker delta function, \emph{i.e.}, $\charfn_{ij} = 1$ if $i = j$ and $\charfn_{ij} = 0$ otherwise. The data vectors' dimension is denoted by $d$. The unit sphere in $\R^d$ is expressed as $\mathcal{S}^{d-1}$. Throughout the paper, we consistently use $z$ to represent a complex number in the upper-half plane $\C_+$, and we define
\[
\eta \bydef \im(z) > 0.
\]
Additionally, we assume that $\abs{\re(z)} \le \tau^{-1}$ and $\tau \le \eta \le \tau^{-1}$, for some global constant $\tau > 0$. 

%We regard it as the fundamental scaling parameter in this paper. The total number of such vectors is denoted by $n \bydef n_d$.

\emph{Probability distributions}: $\unifsp$ denotes the uniform probability measure on $\mathcal{S}^{d-1}$. For two independent vectors $\vx, \vy \sim \unifsp$, we define
\begin{equation}\label{eq:tau_measure}
\taumeasure \bydef \mathsf{Law}(\sqrt{d} \,\vx\tran \vy).
\end{equation}
Due to the rotational symmetry of $\unifsp$, we also have $\sqrt{d}\, \vx\tran \ve_1 \sim \taumeasure$, where $\ve_1 = (1,  0, \ldots, 0)$. Given a random variable $X$, its $L^p$ norm is denoted by $\normp{X}{p} \bydef (\EE \abs{X}^p)^{1/p}$. We say a symmetric random matrix is drawn from the GOE if its law is equivalent to that of $(F + F\tran)/ \sqrt{2n}$, where $F$ is an $n \times n$ matrix with i.i.d. standard normal entries. 

\emph{Stochastic order notation}: In our proof, we utilize a concept of high-probability bounds known as \emph{stochastic domination}. This notion, first introduced in \cite{erdos2013LocalSemicircle,erdos2017Dynamicalapproach}, provides a convenient way to account for low-probability exceptional events where some bounds may not hold.  Consider two families of nonnegative random variables:
\[
X = \big(X^{(d)}(u) : d \in \N, u \in U^{(d)}\big), \quad Y = \big(Y^{(d)}(u) : d \in \N, u \in U^{(d)}\big),
\]
where $U^{(d)}$ is a possibly $d$-dependent parameter set. We say that $X$ is \emph{stochastically dominated} by $Y$, uniformly in $u$, if for every (small) $\varepsilon > 0$ and (large) $D > 0$ we have
\[
\sup_{u \in U^{(d)}} \P[X^{(d)}(u) > d^\varepsilon Y^{(d)}(u)] \le d^{-D}
\]
for sufficiently large $d \ge d_0(\varepsilon, D)$. If $X$ is stochastically dominated by $Y$, uniformly in $u$, we use the notation $X \prec Y$. Moreover, if for some complex family $X$ we have $\abs{X} \prec Y$, we also write $X = \mathcal{O}_\prec(Y)$. This stochastic order notation should not be mistaken for the conventional big $\mathcal{O}$ notation, which we will also use in this paper: for two \emph{deterministic} sequences $X^{(d)}$ and $Y^{(d)}$, we write $X = \mathcal{O}_\alpha(Y)$, with some parameter $\alpha$, if $\abs{X} \le C(\alpha) Y$ for all sufficiently large $d$. Here, the constant $C(\alpha)$ may depend on $\alpha$.

\emph{Vectors and matrices:} For a vector $\vv \in \R^{n}$, its $\ell_2$ norm is denoted by $\norm{\vv}$. For a matrix $\mA \in \R^{n \times n}$, $\norm{\mA}_{\mathsf{op}}$ and $\norm{\mA}_\mathsf{F}$ denote the operator (spectral) norm and the Frobenius norm of $\mA$, respectively. Additionally, $\norm{\mA}_\infty \bydef \max_{i,j \in [n]} \abs{A(i,j)}$ denotes the entry-wise $\ell_\infty$ norm. We use $\vec{1}$ to denote $(1,1, \ldots, 1)$, and $\mI$ is an identity matrix. Their dimensions can be inferred from the context. The trace of $\mA$ is written as $\tr(\mA)$. Lastly, for an $n \times n$ Hermitian matrix $A$, the Stieltjes transform of its empirical spectral distribution is $s_A(z) \bydef \frac{1}{n} \tr(A - zI)^{-1}$. Here, the matrix $(A - zI)^{-1}$ is the resolvent of $A$.
% !TEX root = equivalence.tex

\section{An Asymptotic Equivalence Principle}
\label{sec:equivalence}

\subsection{Polynomial Kernels}

We begin by examining the special case when the kernel function $f_d$ in \eref{A_f} is a degree-$L$ polynomial, for some $L \in \N$. (The case of more general nonlinear functions is addressed in \sref{general_f}.) For reasons that will be clarified later (see \sref{heuristics}), we expand $f_d$ in the following form:
\begin{equation}\label{eq:f_polynomial}
f_d(x) = \sum_{0 \le k \le L} \co_k q_k^{(d)}(x)
\end{equation}
where $\set{\co_k}_k$ is a set of expansion coefficients, and $q_{k}^{(d)}(x)$ denotes the $k$th Gegenbauer polynomial \cite{dai2013ApproximationTheory, efthimiou2014Sphericalharmonics}. The Gegenbauer polynomials, also known as the ultraspherical polynomials in the literature \cite{ismailClassicalQuantumOrthogonal2005}, form a set of orthogonal polynomial basis with respect to the probability measure $\taumeasure$ defined in \eref{tau_measure}. Specifically, we have $\mathsf{deg}\, q_{k}^{(d)}(x) = k$ and
\begin{equation}\label{eq:ortho_poly}
\EE_{\xi \sim \taumeasure} \big[q_{i}^{(d)}(\xi) q_{j}^{(d)}(\xi)\big] = \charfn_{ij},
\end{equation}
where $\charfn_{ij}$ is the Kronecker delta.

%Let $\unifsp$ denote the uniform distribution on the unit sphere in $\R^d$. For $\vx \sim \unifsp$, we use $\taumeasure$ to represent the probability measure of $\sqrt{d}\, \ve_1\tran \vx$. 
%Let $\vx$ be a vector drawn uniformly from $\mathcal{S}^{d-1}$. Consider a random variable $\xi = \sqrt{d} \, \ve_1\tran\vy$. 

The coefficients of these polynomials can be determined by performing the Gram-Schmidt procedure on the monomial basis $\set{0, 1, x, x^2, \ldots}$ and by using the explicit formula for the moments of $\xi$ [see \eref{xi_moments}]. The first few polynomials in the sequence are
\begin{equation}\label{eq:Gegenbauer_low_degree}
q_{0}^{(d)}(x) = 1, \qquad q_{1}^{(d)}(x) = x, \qquad q_{2}^{(d)}(z) = \frac{1}{\sqrt{2}}\sqrt{\frac{d+2}{d-1}}(x^2 - 1),
\end{equation}
and
\begin{equation}\label{eq:ultra_deg3}
\qquad q_{3}^{(d)}(x) = \frac{1}{\sqrt{6}}\sqrt{\frac{d+4}{d-1}}\left(\frac{d+2}{d}x^3 - 3x\right).
\end{equation}
Note that the coefficients of the Gegenbauer polynomials depend on the dimension $d$. To simplify the notation, we will suppress this dependence by writing throughout the paper
\begin{equation}\label{eq:polynomial_s_d}
q_k(x) \bydef q_{k}^{(d)}(x) \quad\text{and} \quad \widetilde{q}_k(x) \bydef q_{k}^{(d-1)}(x).
\end{equation}
In Appendix~\ref{appendix:ortho_poly}, we compile a list of properties of Gegenbauer polynomials that will be used in our analysis. 

For each $k \in \N_0$, define a matrix $\mA_k \in \R^{n \times n}$ such that
\begin{equation}\label{eq:Ak}
(A_k)_{ij} \bydef \begin{cases} \frac{1}{\sqrt{n}} \, q_{k}(\sqrt{d}\, \vx_i\tran \vx_j) &\text{if } i \ne j\\
0&\text{if } i = j
\end{cases},
\end{equation}
where $\set{\vx_i}_{i \in [n]}$ is a collection of independent vectors drawn from $\unifsp$. Considering \eref{f_polynomial}, we can express the inner-product kernel matrix in \eref{A_f} as
\begin{equation}\label{eq:A}
\mA =  \sum_{k = 0}^L \co_k A_k = \Big(\sum_{k=0}^{\ell-1} + \sum_{k = \ell}^L\Big) \co_k A_k,
\end{equation}
where $\ell \in \N$ and $\ell \le L$. The main result of this paper is to establish an asymptotic \emph{equivalence principle} for the empirical spectral distribution (ESD) of $A$: when $n = \ratio d^\ell + o(d^\ell)$ for some $\ratio > 0$, the ESD of $A$ is asymptotically equivalent to that of a matrix $B \in \R^{n \times n}$, defined as
\begin{equation}\label{eq:B}
 \mB \bydef \sum_{k = \ell}^L \co_k  \mB_k.
\end{equation}
The first component in the sum in \eref{B} is a (shifted) Wishart matrix, constructed as follows:
\begin{equation}\label{eq:Bell}
\mB_\ell = \frac{1}{\sqrt{n N_\ell}} W\tran W - \sqrt{\frac {N_\ell} {n}} \mI,
\end{equation}
where $W \in \R^{N_\ell \times n}$ is an i.i.d. Gaussian matrix with $N_\ell \bydef d^\ell / \ell! + o(d^\ell)$. For each $k > \ell$, $\mB_k$ is an $n\times n$ GOE matrix. Furthermore, $B_\ell, B_{\ell+1}, \ldots, B_L$ are jointly \emph{independent}. 

For $z \in C_+$, let $s_{\mA}(z)$ and $s_{\mB}(z)$ denote the Stieltjes transforms of the ESDs of $\mA$ and $\mB$, respectively. The following theorem formalizes the asymptotic equivalence of the matrix models $A$ and $B$.

\begin{theorem}\label{thm:equivalence}
Fix $\ratio, \tau > 0$, two positive integers $\ell \le L$, and a set of coefficients $\set{\co_k}_{0 \le k \le L}$. Suppose that $n /d^\ell  = \ratio + \mathcal{O}(d^{-1/2})$, and $z = E + \mathrm{i}\eta$ with $\abs{E} \le \tau^{-1}$ and $\tau \le \eta \le \tau^{-1}$. Then, almost surely as $d \to \infty$, we have $s_{\mA}(z) \to m(z)$ and $s_{\mB}(z) \to m(z)$, where $m(z)$ is the unique solution in $C_+$ to the equation:
\begin{equation}\label{eq:m_equation}
m(z)\Big(z + \frac{\coa m(z)}{1 + \cob m(z)} + \coc m(z)\Big) + 1 = 0.
\end{equation}
Here, $\coa \bydef \co_\ell^2$, $\cob \bydef \co_\ell \sqrt{\ell!\ratio}$, and $\coc \bydef \sum_{\ell+1 \le k \le L} \co_k^2$. Moreover, for every $\varepsilon > 0$ and $D > 0$, we have
\begin{equation}\label{eq:sA_m}
\P\Big(\abs{s_A(z) - m(z)} \ge d^{-(1/2-\varepsilon)}\Big) \le d^{-D}
\end{equation}
for all sufficiently large $d$.
\end{theorem}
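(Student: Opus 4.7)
The plan is to prove the two convergence statements $s_B(z) \to m(z)$ and $s_A(z) \to m(z)$ separately, and verify that both reduce to the same self-consistent equation \eqref{eq:m_equation}.

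The convergence of $s_B$ follows from standard free probability. Since $B_{\ell+1}, \ldots, B_L$ are independent GOE matrices, the sum $\sum_{k>\ell} \co_k B_k$ is itself GOE with variance $\coc$. The shifted/centered Wishart $\co_\ell B_\ell = \frac{\co_\ell}{\sqrt{n N_\ell}}(W\tran W - N_\ell I)$ has a Marchenko--Pastur-type limit whose Stieltjes transform $m_0$ satisfies $m_0\bigl(z + \coa m_0/(1 + \cob m_0)\bigr) + 1 = 0$ under the aspect ratio $N_\ell/n \to 1/(\ell!\ratio)$ and the identifications $\coa = \co_\ell^2$, $\cob = \co_\ell\sqrt{\ell!\ratio}$. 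Adding an independent GOE of variance $\coc$ corresponds to free additive convolution with a semicircle, and the subordination relation $m_B(z) = m_0(z + \coc m_B(z))$ reduces to exactly \eqref{eq:m_equation}.

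For $s_A$, I would use a Schur-complement/self-consistent-equation approach. Writing $G(z) = (A-z)^{-1}$ and applying the partitioned-inverse formula to a diagonal entry yields $G_{ii} = -1/(z + \va_i\tran G^{(i)}\va_i)$, where $\va_i$ is the off-diagonal part of the $i$-th column of $A$ and $G^{(i)}$ is the resolvent of the principal minor (and thus independent of $\vx_i$). Two ingredients are needed. First, a concentration statement that $\va_i\tran G^{(i)}\va_i$ is close to its conditional expectation $\EE_i[\cdot]$ at rate $\mathcal{O}_\prec(d^{-1/2})$, uniformly in $i$. Second, an explicit formula for this conditional mean, obtained from the addition formula for Gegenbauer polynomials
\[
q_k(\sqrt{d}\,\vx_i\tran \vx_j) = N_k^{-1/2} \sum_{\alpha=1}^{N_k} \har{k}{\alpha}(\vx_i) \har{k}{\alpha}(\vx_j),
\]
combined with orthonormality of the spherical harmonics $\har{k}{\alpha}$ on $\unifsp$. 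The resulting expression decomposes into level-wise contributions: the $k = \ell$ term generates the rational self-energy $\coa m/(1 + \cob m)$ because $A_\ell$ carries an intrinsic Wishart structure of rank $N_\ell \asymp n$; each $k > \ell$ level has rank $N_k \gg n$ and collectively contributes the scalar self-energy $\coc m$ by a GOE-like concentration; and the $k < \ell$ levels form only a rank-$o(n)$ perturbation of $A$ (again by the addition-formula structure) and are therefore negligible in the bulk Stieltjes transform at fixed $\eta > 0$, by the standard rank-interlacing inequality. Averaging $G_{ii}$ over $i$ then produces \eqref{eq:m_equation} for $s_A(z)$ up to error $\mathcal{O}_\prec(d^{-1/2})$; stability of the cubic equation at fixed $\eta > 0$ upgrades this to \eqref{eq:sA_m}, and almost-sure convergence follows by Borel--Cantelli.

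The main obstacle will be the concentration step for $\va_i\tran G^{(i)}\va_i$. The entries of $\va_i$ are nonlinear polynomials of degree up to $L$ in $\sqrt{d}\,\vx_i\tran\vx_j$ and are not independent across $j$, so a Hanson--Wright-style argument must be executed carefully in the spherical-harmonic basis on $\mathcal{S}^{d-1}$, combining concentration of measure on the sphere with moment bounds on products $\har{k}{\alpha}(\vx_i) \har{k'}{\beta}(\vx_i)$ and taking into account the $d$-dependent coefficients of the Gegenbauer polynomials. A related technical difficulty is securing quantitative, finite-$d$ versions of the addition and product formulas with error terms of order $d^{-1/2}$, which is what ultimately produces the sharp rate in \eqref{eq:sA_m} rather than a weaker polynomial one.
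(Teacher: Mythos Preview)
Your high-level plan matches the paper's: the $s_B$ argument via free convolution/subordination is exactly what the paper does, and for $s_A$ the paper likewise removes the low-order components $k<\ell$ by a rank bound, applies the Schur complement to each $G_{ii}$, and closes with a stability estimate for \eqref{eq:m_equation} plus Borel--Cantelli.

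There is, however, a real gap at the step where you assert that the conditional mean $\EE_i[\va_i\tran G^{(i)}\va_i]$ already yields the rational self-energy $\coa m/(1+\cob m)$ for the level $k=\ell$. It does not. Using the reproducing identity $\EE_{\vx_i}\bigl[q_k(\sqrt{d}\,\vx_i\tran\vx_a)q_k(\sqrt{d}\,\vx_i\tran\vx_b)\bigr] = N_k^{-1/2} q_k(\sqrt{d}\,\vx_a\tran\vx_b)$ and summing over $a,b$, the conditional mean is
\[
\Bigl(\textstyle\sum_{k\ge\ell}\co_k^2\Bigr)\,s(z)\;+\;\co_\ell^2\sqrt{\ell!\ratio}\cdot\frac{1}{n}\tr\bigl(G^{(i)} A_\ell^{(i)}\bigr)\;+\;o(1),
\]
so you are left with the auxiliary trace $n^{-1}\tr(G A_\ell)$ and no closed equation for $s(z)$ alone. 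Closing this would require a second self-consistent relation (e.g.\ a block-resolvent/linearization on the Wishart factor $S_\ell$), which your outline does not supply.

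The paper avoids this via a different device. It reparameterizes each $\vx_a$ ($a\ne n$) into its projection $\xi_a=\sqrt{d}\,\vx_n\tran\vx_a$ along $\vx_n$ and an orthogonal part $\widetilde\vx_a\in\mathcal{S}^{d-2}$; these two families are mutually independent. A careful expansion of $q_k(\sqrt{d}\,\vx_a\tran\vx_b)$ in the $(\xi,\widetilde\vx)$ variables then shows that the minor splits as
\[
A^{[n]} \;=\; \widetilde A \;+\; \frac{\co_\ell}{\sqrt{nN_\ell}}\,\vv_\ell(\xi)\vv_\ell(\xi)\tran \;+\; (\text{small}),
\]
where $\widetilde A$ depends only on $\{\widetilde\vx_a\}$ and $\vv_\ell(\xi)=(q_\ell(\xi_a))_{a<n}$. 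The crucial point is that this \emph{same} vector $\vv_\ell(\xi)$ is the level-$\ell$ component of $\va_n$. Woodbury's identity on the rank-one piece then turns the level-$\ell$ contribution into $\coa\,\chi_{\ell\ell}/(1+\cob\,\chi_{\ell\ell})$ with $\chi_{\ell\ell}=\tfrac{1}{n}\vv_\ell\tran\widetilde G\,\vv_\ell$, and the concentration $\chi_{\ell\ell}\approx s(z)$ is now a clean quadratic-form estimate because $\widetilde G$ is genuinely independent of $\{\xi_a\}$. This reparameterization-plus-Woodbury step is the main technical idea your outline is missing; once the independence is arranged this way, the concentration you flagged as the main obstacle is comparatively routine.
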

\begin{remark}
As $B$ is the linear combination of a (shifted) Wishart matrix and independent GOE matrices, its limiting eigenvalue density is given by an additive free convolution of a Marchenko-Pastur law with a semicircle law. Explicit formulas for the limiting density function
\begin{equation}\label{eq:rhofc}
\varrho_\mathrm{fc}(x) \bydef \lim_{\eta \downarrow 0} \frac{1}{\pi} \im[m(x + i \eta)]
\end{equation}
can be found in \cite[Appendix A]{cheng2013SpectrumRandom}. As a consequence of Theorem~\ref{thm:equivalence}, the spectral distribution of $A$ converges weakly almost surely to the same limit $\rho_\mathrm{fc}(E)$ as $d \to \infty$.
\end{remark}
\begin{remark}
It is important to note that the components of $A$ associated with indices $k < \ell$ do not play any role in the equivalent model given by $B$ in \eref{B}. This is due to the fact that the ``low-order'' components $\set{A_k}_{k < \ell}$ are essentially low-rank matrices with ranks on the order of $\mathcal{O}(d^{\ell-1})$. As a result, they do not contribute to the limiting spectrum density of the $n \times n$ matrix when $n \asymp d^\ell$. For a more detailed explanation, see Lemma~\ref{lemma:omit_low_order}.
\end{remark}

\subsection{A Heuristic Explanation of the Equivalence Principle}
\label{sec:heuristics}

The equivalence principle stated above has a simple heuristic explanation. To understand this, let us first revisit a crucial property of Gegenbauer polynomials. Given any $\vx_i, \vx_j \in \sph$,
\begin{equation}\label{eq:linearization_heuristics}
q_{k}(\sqrt{d} \, \vx_i\tran \vx_j) = \frac{1}{\sqrt{\Nk}}\sum_{a \in [\Nk]} \har{k}{a}(\vx_i) \har{k}{a}(\vx_j),
\end{equation}
where $\set{\har{k}{a}(\vx_i)}_{a \in [\Nk]}$ represents a collection of orthonormal degree-$k$ spherical harmonics associated with $\vx_i$, and 
\begin{equation}\label{eq:Nk_heuristics}
\Nk = \frac{d^k}{k!} (1 + \mathcal{O}(d^{-1}))
\end{equation}
denotes the cardinality of the set. For a comprehensive explanation and the exact expression for $\Nk$, we refer the reader to Appendix~\ref{appendix:ortho_poly}. The above identity is valuable because it allows us to ``linearize'' the term $q_{k}(\sqrt{d} \, \vx\tran \vy)$,  transforming it into an inner product of two $\Nk$-dimensional vectors comprised of spherical harmonics.

Using \eref{linearization_heuristics} and another identity, $q_{k}(\sqrt{d}) = \sqrt{\Nk}$, we can express the matrices $\set{A_k}$ in a factorized form:
\begin{equation}\label{eq:Ak_Sk}
\mA_k = \frac{1}{\sqrt{n \Nk}} \SH_k\tran \SH_k - \sqrt{\frac{\Nk}{n}}\, \mI,
\end{equation}
where $\SH_k$ is an $\Nk \times n$ matrix with entries consisting of the spherical harmonics, that is,
\begin{equation}\label{eq:Yk_def_heuristics}
(\SH_k)_{ai} = Y_{k, a}(\vx_i), \qquad \text{ for } a \in [\Nk], i \in [n]
\end{equation}
and $\set{\vx_i}_{i \in [n]}$ are the data vectors in the definition given by \eref{Ak}. By design, the $i$th column of $\SH_k$ depends solely on $\vx_i$. Consequently, due to the independence of $\set{\vx_i}_i$, the columns of $\SH_k$ are entirely independent. While the entries within each column are indeed dependent, the orthonormality of the spherical harmonics (refer to \eref{sh_ortho} in Appendix~\ref{appendix:ortho_poly}) confirms that these entries are uncorrelated random variables with unit variances. Thus, the columns of $\SH_k$ are independent and isotropic random vectors in $\R^{\Nk}$.

Heuristically, if we replace the entries of $\SH_k$ with i.i.d. standard Gaussians, we can expect the limiting ESD of $\mA_k$ to be characterized by the Marchenko-Pastur (MP) law with an aspect ratio parameter
\begin{equation}\label{eq:rp}
\rp_k \bydef \Nk/n.
\end{equation}
For $k = \ell$, this intuition underlies the equivalent model $B_\ell$ constructed in \eref{Bell}. (Refer to Appendix~\ref{appendix:MP_law} for a review of the relevant properties of the MP law.) Note that, since we set the diagonal entries of $A_k$ to zero, there is an additional shift (by $\sqrt{\rp_k}$) of the eigenvalues in \eref{Bell}. 

According to the heuristic arguments above, the limiting ESD of $A_k$ for every $k$ should be characterized by the MP law. However, in our equivalent model given by $B$, the higher order components, \emph{i.e.}, $B_k$ with $k > \ell$, are associated with the semicircle law. This discrepancy can be explained by examining the density function associated with the (shifted) MP law. By setting $\rp = \rp_k$ and $t = 1/\sqrt{\rp_k}$ in \eref{rho_MP}, we obtain
\begin{equation}\label{eq:rhog}
\varrho_k(x) \dif x \bydef \frac{\sqrt{\big[(2+x-1/\sqrt{\rp_k}\,)(2-x+1/\sqrt{\rp_k}\,)\big]_+}}{2\pi(1+x/\sqrt{\rp_k})}\, \dif x + (1-\rp_k)_+ \, \delta(x+ \sqrt{\rp_k}) \dif x.
\end{equation}
This density function is entirely determined by the aspect ratio parameter $\rp_k$ defined in \eref{rp}. Recall the expression for $\Nk$ in \eref{Nk_heuristics} and that $n = \ratio d^\ell + o(d^{\ell})$. For $k = \ell$, we have $\rp_k \to (\ell! \ratio)^{-1}$ as $d \to \infty$. In this case, \eref{rhog} corresponds to an MP law with an aspect ratio of size $\mathcal{O}(1)$. However, for $k > \ell$, we have $\rp_k \asymp d^{k-\ell}$, and thus $1/\sqrt{\rp_k}  = \mathcal{O}(1/\sqrt{d})$. In this situation, the MP law in \eref{rhog} degenerates to the standard semicircle law 
\begin{equation}\label{eq:rhosc}
\varrho_\mathrm{sc}(x) \dif x  := \frac{1}{ 2 \pi } \sqrt{(4 -x^2)_+} \; \dif x.
\end{equation}

Theorem~\ref{thm:equivalence} rigorously establishes the heuristic arguments mentioned above. In addition, it shows that, despite the apparent statistical dependence among $\set{A_k}_{k \ge \ell}$, they can be treated as a collection of asymptotically independent matrices. As a result, the self-consistent equation in \eref{m_equation} corresponds to the additive free convolution of an MP law with a semicircle law. To heuristically derive \eref{m_equation}, we can employ \eref{Ak_Sk} and \eref{Yk_def_heuristics} to express:
\begin{equation}\label{eq:Ak_Sk_full}
\mA_{ij} =  \frac 1 n\sum_{0 \le k \le L}  t_k \sum_{a \in [\Nk]}  \har{k}{a}(\vx_i) \har{k}{a}(\vy_j) - \Big(\frac 1 n \sum_{0 \le k \le L} t_k \Nk\Big) I,  \qquad t_k \bydef  \frac{\mu_k \sqrt n  }{\sqrt{\Nk}}.
\end{equation}
Notice that this expression has the form of the matrix presented in \eref{gMP} (in Appendix~\ref{appendix:MP_law}). By further assuming that the family ${\har{k}{a}(\vx)}$ consists of not just uncorrelated but indeed independent random variables, $\mA$ adheres to the MP law in \eref{m_gMP}, meaning its limiting Stieltjes transform, denoted by $m(z)$, satisfies the equation:
\begin{align}
\frac 1 m  %& = -z + \frac 1 n  \sum_k  \frac { N_k} { t_k^{-1} + m}  \nonumber\\
& = -z - \frac 1 n \Big [  \sum_{k <  \ell} +  \sum_{k = \ell}  +  \sum_{k >  \ell}  \Big ] \frac { N_k t_k^2 m} {1+ t_k m}  \nonumber \\
 &= -z -  \Big [  \sum_{k <  \ell} +  \sum_{k = \ell}  +  \sum_{k >  \ell}  \Big ] \frac { \co_k^2 m} {1+ \co_k \sqrt{n / \Nk} m}  \nonumber \\
& = -z -  \frac{\co_\ell^2 m}{1 + \co_\ell \sqrt{\ell!\ratio} \, m} - \Big(\sum_{k > \ell} \co_k^2\Big) m  + \mathcal{O}( d^{-1/2} ),\label{eq:m_equation_heuristics}
\end{align} 
where in the last step we have used the fact that $n/\Nk[\ell] \to \ratio \ell!$, $n / \Nk \to \infty$ for $k < \ell$ and $n / \Nk \to 0$ for $k > \ell$. Observe that, after dropping the $ \mathcal{O}( d^{-1/2} )$ error term, \eref{m_equation_heuristics} is exactly the self-consistent equation in \eref{m_equation}.

\subsection{General Nonlinear Kernels}
\label{sec:general_f}

In this subsection, we extend the equivalence principle established in Theorem~\ref{thm:equivalence} to encompass cases where the kernel $f_d$ in \eref{A_f} is a general function, going beyond merely polynomials. 

As our analysis will involve the expansion of $f_d$ using normalized Hermite polynomials, we first revisit the definition of these orthogonal polynomials: For $k \in \N_0$, the $k$th Hermite polynomial, denoted by $h_k(x)$, has degree $k$. Furthermore, for $i, j \in \N_0$,
\begin{equation}\label{eq:ortho_poly_Hermite}
\EE \, h_{i}(g) h_{j}(g) = \charfn_{ij},
\end{equation}
where $g \sim \mathcal{N}(0, 1)$ and $\charfn_{ij}$ is the Kronecker delta. The first four (normalized) Hermite polynomials are
\begin{equation}\label{eq:Hermite_low_degree}
h_{0}(x) = 1, \quad h_{1}(x) = x, \quad h_{2}(x) = \frac{1}{\sqrt{2}}(x^2 - 1), \quad h_3(x) = \frac 1{\sqrt 6}(x^3 - 3x).
\end{equation}
Upon comparing \eref{Hermite_low_degree} with \eref{Gegenbauer_low_degree} and \eref{ultra_deg3}, we observe that, as $d \to \infty$, the Hermite polynomials $h_k(x)$ for $k = 0, 1, 2, 3$ are indeed the asymptotic limits of the corresponding Gegenbauer polynomials $q_k(x)$. For a general $k \in \N_0$, it can be shown (see \cite[Lemma 4.1]{cheng2013SpectrumRandom}) that
\begin{equation}\label{eq:qk_hk}
q_k(x) = h_k(x) + \sum_{i = 0}^k c_{i, k}(d) x^i,
\end{equation}
where $\max_i \abs{c_{i, k}(d)} = \mathcal{O}_k(1/d)$.

In the subsequent discussion, we will establish an equivalence principle for the matrix in \eref{A_f}, given the following assumption on the function $f_d$.
\begin{assumption}\label{assumption:f}
Let $g \sim \mathcal{N}(0, 1)$, and let $\set{h_k(x)}_k$ represent the Hermite polynomials defined above. The function $f_d$ in \eref{A_f} satisfies the following conditions:
\begin{enumerate}
\item[(a)] For each $k \in \N_0$, we have
\begin{equation}\label{eq:cok_limit}
\EE [f_d(g) h_k(g)] \xrightarrow{d\to\infty} \co_k,
\end{equation}
for some finite numbers $\set{\co_k}$.

\item[(b)] The sequence $\set{\co_k}$ is square-summable, {i.e.}, 
\begin{equation}\label{eq:cok_summable}
\sigma^2 \bydef \sum_{k = 0}^\infty \co_k^2 < \infty.
\end{equation}
Moreover,
\begin{equation}\label{eq:cosigma}
\EE [f_d^2(g)] \xrightarrow{d\to\infty} \sigma^2.
\end{equation}

\item[(c)] Let $w(x)$ denote the probability density functions of $\mathcal{N}(0, 1)$, and let $w_d(x)$ denote the density function of the probability measure $\taumeasure$, as defined in \eref{tau_measure}. We have
\begin{equation}\label{eq:weight_w_wd}
\int_{\R} f_d^2(x) \abs{w(x) - w_d(x)} \dif x  \xrightarrow{d\to\infty} 0.
\end{equation}
\end{enumerate}
\end{assumption}

When $f_d(x) = f(x)$ is a function that is independent of $d$, the following lemma offers simple sufficient conditions that can be used to verify that Assumption~\ref{assumption:f} holds.
\begin{lemma}\label{lemma:f_const_condition}
A function $f(x)$ meets the conditions in Assumption~\ref{assumption:f} if (a) $\EE[f^2(g)] < \infty$ with $g \sim \mathcal{N}(0, 1)$, and (b) there exist positive constants $c_1, c_2, c_3$ such that $\abs{f(x)} < c_1 e^{c_2 \abs{x}}$ when $\abs{x} \ge c_3$.
\end{lemma}
\begin{proof}
The conditions in \eref{cok_limit}, \eref{cok_summable}, and \eref{cosigma} are immediate consequences of the assumption that $\EE[f^2(g)] < \infty$ and the completeness of the Hermite polynomials. The condition in \eref{weight_w_wd} can be verified by using Lemma~\ref{lemma:w_wd_wdb} found in Appendix~\ref{appendix:concentration}.
\end{proof}

%It is possible to further relax the conditions in Assumption~\ref{assumption:f} and consider a larger class of functions that can be  approximated by a finite number of Gegenbauer polynomials uniformly over $d$. See \cite[Sec. 3.2]{cheng2013SpectrumRandom} for details. On the other hand, 

\begin{theorem}\label{thm:equivalence_f}
Let $A$ be the matrix in \eref{A_f} with the function $f_d$ satisfying the conditions in Assumption~\ref{assumption:f}. Denote its Stieltjes transform by $s_A(z)$. Fix $\ratio, \tau > 0$, and assume that $n/d^\ell = \ratio + \mathcal{O}(d^{-1/2})$, and $z = E + i\eta$ with $\abs{E} \le \tau^{-1}$ and $\tau \le \eta \le \tau^{-1}$. Then, $s_{\mA}(z) \to m(z)$ almost surely as $d \to \infty$, where $m(z)$ is the unique solution in $C_+$ to the equation:
\begin{equation}\label{eq:m_equation_f}
m(z)\Big(z + \frac{\coa m(z)}{1 + \cob m(z)} + \coch m(z)\Big) + 1 = 0.
\end{equation}
Here, $\coa = \co_\ell^2$, $\cob = \co_\ell \sqrt{\ell!\ratio}$, $\coch = \sigma^2 - \sum_{0 \le k \le \ell} \co_k^2$, and $\set{\co_k}, \sigma^2$ are the constants defined in Assumption~\ref{assumption:f}.
\end{theorem}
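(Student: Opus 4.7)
\textbf{Proof plan for Theorem~\ref{thm:equivalence_f}.}
The plan is to reduce the general case to the polynomial case already handled in Theorem~\ref{thm:equivalence} via a Gegenbauer truncation, using Lemma~\ref{lemma:s1s2} to control the approximation error and the stability of the self-consistent equation \eref{m_equation_f} under perturbations of its coefficients. For each integer $L \geq \ell$, I would introduce the degree-$L$ polynomial kernel
\[
\bar f_d^{(L)}(x) \bydef \sum_{k = 0}^L \co_k \, q_k^{(d)}(x),
\]
where the $\co_k$ are the \emph{limit} coefficients from Assumption~\ref{assumption:f}(a). Let $\bar A^{(L)}$ denote the matrix obtained from \eref{A_f} with $f_d$ replaced by $\bar f_d^{(L)}$, and let $\bar s^{(L)}(z)$ be its Stieltjes transform. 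Applying Theorem~\ref{thm:equivalence} with the fixed coefficient vector $(\co_0, \ldots, \co_L)$ gives, almost surely as $d \to \infty$,
\[
\bar s^{(L)}(z) \;\longrightarrow\; m^{(L)}(z),
\]
where $m^{(L)}(z) \in \C_+$ is the unique solution to \eref{m_equation} with $\coa = \co_\ell^2$, $\cob = \co_\ell\sqrt{\ell!\ratio}$, and $\coc$ replaced by $\coc^{(L)} \bydef \sum_{\ell+1 \leq k \leq L} \co_k^2$.

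Next I would estimate the truncation error. Using the orthonormality relation \eref{ortho_poly} together with $\mu_k^{(d)} \bydef \EE[f_d(\xi) q_k^{(d)}(\xi)] \to \co_k$ and $\EE[f_d(\xi)]^2 \to \sigma^2$ from Assumption~\ref{assumption:f},
\[
\EE \bigl[\bigl(f_d(\xi) - \bar f_d^{(L)}(\xi)\bigr)^{\!2}\bigr]
= \EE[f_d^2(\xi)] - 2\sum_{k = 0}^L \co_k \mu_k^{(d)} + \sum_{k = 0}^L \co_k^2
\;\xrightarrow{d\to\infty}\; \sigma^2 - \sum_{k = 0}^L \co_k^2 \;=:\; \varepsilon_L,
\]
and $\varepsilon_L \to 0$ as $L \to \infty$ by Assumption~\ref{assumption:f}(b). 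Plugging this into Lemma~\ref{lemma:s1s2} together with $\eta \ge \tau$ would yield, almost surely and for every fixed $L$,
\[
\limsup_{d \to \infty} \abs{s_A(z) - \bar s^{(L)}(z)} \;\leq\; \tau^{-2}\, \varepsilon_L^{1/2}.
\]

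It then remains to prove stability of the self-consistent equation: $m^{(L)}(z) \to m(z)$ as $L \to \infty$, where $m(z)$ solves \eref{m_equation_f}. Since $\coc^{(L)} \uparrow \coch$ as $L \to \infty$, this is a question about Lipschitz continuity of the solution in the final coefficient. I would derive it directly: subtracting the two fixed-point equations and using the a priori bounds $\abs{m^{(L)}}, \abs{m} \leq 1/\eta \leq \tau^{-1}$ and $\im(m^{(L)} m)$-type sign arguments to show the derivative of the defining equation with respect to $m$ is bounded away from zero on $\{\im(z) \geq \tau\}$, yielding $\abs{m^{(L)}(z) - m(z)} = O(\coch - \coc^{(L)})$. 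Combining the three estimates via the triangle inequality
\[
\abs{s_A(z) - m(z)} \;\leq\; \abs{s_A(z) - \bar s^{(L)}(z)} + \abs{\bar s^{(L)}(z) - m^{(L)}(z)} + \abs{m^{(L)}(z) - m(z)}
\]
and sending first $d \to \infty$ (for fixed $L$, using Theorem~\ref{thm:equivalence} and Lemma~\ref{lemma:s1s2}) then $L \to \infty$, using countable almost sure convergence along a sequence of $L$'s, gives the claim.

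I expect the main obstacle to be the stability step for the self-consistent equation: one has to verify that the implicit map $\coc \mapsto m(z;\coc)$ is uniformly Lipschitz on the spectral region $\{\abs{E} \leq \tau^{-1},\, \tau \leq \eta \leq \tau^{-1}\}$, which requires a careful lower bound on $\abs{1 + \cob m}$ away from zero (preventing the rational term in \eref{m_equation_f} from blowing up). Everything else---applying the already-proved polynomial case, the $L^2$ Gegenbauer-Parseval computation, and the Stieltjes-transform perturbation bound---is essentially a bookkeeping exercise assembled from the tools stated earlier in the paper.
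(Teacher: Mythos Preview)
Your plan is correct and follows essentially the same reduction-to-polynomials strategy as the paper, but with one notable difference in execution. You truncate with the ``natural'' polynomial $\bar f_d^{(L)} = \sum_{k\le L} \co_k q_k$, which forces you to carry an extra stability step $m^{(L)}(z)\to m(z)$ as $\coc^{(L)}\uparrow\coch$. The paper instead sets the top coefficient to $\hat\co_L \bydef (\sigma^2-\sum_{k\le L-1}\co_k^2)^{1/2}$, so that the approximating polynomial $\hat f_d = \sum_{k\le L-1}\co_k q_k + \hat\co_L q_L$ has $\sum_{\ell<k\le L}(\text{coef})_k^2$ exactly equal to $\coch$; hence Theorem~\ref{thm:equivalence} applied to $\hat f_d$ gives the \emph{final} $m(z)$ directly, with no $L\to\infty$ limit in the self-consistent equation needed. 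This trick eliminates precisely the obstacle you flagged (the Lipschitz dependence of $m$ on $\coc$ and the lower bound on $\abs{1+\cob m}$). Your route still works---continuity of $m$ in $\coc$ is easy on $\{\eta\ge\tau\}$, e.g.\ via the free-convolution interpretation or a direct subtraction argument as in Proposition~\ref{prop:stability}---but the paper's choice of $\hat\co_L$ is a cleaner shortcut worth knowing.
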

\begin{remark}
Recall from \sref{heuristics} that the self-consistent equation \eref{m_equation_f} characterizes the free additive convolution of a (shifted) MP law and the semicircle law. Consequently, the statement of Theorem~\ref{thm:equivalence_f} can still be interpreted in terms of an equivalence principle: the limiting ESD of $A$ is equivalent to that of
\[
B = \co_\ell B_\ell + (\sigma^2 - \textstyle\sum_{0 \le k \le \ell} \co_k^2)^{1/2} H,
\]
where $B_\ell$ is the matrix defined in \eref{Bell} and $H$ is a GOE matrix independent of $B_\ell$.
\end{remark}
\begin{proof}
Let $\xi \sim \taumeasure$, with $\taumeasure$ being the probability measure defined in \eref{tau_measure}, and let $\set{q_k(x)}$ be the Gegenbauer polynomials. For any two functions $f(x), \bar f(x)$, we write $\inprod{f, \bar f} \bydef \EE[f(\xi) \bar f(\xi)]$ and $\norm{f} \bydef [\EE f^2(\xi)]^{1/2}$. As a straightforward consequence of the conditions \eref{cosigma} and \eref{weight_w_wd} presented in Assumption~\ref{assumption:f}, we have
\begin{equation}\label{eq:cosigma_q}
\norm{f_d}^2 \xrightarrow{d\to\infty} \sigma^2.
\end{equation}
Additionally, by using \cite[Lemma C.1]{cheng2013SpectrumRandom}, we can conclude from the conditions \eref{cok_limit} and \eref{weight_w_wd} that
\begin{equation}\label{eq:cok_limit_q}
\inprod{f_d, q_k} \xrightarrow{d\to\infty} \co_k
\end{equation}
for every $k \in \N_0$.

By the square-summable condition \eref{cok_summable} in Assumption~\ref{assumption:f}, for any fixed (small) $c > 0$, there exists some integer $L \ge \ell+1$ such that
\begin{equation}\label{eq:co_conv}
\sigma^2 - (\eta^4 c^2/24) \le \textstyle\sum_{0 \le k \le L - 1} \co_k^2 \le \sigma^2.
\end{equation}
Given the $L$ found above, we define two functions
\[
P_{f_{d}, L}(x) = f_d(x) - \textstyle\sum_{0 \le k \le L} \inprod{f_d, q_k} q_k(x)
\]
and
\[
\hat f_d(x) = \textstyle\sum_{0 \le k \le L-1} \co_k q_k(x) + \hat \co_{L} q_{L}(x),
\]
where $\hat \co_{L} \bydef (\sigma^2 - \sum_{0 \le k \le L-1} \co_k^2)^{1/2}$. By using the property that the Gengenbauer polynomials are orthonormal [see \eref{ortho_poly}], we can write
\begin{align}
\norms{f_d - \hat f_d}^2 &= \norms{\textstyle\sum_{0 \le k \le L-1} (\inprod{f_d, q_k} - \co_k) q_k(x) + (\inprod{f_d, q_L} - \hat \co_L) q_L(x) + P_{f_{d}, L}(x)}^2\\
&= \textstyle\sum_{0 \le k \le L-1} (\inprod{f_d, q_k} - \co_k)^2 + (\inprod{f_d, q_L} - \hat \co_L)^2 + (\,\norm{f_d}^2 - \sum_{0 \le k \le L} \inprod{f_d, q_k}^2)\\
&\le (\co_L - \hat \co_L)^2 + (\sigma^2 - \textstyle\sum_{0 \le k \le L} \co_k^2) + \eta^4 c^2/24, \qquad \text{for all sufficiently large } d.\label{eq:fhf_0}
\end{align}
To reach the inequality in \eref{fhf_0}, we have employed the characterizations presented in \eref{cosigma_q} and \eref{cok_limit_q}, which guarantee that the inequality holds for all $d \ge d(L, c)$, where $d(L, c)$ is some integer that may depend on $L$ and $c$. By \eref{co_conv}, $\hat \co_L^2 \le \eta^4 c^2/24$ and $\co_L^2 \le \eta^4 c^2/24$. We can then further bound the right-hand side of \eref{fhf_0} as
\begin{equation}\label{eq:f_approx}
\norms{f_d - \hat f_d}^2 \le 2 \co_L^2 + 2 \hat \co_L^2 + (\sigma^2 - \textstyle\sum_{0 \le k \le L} \co_k^2) + \eta^4 c^2/24 \le \eta^4 c^2/4,
\end{equation}
for all sufficiently large $d$.

Let $\hat A$ be a matrix constructed according to \eref{A_f} but with the function $f_d$ replaced by $\hat f_d$. Let $s_{\hat A}$ denote its Stieltjes transform. We can characterize $s_{\hat A}$ in two ways. On the one hand, since $\hat f_d$ is a linear combination of $L+1$ Gegenbauer polynomials, we can apply Theorem~\ref{thm:equivalence} to get
\begin{equation}\label{eq:sfm}
\abs{s_{\hat A}(z) - m(z)} \prec \frac{1}{\sqrt{d}},
\end{equation}
where $m(z) \in \C_+$ is the solution to \eref{m_equation_f}. On the other hand, by viewing $\hat A$ as a perturbation of $A$, we can employ the comparison inequality in Lemma~\ref{lemma:s1s2_general} in Appendix~\ref{appendix:resolvent_identities} to get
\begin{equation}\label{eq:s1s2_sphere}
\abs{s_A(z) - s_{\hat A}(z)} \le \frac{1}{\eta^2} \norms{f_d - \hat f_d} + \mathcal{O}_\prec\Big(\frac{1}{\eta\sqrt{n}}\Big) \le  \frac{c}{2} +  + \mathcal{O}_\prec\Big(\frac{1}{\eta\sqrt{n}}\Big),
\end{equation}
where the second inequality follows from \eref{f_approx}. By the triangular inequality and estimates in \eref{sfm} and \eref{s1s2_sphere}, we have
\begin{align}
\abs{s_A(z) - m(z)} &\le \abs{s_A(z) - s_{\hat A}(z)} + \abs{s_{\hat A}(z) - m(z)}\\
&\le c/2 +  \mathcal{O}_\prec\Big(\frac{1}{\sqrt{d}}\Big).
\end{align}
Thus, for any $D > 0$,
\[
\P(\,\abs{s_A(z) - m(z)} > c) < d^{-D}
\]
for all sufficiently large $d$. Applying the Borel-Cantelli lemma (for a fixed $D > 1$), we can then conclude that $s_A(z)$ converges to $m(z)$ almost surely.
\end{proof}

\subsection{Numerical Experiments}
\label{sec:numerical}

In this subsection, we present numerical experiments that demonstrate the equivalence principle as stated in Theorems~\ref{thm:equivalence} and \ref{thm:equivalence_f}. We first examine the empirical spectral distribution (ESD) of individual polynomial component matrices $A_k$, as defined in \eref{Ak}. In the quadratic scaling regime, where $n$ is asymptotically proportional to $\ratio d^2$ for a fixed $\ratio$, $A_2$ is asymptotically equivalent to $B_2$ in \eref{Bell}. Consequently, the ESD of $A_2$ converges to the MP law, characterized by a density function given in \eref{rho_MP}. Both $A_3$ and $A_4$ are asymptotically equivalent to a GOE matrix, and their ESDs therefore converge to the standard semicircle law, as outlined in \eref{rhosc}. As illustrated in \fref{A2A3A4}, the ESDs of $A_2$, $A_3$, and $A_4$ closely align with their respective limiting spectral densities.

\begin{figure}[t!]
	\centering
	\begin{subfigure}[b]{0.3\textwidth}
		\centering
		\includegraphics[width=\textwidth]{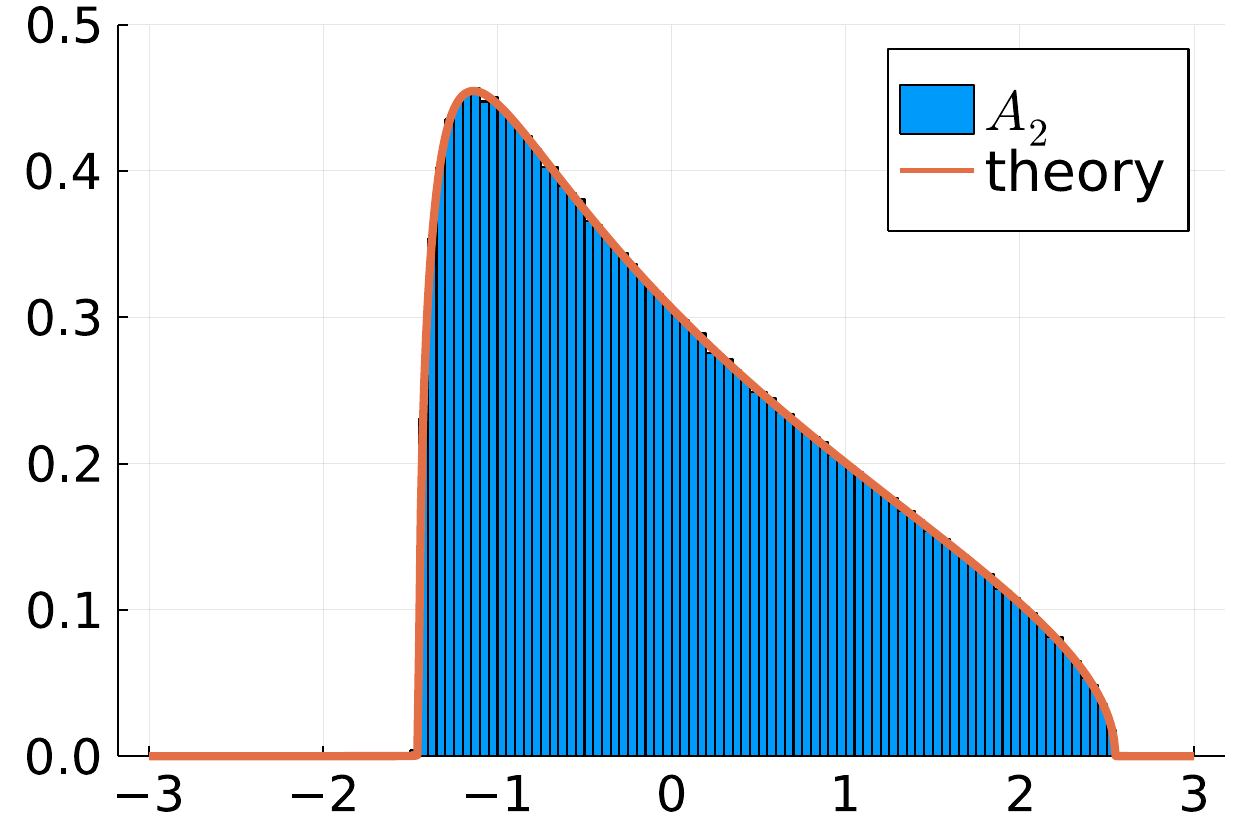}
	\end{subfigure}
	\hspace{5pt}
		\begin{subfigure}[b]{0.3\textwidth}
		\centering
		\includegraphics[width=\textwidth]{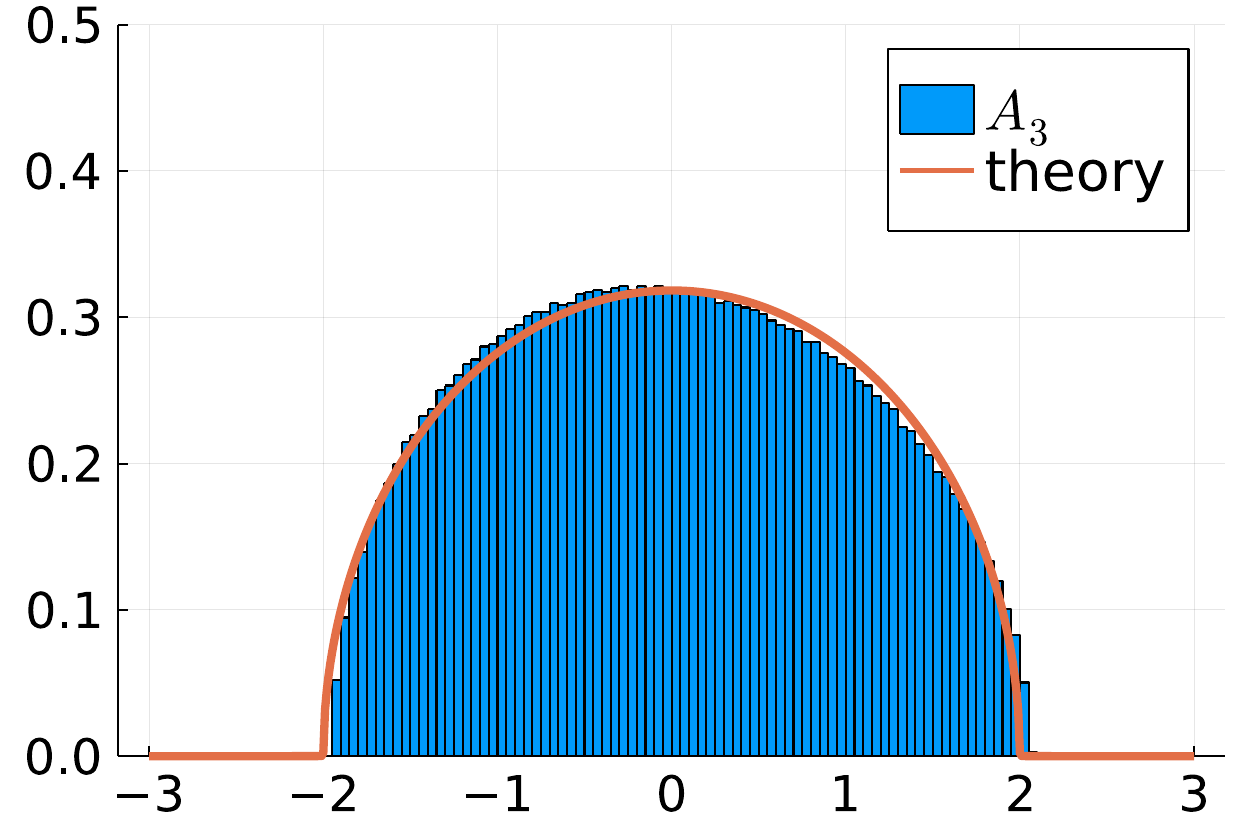}
	\end{subfigure}
	\hspace{5pt}
	\begin{subfigure}[b]{0.3\textwidth}
		\centering
		\includegraphics[width=\textwidth]{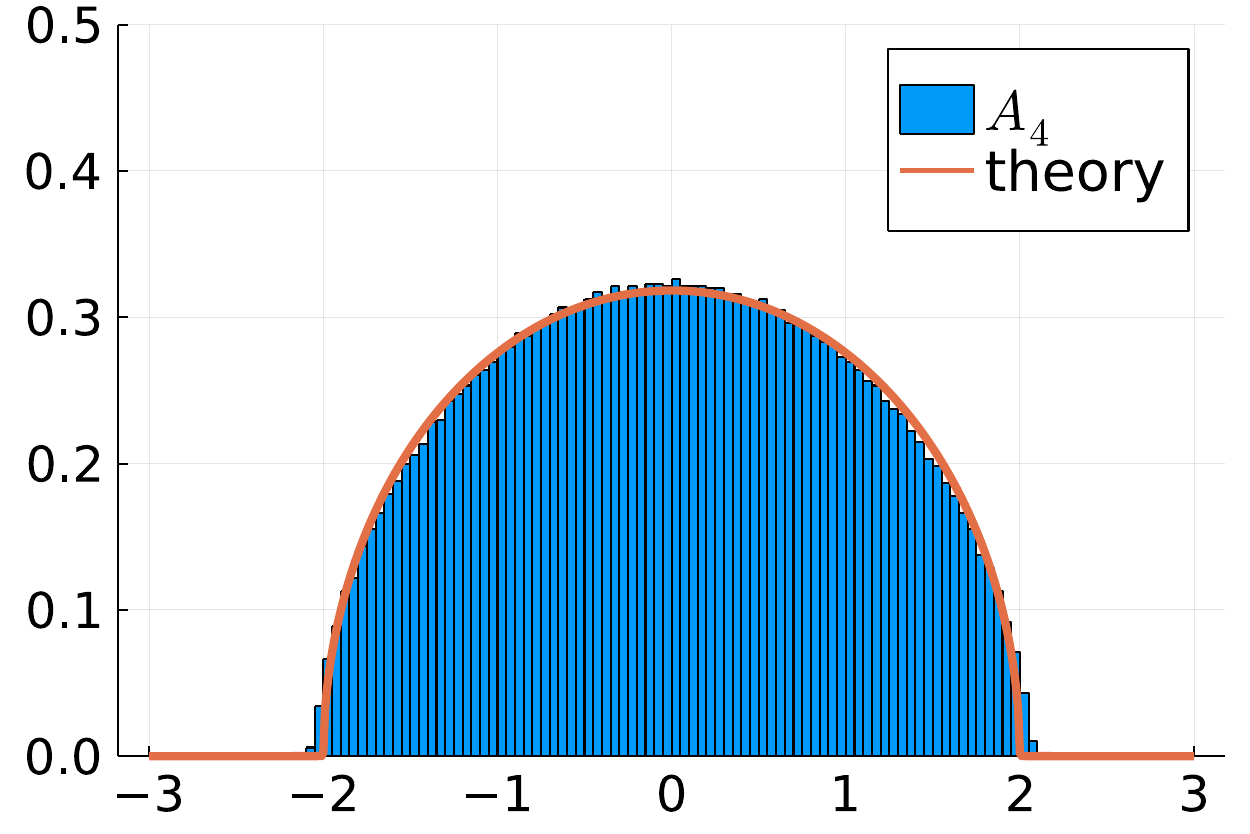}
	\end{subfigure}
	\caption{Demonstration of the equivalence principle in the quadratic scaling regime, where $n \asymp \ratio d^2$. In this experiment, $d = 300$ and $\ratio = 0.15$. The ESDs of $A_2$, $A_3$, and $A_4$ are plotted against their corresponding limiting spectral densities given by Theorem~\ref{thm:equivalence}. Observe that the ESD of $A_2$ is characterized by the MP law in \eref{rho_MP} (with shape parameters $\rp = 1/(2\ratio)$ and $t = 1/\sqrt{\rp}$), whereas the ESDs of $A_3$ and $A_4$ both follow the standard semicircle law given in \eref{rhosc}.}\label{fig:A2A3A4}
\end{figure}

\begin{figure}[t!]
	\centering
	\begin{subfigure}[b]{0.3\textwidth}
		\centering
		\includegraphics[width=\textwidth]{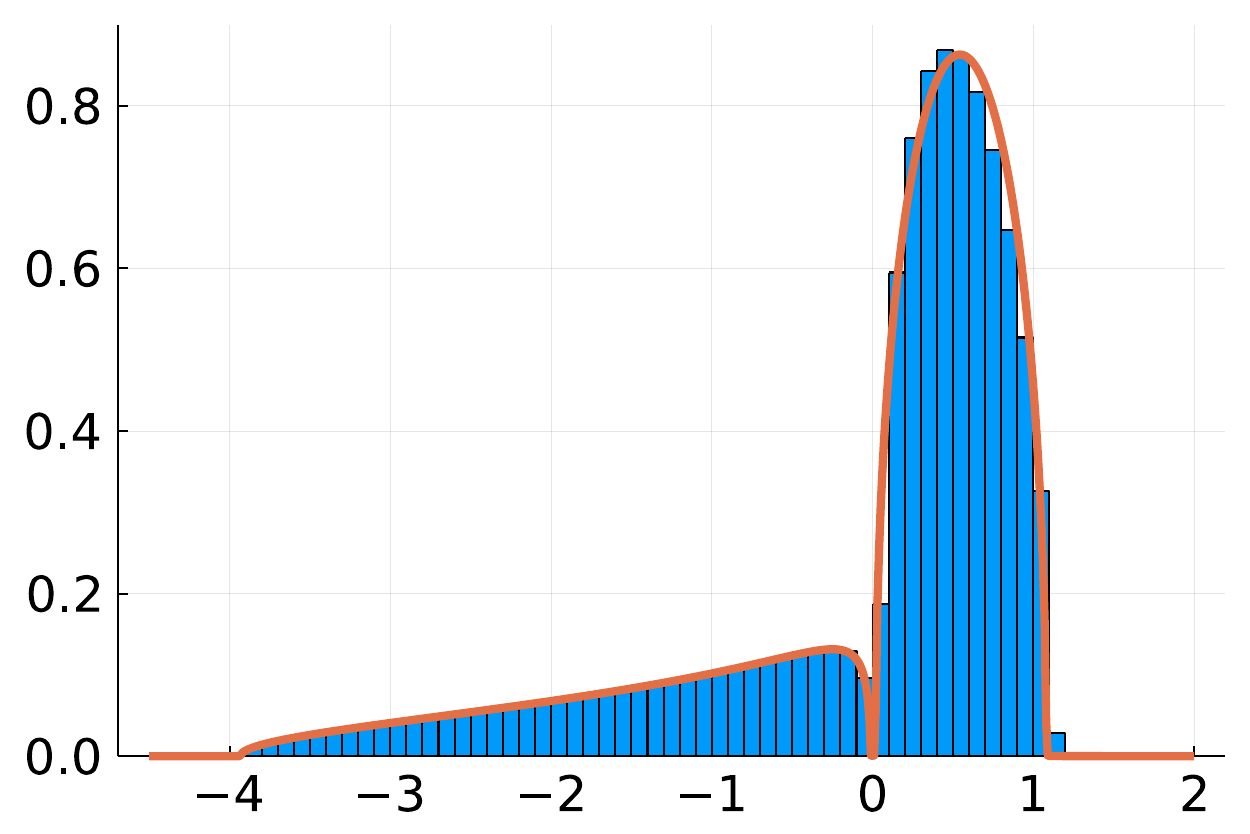}
		\caption{The matrix in \eref{A2A3A4}}\label{fig:quadratic_combination:1}
	\end{subfigure}
	\hspace{20pt}
	\begin{subfigure}[b]{0.3\textwidth}
		\centering
		\includegraphics[width=\textwidth]{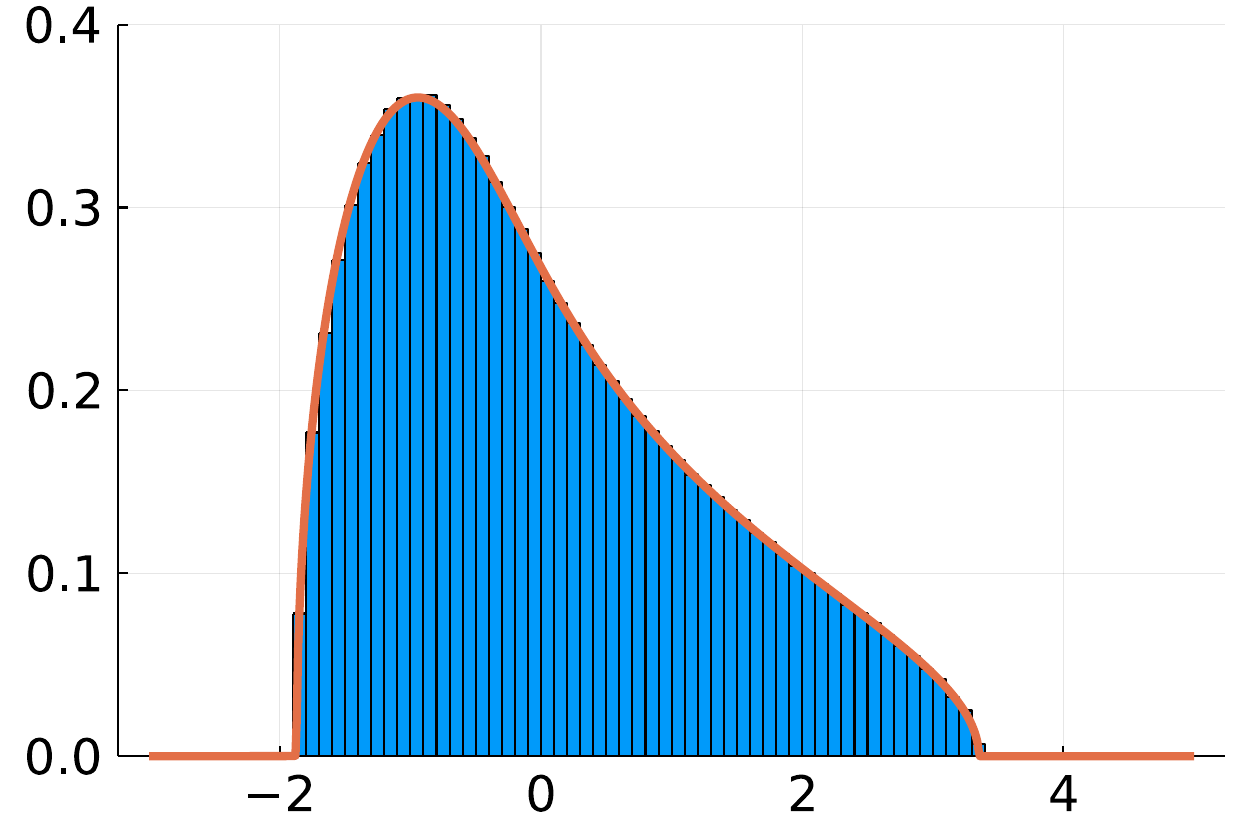}
		\caption{The matrix in \eref{A5A6_trunc}}\label{fig:quadratic_combination:2}
	\end{subfigure}
	\caption{Linear combinations of the Gegenbauer polynomial matrices. (a) The ESD of the matrix defined in \eref{A2A3A4}. In this experiment, $d = 100$ and $n = 1.8\times10^4$, and thus $n /d^2 = 1.8$. (b) The ESD of the matrix defined in \eref{A5A6_trunc}. Here, we set $d = 200$, $n =2\times 10^4$. The truncation threshold in \eref{A5_trunc} is set to $M = 3$.} \label{fig:quadratic_combination}
\end{figure}

In the second example, we consider linear combinations of the polynomial matrices $\set{A_k}$. Theorem~\ref{thm:equivalence} states that the ESD of any fixed linear combination of $A_k$ can be obtained by a free additive convolution between the MP law and the semicircle law. In \fref{quadratic_combination:1}, we plot the ESD of
\begin{equation}\label{eq:A2A3A4}
-A_2 + \frac {1} {\sqrt{20}} A_3 + \frac {1} {\sqrt{20}} A_4
\end{equation}
against the limiting spectral density. The theoretical curve (red solid line in the figure) is computed by solving the self-consistent equation \eref{m_equation} and then evaluating \eref{rhofc} numerically. In \fref{quadratic_combination:2}, we show the ESD of another matrix in the form of
\begin{equation}\label{eq:A5A6_trunc}
A_2 + \frac {1} {4} A_3 + \frac {1} 4 A_4 + \frac{1}4 A_5^M + \frac{1}4 A_6^M.
\end{equation}
Here, $A_5^M$ and $A_6^M$ are the truncated version of $A_5$ and $A_6$, respectively. Specifically, for $M > 0$, we have
\begin{equation}\label{eq:A5_trunc}
(A_5^M)_{ij} = (A_5)_{ij} \cdot \charfn(\, \abs{(A_5)_{ij}} \le M/\sqrt{n}) \qquad i,j \in [n],
\end{equation}
and $A_6^M$ is defined similarly. Note that we apply this truncation to remove a small number of outliers in the entries of $A_5$ and $A_6$ that have very large magnitudes. The presence of these outliers intensifies the finite-size effect, causing the ESD to deviate from the limiting spectral density when the dimension $d$ is not very large. Due to the truncation step, the matrix in \eref{A5A6_trunc} is no longer a finite linear combination of $\set{A_k}$. Consequently, we employ Theorem~\ref{thm:equivalence_f} to compute the theoretical curve.

In the last example, we consider the ESD of the matrix in \eref{A_f}, where the nonlinear function is a soft-thresholding operator, \emph{i.e.},
\begin{equation}\label{eq:thresholding}
f_d(x) = \sign(x) (\,\abs{x} - \tau)_+
\end{equation}
for some threshold $\tau > 0$. \fref{thresholding} compares the ESDs of this matrix against the limiting spectral densities in the linear ($n \asymp \ratio d$), quadratic ($n \asymp \ratio d^2$), and cubic ($n \asymp \ratio d^3$) scaling regimes, respectively. Since $\EE_{\xi \sim \taumeasure} [f_d(\xi) q_1(\xi)] > 0$, the matrix $\mA$ has a nonzero ``projection'' in $A_1$. As we show in Lemma~\ref{lemma:op_spike} in Appendix~\ref{appendix:op_norm}, $A_1$ is essentially a rank-$d$ matrix with $d$ large eigenvalues of size $\mathcal{O}(\sqrt{n/d})$. It follows that, in the quadratic and cubic scaling regimes, the spectrum of $\mA$ contains $d$ outlier eigenvalues that are separated from the bulk.

\begin{figure}[t!]
	\centering
	\begin{subfigure}[b]{0.3\textwidth}
		\centering
		\includegraphics[width=\textwidth]{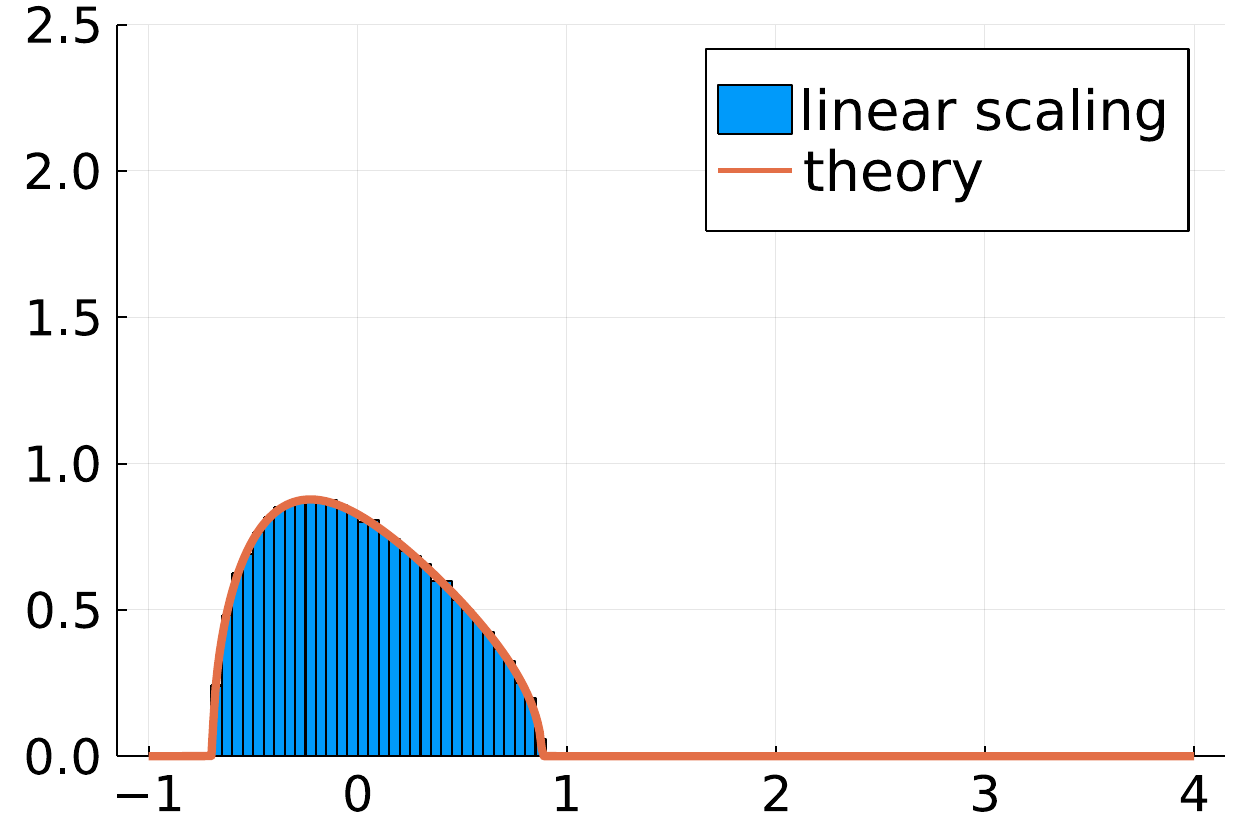}
	\end{subfigure}
	\hspace{5pt}
		\begin{subfigure}[b]{0.3\textwidth}
		\centering
		\includegraphics[width=\textwidth]{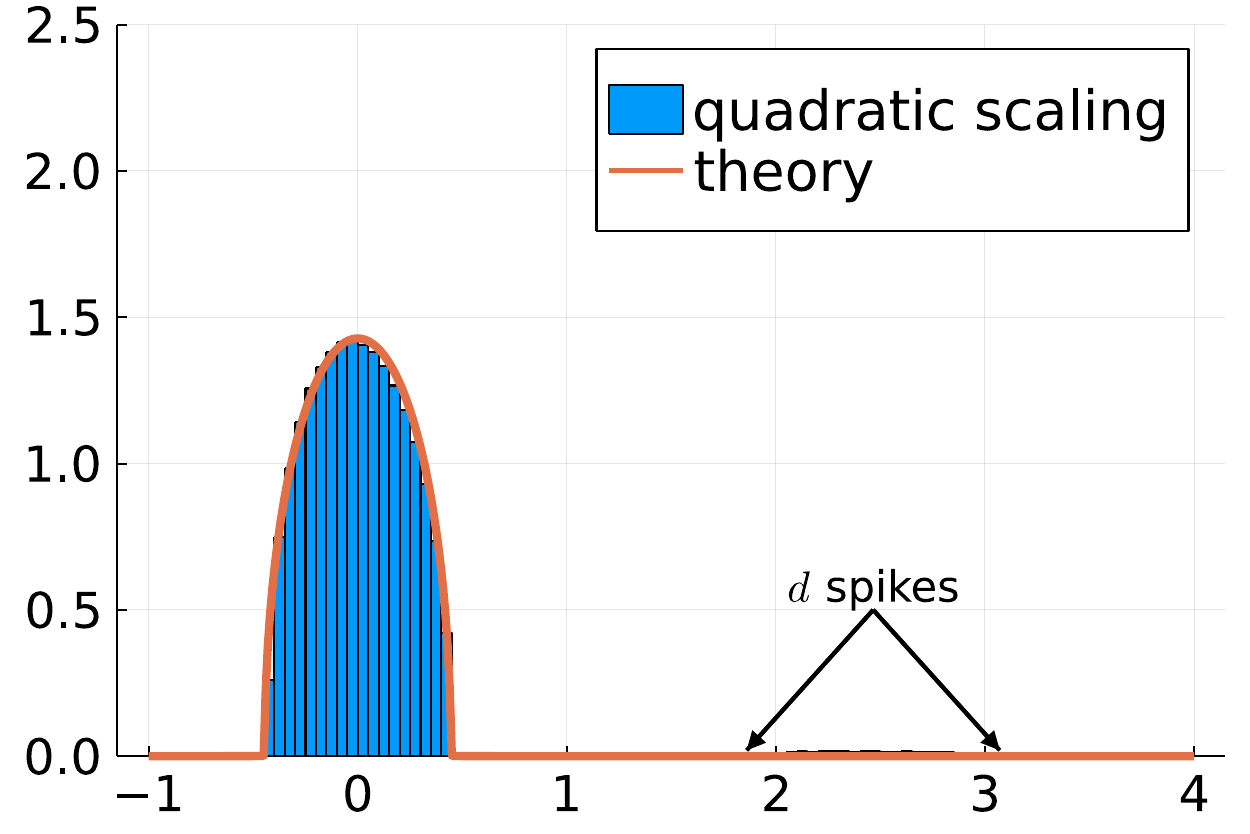}
	\end{subfigure}
	\hspace{5pt}
	\begin{subfigure}[b]{0.3\textwidth}
		\centering
		\includegraphics[width=\textwidth]{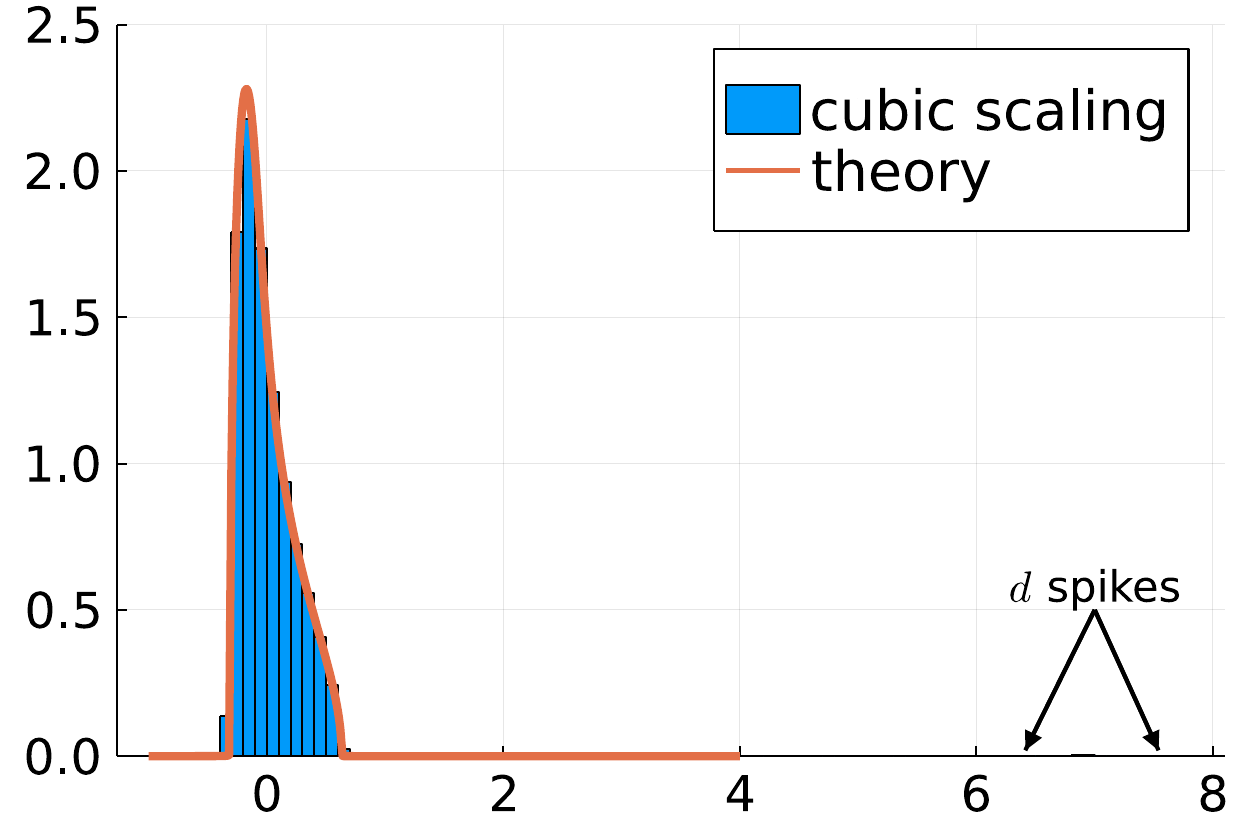}
	\end{subfigure}
	\caption{The ESD of the inner-product kernel matrix \eref{A_f} where $f_d$ is the soft-thresholding function in \eref{thresholding} with $\tau = 1$. In the experiment, we choose $\tau = 1$, $\ratio = 0.15$, and set the dimension to $d = 1.2\times 10^4$, $d = 300$ and $d = 50$ in the linear, quadratic, and cubic scaling regimes, respectively.}\label{fig:thresholding}
\end{figure}

% !TEX root = equivalence.tex

\section{Related Work}
\label{sec:related}

In this section, we discuss several related lines of work in the literature.

\emph{Random kernel matrices}: Among the earlier studies on the spectrum of random kernel matrices, \citet{koltchinskii2000RandomMatrix} considered a setting where the data vectors $\set{\vx_i}$ are sampled from a fixed manifold in $\R^d$. They demonstrated that, when $n \to \infty$ with $d$ fixed, the spectrum of the kernel matrix converges to that of an integral operator on the manifold. The high-dimensional setting, where $n, d \to \infty$ with $n/d \to \ratio \in (0, \infty)$, was first investigated by \citet{elkaroui2010spectrumkernel}. This work examined a scaling in which the matrix elements $A_{ij}$ are $f(\vx_i\tran \vx_j)$. When the data vectors $\set{\vx_i}$ are independently sampled from $\unifsp$, the typical size of $\vx_i\tran\vx_j$ is $\mathcal{O}(1/\sqrt{d})$ for $i \neq j$. As a result, the kernel matrix only depends on the properties of the kernel function $f(x)$ in a local neighborhood near $x = 0$. Indeed, \citet{elkaroui2010spectrumkernel} showed that, under mild local smoothness assumptions on $f(x)$, the kernel matrix is asymptotically equivalent to a simpler matrix obtained via a first-order Taylor expansion. Our work, as well as that of \cite{cheng2013SpectrumRandom}, considers a different scaling: the presence of the $\sqrt{d}$ factor in \eref{A_f} means that the kernel function $f_d$ is applied to values of size $\mathcal{O}(1)$ rather than $\mathcal{O}(1/\sqrt{d})$. Consequently, the spectrum of the matrix in \eref{A_f} depends on the global properties of the kernel function.

The linear asymptotic regime of the model in \eref{A_f} was first investigated by \citet{cheng2013SpectrumRandom}, who established the weak limit of the ESD of the kernel matrix. The idea of ``decorrelation'' by expanding the nonlinear kernel function using an orthogonal polynomial basis was first introduced in that work and plays a significant role in our paper. The results of \cite{cheng2013SpectrumRandom} were obtained for data vectors sampled from isotropic Gaussian and spherical distributions. \citet{do2013spectrumrandom} extended these results to other distributions (such as the Bernoulli case). The spectral norm of the kernel matrix was explored by \citet{fan2017SpectralNorm}, who also made the observation that the limiting distribution obtained in \cite{cheng2013SpectrumRandom} is the free additive convolution of an MP law and a semicircle law.

\emph{Asymptotics with polynomial scaling}: Studies of high-dimensional statistical problems often focus on the linear asymptotic regime. In random matrix theory, the more general polynomial asymptotic regime was investigated by \citet{bloemendal2015IsotropicLocal}, who demonstrated the local MP law for sample covariance matrices $X\tran X$, where $X$ is an $N \times n$ matrix with $\log N \asymp \log n$. In that work, the matrix $X$ is assumed to have independent entries. Although the kernel matrix in our work can also be written in the form of a (generalized) sample covariance matrix [see \eref{Ak_Sk} and \eref{Ak_Sk_full}], the matrix entries in our problem consist of spherical harmonics, which are uncorrelated but dependent random variables. As a result, the technical approach of \cite{bloemendal2015IsotropicLocal} cannot be directly applied here. In the context of kernel methods, exact asymptotics in the polynomial scaling regime were examined in \citet{opper2001Supportvector} and \citet{bordelon2021SpectrumDependent} using nonrigorous statistical physics methods. The heuristics employed by these authors were similar to the scheme outlined in \sref{heuristics}, specifically, treating the uncorrelated spherical harmonics as if they were independent standard normal random variables.

\citet{ghorbani2020Linearizedtwolayers} explored the high-dimensional limit of kernel regression and several related models under a scaling where $d^{\ell+\delta} \le n \le d^{\ell+1-\delta}$ for some (small) $\delta > 0$. In this context, the number of samples is assumed to fall between two consecutive integer powers of $d$. This contrasts with our work, in which we assume $n/d^\ell \to \kappa \in (0, \infty)$. Upon completing this paper, we became aware of an independent work by \citet{misiakiewicz2022Spectruminnerproduct} that examines the limit of kernel ridge regression in the exact polynomial asymptotic regime. One of the main results of that study shows that the limiting spectrum of the kernel matrix follows the MP law when the kernel function $f_d$ is the $\ell$th Gegenbauer polynomial. This finding corresponds to a specific case in our equivalence principle, where the matrix $A$ in \eref{A} comprises solely the component $\co_\ell A_\ell$. In our model, the inclusion of high-order components $\set{A_k}_{k > \ell}$ leads to more general limiting distributions (the free convolution between the MP law and the semicircle law) and additional technical challenges in the proof.

\emph{Non-Hermitian ensembles and random feature models}:  Lastly, we note that it is possible to extend the current study to a non-Hermitian version of  \eref{A_f}.  In this case, one would investigate an $n \times p$ matrix, with entries given by
\[
A_{ij} = \frac{1}{\sqrt n} f_d(\sqrt{d} \vx_i \vy_j),
\]
where $\{\vx_i\}_{i \le n}$ and $\{\vy_j\}_{j \le p}$ represent two collections of vectors in $\R^d$. This type of matrix appears in the random feature model \cite{rahimi2007RandomFeatures,louart2017RandomMatrix,hastie2020SurprisesHighDimensional,penningtonNonlinearRandomMatrix2019}, an interesting theoretical model for large random neural networks. For such non-Hermitian matrices, an asymptotic Gaussian equivalence phenomenon, analogous to the equivalence principle obtained in this study, have been demonstrated under the linear scaling regime, when $n, d$ and $p$ grow to infinity at fixed ratios \cite{mei2019generalizationerror,gerace2021GeneralisationErrorLearning,goldtGaussianEquivalenceGenerative2020,loureiro2022LearningCurvesGeneric,hu2021UniversalityLaws}. We conjecture that similar equivalence principles may exist under the polynomial scaling regime.

% !TEX root = equivalence.tex

\section{Proof of the Main Result}
\label{sec:proof_equivalence}

This section is devoted to the proof of Theorem~\ref{thm:equivalence}. We start by presenting a high-level outline of the proof in \sref{outline}. The technical details are given in Sections~\ref{sec:A_complement} and \ref{sec:A_Schur}, and in the appendix.

\subsection{Outline of the Proof}
\label{sec:outline}

Recall that $n = \ratio d^\ell + o(d^\ell)$ for some fixed $\kappa > 0$ and $\ell \in \N$. Accordingly, we split the matrix $\mA$ in \eref{A} into two parts: the low-order term $\Al \bydef \sum_{0 \le k < \ell} \co_k \mA_k$, and the high-order term $\Ah \bydef \sum_{\ell \le k \le L} \co_k \mA_k$. We start by observing that the contribution of the low-order term to the Stieltjes transform is negligible.

\begin{lemma}\label{lemma:omit_low_order} Let $s(z)$ and $\hat{s}(z)$ denote the Stieltjes transforms of $A$ and $\Ah$, respectively. Under the same settings of Theorem~\ref{thm:equivalence}, we have
\begin{equation}\label{eq:s_diff_LH}
\abs{s(z) - \hat{s}(z)} = \mathcal{O}\Big(\frac{1}{\eta^2 \sqrt{d}}\Big).
\end{equation}
\end{lemma}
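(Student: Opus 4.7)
The plan is to exploit the key structural observation that each low-order component $\mA_k$ with $k < \ell$ is, up to a deterministic scalar multiple of the identity, a matrix whose rank is at most $\Nk = d^k/k! + \mathcal{O}(d^{k-1})$. Using the factorized representation \eqref{eq:Ak_Sk} established in the heuristics section, I write
\[
\Al = \sum_{k = 0}^{\ell - 1} \co_k \mA_k = M + c\, \mI, \qquad M \bydef \sum_{k=0}^{\ell-1} \frac{\co_k}{\sqrt{n \Nk}}\, \SH_k\tran \SH_k, \qquad c \bydef -\sum_{k=0}^{\ell-1}\co_k\sqrt{\Nk/n}.
\]
From \eqref{eq:Nk_heuristics} and $n = \ratio d^\ell + o(d^\ell)$, the rank of $M$ is at most $\sum_{0\le k<\ell}\Nk = \mathcal{O}(d^{\ell-1})$, and the deterministic scalar shift satisfies $|c| = \mathcal{O}(d^{-1/2})$, with the dominant contribution coming from the $k = \ell-1$ term.

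With this decomposition in hand, I estimate $|s(z) - \hat s(z)|$ by introducing the intermediate matrix $A' \bydef \Ah + M$, so that $\mA = A' + c\,\mI$, and applying the triangle inequality $|s(z) - \hat s(z)| \le |s_\mA(z) - s_{A'}(z)| + |s_{A'}(z) - s_{\Ah}(z)|$. The first term captures only a translation of the spectrum by the real scalar $c$; directly estimating the Stieltjes transform of a real measure whose support lies at distance at least $\eta$ from both $z$ and $z-c$ gives
\[
|s_\mA(z) - s_{A'}(z)| = |s_{A'}(z - c) - s_{A'}(z)| \le |c|/\eta^2.
\]
The second term controls a genuine low-rank perturbation: since $\rank(A' - \Ah) = \rank(M) = \mathcal{O}(d^{\ell-1})$, the standard rank inequality for Stieltjes transforms of Hermitian matrices (a direct consequence of Cauchy interlacing applied to the eigenvalue counting function) yields
\[
|s_{A'}(z) - s_{\Ah}(z)| \le \frac{C\, \rank(M)}{n\eta}.
\]

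Combining the two estimates gives
\[
|s(z) - \hat s(z)| \le \frac{|c|}{\eta^2} + \frac{C\, \rank(M)}{n \eta} = \mathcal{O}\!\left(\frac{1}{\eta^2 \sqrt d}\right) + \mathcal{O}\!\left(\frac{1}{d\eta}\right) = \mathcal{O}\!\left(\frac{1}{\eta^2 \sqrt d}\right),
\]
which matches \eqref{eq:s_diff_LH}. Because the entire argument is purely deterministic linear algebra once the decomposition $\Al = M + c\,\mI$ is in place, I do not anticipate any real obstacle; the only bookkeeping that requires care is the rank count of $M$ and the magnitude of $c$, both of which follow immediately from the explicit formula for $\Nk$ together with the polynomial scaling $n \asymp d^\ell$. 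Notably, no probabilistic input is needed beyond the deterministic bound on $\rank(\SH_k\tran \SH_k)$.
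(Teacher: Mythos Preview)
Your proposal is correct and is essentially identical to the paper's own proof: the paper also writes $\Al = \sum_{k<\ell}\frac{\co_k}{\sqrt{n\Nk}}\SH_k\tran \SH_k - \sigma_d I$ (your $M + cI$ with $c = -\sigma_d$), introduces the intermediate matrix $\breve A = A + \sigma_d I$ (which coincides with your $A' = \Ah + M$), and then bounds $|s - \hat s|$ via the same two-step triangle inequality, handling the scalar shift by the Lipschitz estimate $|s(z_1)-s(z_2)|\le |z_1-z_2|/(\im z_1 \cdot \im z_2)$ and the low-rank part by the Cauchy interlacing bound \eqref{eq:s_diff_rank}.
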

\begin{proof}
See Appendix~\ref{appendix:resolvent_identities}.
\end{proof}
%\yml{The larger component $\frac{1}{\eta^2 \sqrt{d}}$ is only due to a constant $\mathcal{O}(d^{-1/2})$ shift  in the eigenvalues. We will eventually simplify the above bound by combining the two terms, if we assume $\eta < 1/\tau$ for some fixed $\tau$.}

In light of Lemma~\ref{lemma:omit_low_order}, we will assume without loss of generality that $\Al = 0$ and $A = \Ah$ in our following discussions. For each $i \in [n]$, we use $A\mi \in \R^{(n-1)\times(n-1)}$ to denote the minor matrix obtained by removing the $i$th column and row of $A$, \emph{i.e.},
\[
A\mi_{ab} = A_{ab}, \qquad a, b \neq i.
\]
Note that, rather than mapping the indices of $A\mi$ to $[n-1]$, we use $a, b \in [n] \setminus \set{i}$ to index the entries of $A\mi$. Let $G(z) = (A - z I)^{-1}$ and $G\mi(z) = (A\mi - z I)^{-1}$ be the resolvents of $A$ and $A\mi$, respectively. 

%To streamline the notation, we will often write $G$ and $G\mi$ for the resolvents by suppressing their dependence on the spectral parameter $z$.

We start our proof by applying Schur's complement formula, which gives us
\begin{equation}\label{eq:Schur}
\frac{1}{G_{ii}(z)} = -z - \sum_{a, b \neq i} A_{ia} G\mi_{ab}(z) A_{ib}.
\end{equation}
Recall that the Stieltjes transform of $A$ can be obtained as $s(z) = \frac{1}{n} \sum_{i} G_{ii}(z)$. The main technical step in our proof is to show that
\begin{equation}\label{eq:H_complement}
Q_i(z) \bydef \sum_{a, b \neq i} A_{ia} G\mi_{ab}(z) A_{ib} - \Big[\frac{\coa s(z)}{1 + \cob s(z)} + \coc s(z)\Big] = \mathcal{O}_\prec(d^{-1/2}),
\end{equation}
where $\coa, \cob, \coc$ are the three constants defined in Theorem~\ref{thm:equivalence}. We will prove this estimate in \sref{A_Schur} (see Proposition~\ref{prop:A_complement}).

Using \eref{H_complement}, we can now split the right-hand side of \eref{Schur} into a leading term and an error term. Multiplying both sides of \eref{Schur} by $G_{ii}(z)$ and averaging over $i$, we get
\begin{equation}\label{eq:s_equation}
s(z)\Big(z + \frac{\coa s(z)}{1 + \cob s(z)} + \coc s(z)\Big) + 1 = \underbrace{- \frac{1}{n} \sum_i Q_i(z) G_{ii}(z)}_{\let\scriptstyle\textstyle(\text{error term})}.
\end{equation}
By \eref{G_inf} in the first step, and by applying \eref{H_complement} and the union bound in the second step, 
\begin{equation}\label{eq:H_error}
\abs{(\text{error term})} \le \frac{1}{\eta}\max_i \,\abs{Q_i(z)} = \frac{1}{\eta} \mathcal{O}_\prec(d^{-1/2}).
\end{equation}
Observe that the left-hand side of \eref{s_equation} has exactly the same functional form as the left-hand side of \eref{m_equation}. The error estimate in \eref{H_error} then implies that the Stieltjes transform $s(z)$ approximately satisfies the nonlinear equation in \eref{m_equation}. By analyzing the uniqueness and stability of the solution $m(z)$ to this nonlinear equation (see Proposition~\ref{prop:stability} in Appendix~\ref{appendix:stability}), we can then conclude that $s(z) \approx m(z)$.

\subsection{Schur Complement: Reparameterization}
\label{sec:A_complement}

We devote this and the next subsections to establishing the estimate in \eref{H_complement}. Observe that, by construction, different coordinates of $A$ are statistically exchangeable. Thus, we only need to show \eref{H_complement} for a single index $i \in [n]$. We choose to do so for $i = n$.

Recall from \eref{A} and \eref{Ak} that
\begin{equation}\label{eq:An_original}
A_{na} = \frac{1}{\sqrt{n}} \sum_{\ell \le k \le L} \co_k q_{k}(\sqrt{d}\, \vx_n\tran \vx_a),
\end{equation}
and the entries of the minor $\mA\mn$ are of the form
\begin{equation}\label{eq:A_minus_n_original}
A\mn_{ab} = \frac{1}{\sqrt{n}} \sum_{\ell \le k \le L} \co_k q_{k}(\sqrt{d}\, \vx_a\tran \vx_b) \charfn_{a \neq b}, \qquad \text{for } a, b \in [n-1],
\end{equation}
where $\charfn_{a \neq b} \bydef 1 - \charfn_{ab}$. As a reminder to the reader: due to Lemma~\ref{lemma:omit_low_order}, we assume that $A$ contains no lower-order components, \emph{i.e.}, $\co_k = 0$ for $0 \le k < \ell$.

The random variables $\{A_{na}\}$ and $\{A\mn_{ab}\}$ are weakly correlated. To make this correlation explicit, we use the following reparameterization. For every $a \in [n-1]$, let 
\begin{equation}
\xi_a \bydef \sqrt{d}\, \vx_n\tran \vx_a \quad\text{and} \quad \widetilde{\vx}_a \bydef (1-\xi_a^2 / d)^{-1/2} R_n\tran \vx_a,
\end{equation}
where $R_n \in \R^{d \times (d-1)}$  is a matrix whose columns are unit-norm vectors orthogonal to $\vx_n$. We can now rewrite \eref{An_original} as
\begin{equation}\label{eq:An}
A_{na} = \frac{1}{\sqrt{n}} \sum_{\ell \le k \le L} \co_k q_{k}(\xi_a).
\end{equation}
By definition, $(R_n R_n\tran + \vx_n \vx_n\tran) =I$, and hence $\vx_a\tran \vx_b = \vx_a\tran (R_n R_n\tran + \vx_n \vx_n\tran) \vx_b$ for $a, b \in [n-1]$. We can then verify that
$\sqrt{d}\, \vx_a\tran \vx_b= r(\xi_a) r(\xi_b) \sqrt{d-1} \widetilde{\vx}_a\tran \widetilde{\vx}_b  + \xi_a \xi_b / \sqrt{d}$, where 
\begin{equation}\label{eq:rxi}
r(\xi) \bydef (1-1/d)^{-1/4} (1- \xi^2/d)^{1/2}.
\end{equation}
This allows us to rewrite \eref{A_minus_n_original} as
\begin{equation}\label{eq:A_minus_n}
A\mn_{ab} = \frac{1}{\sqrt{n}} \sum_{\ell \le k \le L} \co_k q_{k}\Big(r(\xi_a) r(\xi_b) \sqrt{d-1} \widetilde{\vx}_a\tran \widetilde{\vx}_b  + \xi_a \xi_b / \sqrt{d}\Big) \charfn_{a \neq b}.
\end{equation}

The advantage of the new parameterizations in \eref{An} and \eref{A_minus_n} is that the families $\set{\xi_a}$ and $\set{\widetilde{\vx}_a}$ are independent, as we show below.

\begin{lemma}\label{lemma:rotation}
$\set{\xi_a}_{a < n}$ is an i.i.d. family with $\xi_a \sim \taumeasure$, where $\taumeasure$ is the probability measure defined in \eref{tau_measure}. $\set{\widetilde{\vx}_a}_{a < n}$ is an i.i.d. family of random vectors with $\widetilde{\vx}_a \sim \unif{\mathcal{S}^{d-2}}$. Moreover, $\set{\xi_a}$ and $\set{\widetilde{\vx}_a}$ are mutually independent.
\end{lemma}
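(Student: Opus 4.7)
The plan is to reduce everything to the case $\vx_n = \ve_1$ by a rotational symmetry argument, and then read off the distributions directly from the coordinate decomposition of a uniform point on $\mathcal{S}^{d-1}$.

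First, I would condition on $\vx_n$. Since $\vx_1, \ldots, \vx_n$ are mutually independent, conditionally on $\vx_n$ the vectors $\vx_1, \ldots, \vx_{n-1}$ remain i.i.d.\ and uniform on $\mathcal{S}^{d-1}$. The construction of $R_n$ depends only on $\vx_n$, so conditionally on $\vx_n$ we may think of $R_n$ as a fixed $d \times (d-1)$ matrix with orthonormal columns spanning $\vx_n^\perp$. Because $\unifsp$ is invariant under orthogonal transformations, the law of $(\xi_a, \widetilde{\vx}_a)$ conditional on $\vx_n$ equals the law one obtains by choosing any convenient orthonormal frame; the cleanest choice is $\vx_n = \ve_1$ and $R_n = [\ve_2, \ldots, \ve_d]$.

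Under that choice, if we write each $\vx_a = (u_a, \vv_a)$ with $u_a \in \R$ and $\vv_a \in \R^{d-1}$, then
\[
\xi_a = \sqrt{d}\, u_a \qquad \text{and} \qquad \widetilde{\vx}_a = (1 - u_a^2)^{-1/2} \vv_a = \vv_a / \norm{\vv_a},
\]
using $\norm{\vv_a}^2 = 1 - u_a^2$. The distributional content is then the standard two facts about uniform points on a sphere: the first coordinate $u_a$ has the marginal law of $\ve_1\tran \vx$ (which is exactly $\taumeasure$ after the factor $\sqrt{d}$), the normalized tangential part $\vv_a / \norm{\vv_a}$ is uniform on $\mathcal{S}^{d-2}$, and the two are independent. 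I would briefly justify this by the decomposition $\vx_a = u_a \ve_1 + \sqrt{1 - u_a^2}\, \vw_a$ with $\vw_a \sim \unif(\mathcal{S}^{d-2})$ independent of $u_a$, which follows from the $O(d-1)$-rotation invariance of $\unifsp$ acting on the last $d-1$ coordinates.

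This gives the claimed joint distribution of $(\xi_a, \widetilde{\vx}_a)$ for a single $a$ conditionally on $\vx_n$. Independence across $a$ is then immediate from the independence of $\vx_1, \ldots, \vx_{n-1}$ conditional on $\vx_n$. Since the conditional law of $(\{\xi_a\}, \{\widetilde{\vx}_a\})$ does not depend on $\vx_n$, the unconditional law coincides with it, completing the proof. There is no real obstacle here beyond being careful about the $\vx_n$-dependence of $R_n$, which is handled by the initial conditioning step.
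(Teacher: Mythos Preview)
Your proposal is correct and follows essentially the same approach as the paper: condition on $\vx_n$, use orthogonal invariance of $\unifsp$ to rotate to the frame $\vx_n=\ve_1$, $R_n=[\ve_2,\ldots,\ve_d]$, and then read off $\xi_a$ and $\widetilde{\vx}_a$ as the first coordinate and normalized tangential part of a uniform point on $\mathcal{S}^{d-1}$. The paper phrases the rotation step slightly differently (applying the orthogonal matrix $E=[\vx_n\ R_n]$ to each $\vx_a$ rather than changing coordinates), but the content is identical.
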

\begin{proof}
Fix $\vx_n$, and let $E = [\vx_n \, R_n]$. By construction, $E$ is an orthogonal matrix. Define 
\[
\vv_a = E\tran \vx_a, \quad 1 \le a \le n-1.
\]
As $\vx_a \sim_{\text{i.i.d.}}\unifsp$ and $E$ is independent of $\set{\vx_a}_{a < n}$, we can conclude from the orthogonal invariant property of the spherical distribution that $\vv_a \sim_{\text{i.i.d.}} \unifsp$. The statement of the lemma then follows from the observation that
\[
\xi_a = \sqrt{d}\,\ve_1\tran \vv_a \quad \text{and} \quad \widetilde{\vx}_a = \frac{\vv_{a, \setminus 1}}{\Vert{\vv_{a, \setminus 1}}\Vert} , 
\]
where $\vv_{a, \setminus 1}$ is the $(d-1)$-dimensional vector obtained from $\vv_a$ after removing its first element. Then $\xi_a \sim \taumeasure$. Meanwhile, $\widetilde{\vx}_a $ is uniformly distributed over $\mathcal{S}^{d-2}$, and is independent of $\xi_a$. Note that the above derivations are done for fixed $\vx_n$ (i.e. when conditioned on $\vx_n$). However, since the distributions of $\set{\xi_a}$ and $\set{\widetilde{\vx}_a}$ are invariant to the choice of $\vx_n$, we can conclude that $\set{\xi_a}$ and $\set{\widetilde{\vx}_a}$ are indeed independent of $\vx_n$.
\end{proof}

It is easy to check that $\max_a \abs{\xi_a} = \mathcal{O}_\prec(1)$ [see \eref{qk_hpb}], and thus $\xi_a \xi_b / \sqrt{d} = \mathcal{O}_\prec(\frac{1}{\sqrt{d}})$. It is then natural to consider a Taylor expansion of the right-hand side of \eref{A_minus_n} to try to bring the term $\xi_a \xi_b / \sqrt{d}$ out of the polynomial $q_{k}(\cdot)$. In fact, a more careful analysis will give us the following expansion formula, which plays an important role in our proof. First, we need to introduce a set of functions: for each $\Eid \in \N_0$, let
\begin{equation}\label{eq:dxi}
\dxi_{\Eid} = \Bigg\{\sum_{0\le \iq,\jq \le \Eid} c_{\iq\jq}(d) \frac{q_{\iq}(\xi_1) q_{\jq}(\xi_2)}{d^{\max\set{\iq,\jq}/2 +1}}: c_{\iq\jq}(d) = \mathcal{O}_t(1) \text{ for } 0 \le \iq, \jq \le \Eid\Bigg\}.
\end{equation}
In what follows, we will use $\dxi_{\Eid}(\xi_1, \xi_2)$ to denote \emph{any} function in $\dxi_\Eid$. The exact form of $\dxi_{\Eid}(\xi_1, \xi_2)$ can change from one expression to another.

\begin{proposition}\label{prop:expansion}
For each $k = 0, 1, 2, \ldots, L$, we have
\begin{equation}\label{eq:expansion}
q_{k}\Big(r(\xi_1) r(\xi_2) x + \xi_1 \xi_2 /\sqrt{d}\Big) = \sum_{\Eid = 0}^k  \widetilde{q}_{k-\Eid}(x)r^{k-\Eid}(\xi_1) r^{k-\Eid}(\xi_2)\Big[\sqrt{(k)_\Eid} \frac{q_{\Eid}(\xi_1)q_{\Eid}(\xi_2)}{d^{\Eid/2}}+ \dxi_{\Eid}(\xi_1, \xi_2)\Big],
\end{equation}
where $(k)_\Eid = k! / (k-\Eid)!$ denotes the falling factorial, and $\widetilde{q}_k(x)$ is the $k$th Gegenbauer polynomial in dimension $d-1$ as defined in \eref{polynomial_s_d}.
\end{proposition}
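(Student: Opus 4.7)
The plan is to prove the identity as a polynomial identity in $x$: both sides are polynomials of degree at most $k$ in $x$. The geometric picture underlying the formula is transparent: if one sets $x = \sqrt{d-1}\,\widetilde\vx_a\tran\widetilde\vx_b$ so that $r(\xi_1)r(\xi_2)x + \xi_1\xi_2/\sqrt d = \sqrt d\,\vu\tran\vv$ for unit vectors $\vu,\vv \in \mathcal{S}^{d-1}$ decomposed along a common pole (as in the derivation preceding \eref{A_minus_n}), then the proposition is a re-expansion of the zonal kernel $q_k^{(d)}(\sqrt d\,\vu\tran\vv)$ along that pole. This is an elementary-form variant of the classical addition formula for ultraspherical polynomials, specialized to the $q_k^{(d)}$ normalization: the factor $\widetilde q_{k-\Eid}(x) = q_{k-\Eid}^{(d-1)}(x)$ is the zonal kernel of degree $k-\Eid$ on the tangential sphere $\mathcal{S}^{d-2}$, while $q_\Eid(\xi_1)q_\Eid(\xi_2)$ captures the dependence on the two polar coordinates.

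I would execute this via direct polynomial manipulation. Writing $q_k^{(d)}(y) = \sum_{m=0}^k \alpha_{k,m}^{(d)} y^m$, the binomial theorem gives
\[
q_k^{(d)}\bigl(r_{12}x+\eta\bigr) \;=\; \sum_{j=0}^k r_{12}^j\,x^j\,F_{k,j}^{(d)}(\eta), \qquad r_{12} := r(\xi_1)r(\xi_2),\quad \eta := \xi_1\xi_2/\sqrt d,
\]
where $F_{k,j}^{(d)}(\eta) = \sum_{s=0}^{k-j}\binom{j+s}{j}\alpha_{k,j+s}^{(d)}\eta^s$ is a polynomial of degree $k-j$ in $\eta$. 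Substituting $\eta^s = \xi_1^s\xi_2^s/d^{s/2}$, re-expanding $\xi^s = \sqrt{s!}\,q_s(\xi)(1+O(1/d)) + (\text{lower-degree in }\xi)$ and $x^j = \sqrt{j!}\,\widetilde q_j(x)(1+O(1/d)) + (\text{lower-degree in }x)$ (using that the leading coefficient of $q_s^{(d)}$ equals its Hermite limit $1/\sqrt{s!}$ up to $O(1/d)$, as recorded in Appendix~\ref{appendix:ortho_poly}), and then relabeling $\Eid = k-j$, reorganizes the double sum into the form stated in the proposition.

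The leading coefficient of $\widetilde q_{k-\Eid}(x)\,r^{k-\Eid}(\xi_1)r^{k-\Eid}(\xi_2)$ arises from pairing the top monomial $x^{k-\Eid}$ with the top $\eta$-power in $F_{k,k-\Eid}^{(d)}$. A short computation yields the contribution $\sqrt{(k-\Eid)!}\,\alpha_{k,k}^{(d)}\tbinom{k}{\Eid}\xi_1^\Eid\xi_2^\Eid/d^{\Eid/2}$, which, after inserting $\alpha_{k,k}^{(d)} = 1/\sqrt{k!}\,(1+O(1/d))$ and $\xi^\Eid = \sqrt{\Eid!}\,q_\Eid(\xi) + (\text{lower})$, simplifies to the claimed leading term $\sqrt{(k)_\Eid}\,q_\Eid(\xi_1)q_\Eid(\xi_2)/d^{\Eid/2}$. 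The remaining contributions come from (i) $O(1/d)$ deviations of the leading Gegenbauer coefficient from its Hermite limit, (ii) the lower-degree tails of the re-expansions of $\xi_a^s$ and $x^j$, and (iii) subleading Gegenbauer coefficients $\alpha_{k,m}^{(d)}$ with $m<k$. A direct check shows each such term is of the form $c_{\iq\jq}(d)\,q_\iq(\xi_1)q_\jq(\xi_2)/d^{\max(\iq,\jq)/2+1}$ with $c_{\iq\jq}(d) = O(1)$, which is precisely the structure of $\dxi_\Eid$ defined in \eref{dxi}.

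The main obstacle is the bookkeeping required to extract an extra $1/d$ factor from every subleading contribution, and the fact that some of these gains arise only after nontrivial cancellations across different terms in the double sum. For instance, at $k=3$ the naive leading $O(1/d)$ parts of the coefficient of $\widetilde q_1(x)$ cancel, leaving $O(1/d^2) = O(d^{-\max(2,2)/2-1})$, which matches the $\dxi_2$ scaling. These cancellations ultimately reflect the structure of the classical addition formula for ultraspherical polynomials. The verification is purely algebraic, though combinatorially involved, and requires no probabilistic input.
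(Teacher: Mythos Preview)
Your approach is genuinely different from the paper's. The paper proves \eref{expansion} by induction on $k$ using the three-term recurrence $a_k q_{k+1}(y) = y\,q_k(y) - a_{k-1}q_{k-1}(y)$ together with its analogue for $\widetilde q$. At each inductive step, the coefficient $C_m(\xi_1,\xi_2)$ of $\widetilde q_{k+1-m}(x)$ is assembled from four explicit pieces, and the only nontrivial cancellation is isolated in two short lemmas: one expands $\xi_1\xi_2\,q_m(\xi_1)q_m(\xi_2)/d^{(m+1)/2}$, the other expands $r_{12}^2\,q_{m-1}(\xi_1)q_{m-1}(\xi_2)/d^{(m-1)/2}$, and the cross terms $q_{m-1}(\xi_1)q_{m+1}(\xi_2)+q_{m+1}(\xi_1)q_{m-1}(\xi_2)$ produced by these two expansions cancel exactly. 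This makes the passage from $k$ to $k+1$ mechanical, and the $\dxi_m$ class is preserved step by step via simple closure properties (multiplication by $\xi_1\xi_2/\sqrt d$, by $\xi_i^2/d$, and by $r_{12}^2$).

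Your direct monomial expansion is a legitimate alternative, and your leading-term computation is correct. The gap is that you have not actually verified the error structure: the sentence ``A direct check shows each such term is of the form $c_{\iq\jq}(d)\,q_\iq(\xi_1)q_\jq(\xi_2)/d^{\max(\iq,\jq)/2+1}$'' is precisely what needs to be proved, and as you yourself note, individual subleading contributions are \emph{not} of this form before cancellation (e.g.\ off-diagonal pieces like $q_1(\xi_1)q_2(\xi_2)/d$ can appear at intermediate stages and must cancel against partners from other terms in your double sum). Checking one example ($k=3$) is not a proof for general $k$, and tracking these cancellations across all $(m,j,s)$ in your expansion is exactly the ``combinatorially involved'' work you defer. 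The paper's inductive route sidesteps this by localizing the cancellation to a single explicit identity per step. If you want to avoid induction, the cleanest alternative is the one you allude to but do not use: cite Gegenbauer's classical addition theorem for ultraspherical polynomials and carry out the change of normalization to $q_k^{(d)}$, $\widetilde q_{k-\Eid}$; that would give \eref{expansion} with all cancellations pre-packaged.
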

\begin{proof}
See Appendix~\ref{appendix:expansion}.
%\hty{We should replace $r(\xi_i) r(\xi_j)$ by a constant of order one to make it clearer.  The proof is basically an expansion of the form 
%\[
%q_{k}\Big( x +y/\sqrt{d}\Big) \sim  \sum_{m = 0}^k  \widetilde{q}_{k-m}(x)   q_{m}(yd^{-1/2}) ,
%\]
%and then rewrite $q_{m}(yd^{-1/2})$ into main term and error terms. 
%}
\end{proof}

For the first  term on the right side of  \eqref{eq:expansion} with $t=0$, we define an $(n-1)\times(n-1)$ matrix $\widetilde{\mA}_k$, whose entries are given by:
\begin{equation}\label{eq:Ak_tilde}
(\widetilde{A}_k)_{ab} = \frac{1}{\sqrt{n}} \widetilde{q}_k(\sqrt{d-1} \,\widetilde{\vx}_a\tran \widetilde{\vx}_b)\,\charfn_{a \neq b}, \quad \text{for }a, b \in [n-1].
\end{equation}
We write their linear combination as
\begin{equation}\label{eq:A_tilde}
\widetilde{\mA} \bydef \sum_{\ell \le k \le L} \co_k \widetilde{\mA}_k.
\end{equation}
For the first  term on the right side of  \eqref{eq:expansion} with $t=k$, we define the vector
\begin{equation}\label{eq:v_xi}
\vv_k(\xi) \bydef (q_k(\xi_1), q_k(\xi_2), \ldots q_k(\xi_{n-1}))\tran.
\end{equation}
%and a diagonal matrix
%\[
%\Lambda_k(\xi) \bydef \diag\set{q_k(\xi_1), q_k(\xi_2), \ldots, q_k(\xi_{n-1})}.
%\]
Using the expansion in \eref{expansion}, we can now write the matrix in \eref{A_minus_n} as
\begin{equation}\label{eq:A_Delta}
\mA\mn = \widetilde{\mA} + \frac{\co_\ell }{\sqrt{n d^\ell / \ell!}}\vv_\ell(\xi) \vv_\ell(\xi)\tran + \sum_{1 \le i \le 5} \Delta_i,
\end{equation}
where $\Delta_1, \ldots, \Delta_5$ are matrices defined as follows: for $a, b \in [n-1]$,
\begin{subequations}
\begin{align}
({\Delta}_1)_{ab} &= \frac{-\co_\ell q_\ell^2(\xi_a)}{\sqrt{n d^\ell / \ell!}}\delta_{ab},\label{eq:Delta1}\\
({\Delta}_2)_{ab} &=  \sum_{\ell \le k \le L} \co_k \sum_{0 \le \Eid \le k}  \sqrt{(k)_\Eid}\cdot(\widetilde{A}_{k-\Eid})_{ab}(r^{k-\Eid}(\xi_a) r^{k-\Eid}(\xi_b)-1) \frac{q_{\Eid}(\xi_a)q_{\Eid}(\xi_b)}{d^{\Eid/2}},\label{eq:Delta2}\\
({\Delta}_3)_{ab} &=  \co_\ell \sum_{1 \le \Eid \le \ell-1} \sqrt{(\ell)_\Eid} \cdot (\widetilde{A}_{\ell-\Eid})_{ab} \frac{q_{\Eid}(\xi_a)q_{\Eid}(\xi_b)}{d^{\Eid/2}},\label{eq:Delta3}\\
({\Delta}_4)_{ab} &=  \sum_{\ell < k \le L} \co_k \sum_{1 \le \Eid \le k} \sqrt{(k)_\Eid}\cdot (\widetilde{A}_{k-\Eid})_{ab}  \frac{q_{\Eid}(\xi_a)q_{\Eid}(\xi_b)}{d^{\Eid/2}},\label{eq:Delta4}\\
({\Delta}_5)_{ab} &=  \sum_{\ell \le k \le L} \co_k \sum_{0 \le \Eid \le k}  (\widetilde{A}_{k-\Eid})_{ab}\cdot r^{k-\Eid}(\xi_a) r^{k-\Eid}(\xi_b) \dxi_{\Eid}(\xi_a, \xi_b).\label{eq:Delta5}
\end{align}
\end{subequations}
Here ${\Delta}_2$ represents the error incurred when replacing $r^{k-\Eid}(\xi_a) r^{k-\Eid}(\xi_b)$ by $1$; the error terms for $k=\ell$ with $t\ge 1$ are collected in ${\Delta}_3$;  the error terms for $\ell < k \le L$ are collected in ${\Delta}_4$; lastly, all the terms involving $\dxi$ are collected in ${\Delta}_5$.

%\yml{We will show that $\Delta_1, \ldots, \Delta_5$ can all be omitted in the resolvent expansion. In particular, $\Delta_1, \Delta_2$ and $\Delta_4$ can be controlled by showing that their operator norms are small. $\Delta_3$ can be controlled by using the fact that $\EE q_a(\xi) q_b(\xi) = 0$ for $a \neq b$. Finally, $\Delta_5$ can be controlled by a mixture of the above arguments: namely, the low-order polynomial terms are small due to $\EE q_a(\xi) q_b(\xi) = 0$ for $a \neq b$ and higher-order terms are controlled by operator norm.}

Later, we will demonstrate that the matrices $\set{\Delta_i}$ can all be considered as small noise terms that become negligible as $d \to \infty$. Moreover, by the construction of $\widetilde{A}_k$ in \eref{Ak_tilde} and by Lemma~\ref{lemma:rotation}, both $\widetilde{\mA}$ and its resolvent $\widetilde{G}(z) \bydef (\widetilde{A} - z I)^{-1}$ are independent of $\set{\xi_a}$. These observations lead us to consider the following expansion.

\begin{lemma}
Let $G\mn(z)$ and $\widetilde{G}(z)$ denote the resolvents of $\mA\mn$ and  $\widetilde{\mA}$, respectively. We have
\begin{equation}\label{eq:Schur_comp}
\begin{aligned}
&\sum_{a, b \neq n} A_{na} G\mn_{ab}(z) A_{nb} = \sum_{\ell \le \iq,\jq \le L} \co_{\iq} \co_{\jq} \chi_{\iq\jq}(z) - \theta(z) \sum_{\ell \le \jq \le L} \co_{\jq} \chi_{\ell \jq}(z)\\
&\qquad\qquad - \frac{1}{\sqrt{n}}\Big(\sum_{\ell \le \iq \le L} \co_{\iq} \vv_{\iq}\tran(\xi) - \theta(z) \vv_\ell\tran(\xi)\Big) \widetilde{G}(z) \Big(\sum_{1 \le c \le 5} \Delta_c\Big) G\mn(z) \Big(\frac{1}{\sqrt{n}} \sum_{\ell \le \jq \le L} \co_{\jq} \vv_{\jq}(\xi)\Big),
\end{aligned}
\end{equation}
where, for $\iq, \jq \in \N$, $\vv_{\iq}(\xi)$ is the vector defined in \eref{v_xi},
\begin{align}
\chi_{\iq\jq}(z) &\bydef \frac{1}{n} \vv_{\iq}\tran(\xi) \widetilde{G}(z) \vv_{\jq}(\xi),\label{eq:chi}\\
\intertext{and}
\theta(z) &\bydef \frac{\co_\ell \sum_{\ell \le \iq \le L} \co_{\iq} \chi_{\iq\ell}}{\sqrt{d^\ell/(\ell! n)} + \co_\ell\chi_{\ell\ell}}.\label{eq:theta}
\end{align}
\end{lemma}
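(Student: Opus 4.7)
The plan is to evaluate the Schur sum as a quadratic form in $\widetilde G$ after extracting the rank-one piece of $\mA\mn$ and then handling the remaining small perturbations via the resolvent identity. First, by \eref{An} the row $(A_{na})_{a\in[n-1]}$ equals $\vw/\sqrt n$, where I set $\vw \bydef \sum_{\ell\le \iq\le L}\co_\iq \vv_\iq(\xi)$, so
\[
\sum_{a,b\ne n} A_{na}\, G\mn_{ab}(z)\, A_{nb} \;=\; \tfrac{1}{n}\,\vw\tran G\mn(z)\,\vw.
\]
Using \eref{A_Delta}, I would split $\mA\mn = M + \Delta$, with $M \bydef \widetilde{\mA} + \alpha\, \vv_\ell \vv_\ell\tran$, $\alpha \bydef \co_\ell/\sqrt{n d^\ell/\ell!}$, and $\Delta \bydef \sum_{1\le c\le 5}\Delta_c$, and then apply the resolvent identity $G\mn = G^M - G^M \Delta\, G\mn$ with $G^M \bydef (M-zI)^{-1}$.

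The central algebraic step is Sherman--Morrison on $G^M$:
\[
G^M \;=\; \widetilde G \;-\; \frac{\alpha\, \widetilde G\, \vv_\ell \vv_\ell\tran\,\widetilde G}{1 + \alpha\, \vv_\ell\tran \widetilde G\, \vv_\ell}.
\]
Since $\vv_\ell\tran \widetilde G\, \vv_\ell = n\chi_{\ell\ell}$ and $\alpha n\sqrt{d^\ell/(\ell! n)}=\co_\ell$, multiplying numerator and denominator of the scalar prefactor by $\sqrt{d^\ell/(\ell! n)}$ shows that
\[
\frac{\alpha n \sum_{\iq} \co_\iq \chi_{\iq\ell}(z)}{1 + \alpha n\, \chi_{\ell\ell}(z)} \;=\; \theta(z),
\]
exactly matching the definition \eref{theta}. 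Combined with the symmetry $\chi_{\iq\ell}=\chi_{\ell\iq}$, this yields $\vw\tran G^M = \bigl(\vw - \theta(z)\vv_\ell\bigr)\tran \widetilde G$, and expanding $\tfrac{1}{n}\vw\tran G^M \vw$ bilinearly gives
\[
\tfrac{1}{n}\vw\tran G^M \vw \;=\; \sum_{\iq,\jq}\co_\iq \co_\jq \chi_{\iq\jq}(z) \;-\; \theta(z)\sum_\jq \co_\jq \chi_{\ell\jq}(z),
\]
which produces the first two terms on the right-hand side of \eref{Schur_comp}.

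The remaining contribution comes from the perturbation via the resolvent identity:
\[
-\tfrac{1}{n}\vw\tran G^M\, \Delta\, G\mn\, \vw \;=\; -\tfrac{1}{\sqrt n}\bigl(\vw - \theta(z)\vv_\ell\bigr)\tran \widetilde G\, \Delta\, G\mn \bigl(\tfrac{1}{\sqrt n}\vw\bigr),
\]
which, upon substituting $\vw=\sum_\iq \co_\iq \vv_\iq(\xi)$ and $\Delta=\sum_{c=1}^5 \Delta_c$, is precisely the last term in \eref{Schur_comp}. The whole computation is algebraic; the only slightly delicate point is the cancellation that rewrites the Sherman--Morrison scalar as $\theta(z)$ in the form prescribed by \eref{theta}, which is exactly what the unusual-looking denominator $\sqrt{d^\ell/(\ell! n)} + \co_\ell \chi_{\ell\ell}$ is engineered to accommodate.
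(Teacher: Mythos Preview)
Your proof is correct and follows essentially the same approach as the paper: the paper also applies the resolvent identity to peel off $\Delta=\sum_c\Delta_c$ and then uses the Sherman--Morrison/Woodbury formula to invert the rank-one perturbation of $\widetilde{\mA}$, arriving at the same decomposition. Your presentation is slightly cleaner in isolating the row vector $\vw\tran G^M=(\vw-\theta\vv_\ell)\tran\widetilde G$ before pairing with $\vw$, but the underlying computation is identical.
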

\begin{proof}
We start by recalling the simple resolvent identity in \eref{resolvent_id}. Applying this identity on the two matrices $\mA\mn$ and $\widetilde{\mA} + \frac{\co_\ell \vv_\ell(\xi)\vv_\ell(\xi)\tran}{\sqrt{n d^\ell / \ell!}}$, and by using \eref{A_Delta}, we get
\begin{align}
G\mn(z) &= \Big(\widetilde{\mA} + \frac{\co_\ell \vv_\ell(\xi)\vv_\ell(\xi)\tran}{\sqrt{n d^\ell / \ell!}}-z\mI\Big)^{-1} \Big(\mI - \sum_{1 \le i \le 5} \Delta_i G\mn(z)\Big)\nonumber\\
&= \Big(\widetilde{G}(z) - \frac{\mu_\ell \widetilde{G}(z) \vv_\ell(\xi) \vv_\ell\tran(\xi) \widetilde{G}(z)}{\sqrt{n d^\ell/\ell!} + \co_\ell \vv_\ell\tran(\xi) \widetilde{G}(z) \vv_\ell(\xi)}\Big)\Big(\mI - \sum_{1 \le i \le 5} \Delta_i G\mn(z)\Big),\label{eq:resolvent_exp}
\end{align}
where the second equality follows from the Woodbury matrix identity. 
%\[
%({A} + \tau q q^*)^{-1} = {A}^{-1} - \frac {\tau A^{-1} q \cdot q^* A^{-1}} { 1 + \tau q^* A^{-1}  q}.
%\]
By \eref{An} and \eref{v_xi},
\[
A_{na} = { \frac 1 { \sqrt n}} \sum_{\ell \le k \le L} \co_k [\vv_k(\xi)]_a.
\]
Substituting this into \eref{resolvent_exp} then leads to the desired result.
\end{proof}

\subsection{Schur Complement: the Leading Term}
\label{sec:A_Schur}

Next, we demonstrate that the right-hand side of \eref{Schur_comp} can indeed be separated into a leading term, whose form is provided in \eref{H_complement}, and a small, random error term.

\begin{proposition}\label{prop:A_complement}
Let $A = \sum_{\ell \le k \le L} \co_k A_k$, and $i \in [n]$. Under the same conditions of Theorem~\ref{thm:equivalence}, we have
\begin{equation}\label{eq:A_complement_hpb}
\sum_{a, b \neq i} A_{ia} G\mi_{ab}(z) A_{ib} = \Big[\frac{\coa s(z)}{1 + \cob s(z)} + \coc s(z)\Big] + \mathcal{O}_\prec(d^{-1/2}),
\end{equation}
where $s(z)$ is the Stieltjes transform of the spectrum of $A$.
\end{proposition}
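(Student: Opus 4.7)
The plan is to begin from the decomposition in \eref{Schur_comp}, which has already split the Schur quantity into a ``leading'' combination of the quadratic forms $\chi_{\iq\jq}(z)$ plus a bilinear error driven by the five noise matrices $\Delta_1,\ldots,\Delta_5$. Three ingredients are needed: (i) a sharp concentration estimate $\chi_{\iq\jq}(z) = \delta_{\iq\jq}\, s(z) + \mathcal{O}_\prec(d^{-1/2})$; (ii) a short algebraic simplification that converts the leading combination into $\coa\, s(z)/(1+\cob\, s(z)) + \coc\, s(z)$; and (iii) operator-norm control on the $\Delta_c$'s that bounds the bilinear remainder by $\mathcal{O}_\prec(d^{-1/2})$.

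Step (i) relies on the independence structure of Lemma~\ref{lemma:rotation}: $\widetilde G(z)$ is measurable with respect to $\set{\widetilde\vx_a}$, hence independent of each $\vv_\iq(\xi)$. Conditional on $\widetilde G$, the coordinates of $\vv_\iq(\xi)$ are i.i.d., mean-zero (since $\iq\ge\ell\ge1$ and the $q_k$'s are orthonormal against $\taumeasure$), and bounded in $L^\infty$ on the high-probability event $\set{\max_a\abs{\xi_a} = \mathcal{O}_\prec(1)}$. A Hanson--Wright-type inequality then yields $\chi_{\iq\jq}(z) = \delta_{\iq\jq}\, n^{-1}\tr\widetilde G(z) + \mathcal{O}_\prec(n^{-1/2})$. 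To compare $n^{-1}\tr\widetilde G$ with $s(z)$, use the decomposition \eref{A_Delta}: $\mA\mn - \widetilde{\mA}$ equals a rank-one perturbation plus $\sum_c \Delta_c$, so a single rank-one resolvent identity together with the $\Delta$-estimates from step (iii) gives $\absb{n^{-1}\tr(\widetilde G - G\mn)} = \mathcal{O}_\prec(d^{-1/2})$, and eigenvalue interlacing contributes an additional $\mathcal{O}((n\eta)^{-1})$ from passing to the minor.

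Step (ii) is pure algebra. Plugging $\chi_{\iq\jq} \approx \delta_{\iq\jq} s(z)$ into the definition \eref{theta} of $\theta$ and using $\sqrt{d^\ell/(\ell! n)} = 1/\sqrt{\ell!\ratio} + \mathcal{O}(d^{-1/2})$ gives $\theta(z) = \co_\ell \cob\, s(z)/(1+\cob\, s(z)) + \mathcal{O}_\prec(d^{-1/2})$. The leading part of \eref{Schur_comp} then equals $(\coa + \coc) s(z) - \co_\ell s(z) \cdot \theta(z)$, which reduces in one line to $\coa s(z)/(1+\cob s(z)) + \coc s(z)$.

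The main technical obstacle is step (iii). Using $\norms{\widetilde G}_{\mathsf{op}}, \norms{G\mn}_{\mathsf{op}} \le \eta^{-1}$ and $n^{-1}\norm{\vv_\iq}^2 = \mathcal{O}_\prec(1)$, the bilinear remainder is controlled by a constant (depending only on $\eta$ and the $\set{\co_k}$) times $\sum_c \norms{\Delta_c}_{\mathsf{op}}$. The diagonal matrix $\Delta_1$ has size $\mathcal{O}_\prec((nd^\ell)^{-1/2})$, which is negligible. For $\Delta_3,\Delta_4,\Delta_5$, each off-diagonal entry factors as $(\widetilde A_{k-\iq})_{ab}$ times a Hadamard weight $q_\iq(\xi_a)q_\iq(\xi_b)/d^{\iq/2}$ with $\iq\ge1$, so each such block admits a diagonal sandwich representation $d^{-\iq/2} D_\iq \widetilde A_{k-\iq} D_\iq$ where $D_\iq = \diag(q_\iq(\xi_a))$ and $\norms{D_\iq}_{\mathsf{op}} = \mathcal{O}_\prec(1)$. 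Finally, $\Delta_2$ carries the extra small factor $r^{k-\iq}(\xi_a) r^{k-\iq}(\xi_b) - 1 = \mathcal{O}_\prec(d^{-1})$ obtained by Taylor-expanding $r(\cdot)$ around $1$, hence is strictly smaller. The real crux is therefore uniform operator-norm control on the ``building blocks'' $\widetilde A_{k-\iq}$, which requires its own dedicated argument (e.g., a moment/trace method, or an appeal to the op-norm lemmas deferred to the appendix) and is where the bulk of the technical effort should be concentrated.
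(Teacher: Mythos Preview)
Your outline tracks the paper's strategy closely for steps~(i) and~(ii) and for the pieces $\Delta_1,\Delta_2,\Delta_4$, but there is a genuine gap in step~(iii): the pure operator-norm route fails for $\Delta_3$ (and for part of $\Delta_5$).  Recall that $\Delta_3$ is built from blocks $d^{-t/2}\,D_t\,\widetilde A_{\ell-t}\,D_t$ with $1\le t\le\ell-1$.  The correct operator-norm bound for the low-degree building block is $\norms{\widetilde A_{\ell-t}}_{\mathsf{op}}\prec d^{t/2}$ (this is a shifted Gram matrix with aspect ratio $N_{\ell-t}/n\asymp d^{-t}$, so its norm grows like $\sqrt{n/N_{\ell-t}}$).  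That exactly cancels the $d^{-t/2}$ prefactor, leaving $\norms{\Delta_3}_{\mathsf{op}}\prec 1$, not $d^{-1/2}$.  So the bound on the bilinear remainder you obtain is only $\mathcal{O}_\prec(1)$, which is useless.

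The paper's fix is to abandon the operator-norm reduction for these terms and instead exploit additional cancellation in the specific quadratic form $\vv_\iq(\xi)^{\!\top}\widetilde G\,\Delta_3\,G\mn\,\vv_\jq(\xi)$.  The key observation is that the row vector $\vv_\iq(\xi)^{\!\top}\widetilde G\,\Lambda_t(\xi)\,\widetilde A_{\ell-t}$ involves, entrywise, a quadratic form in $\set{\xi_a}$ with ``degrees'' $\iq\ne t$ (since $\iq\ge\ell>t$); because $\widetilde G$ and $\widetilde A_{\ell-t}$ are independent of $\set{\xi_a}$, a second application of the Hanson--Wright/quadratic concentration (Proposition~\ref{prop:quadratic_hpb}) yields $\norm{\vv_\iq^{\!\top}\widetilde G\,\Lambda_t\,\widetilde A_{\ell-t}}\prec\sqrt{n}/\eta$ \emph{without} the extra $d^{t/2}$ penalty.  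This is the content of Lemma~\ref{lemma:Lambda_cross}, and the same idea rescues the low-index pieces of $\Delta_5$.  A secondary omission is that your step~(ii) silently assumes a lower bound on the denominator $\abs{\sqrt{d^\ell/(\ell!n)}+\co_\ell\chi_{\ell\ell}}$ in $\theta(z)$; this is not automatic and requires a short case analysis using $\im\chi_{\ell\ell}(z)>0$ together with $\norms{\widetilde A}_{\mathsf{op}}\prec1$.
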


To prepare for the proof of Proposition~\ref{prop:A_complement}, we first establish several intermediate results.

\begin{lemma}\label{lemma:chi_leading}
For $1 \le \iq,\jq \le L$, we have
\begin{equation}\label{eq:chi_leading}
\chi_{\iq\jq}(z) = s(z) \delta_{\iq\jq} + \mathcal{O}_\prec\Big(\frac{1}{\eta^2 \sqrt{d}}\Big).
\end{equation}
\end{lemma}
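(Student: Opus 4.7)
The plan is to exploit the crucial independence between $\widetilde{G}(z)$ and $\{\xi_a\}_{a < n}$ afforded by Lemma~\ref{lemma:rotation}. Recall that $\widetilde{A}$ is built purely from $\{\widetilde{\vx}_a\}$ via \eref{Ak_tilde} and \eref{A_tilde}, whereas $\vv_\iq(\xi)$ is a function only of $\{\xi_a\}$. Thus, conditioning on $\widetilde{G}(z)$, the quantity $\chi_{\iq\jq}(z) = \frac{1}{n}\vv_\iq\tran(\xi)\widetilde{G}(z)\vv_\jq(\xi)$ becomes a quadratic form in an i.i.d.\ family, and one can proceed by a standard expectation-plus-concentration decomposition.

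For the conditional expectation, I would use the independence of $\xi_1,\dots,\xi_{n-1}$ together with the orthonormality relation \eref{ortho_poly}. For $a \neq b$ and any $\iq, \jq \geq 1$, $\EE[q_\iq(\xi_a)q_\jq(\xi_b)] = \EE[q_\iq(\xi_a)]\,\EE[q_\jq(\xi_b)] = \delta_{\iq,0}\delta_{\jq,0} = 0$, since $q_0 \equiv 1$ and $\EE[q_k(\xi)] = \langle q_k, q_0\rangle = \delta_{k,0}$. For $a=b$, $\EE[q_\iq(\xi_a)q_\jq(\xi_a)] = \delta_{\iq\jq}$. Consequently,
\[
\EE\big[\chi_{\iq\jq}(z) \,\big|\, \widetilde{G}(z)\big] \;=\; \frac{\delta_{\iq\jq}}{n}\sum_{a}\widetilde{G}_{aa}(z) \;=\; \frac{n-1}{n}\,\widetilde{s}(z)\,\delta_{\iq\jq},
\]
where $\widetilde{s}(z)$ is the Stieltjes transform of $\widetilde{A}$. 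The prefactor $(n-1)/n$ contributes only $\mathcal{O}(1/(n\eta))$, which is absorbed in the target error.

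For the fluctuation $\chi_{\iq\jq}(z) - \EE[\chi_{\iq\jq}(z) \mid \widetilde{G}(z)]$, I would apply a Hanson--Wright style bound for quadratic forms conditionally on $\widetilde{G}(z)$. Since $\max_a |\xi_a| \prec 1$ by \eref{qk_hpb}, the entries $q_k(\xi_a)$ are $\mathcal{O}_\prec(1)$ with all moments controlled uniformly in $d$. The Frobenius norm bound $\|\widetilde{G}(z)\|_\mathrm{F}^2 \le (n-1)/\eta^2$ then yields a conditional standard deviation of order $\tfrac{1}{n}\|\widetilde{G}\|_\mathrm{F} = \mathcal{O}(1/(\eta\sqrt{n}\,))$, which under $n \asymp d^\ell$ with $\ell \ge 1$ is $\prec 1/(\eta\sqrt{d})$. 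Upgrading this to high probability in the stochastic-domination sense is routine because the relevant variables have uniformly bounded moments.

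It remains to replace $\widetilde{s}(z)$ by $s(z)$. Here I would chain three perturbation estimates: first, by \eref{A_Delta}, the minor $A\mn$ differs from $\widetilde{A}$ by a rank-one matrix plus the error matrices $\sum_i \Delta_i$; rank-one changes shift Stieltjes transforms by $\mathcal{O}(1/(n\eta))$, and the $\Delta_i$ contributions can be controlled by Hilbert--Schmidt bounds analogous to Lemma~\ref{lemma:sG_perturbation}. Second, interlacing between $A$ and its minor $A\mn$ gives a further $\mathcal{O}(1/(n\eta))$ shift. Third, Lemma~\ref{lemma:omit_low_order} ensures that dropping the low-order part already costs only $\mathcal{O}(1/(\eta^2\sqrt{d}))$. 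Combining these three replacements gives $|\widetilde{s}(z) - s(z)| \prec 1/(\eta^2\sqrt{d})$, which matches the target error.

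The main technical obstacle I anticipate is the step in which $\|\Delta_i\|_\mathrm{F}$ must be controlled uniformly in $z$; the explicit forms \eref{Delta1}--\eref{Delta5} make this essentially bookkeeping (using $\max_a |\xi_a| \prec 1$ and high-probability bounds on the spectral norms of $\widetilde{A}_k$), but it does require carefully tracking the powers of $d$ supplied by the denominators $d^{\Eid/2}$ to ensure every term is $\mathcal{O}_\prec(d^{-1/2})$ once inserted into the Stieltjes transform.
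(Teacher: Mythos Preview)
Your proposal is correct and follows essentially the same approach as the paper: the paper likewise uses the independence of $\widetilde{G}(z)$ and $\{\xi_a\}$ together with a quadratic-form concentration bound (Proposition~\ref{prop:quadratic_hpb}, via the Ward identity) to get $\chi_{\iq\jq}(z) \approx \widetilde{s}(z)\,\delta_{\iq\jq}$, and then chains the rank-one perturbation in \eref{A_Delta}, entrywise $\ell_\infty$ bounds on the $\Delta_i$ (feeding into \eref{s_diff}), and interlacing to replace $\widetilde{s}(z)$ by $s(z)$. The only minor deviation is that your third step invoking Lemma~\ref{lemma:omit_low_order} is unnecessary here, since the ambient assumption in this section is already $A = \Ah$.
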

\begin{proof}
Let $\widetilde{s}(z) \bydef \frac{1}{n} \tr \widetilde{G}$ and $s\mn(z) \bydef \frac{1}{n} \tr G\mn$ be the Stieltjes transforms of $\widetilde{G}$ and $G\mn$, respectively. We will establish \eref{chi_leading} in three steps, by showing (a) $\chi_{\iq\jq}(z) \approx \widetilde{s}(z) \delta_{\iq\jq}$; (b) $\widetilde{s}(z) \approx s\mn(z)$; and (c) $s\mn(z) \approx s(z)$.

For step (a), we first recall the Ward identity \eref{Ward}, which gives us
\[
\frac{1}{n} \sum_{1 \le a, b < n} \abs{\widetilde{G}_{ab}}^2 = \frac{\im(\widetilde{s}(z))}{\eta} \le \frac{1}{\eta^2},
\]
where the inequality is due to \eref{sH_bnd}. By the definitions presented in \eref{v_xi} and \eref{chi}, 
\[
\chi_{\iq\jq}(z) = \frac{1}{\sqrt{n}}\sum_{a,b} [\frac{1}{\sqrt{n}} \widetilde{G}_{ab}(z)]q_{\iq}(\xi_a) q_{\jq}(\xi_b).
\]
Applying \eref{quadratic_hpb} in Proposition~\ref{prop:quadratic_hpb} with $\Psi_c = 1/{\eta}$, we have
\begin{equation}\label{eq:chi_st}
\abs{\chi_{\iq\jq}(z) - \widetilde{s}(z) \delta_{\iq\jq}} \prec \frac{1}{\eta \sqrt{n}}.
\end{equation}

For step (b), we use the expansion formula in \eref{A_Delta}. By \eref{s_diff_rank}, \eref{s_diff}, and the triangular inequality,
\begin{equation}\label{eq:smn_st_1}
\abs{s\mn(z) - \widetilde{s}(z)} \le \frac{C}{n \eta} + \frac{\sum_{i = 1}^5 \norm{\Delta_i}_\mathsf{F}}{\sqrt{n} \eta^2} \le \frac{C}{n \eta} + \frac{\sqrt{n}}{\eta^2} \sum_{i = 1}^5 \norm{\Delta_i}_\infty,
\end{equation}
where $C$ is an absolute constant. Next, we bound the entry-wise $\ell_\infty$ norm of $\set{\Delta_i}$, whose constructions are given in \eref{Delta1} -- \eref{Delta5}. Our derivations will frequently use the following property of stochastic domination, which can be verified by a simple union bound: If $X_1 \prec Y_1$ and $X_2 \prec Y_2$, then 
\begin{equation}\label{eq:dominance_property}
X_1 X_2 \prec Y_1 Y_2 \quad\text{and}\quad X_1 + X_2 \prec Y_1 + Y_2.
\end{equation}
By \eref{qk_hpb}, and the property stated above, we have $\max_a \abs{q_k(\xi_a)}^2 \prec 1$. It follows that $\norm{\Delta_1}_\infty \prec \frac{1}{\sqrt{n d^\ell}}$. Similarly, by using \eref{Ak_inf_hpb}, we can easily verify that
\[
\norm{\Delta_3}_\infty \prec \frac{1}{\sqrt{n d}} \quad\text{and}\quad \norm{\Delta_4}_\infty \prec \frac{1}{\sqrt{n d}}.
\]
To bound $\Delta_5$, we first check from the definition in \eref{dxi} that $\max_{a, b \in [n-1]} \abs{\dxi_t(\xi_a, \xi_b)} \prec 1/d$. Moreover, by its construction in \eref{rxi}, the function $r(\xi) \le 1+\mathcal{O}(1/d)$ for all $\xi$. Combining these estimates then gives us $\norm{\Delta_5}_\infty \prec \frac{1}{d\sqrt{n}}$.

The matrix $\Delta_2$ in \eref{Delta2} requires some additional care, as it contains terms with index $t = 0$. First write
\begin{equation}\label{eq:rab_1}
r^{k-\Eid}(\xi_a) r^{k-\Eid}(\xi_b)-1 = (r^{k-\Eid}(\xi_a) -1)(r^{k-\Eid}(\xi_b) -1) + (r^{k-\Eid}(\xi_a) -1) + (r^{k-\Eid}(\xi_b) -1).
\end{equation}
Applying the high-probability bound \eref{rxi_hpb} and the property in \eref{dominance_property}, we get
\[
\max_{a,b} \abs{r^{k-\Eid}(\xi_a) r^{k-\Eid}(\xi_b)-1} \prec 1/d,
\]
which then implies that $\norm{\Delta_2}_\infty \prec \frac{1}{d\sqrt{n}}$. Substituting these bounds on $\norm{\Delta_c}_\infty$ for $1 \le c \le 5$ into \eref{smn_st_1} then leads to
\begin{equation}\label{eq:smn_st}
\abs{s\mn(z) - \widetilde{s}(z)} \prec \frac{1}{\eta^2 \sqrt{d}}.
\end{equation}
Now we move to step (c), where we compare $s\mn(z)$ with $s(z)$. By the interlacing properties of the eigenvalues of $A\mn$ and $A$, we have the following standard result (see \cite[Lemma 7.5]{erdos2017Dynamicalapproach} for a proof):
\begin{equation}\label{eq:smn_sA}
\abs{s\mn(z) - s(z)} \le \frac{C}{n \eta},
\end{equation} 
where $C$ is an absolute constant. Finally, the statement of the lemma can be obtained by applying the triangular inequality and by using the estimates in \eref{chi_st}, \eref{smn_st}, and \eref{smn_sA}.
\end{proof}

Next, we show that the terms involving $\set{\Delta_c}_{1 \le c \le 5}$ on the right-hand side of \eref{Schur_comp} are small. This is done in the following two lemmas.

\begin{lemma}\label{lemma:Lambda_cross}
Let $\idi, \idj, k$ be integers such that  $0 \le \idi, \idj, k \le L$ and $\idi \neq \idj$. We have
\begin{equation}\label{eq:Lambda_cross}
\norm{\vv_{\idi}\tran(\xi) \widetilde{G}(z) \Lambda_{\idj}(\xi) \widetilde{\mA}_k}  \prec \frac{\sqrt{n}}{\eta},
\end{equation}
where $\Lambda_{\idj}(\xi)$ denotes a diagonal matrix defined as
\begin{equation}\label{eq:Lambda_k}
\Lambda_{\idj}(\xi) \bydef \diag\set{q_{\idj}(\xi_1), q_{\idj}(\xi_2), \ldots, q_{\idj}(\xi_{n-1})}.
\end{equation}
\end{lemma}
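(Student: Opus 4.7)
The plan is to leverage the independence between $\set{\widetilde{\vx}_a}$ and $\set{\xi_a}$ from Lemma~\ref{lemma:rotation} and reduce the vector norm to coordinate-wise bilinear forms handled by Proposition~\ref{prop:quadratic_hpb} (the same concentration estimate invoked in the proof of Lemma~\ref{lemma:chi_leading}). Denote the row vector of interest by $u \bydef \vv_\idi(\xi)\tran \widetilde{G}(z)\,\Lambda_\idj(\xi)\,\widetilde{A}_k$. Expanding its $c$-th coordinate gives
\begin{equation}
u_c = \sum_{a, b} M^{(c)}_{ab}\, q_\idi(\xi_a)\, q_\idj(\xi_b), \qquad M^{(c)}_{ab} \bydef \widetilde{G}_{ab}(z)\,(\widetilde{A}_k)_{bc}.
\end{equation}
I would condition on $\set{\widetilde{\vx}_a}$; under this conditioning, each $M^{(c)}$ is deterministic (since $\widetilde G$ and $\widetilde A_k$ depend only on $\set{\widetilde{\vx}_a}$), while the $\set{\xi_a}_{a<n}$ remain i.i.d. Thus $u_c$ is a bilinear form in the independent random variables $\set{\xi_a}$ with a deterministic coefficient matrix.

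The hypothesis $\idi\ne\idj$ is used precisely here: the orthonormality relation \eref{ortho_poly} gives $\EE[q_\idi(\xi) q_\idj(\xi)] = \delta_{\idi\idj} = 0$, so the centering contribution $\delta_{\idi\idj}\tr(M^{(c)})$ appearing in the conclusion of Proposition~\ref{prop:quadratic_hpb} vanishes identically. The proposition then yields
\begin{equation}
\abs{u_c} \prec \norm{M^{(c)}}_\mathsf{F},
\end{equation}
uniformly in $c \in [n-1]$ after a union bound (the factor $n$ is absorbed into the $d^\varepsilon$ tolerance of $\prec$). For the Frobenius norm, I compute
\begin{equation}
\norm{M^{(c)}}_\mathsf{F}^2 = \sum_{b}(\widetilde{A}_k)_{bc}^2 \sum_{a}\abs{\widetilde{G}_{ab}(z)}^2 \le \eta^{-2} \sum_{b}(\widetilde{A}_k)_{bc}^2,
\end{equation}
using $\sum_a \abs{\widetilde{G}_{ab}}^2 \le \norm{\widetilde{G}}_\mathsf{op}^2 \le \eta^{-2}$. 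Invoking the entry-wise high-probability bound \eref{Ak_inf_hpb}, $\max_{b,c}\abs{(\widetilde{A}_k)_{bc}} \prec n^{-1/2}$, gives $\sum_b (\widetilde{A}_k)_{bc}^2 \prec 1$, hence $\norm{M^{(c)}}_\mathsf{F} \prec 1/\eta$ uniformly in $c$. Combining,
\begin{equation}
\norm{u}^2 = \sum_{c} \abs{u_c}^2 \le n\,\max_{c}\abs{u_c}^2 \prec n/\eta^2,
\end{equation}
and taking a square root delivers the claimed $\norm{u}\prec\sqrt{n}/\eta$.

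The main technical obstacle is that the random diagonal $\Lambda_\idj(\xi)$ sits \emph{between} $\widetilde{G}$ and $\widetilde{A}_k$. This prevents packaging $\norm{u}^2$ as a single quadratic form $\vv_\idi(\xi)\tran K\,\overline{\vv_\idi(\xi)}$ in $\vv_\idi$ against a deterministic kernel, because the natural candidate $K = \widetilde{G}\Lambda_\idj\widetilde{A}_k\widetilde{A}_k\tran \Lambda_\idj \widetilde{G}\herm$ itself carries $\xi$-randomness through two copies of $\Lambda_\idj$. Reducing to the coordinate-wise forms $u_c$ sidesteps this: within each $u_c$, both sources of $\xi$-randomness (namely $q_\idi(\xi_a)$ on the left and $q_\idj(\xi_b)$ threaded through $\Lambda_\idj$) are coupled only through the single deterministic $M^{(c)}$, and are absorbed by one application of Proposition~\ref{prop:quadratic_hpb}. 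The condition $\idi\ne\idj$ is precisely what kills the otherwise $O(\tr M^{(c)})$ leading term, leaving the Frobenius-type fluctuation as the dominant contribution.
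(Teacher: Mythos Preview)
Your argument is essentially the paper's: both reduce to the coordinate-wise bilinear forms $u_c = \vv_\idi\tran(\xi) C_c \vv_\idj(\xi)$ with $C_c = M^{(c)}$, bound $\norm{M^{(c)}}_\mathrm{F} \prec 1/\eta$, and invoke Proposition~\ref{prop:quadratic_hpb} followed by a union bound over $c$. Your Frobenius-norm estimate via the column bound $\sum_a \abs{\widetilde G_{ab}}^2 \le \norm{\widetilde G}_\mathrm{op}^2$ is a harmless variant of the paper's use of the Ward identity.

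One small gap: the lemma allows $\idi=0$ or $\idj=0$, but Proposition~\ref{prop:quadratic_hpb}\eref{quadratic_hpb} is stated only for $a,b\in[L]$ (and its proof needs $q_a,q_b$ to be mean-zero). When $\idj=0$ you have $q_0\equiv 1$, so $u_c=\vv_\idi\tran(\xi)\,C_c\vec 1$ is a \emph{linear} form, not a centered bilinear one; you need the linear bound \eref{linear_hpb} together with $\norm{C_c\vec 1}\le\norm{\widetilde G}_\mathrm{op}\norm{\widetilde A_k}_\infty\sqrt{n}\prec 1/\eta$. The paper treats this boundary case explicitly; once you add that sentence your proof matches.
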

\begin{proof}
Write $u \bydef \vv_{\idi}\tran(\xi) \widetilde{G}(z) \Lambda_{\idj}(\xi) \widetilde{\mA}_k$. This is a vector with $n-1$ entries. In our proof, we will show that 
\begin{equation}\label{eq:Lambda_cross_u}
\max_{a \in [n-1]} \, \abs{u_a} \prec {1}/{\eta},
\end{equation}
of which \eref{Lambda_cross} is then an immediate consequence. For each $a \in [n-1]$, define a matrix
\[
C_a \bydef \widetilde{G}(z) \diag\big\{(\widetilde{A}_k)_{1a}, (\widetilde{A}_k)_{2a}, \ldots, (\widetilde{A}_k)_{n-1,a}\big\}.
\]

One can verify that
\[
u_a = \vv_{\idi}\tran(\xi) C_a \vv_{\idj}(\xi). 
\]
%\hty{ there is still a factor $\Lambda_{\idj}(\xi) $, even it is order one} 
We start by proving \eref{Lambda_cross_u} for the special case when $\idi > \idj = 0$. Recall that $\vv_0(\xi) = \vec{1}$. We then have $\norm{C_a \vv_0(\xi)} \le \Vert\widetilde{G}\Vert_\mathsf{op} \norms{\widetilde A_k}_\infty \sqrt{n}\prec 1/\eta$, where the second step uses \eref{G_op} and \eref{Ak_inf_hpb}. Since $C_a$ is independent of $\vv_{\idi}(\xi)$, we can apply \eref{linear_hpb} in Proposition~\ref{prop:quadratic_hpb} with $\Psi_b = 1/\eta$ to get
\begin{equation}\label{eq:ua_i0}
u_a = \vv_{\idi}\tran(\xi) C_a \vv_0(\xi) \prec 1/\eta.
\end{equation}
By the same arguments, $\vv_0\tran(\xi) C_a \vv_\idj(\xi) \prec 1/\eta$ for $\idj > 0$. In what follows, we assume $1 \le \idi, \idj \le L$ with $\idi \neq \idj$. Using the Ward identity \eref{Ward} in the second step, and applying \eref{sH_bnd}, \eref{Ak_inf_hpb} in the third step, we get
\[
\norm{C_a}_\mathsf{F} \le \Vert\widetilde{G}\Vert_\mathsf{F} \norms{\widetilde A_k}_\infty \le \frac{\sqrt{n \cdot \im\,\widetilde{s}(z)}}{\sqrt{\eta}} \norms{\widetilde A_k}_\infty \prec \frac{1}{\eta}.
\]
By appealing to \eref{quadratic_hpb} in Proposition~\ref{prop:quadratic_hpb} with $\Psi_c = 1/\eta$, we have
\begin{equation}\label{eq:ua_ij}
u_a = \vv_{\idi}\tran(\xi) C_a \vv_j(\xi) \prec 1/\eta.
\end{equation}
One can check that the high-probability bounds in \eref{ua_i0} and \eref{ua_ij} are both uniform in $a \in [n-1]$. (To see this, note that we always bound $\max_i \vert(\widetilde A_k)_{ia}\vert$ via $\norms{\widetilde A_k}_\infty$ in the above derivations.) Applying the union bound over $a$ then gives us \eref{Lambda_cross_u}.
\end{proof}

\begin{lemma}
For $\idi, \idj \in \N$ with $\ell \le \idi,\idj \le L$, and for $1 \le c \le 5$, we have
\begin{equation}\label{eq:quadratic_Delta}
\frac{1}{n}\vv_{\idi}\tran(\xi)  \widetilde{G}(z) \Delta_c G\mn(z) \vv_{\idj}(\xi) = \mathcal{O}_\prec\Big(\frac{1}{\eta^2\sqrt{d}}\Big).
\end{equation}
\end{lemma}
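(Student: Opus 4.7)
The plan is to bound \eref{quadratic_Delta} by decomposing each $\Delta_c$ into a finite sum of atomic matrices of the form $\pi(d)\, D_1\, \widetilde A_{k-t}\, D_2$, where $\pi(d)$ is a deterministic prefactor and $D_1, D_2$ are diagonal random matrices whose entries are bounded products of $r^{k-t}(\xi_a)$, $r^{k-t}(\xi_a)-1$, and Gegenbauer polynomials $q_\gamma(\xi_a)$ with $\gamma\le\ell$. Reading off \eref{Delta1}--\eref{Delta5} together with the pointwise estimates $|r^{k-t}(\xi)-1|\prec 1/d$ and $|\dxi_t(\xi_a,\xi_b)|\prec 1/d$ (from \eref{rxi} and \eref{dxi}), every atomic piece satisfies $|\pi(d)|\,\|D_1\|_{\mathrm{op}}\|D_2\|_{\mathrm{op}}\prec 1/\sqrt d$: the gain of $1/\sqrt d$ comes from the factor $1/d^{t/2}$ with $t\ge 1$ in $\Delta_3,\Delta_4$, and from the $1/d$ factor of $r^{k-t}(\xi_a)r^{k-t}(\xi_b)-1$ in $\Delta_2$ or of $\dxi_t$ in $\Delta_5$. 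Throughout, $\|\vv_\idi(\xi)\|\prec\sqrt n$ by quadratic-form concentration, and $\|\widetilde G\|_{\mathrm{op}},\|G\mn\|_{\mathrm{op}}\le 1/\eta$. The purely diagonal matrix $\Delta_1$ with $\|\Delta_1\|_{\mathrm{op}}\prec 1/\sqrt{nd^\ell}$ is handled at once by the crude sandwich bound.

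For an atomic piece with $k-t\ge\ell$, Lemma~\ref{lemma:op_spike} gives $\|\widetilde A_{k-t}\|_{\mathrm{op}}\prec 1$, so the sandwich bound $\tfrac{1}{n}|\vv_\idi\tran\widetilde G(D_1\widetilde A_{k-t}D_2) G\mn \vv_\idj|\prec 1/\eta^2$ combined with $|\pi(d)|\prec 1/\sqrt d$ yields the target. The obstruction is the regime $k-t<\ell$, where $\|\widetilde A_{k-t}\|_{\mathrm{op}}$ can grow as $\sqrt{n/\widetilde N_{k-t}}\asymp d^{(\ell-(k-t))/2}$ and the operator-norm bound loses exactly the $1/\sqrt d$ gain from $|\pi(d)|$. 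This regime appears in $\Delta_3$ for every $t\in[1,\ell-1]$, and in $\Delta_4,\Delta_5$ whenever $k-t<\ell$; note that the $t=k$ case of $\Delta_2$ vanishes identically because $r^0(\xi_a)r^0(\xi_b)-1=0$.

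In this critical regime, I would substitute the Wishart factorization \eref{Ak_Sk} in dimension $d-1$,
\[
\widetilde A_j = (n\widetilde N_j)^{-1/2}\,\widetilde S_j\tran\widetilde S_j - (\widetilde N_j/n)^{1/2}\, I,
\]
where $\widetilde S_j\in\R^{\widetilde N_j\times(n-1)}$ is the spherical-harmonic matrix attached to $\{\widetilde\vx_a\}$, and control the two pieces separately. The shift term is diagonal with operator norm $\prec d^{(j-\ell)/2}$; multiplied by $1/d^{t/2}$ it contributes $\prec d^{-(t+\ell-j)/2}/\eta^2 \le 1/(\eta^2\sqrt d)$, using that $t+\ell-j\ge 1$ in every relevant sub-case. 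For the rank-$\widetilde N_j$ main piece, factor
\[
\vv_\idi\tran\widetilde G D_1 \widetilde S_j\tran\widetilde S_j D_2 G\mn\vv_\idj = \bigl\langle \widetilde S_j D_1 \widetilde G\vv_\idi,\;\widetilde S_j D_2 G\mn\vv_\idj\bigr\rangle,
\]
and bound each $\widetilde N_j$-dimensional vector in $\ell_2$ norm. By Lemma~\ref{lemma:rotation}, $\widetilde S_j$ is independent of $\{\xi_a\}$, and its columns have deterministic squared norm $\widetilde N_j$ by the addition theorem for spherical harmonics. Applying Proposition~\ref{prop:quadratic_hpb} conditionally on $(\widetilde G,\{\xi_a\})$ to the quadratic form $\|\widetilde S_j D_1\widetilde G\vv_\idi\|^2 = (D_1\widetilde G\vv_\idi)\tran(\widetilde S_j\tran\widetilde S_j)(D_1\widetilde G\vv_\idi)$, whose conditional expectation equals $\widetilde N_j\|D_1\widetilde G\vv_\idi\|^2\prec \widetilde N_j n/\eta^2$, yields $\|\widetilde S_j D_1\widetilde G\vv_\idi\|\prec\sqrt{\widetilde N_j n}/\eta$.

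The main obstacle is the second factor $\widetilde S_j D_2 G\mn\vv_\idj$, since $G\mn$ depends on $\widetilde S_j$ through $\widetilde A$. I would handle this by a resolvent-perturbation step, writing $G\mn = \widetilde G - \widetilde G(A\mn-\widetilde A)G\mn$ and substituting \eref{A_Delta}, which expresses $A\mn-\widetilde A$ as a rank-one spike $\co_\ell(nd^\ell/\ell!)^{-1/2}\vv_\ell(\xi)\vv_\ell(\xi)\tran$ plus the already-studied matrices $\Delta_1,\ldots,\Delta_5$. This replaces $\widetilde S_j D_2 G\mn\vv_\idj$ by $\widetilde S_j D_2 \widetilde G\vv_\idj$ plus a remainder that absorbs into the Lemma~\ref{lemma:chi_leading} estimates via a bootstrap (since the remainder involves variants of \eref{quadratic_Delta} itself, but with strictly fewer $\Delta_c$ factors, so an induction on the number of perturbation insertions closes). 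The leading replacement $\widetilde S_j D_2 \widetilde G\vv_\idj$ satisfies the matching bound $\prec\sqrt{\widetilde N_j n}/\eta$ by the same Proposition~\ref{prop:quadratic_hpb} argument, so combining the two factors via Cauchy--Schwarz, dividing by $n\sqrt{n\widetilde N_j}$, and multiplying by $|\pi(d)|$ gives $\prec d^{-(t+\ell-j)/2}/\eta^2 \le 1/(\eta^2\sqrt d)$, completing the argument.
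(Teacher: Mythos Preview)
Your treatment of the easy cases ($\Delta_1$, and atomic pieces with $k-t\ge\ell$) is fine, and you correctly identify the obstruction when $k-t<\ell$. But the proposed fix via the Wishart factorization has a genuine gap. You claim that Proposition~\ref{prop:quadratic_hpb} applied ``conditionally on $(\widetilde G,\{\xi_a\})$'' yields $\|\widetilde S_j D_1\widetilde G\vv_\idi\|\prec\sqrt{\widetilde N_j n}/\eta$. This fails for two reasons. First, Proposition~\ref{prop:quadratic_hpb} concerns quadratic forms in the \emph{scalar} variables $q_a(\xi_i)$, not in the spherical-harmonic entries of $\widetilde S_j$; the latter are only uncorrelated within each column, not independent. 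Second, and more fundamentally, $\widetilde G$ is a function of $\widetilde A$, which is a function of $\{\widetilde\vx_a\}$---the same randomness that $\widetilde S_j$ depends on. Conditioning on $\widetilde G$ therefore destroys the randomness of $\widetilde S_j$. Indeed, the deterministic identity $\|\widetilde S_j w\|^2=\widetilde N_j\|w\|^2+\sqrt{n\widetilde N_j}\,w\tran\widetilde A_j w$ (from the addition theorem) shows that your target bound would force $|w\tran\widetilde A_j w|\prec\sqrt{\widetilde N_j/n}\,\|w\|^2$, which is precisely the statement that $w$ avoids the large-eigenvalue directions of $\widetilde A_j$---and there is no mechanism in your argument producing such orthogonality. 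Your bootstrap for the second factor is also miswired: expanding $G\mn=\widetilde G-\widetilde G(A\mn-\widetilde A)G\mn$ \emph{adds} a $\Delta_c$ insertion rather than removing one, so the induction does not close.

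The paper's approach exploits a different cancellation: it conditions on $\{\widetilde\vx_a\}$ (which is genuinely independent of $\{\xi_a\}$ by Lemma~\ref{lemma:rotation}) and observes that each entry of the row vector $\vv_\idi\tran(\xi)\widetilde G\Lambda_t(\xi)\widetilde A_{k-t}$ is a bilinear form $\sum_{a,b}c_{ab}\,q_\idi(\xi_a)q_t(\xi_b)$ with $\idi\ge\ell>t$, hence $\idi\neq t$. Proposition~\ref{prop:quadratic_hpb} then gives concentration around zero (the diagonal correction vanishes because $\delta_{\idi t}=0$), yielding $\|\vv_\idi\tran\widetilde G\Lambda_t\widetilde A_{k-t}\|\prec\sqrt n/\eta$ (Lemma~\ref{lemma:Lambda_cross}). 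This is the missing ingredient: the gain comes from the orthogonality of Gegenbauer polynomials of different degrees in the $\xi$-variables, not from any concentration of the spherical-harmonic matrix $\widetilde S_j$. Once this is in hand, $\Delta_3$ and the polynomial part of $\Delta_5$ follow by Cauchy--Schwarz against $\Lambda_t G\mn\vv_\idj$; for $\Delta_2$ and $\Delta_4$ the paper gets away with a direct operator-norm bound via Proposition~\ref{prop:Ak_op_bnd}, since the combinatorics $\max\{(\ell-k+t)/2,0\}\le t/2$ (for $k\ge\ell$) and $\le(t-1)/2$ (for $k>\ell$, $t\ge1$) absorb the growth.
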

\begin{proof}
By using \eref{G_op} in the first step and \eref{qk_hpb} in the second step, we have $\Vert{G\mn \vv_{\idj}(\xi)}\Vert \le \frac{1}{\eta} \norm{\vv_{\idj}(\xi)} \prec \frac{\sqrt{n}}{\eta}$. The same arguments will also give us $\Vert{\vv_{\idi}\tran(\xi) \widetilde{G}(z)}\Vert \prec \sqrt{n}/\eta$. It follows that
\begin{equation}\label{eq:Delta_op_bnd}
\abs{\frac{1}{n}\vv_{\idi}\tran(\xi)  \widetilde{G}(z) \Delta_c G\mn(z) \vv_{\idj}(\xi)} \prec \frac{1}{\eta^2} \norm{\Delta_c}_\mathsf{op}.
\end{equation}
Thus, one way to bound the left-hand side of \eref{quadratic_Delta} is to obtain estimates on the operator norm of $\Delta_c$. We start from $\Delta_1$ in \eref{Delta1}. Since it is a diagonal matrix,
\begin{equation}\label{eq:Delta1_op}
\norm{\Delta_1}_\mathsf{op} \le \frac{1}{\sqrt{nd^\ell}} \max_a\,  q_\ell^2(\xi_a) \prec \frac{1}{\sqrt{n d^\ell}},
\end{equation}
where the second step is due to \eref{qk_hpb}. For $\Delta_2$ in \eref{Delta2}, write $u_k \bydef \max_a \abs{r^k(\xi_a) - 1}$. Using the expansion in \eref{rab_1} and the triangular inequality, we have
\[
\norm{\Delta_2}_\mathsf{op} \le C\sum_{\ell \le k \le L}\sum_{0 \le t \le k} \frac{1}{d^{t/2}}\norms{\widetilde{A}_{k-t}}_\mathsf{op} (u_{k-t}^2 + 2 u_{k-t}) \max_a \abs{q_t(\xi_a)}^2
\]
for some constant $C$ that depends on $\set{\co_k}$. Note that $\widetilde{A}_k$ has exactly the same construction as $A_k$. The only difference is to change the dimension parameter from $d$ to $d-1$. Thus, we can apply Proposition~\ref{prop:Ak_op_bnd} to get $\norms{\widetilde{A}_{k-t}}_\mathsf{op} \le d^{\max\set{(\ell-k+t)/2, 0}}$. Combining this estimate with those in \eref{rxi_hpb} and \eref{qk_hpb} leads to 
\begin{equation}\label{eq:Delta2_op}
\norm{\Delta_2}_\mathsf{op} \prec C\sum_{\ell \le k \le L}\sum_{0 \le t \le k} d^{\max\set{(\ell-k+t)/2, 0}}d^{-t/2}d^{-1} \prec d^{-1},
\end{equation}
where the second step is due to $\max\set{(\ell-k+t)/2, 0} \le t/2$ for $k \ge \ell$. Similarly, for $\Delta_4$ in \eref{Delta4}, we have
\begin{equation}\label{eq:Delta4_op}
\norm{\Delta_4}_\mathsf{op} \prec C\sum_{\ell < k \le L}\sum_{1 \le t \le k} d^{\max\set{(\ell-k+t)/2, 0}}d^{-t/2} \prec d^{-1/2}.
\end{equation}
The second step follows from the fact that, for $k > \ell$ and $t \ge 1$, $\max\{\frac{\ell-k+t}{2}, 0\} \le \max\{\frac{t-1}{2}, 0\} = \frac{t-1}{2}$. By substituting \eref{Delta1_op}, \eref{Delta2_op}, and \eref{Delta4_op} into \eref{Delta_op_bnd}, we verify the statement in \eref{quadratic_Delta} for the cases of $c = 1, 2$ and $4$.

The cases of $\Delta_3$ and $\Delta_5$ require a different approach. We start with $\Delta_3$. Recall the diagonal matrix notation introduced in \eref{Lambda_k}. Write 
\begin{equation}\label{eq:LambdaGv}
\vu_\idj \bydef G\mn(z) \vv_{\idj}(\xi). 
\end{equation}
By \eref{qk_hpb} and \eref{G_op}, we have
\begin{equation}\label{eq:ut_bnd}
\norm{\vu_\idj} \le  \norms{G\mn(z)}_\mathsf{op} \cdot \norm{\vv_{\idj}(\xi)} \prec \frac{\sqrt{n}}{\eta}. 
\end{equation}
and
\begin{equation}\label{eq:ut_Lambda_bnd}
\norm{\Lambda_t(\xi) \vu_\idj} \le \big(\max_a \, \abs{q_t(\xi_a)}\big)\cdot \norms{\vu_\idj} \prec \frac{\sqrt{n}}{\eta}. 
\end{equation}

From the construction of $\Delta_3$ in \eref{Delta3}, one can check that 
\begin{align}
\abs{\frac{1}{n}\vv_{\idi}\tran(\xi)  \widetilde{G}(z) \Delta_3 G\mn(z) \vv_{\idj}(\xi)} &\le C \sum_{1 \le t \le \ell-1} \frac{1}{d^{t/2}n}\abs{\Big(\vv_{\idi}\tran(\xi) \widetilde{G}(z) \Lambda_t(\xi) \widetilde{A}_{\ell-t}\Big) \Lambda_t(\xi) \vu_\idj}\\
&\le C \sum_{1 \le t \le \ell-1} \frac{1}{d^{t/2}n}\norm{\vv_{\idi}\tran(\xi) \widetilde{G}(z) \Lambda_t(\xi) \widetilde{A}_{\ell-t}}\cdot  \norm{\Lambda_t(\xi)\vu_t}\\
&\prec \frac{1}{\eta^2 \sqrt{d}}.
\end{align} 
The second step uses the Cauchy-Schwarz inequality; in the last step, we have applied Lemma~\ref{lemma:Lambda_cross} (note that $\iq \ge \ell > t$), and the estimate in \eref{ut_Lambda_bnd}.

We complete the proof by showing \eref{quadratic_Delta} for $\Delta_5$. By construction, $\Delta_5$ is the linear combination of a finite number of matrices $\big\{\Delta_5^{(k, t)}\big\}$, indexed by $k, t$. The entries of  $\Delta_5^{(k, t)}$ are
\begin{align}
(\Delta_5^{(k, t)})_{ab} &= (\widetilde{A}_{k-t})_{ab} \cdot r^{k-\Eid}(\xi_a) r^{k-\Eid}(\xi_b) \dxi_{\Eid}(\xi_a, \xi_b)\\
&\overset{(a)}{=}(\widetilde{A}_{k-t})_{ab} \big(\dxi_{\Eid + 2\ell}(\xi_a, \xi_b) + f_2(\xi_a, \xi_b)\big)\\
&\overset{(b)}{=} (\widetilde{A}_{k-t})_{ab}\Big(\textstyle\sum_{0 \le i,j \le t+2\ell} \underbrace{c_{ij}(d) d^{-\max\{i, j\}/2-1} q_i(\xi_a) q_j(\xi_b)}_{(\ast)} + f_2(\xi_a, \xi_b)\Big)\\
&\overset{(c)}{=} (\widetilde{A}_{k-t})_{ab} \textstyle\sum_{\substack{0 \le i \le \ell-1\\0\le j \le t+2\ell}}(\ast) + \underbrace{(\widetilde{A}_{k-t})_{ab} \Big(\textstyle\sum_{\substack{\ell \le i \le t+2\ell\\0\le j \le t+2\ell}} (\ast) + f_2(\xi_a, \xi_b)\Big)}_{\eqqcolon R_{ab}}.\label{eq:Delta5_kt}
\end{align}
In step (a), we apply Proposition~\ref{prop:rxi_f1f2} in Appendix~\ref{appendix:concentration} to split $r^{k-\Eid}(\xi_a) r^{k-\Eid}(\xi_b) \dxi_{\Eid}(\xi_a, \xi_b)$ into the sum of two functions, with $f_1(\xi_a, \xi_b) \in \dxi_{t + 2\ell}$ and $f_2(\xi_a, \xi_b)$ satisfying the estimate in \eref{rxi_f2}. Step (b) uses the definition in \eref{dxi}; In step (c), we split the double sum over $0 \le i, j \le t+2\ell$ into two parts, with the index $i$ ranging from $0$ to $\ell-1$ in the first partial sum. Observe that each term in the partial sum in $R_{ab}$ includes a factor $d^{-\max\{i, j\}/2-1} \le d^{-\ell/2-1} = \mathcal{O}(n^{-1/2}d^{-1})$, because $i \ge \ell$. By \eref{Ak_inf_hpb}, \eref{qk_hpb}, and \eref{rxi_f2}, and by the union bound, we can verify that
\begin{equation}\label{eq:R_bnd}
\max_{a,b} \, \abs{R_{ab}} \prec \frac{1}{d n} \quad\text{and thus}\quad \norm{R}_\mathsf{op} \le \norm{R}_\mathsf{F} \prec \frac{1}{d}.
\end{equation}

Now consider the left-hand side of \eref{quadratic_Delta}, with $\Delta_c$ replaced by $\Delta_5^{(k, t)}$. By using \eref{Delta5_kt} and the notation introduced in \eref{Lambda_k} and \eref{LambdaGv}, we have
\begin{align}
&\frac{1}{n}\abs{\vv_{\idi}\tran(\xi)  \widetilde{G}(z) \Delta_5^{(k,t)} G\mn(z) \vv_{\idj}(\xi)} \le \frac{1}{nd} \sum_{\substack{0 \le i \le \ell-1\\0\le j \le t+2\ell}}  \abs{c_{ij}(d)\vv_\idi\tran(\xi) \widetilde{G}(z)\Lambda_i(\xi) \widetilde{A}_{k-t} \Lambda_j(\xi) \vu_\idj} + \frac{1}{n} \abs{\vv_{\idi}\tran(\xi) \widetilde{G}(z)R \vu_\idj}\\
&\qquad\qquad\le \frac{1}{nd} \sum_{\substack{0 \le i \le \ell-1\\0\le j \le t+2\ell}} \abs{c_{ij}(d)}\cdot\norm{\vv_\idi\tran(\xi) \widetilde{G}(z)\Lambda_i(\xi) \widetilde{A}_{k-t}} \cdot \norm{\Lambda_j(\xi) \vu_\idj} + \frac{1}{n} \norms{\vv_\idi\tran(\xi) \widetilde{G}(z)} \norm{R}_\mathsf{op} \norms{\vu_\idj}\label{eq:Delta5_kt_1}\\
&\qquad\qquad \prec \frac{1}{\eta^2 d}.\label{eq:Delta5_kt_2}
\end{align}
In the last step, we bound the right-hand side of \eref{Delta5_kt_1} as follows: for the terms in the sum over $i, j$, we have used the fact that $c_{ij}(d) = \mathcal{O}(1)$ and have applied \eref{ut_Lambda_bnd} and the estimate \eref{Lambda_cross} in Lemma~\ref{lemma:Lambda_cross}. (Note that $\alpha \ge \ell > i$ and thus \eref{Lambda_cross} is applicable.) For the last term on the right-hand side of \eref{Delta5_kt_1}, we use \eref{R_bnd}, \eref{ut_bnd}, and the estimate that $\norms{\vv_\idi\tran(\xi) \widetilde{G}(z)} \prec \sqrt{n} / \eta$.

The large deviation bound in \eref{Delta5_kt_2} holds for all $k, t$ over the range $\ell \le k \le L$ and $0 \le t \le k$. Applying the triangular inequality then yields the estimate in \eref{quadratic_Delta} for $\Delta_5$.
\end{proof}

We may now complete the proof of Proposition~\ref{prop:A_complement}.

\begin{proof}[Proof of Proposition~\ref{prop:A_complement}]
Observe that the different coordinates of $A$ are statistically exchangeable. Thus, it is sufficient to show \eref{A_complement_hpb} for any particular choice of the index $i$. We choose to do so for $i = n$. With the decomposition given in \eref{Schur_comp}, our task boils down to verifying that the right-hand side of \eref{Schur_comp} concentrates around the leading term given in \eref{A_complement_hpb}. To that end, we first show that the term $\theta(z)$ in \eref{theta} is close to
\[
\theta^\ast(z) \bydef \frac{\co_\ell^2 s(z)}{\sqrt{d^\ell/(\ell! n)} + \co_\ell s(z)}.
\]
Indeed, we have
\begin{equation}\label{eq:theta_leading0}
\abs{\theta(z) - \theta^\ast(z)} \le \frac{ \sum_{\ell < \iq \le L} \abs{\co_\ell \co_{\iq}\chi_{\iq\ell}}}{\phantom{\,}\absb{\sqrt{d^\ell/(\ell! n)} + \co_\ell \chi_{\ell \ell}}} + \frac{\co_\ell^2 \sqrt{d^\ell/(\ell! n)} \,\abs{\chi_{\ell\ell}-s(z)}}{\phantom{\,}\absb{\sqrt{d^\ell/(\ell! n)} + \co_\ell \chi_{\ell \ell}(z)}\cdot \absb{\sqrt{d^\ell/(\ell! n)} + \co_\ell s(z)}}.
\end{equation}
The two numerators on the right-hand side are small due to the estimates provided in Lemma~\ref{lemma:chi_leading}. The denominators can be bounded as follows. To lighten the notation, write $C_{d, \ell} \bydef \sqrt{d^\ell/(\ell! n)}$. Recall the definition of $\chi_{\ell\ell}(z)$ in \eref{chi}. By \eref{G_op},
\[
\abs{\co_\ell \chi_{\ell\ell}(z)} \le \frac{\abs{\mu_\ell}\cdot\norm{\vv_\ell(\xi)}^2}{n \eta}.
\]
Consider two cases: (a) If ${\abs{\mu_\ell}\cdot\norm{\vv_\ell(\xi)}^2}/({n \eta}) \le C_{d, \ell} / 2$, we have $\abs{\co_\ell \chi_{\ell\ell}(z)} \le C_{d, \ell} / 2$ and thus
\begin{equation}\label{eq:chi_inv0}
\frac{1}{\absb{C_{d, \ell} + \co_\ell \chi_{\ell \ell}}} \le \frac{2}{C_{d,\ell}}.
\end{equation}
(b) Suppose that ${\abs{\co_\ell}\cdot\norm{\vv_\ell(\xi)}^2}/({n \eta}) > C_{d, \ell} / 2$. Let $\{\lambda_j\}_{j \in [n-1]}$ denote the eigenvalues of $\widetilde{A}$. One can directly verify from the definition in \eref{chi} that
\[
\abs{\co_\ell} \im\,\chi_{\ell\ell}(z) \ge \frac{\abs{\co_\ell} \cdot \norm{\vv_\ell}^2}{n \eta} \min_{j \in [n-1]} \frac{\eta^2}{(\lambda_j - E)^2 + \eta^2} \ge \Big(\frac{C_{d,\ell}}{2}\Big) \frac{\eta^2}{2\norms{\widetilde{A}}_\mathsf{op}^2 + 2E^2 + \eta^2},
\]
where $E = \re(z)$. This then gives us
\begin{equation}\label{eq:chi_inv}
\frac{1}{\absb{C_{d, \ell} + \co_\ell \chi_{\ell \ell}}} \le \frac{2}{C_{d,\ell}} \Big(1 + \frac{2(\norms{\widetilde{A}}_\mathsf{op}^2 + E^2)}{\eta^2}\Big).
\end{equation}
Since the right-hand side of \eref{chi_inv} dominates that of \eref{chi_inv0}, the bound in \eref{chi_inv} in fact is valid for both cases. Note that $\abs{\co_\ell s(z)} \le \abs{\co_\ell}/\eta$. By considering two cases depending on whether $\abs{\co_\ell}/\eta \ge C_{d, \ell}/2$ and by following analogous arguments as above, we can also verify that
\begin{equation}\label{eq:s_inv}
\frac{1}{\absb{C_{d, \ell} + \co_\ell s(z)}} \le \frac{2}{C_{d,\ell}} \Big(1 + \frac{2(\norms{A}_\mathsf{op}^2 + E^2)}{\eta^2}\Big).
\end{equation}
We already have controls on the operator norm of $A$. Indeed, the estimates in \eref{Ak_op_bnd} give us
\[
\norm{A}_\mathsf{op} \le \sum_{k=\ell}^L \abs{\mu_k} \cdot \norm{A_k}_\mathsf{op} \prec 1.
\]
Similarly, $\norms{\widetilde{A}}_\mathsf{op} \prec 1$, as $\widetilde{A}$ is just the $(d-1)$-dimensional version of $A$. Substituting \eref{chi_inv} and \eref{s_inv} into \eref{theta_leading0}, and using the large deviation estimates in \eref{chi_leading}, we get
\begin{equation}\label{eq:theta_leading}
\abs{\theta(z) - \theta^\ast(z)} \prec \frac{1}{\sqrt{d}}.
\end{equation}
Finally, by applying \eref{theta_leading}, \eref{chi_leading}, and \eref{quadratic_Delta} to the right-hand side of \eref{Schur_comp}, and by using the assumption that $n/d^\ell = \ratio + \mathcal{O}(d^{-1/2})$, we reach the statement \eref{A_complement_hpb} of the proposition.
\end{proof}

\subsection{Proof of Theorem~\ref{thm:equivalence}}

The asymptotic limit of the ESD of the $B$ matrix can be obtained by standard methods in random matrix theory. In particular, since $B_\ell$ is a shifted Wishart matrix, its limiting ESD follows the MP law with a ratio parameter equal to $(\ell! \ratio)^{-1}$. Let $s_\ell(z)$ denote the Stieltjes transform of the limiting spectral density of the linearly-scaled version $\co_\ell B_\ell$. By using properties of the MP law (see Appendix~\ref{appendix:MP_law}), and after taking into account the linear transformation (namely, the scaling by $\co_\ell$ and shifting by $(\ell! \ratio)^{-1/2}$) of the eigenvalues, we can characterize $s_\ell(z)$ as the unique solution to
\begin{align} \label{eq:MPell}
\frac 1 {s_\ell (z)}     = -z -  \frac{\coa s_\ell(z)} {1+ \cob s_\ell (z)  },   %+   O(\frac n { N_k}    )  
\end{align} 
where $\coa, \cob$ are the constants defined in Theorem~\ref{thm:equivalence}.

By construction, the law of $B$ is equal to that of $\co_\ell B_\ell + \sqrt{\coc} H$, where $H \in \R^{n \times n}$ is a GOE matrix independent of $B_\ell$. Thus, the Stieltjes transform $s_B(z)$ of its ESD converges almost surely to a limiting function $m(z)$. Moreover, $m(z)$ can be obtained via a free additive convolution between the MP law and the semicircle law, which gives us $m(z) = s_\ell(z+  \coc m(z))$. It then follows from \eref{MPell} that
\begin{align}
\frac 1 {m(z)}= \frac 1 {s_\ell (z+  \coc m(z))}     & = -z-   \coc m(z) +    \frac{\coa s_\ell(z + \coc m(z))} {1+ \cob s_\ell (z+\coc m(z))  } 
\\ 
& =  -z +\frac{\coa m(z)} {1+ \cob m (z)  } -   \coc m(z),  %+   O(\frac n { N_k}    )  
\end{align} 
which is exactly \eqref{eq:m_equation}.

The rest of the proof is devoted to establishing the estimate in \eref{sA_m} for the matrix $A$. To lighten the notation, we drop the subscript in $s_A(z)$ in the following discussions. We start by recalling Lemma~\ref{lemma:omit_low_order}, which shows that removing the low-order components from $A$ will only incur an error of size $\mathcal{O}(1/\sqrt{d})$ in the Stieltjes transform. As this error is of the same size as the right-hand side of \eref{sA_m}, it is sufficient for us to establish \eref{sA_m} for the special case when $A$ contains only the high-order components, \emph{i.e.}, $A = \sum_{k = \ell}^L \co_k A_k$. 

By \eref{s_equation}, \eref{H_error}, \eref{H_complement}, and the high-probability estimate given in Proposition~\ref{prop:A_complement}, we have
\begin{equation}\label{eq:s_equation_approx}
s(z)\Big(z + \frac{\coa s(z)}{1 + \cob s(z)} + \coc s(z)\Big) + 1 = \Delta,
\end{equation}
where the error term $\Delta  \bydef -\frac{1}{n} \sum_i Q_i(z) G_{ii}(z)$ satisfies the estimate
\begin{equation}\label{eq:sA_small_error}
\abs{\Delta} \prec \frac{1}{\eta \sqrt{d}}.
\end{equation}
Thus, $s(z)$ is an approximate solution to \eref{m_equation}. To show $s(z) \approx m(z)$, we will apply the stability bound given in Proposition~\ref{prop:stability}. To that end, we first establish a high-probability upper bound for $1/\im[s(z)]$. Let $\set{\lambda_i}_{i \in [n]}$ denotes the eigenvalues of $A$, and $z = E + \i \eta \in C_+$. We have
\begin{equation}\label{eq:sz_lb0}
\im[s(z)] = \frac{1}{n} \sum_i \frac{\eta}{(\lambda_i - E)^2 + \eta^2} \ge \frac{\eta/2}{\norm{A}_\mathsf{op}^2 +  E^2 + \eta^2}.
\end{equation}
Applying the triangular inequality and Proposition~\ref{prop:Ak_op_bnd}, we have $\norm{A}_\mathsf{op} \le \sum_{k = \ell}^L \abs{\co_k} \cdot \norm{A_k}_\mathsf{op} \prec 1$. Moreover, by the assumption made in Theorem~\ref{thm:equivalence}, $\abs{E} \le \tau^{-1}$ and $\tau \le \eta \le \tau^{-1}$ for some fixed $\tau > 0$. Substituting these bounds into \eref{sz_lb0} then gives us
\begin{equation}\label{eq:sz_lb}
\frac 1{\im[s(z)]} \le \frac{\norm{A}_\mathsf{op}^2 +  E^2 + \eta^2}{\eta/2} \prec 1.
\end{equation}
For any $0 < \varepsilon < 1/2$  and $D > 0$, we have
\begin{align}
\P(\, \abs{s(z) - m(z)} \ge d^{\epsilon -1/2}) &\le \P\big(\abs{\Delta/s(z)} > \eta/2\big)+\P(\, \abs{s(z) - m(z)} \ge d^{\varepsilon -1/2}, \, \abs{\Delta/s(z)} \le \eta/2) \\
&\overset{\text{(a)}}{\le}\P\big(\abs{\Delta/s(z)} \ge \eta/2\big) +  \P\Big(\abs{\Delta/s(z)}\ge (\eta^2/4) d^{\varepsilon - 1/2}\Big)\\
&\overset{\text{(b)}}{\le}2\,  \P\Big(\abs{\Delta/s(z)}\ge (\eta^2/4) d^{\varepsilon - 1/2}\Big)\\
&\le 2\,  \P\Big(\abs{\Delta}(\im[s(z)])^{-1}\ge (\eta^2/4) d^{\varepsilon - 1/2}\Big)\\
&\overset{\text{(c)}}{\le} d^{-D}.
\end{align}
In step (a), the second term on the right-hand side of the inequality follows from Proposition~\ref{prop:stability}, where the error term $\et$ in \eref{m_equation_approx} is $\Delta/s(z)$; Step (b) holds for all sufficiently large $d$; Step (c) follows from the high-probability estimates given in \eref{sA_small_error} and \eref{sz_lb}. As $\varepsilon$ and $D$ can be chosen arbitrarily, we have verified \eref{sA_m}. The almost sure convergence of $s(z)$ to $m(z)$ then follows from \eref{sA_m} and the Borel-Cantelli lemma.
%\include{general_nonlinear}
% !TEX root = equivalence.tex

\section{The Isotropic Gaussian Model}
\label{sec:gaussian}

In this section, we show that the results stated in Theorem~\ref{thm:equivalence} and Theorem~\ref{thm:equivalence_f} still hold if the data vectors are sampled from the i.i.d. isotropic Gaussian distribution instead of the spherical distribution. Specifically, let $\vg_1, \ldots, \vg_n \in \R^d$ be a set of independent vectors drawn from $\mathcal{N}(0,  I_d/d)$. Consider a random matrix $\Gm \in \R^{n \times n}$ with entries
\begin{equation}\label{eq:G_f}
\Gm_{ij} \bydef \begin{cases} \frac{1}{\sqrt{n}} \, f_d(\sqrt{d}\, \vg_i\tran \vg_j) &\text{if } i \ne j\\
0&\text{if } i = j
\end{cases},
\end{equation}
where $f_d: \R \mapsto \R$ is a (nonlinear) kernel function. We show that the weak limit of the empirical spectral distribution of $\Gm$ is characterized by the same equivalence principle given in Theorem~\ref{thm:equivalence} and Theorem~\ref{thm:equivalence_f}.

Our approach is based on a simple coupling method, which allows us to define $\Gm$ in the same probability space as the matrix $A$ in \eref{A_f} and to write $\Gm$ as a perturbation of $A$. To that end, we rewrite $\vg_i$, for each $i \in [n]$, as
\begin{equation}\label{eq:g_decomp}
\vg_i = \rad_i \vx_i,
\end{equation}
where $\rad_i \bydef \norm{\vg_i}$ and $\vx_i \bydef \vg_i / \norm{\vg_i}$. By the rotational invariance of the isotropic Gaussian distribution, $\vx_i \sim \unifsp$, and $\vx_i$ is independent of $\rad_i$. In our subsequent discussions, we will often use the following standard concentration result (see, \emph{e.g.}, \cite[Theorem 3.1.1]{vershynin2018HighdimensionalProbabilityIntroduction}): for any $t > 0$,
\begin{equation}\label{eq:rad_concentrate}
\P(\,\abs{\rad_i - 1} \ge t) \le 2 e^{-c dt^2},
\end{equation}
where $c > 0$ is some absolute constant. In other words, $\sqrt{d}(\rad_i - 1)$ is a sub-Gaussian random variable whose sub-Gaussian norm is upper-bounded by an absolute constant. This then immediately implies that 
\begin{equation}\label{eq:rad_dominance}
\rad_i = 1 + \mathcal{O}_\prec(d^{-1/2}).
\end{equation}

\begin{proposition}\label{prop:sg_polynomial}
Let $A$ and $\Gm$ be the kernel random matrices defined in \eref{A_f} and \eref{G_f}, respectively, where the nonlinear function
\begin{equation}\label{eq:f_gen_polynomial}
f_d(x) = \sum_{0 \le b \le L} c_b x^b
\end{equation}
is an $L$th degree polynomial. Moreover, $A$ and $\Gm$ are constructed in the same probability space by using \eref{g_decomp}. Suppose that $n / d^\ell = \ratio + o(1)$ for some constants $\ratio > 0$ and $\ell \in \N$. Let $s_A(z)$ and $s_{\Gm}(z)$ denote the Stieltjes transforms of the ESDs of $A$ and $\Gm$, respectively. Then for any $z \in \C_+$ with $\im(z) = \eta$, we have
\begin{equation}
\abs{s_A(z) - s_{\Gm}(z)} \le \frac{C_L}{d \eta} + \mathcal{O}_\prec\Big( \frac{\max_{0 \le k \le L} \abs{c_k} }{\eta^2 \sqrt{d}}\Big),
\end{equation}
where $C_L$ is some constant that only depends on $L$.
\end{proposition}
\begin{proof}
Without loss of generality, we assume that $L \ge \ell$. By the definition in \eref{G_f}, for $i \neq j$, 
\begin{align}
\sqrt{n} \Gm_{ij} &= \textstyle\sum_{0 \le b \le L} c_b (\sqrt{d} \vg_i\tran \vg_j)^b\\
&\overset{\eref{g_decomp}}{=} \textstyle\sum_{0 \le b \le L} c_b (\sqrt{d} \vx_i\tran\vx_j)^b \rad_i^b \rad_j^b\\
&= \sqrt{n} A_{ij} + \textstyle\sum_{0 \le b \le L} c_b (\sqrt{d} \vx_i\tran\vx_j)^b \big[(\rad_i^b-1) + (\rad_j^b -1 ) + (\rad_i^b-1)(\rad_j^b-1)\big].\label{eq:GA_diff0}
\end{align}
For any $0 \le b \le L$, we can expand the monomial $x^b$ as a linear combination of Gegenbauer polynomials, \emph{i.e.},
\begin{equation}\label{eq:xcoeff}
x^b = \textstyle\sum_{0 \le k \le b} \xcoeff_{b, k} q_k(x),
\end{equation}
where $\set{\xcoeff_{b, k}}$ are some expansion coefficients. Note that $\set{\xcoeff_{b,k}}$ depend on the dimension $d$ [see, \emph{e.g.}, \eref{Gegenbauer_low_degree}], but we suppress this explicit dependence to streamline the notation. Recall the definition of the matrices $\set{A_k}$ in \eref{Ak}. Using \eref{xcoeff} and after rearranging the terms, we can write \eref{GA_diff0} as
\begin{equation}
\Gm_{ij} = A_{ij} + \underbrace{\textstyle\sum_{0 \le k \le L} (A_k)_{ij} \left(\sum_{k \le b \le L} c_b \xcoeff_{b, k}\big[(\rad_i^b-1) + (\rad_j^b -1 ) + (\rad_i^b-1)(\rad_j^b-1)\big]\right)}_{(\Delta_1)_{ij} + (\Delta_2)_{ij} + (\Delta_3)_{ij}}.
\end{equation}
Here, the difference between $A$ and $\Gm$ is represented by the sum of three $n\times n$ matrices, defined as
\begin{align}
(\Delta_1)_{ij} &= \textstyle\sum_{\ell \le k \le L} (A_k)_{ij} \left(\sum_{k \le b \le L} c_b \xcoeff_{b, k}\big[(\rad_i^b-1) + (\rad_j^b -1 ) + (\rad_i^b-1)(\rad_j^b-1)\big]\right)\\
(\Delta_2)_{ij} &= \textstyle\sum_{0 \le k < \ell} \left(-{\frac{\sqrt N_k}{\sqrt n}} \charfn_{ij}\right) \left(\sum_{k \le b \le L} c_b \xcoeff_{b, k}\big[(\rad_i^b-1) + (\rad_j^b -1 ) + (\rad_i^b-1)(\rad_j^b-1)\big]\right)\\
\intertext{and}
(\Delta_3)_{ij} &= \textstyle\sum_{0 \le k < \ell} \left((A_k)_{ij} + {\frac{\sqrt N_k}{\sqrt n}} \charfn_{ij}\right) \left(\sum_{k \le b \le L} c_b \xcoeff_{b, k}\big[(\rad_i^b-1) + (\rad_j^b -1 ) + (\rad_i^b-1)(\rad_j^b-1)\big]\right),
\end{align}
where $\set{N_k}$ are the constants given in \eref{Nk}. Next, we show that the perturbations by $\Delta_1, \Delta_2, \Delta_3$ only cause negligible changes in the Stieltjes transform of $A$. First, applying the triangular inequality gives us
\begin{align}\label{eq:Delta1_bnd0}
\norm{\Delta_1}_\mathsf{op} \le 2 L^2 \max_{\ell \le k \le L} \norm{A_k}_\mathsf{op} \max_{0 \le b \le L} \abs{c_b} \max_{0 \le k \le b \le L} \abs{\xcoeff_{b, k}} \Big(\max_{\substack{\ell \le b \le L\\ i \in [n]}} \vert{\rad_i^b - 1}\vert + \max_{\substack{\ell \le b \le L\\ i \in [n]}} \vert{\rad_i^b - 1}\vert^2 \Big).
\end{align}
By the estimates of Proposition~\ref{prop:Ak_op_bnd}, we have 
\begin{equation}\label{eq:max_Ak_op_bnd}
\max_{\ell \le k \le L} \norm{A_k}_\mathsf{op} \prec 1.
\end{equation}
To bound the expansion coefficients $\set{\xcoeff_{b, k}}$, we let $\xi$ be a random variable sampled from the distribution $\taumeasure$ in \eref{tau_measure}. Using \eref{xcoeff} and the orthogonality of the Gegenbauer polynomials $\set{q_k(x)}$ with respect to the law of $\xi$, we have
\begin{equation}\label{eq:xcoeff_bnd}
\abs{\xcoeff_{b, k}} = \abs{\EE \xi^b q_k(\xi)} \le (\EE \, \xi^{2b})^{1/2} \le (8b)^{b/2}.
\end{equation}
The second step of the above display uses H\"{o}lder's inequality and the fact that $\EE q_b^2(\xi) = 1$. The last step is due to the moment estimate in \eref{xi_moments}. Next, we bound the remaining terms on the right-hand side of \eref{Delta1_bnd0}. The estimate in \eref{rad_dominance} implies that $\rad_i^b = 1 + \mathcal{O}_\prec(d^{-1/2})$ for all $i \in [n]$ and $0 \le b \le L$. Thus, by the union bound, 
\begin{equation}\label{eq:rad_bnd}
\max_{\substack{\ell \le b \le L\\ i \in [n]}} \vert{\rad_i^b - 1}\vert = \mathcal{O}_\prec(d^{-1/2}).
\end{equation}
Substituting \eref{max_Ak_op_bnd}, \eref{xcoeff_bnd}, and \eref{rad_bnd} into \eref{Delta1_bnd0} then yields
\begin{equation}\label{eq:Delta1_bnd}
\norm{\Delta_1}_\mathsf{op} \prec \frac{\max_{0 \le b \le L} \abs{c_b}}{\sqrt d}.
\end{equation}

The operator norm of $\Delta_2$ can be bounded similarly:
\begin{align}
\norm{\Delta_2}_\mathsf{op} &\le 2 (L+1)^2 \max_{0 \le k < \ell} \frac{\sqrt N_k}{\sqrt n} \max_{0 \le b \le L} \abs{c_b} \max_{0 \le k \le b \le L} \abs{\xcoeff_{b, k}} \Big(\max_{\substack{\ell \le b \le L\\ i \in [n]}} \vert{\rad_i^b - 1}\vert + \max_{\substack{\ell \le b \le L\\ i \in [n]}} \vert{\rad_i^b - 1}\vert^2 \Big)\\
&\prec  \frac{\max_{0 \le b \le L} \abs{c_b}}{d},\label{eq:Delta2_bnd}
\end{align}
where the last step uses the estimate $\sqrt{N_k / n} = \mathcal{O}(d^{-1/2})$ for $0 \le k < \ell$ [recall the definition in \eref{Nk}] and the estimates given in \eref{xcoeff_bnd} and \eref{rad_bnd}.

Now we consider $\Delta_3$. Its operator norm is not small, but it has a low-rank structure. Recall from the decomposition in \eref{Ak_Sk} that $\rank(A_k + \sqrt{\frac{N_k}{n}} I) \le N_k$. It follows that
\begin{equation}\label{eq:Delta3_bnd}
\rank(\Delta_3) \le (L+1) \textstyle\sum_{0 \le k < \ell} N_k = \mathcal{O}_L(d^{\ell - 1}).
\end{equation}
Given the estimates in \eref{Delta1_bnd}, \eref{Delta2_bnd}, and \eref{Delta3_bnd}, we can reach the statement of the proposition by applying the perturbation inequalities \eref{s_diff} and \eref{s_diff_rank} in Lemma~\ref{lemma:sG_perturbation} and the triangular inequality.
\end{proof}

\begin{remark}
As an immediate consequence of Proposition~\ref{prop:sg_polynomial}, we observe that Theorem~\ref{thm:equivalence} remains valid even when data vectors are sampled from the isotropic Gaussian distribution, as opposed to the spherical distribution. More specifically, the estimate provided in \eref{sA_m} continues to hold when we substitute $s_A(z)$ with $s_{\Gm}(z)$, where $\Gm$ represents the kernel matrix defined in \eref{G_f} with $f_d$ corresponding to a linear combination of the first $L$ Gegenbauer polynomials, as expressed in \eref{f_polynomial}.
\end{remark}

Next, we establish a counterpart of Theorem~\ref{thm:equivalence_f} for the isotropic Gaussian case,  under the following assumption on the function $f_d$ in \eref{G_f}.
\begin{assumption}\label{assumption:fg}
Let $g \sim \mathcal{N}(0, 1)$, and let $\set{h_k(x)}_k$ represent the Hermite polynomials defined in \eref{ortho_poly_Hermite}. The function $f_d$ in \eref{G_f} satisfies the following conditions:
\begin{enumerate}
\item[(a)] For each $k \in \N_0$, we have
\begin{equation}\label{eq:cok_limit_b}
\EE [f_d(g) h_k(g)] \xrightarrow{d\to\infty} \co_k,
\end{equation}
for some finite numbers $\set{\co_k}$.

\item[(b)] The sequence $\set{\co_k}$ is square-summable, {i.e.}, 
\begin{equation}\label{eq:cok_summable_b}
\sigma^2 \bydef \sum_{k = 0}^\infty \co_k^2 < \infty.
\end{equation}
Moreover,
\begin{equation}\label{eq:cosigma_b}
\EE [f_d^2(g)] \xrightarrow{d\to\infty} \sigma^2.
\end{equation}

\item[(c)] Let $w(x)$ denote the probability density functions of $\mathcal{N}(0, 1)$, and let $\breve{w}_d(x)$ denote the density function of the random variable $\breve{\xi}_d = \sqrt{d} \vx\tran \vy$, where $\vx, \vy \in \R^d$ are two independent random vectors sampled from $\mathcal{N}(0, I_d/d)$. We have
\begin{equation}\label{eq:weight_w_wdb}
\int_{\R} f_d^2(x) \abs{w(x) - \breve{w}_d(x)} \dif x  \xrightarrow{d\to\infty} 0.
\end{equation}
\end{enumerate}
\end{assumption}

\begin{remark}
The conditions in Assumption~\ref{assumption:fg} closely resemble those in Assumption~\ref{assumption:f}. The only difference lies in the expression given in \eref{weight_w_wdb}, where we replace the probability density $w_d(x)$ from \eref{weight_w_wd} with a new density function, $\breve{w}_d(x)$. Furthermore, when $f_d = f$ is a function that remains independent of the dimension $d$, a simpler set of sufficient conditions can be employed, as stated in the following lemma.
\end{remark}

\begin{lemma}\label{lemma:f_const_condition_b}
A function $f(x)$ meets the conditions in Assumption~\ref{assumption:fg} if (a) $\EE[f^2(g)] < \infty$ with $g \sim \mathcal{N}(0, 1)$, and (b) there exist positive constants $c_1, c_2, c_3$ such that $\abs{f(x)} < c_1 e^{c_2 \abs{x}}$ when $\abs{x} \ge c_3$.
\end{lemma}
\begin{proof}
The conditions in \eref{cok_limit_b}, \eref{cok_summable_b}, and \eref{cosigma_b} directly follow from the assumption that $\EE[f^2(g)] < \infty$ and the completeness of the Hermite polynomials. To verify the condition in \eref{weight_w_wdb}, we can employ the estimate \eref{f2_w_wdb} provided in Lemma~\ref{lemma:w_wd_wdb} from Appendix~\ref{appendix:concentration}.
\end{proof}

\begin{theorem}\label{thm:equivalence_fg}
The statement of Theorem~\ref{thm:equivalence_f} remains valid if we replace the matrix $A$ with the matrix $\Gm$ as given in \eref{G_f}, and replace Assumption~\ref{assumption:f} with Assumption~\ref{assumption:fg}.
\end{theorem}
\begin{proof}
Our proof is a modification of the proof for Theorem~\ref{thm:equivalence_f}, and we provide the details in Appendix~\ref{appendix:proof_fg}.
\end{proof}

In concluding this section, we highlight an important caveat concerning the results presented above. While changing the data distribution from spherical to isotropic Gaussian does not alter the weak limit of the empirical spectral distribution of the kernel matrix, the Gaussian model may indeed introduce additional spike eigenvalues that are absent in the original spherical case. To illustrate this, let us consider $\Gm$ and $A$ as the kernel matrices defined in \eref{G_f} and \eref{A_f}, respectively, with the nonlinear function chosen as $f_d(x) = x^2 - 1$. Furthermore, we assume that $n/d^2 = \ratio + o(1)$ for a constant $\ratio > 0$.

Notice that the nonlinear function in this case is equivalent (up to a constant) to the second-degree Gegenbauer polynomial $q_2(x)$, as seen in \eref{Gegenbauer_low_degree}. Consequently, we can apply Proposition~\ref{prop:Ak_op_bnd}, resulting in:
\begin{equation}\label{eq:A_op_scq2}
\norm{A}_\mathsf{op} \prec 1,
\end{equation}
This implies that, for any $\epsilon > 0$, the spectrum of $A$ will be confined within the interval $[-d^\epsilon, d^\epsilon]$ with high probability as $d \to \infty$. However, this is not the case for $\Gm$. Although its bulk eigenvalues share the same weak limit as those of $A$, the matrix $\Gm$ exhibits two spike eigenvalues of order $\mathcal{O}(\sqrt{d})$.

To see that, we first use the decomposition \eref{g_decomp} to express
\begin{align}
\Gm_{ij} &= \frac{1}{\sqrt n} \big[(\sqrt d \vx_i\tran \vx_j \rad_i \rad_j)^2 - 1 \big] \charfn_{i \neq j}\\
&=A_{ij} \rad_i^2 \rad_j^2 + \frac{1}{\sqrt n}(\rad_i^2 \rad_j^2 - 1) \charfn_{i \neq j}.\label{eq:G_A_perturb0}
\end{align}
By defining $\delta_i \bydef \rad_i^2 - 1$ for $i \in [n]$, we can further expand \eref{G_A_perturb0}:
\begin{equation}\label{eq:G_A_perturb}
\Gm_{ij} = A_{ij} + \underbrace{A_{ij} \delta_i + A_{ij} \delta_j + A_{ij} \delta_i \delta_j -\frac{1}{\sqrt n} (\rad_i^2 \rad_j^2 -1)\charfn_{i = j}}_{\eqqcolon E_{ij}} + \underbrace{\frac{1}{\sqrt n} (\delta_i + \delta_j + \delta_i \delta_j)}_{\eqqcolon F_{ij}},
\end{equation}
where $E$ and $F$ are two $n\times n$ matrices representing the difference between $\Gm$ and $A$. Observe that the contribution from $E$ is negligible. Indeed, from the estimate in \eref{rad_dominance}, we obtain $\delta_i = \mathcal{O}_\prec(d^{-1/2})$, and by applying the union bound, $\max_{i \in [n]} \abs{\delta_i} = \mathcal{O}_\prec(d^{-1/2})$. Employing these estimates and the one in \eref{A_op_scq2}, we have
\begin{align}
\norm{E}_\mathsf{op} \le  \max_{i \in [n]} \{2\abs{\delta_i} + \delta_i^2\} \norm{A}_\mathsf{op} + \frac{1}{\sqrt n} (1 + \max_{i \in [n]} \rad_i^4) \prec \frac{1}{\sqrt d}.
\end{align}
Next, we examine the second perturbation matrix $F$ in \eref{G_A_perturb}. $F$ has rank $2$, and its two nonnegative eigenvalues, denoted by $\lambda^+$ and $\lambda^-$, can be directly computed as
\begin{equation}\label{eq:l1l2_0}
\lambda^+,  \lambda^- = \frac{\sqrt n}{d} \Big(\frac{c_1}{2} + \sqrt{d} c_2  \pm \sqrt{c_1 d} \sqrt{1+ c_1/(4d) + c_2/\sqrt{d}}\Big),
\end{equation}
where
\begin{equation}
c_1 = \frac{1}{n}\sum_{i \in [n]} (\sqrt{d}\delta_i)^2 \quad \text{and}\quad c_2 = \frac{1}{n} \sum_{i \in [n]} (\sqrt{d} \delta_i).
\end{equation}
It is easy to verify that $\EE \delta_i = 0$ and $\EE (\sqrt{d} \delta_i)^2 = 2$. Furthermore, the concentration inequality \eref{rad_concentrate} implies that $\norms{\sqrt{d} \delta_i}_{L^p} \le C_p$ for every $p \in \N$, where $C_p$ is some finite constant that only depend on $p$. Using these estimates, we can apply Lemma~\ref{lemma:linear_quadratic_lp} to get
\begin{equation}\label{eq:estimates_c1c2}
c_1 = 2 + \mathcal{O}_\prec(n^{-1/2}) \quad \text{and} \quad c_2 = \mathcal{O}_\prec(n^{-1/2}).
\end{equation}
After substituting \eref{estimates_c1c2} into \eref{l1l2_0}, the formula can be simplified as
\[
\lambda^+,  \lambda^- = \frac{\sqrt n}{d}(\pm \sqrt{2d} + 1) + \mathcal{O}_\prec(d^{-1/2}).
\]
Since $n/d^2 \to \ratio$, the two nonzero eigenvalues of $F$ are thus close to $\pm \sqrt{2\kappa d}$. Finally, given that $G = A + E + F$ with $\norm{A+E}_\mathsf{op} \prec 1$, standard eigenvalue perturbation arguments allow us to conclude that the matrix $\Gm$ also has two large spike eigenvalues close to $\pm \sqrt{2\kappa d}$.

% !TEX root = equivalence.tex

\section{Summary and Discussions}
\label{sec:summary}

Our main results, Theorem~\ref{thm:equivalence} and Theorem~\ref{thm:equivalence_f}, establish the weak limit of the spectrum for random inner-product kernel matrices in the polynomial scaling regime, where $d, n \to \infty$ such that $n / d^\ell \to \kappa \in (0, \infty)$ for a fixed $\kappa \in \R$ and $\ell \in \N$. The central insight of this work is an asymptotic equivalence principle, which asserts that the limiting spectral distribution of the kernel matrix corresponds to that of a linear combination of a (shifted) Wishart matrix and an independent GOE matrix. Consequently, the limiting distribution is characterized by a free additive convolution between an MP law and a semicircle law.

We have established our results only for data vectors sampled from spherical or isotropic Gaussian distributions. In fact, Theorem~\ref{thm:equivalence} is expected to hold (with certain modifications in error bounds) for more general data distributions exhibiting reasonably fast decay at infinity. It is also worth noting that the error bounds in Theorem~\ref{thm:equivalence} are not optimal. Moreover, we currently constrain the imaginary part of the spectral parameter $z = E + i \eta$ to $\eta > \tau$ for some constant $\tau$. Although our proof approach can accommodate $\eta \ge d^{-c}$ for some small constant $c$ with more careful book-keeping, this is still far from the regime $\eta \sim n^{-1 + \epsilon}$ required to reach individual eigenvalue locations.

Extending Theorem~\ref{thm:equivalence} to encompass the entire range $n^{-1 + \epsilon} \le \eta \le 1$ may prove challenging due to the presence of multiscale structures in the eigenvalues. These structures emerge because the matrix $A$ in \eqref{eq:A} is a sum of component matrices $\set{A_k}$ across different scales. As a result, accurately characterizing individual eigenvalues, including extremal ones, poses an interesting open problem. Related issues, such as eigenvector statistics and eigenvalue universality, also hinge on addressing this matter. The potential for multiscale structures sets the nonlinear model apart from standard mean field models like Wigner matrices or sample covariance matrices \cite{erdos2017Dynamicalapproach}. We aim to tackle some of these questions in future papers.

\appendix

% !TEX root = equivalence.tex

\section{Gegenbauer Polynomials and Spherical Harmonics}
\label{appendix:ortho_poly}

In this appendix, we collect a few useful properties of the (normalized) Gegenbauer polynomials $\{q_k(x)\}_{k \ge 0}$ defined in \eref{ortho_poly}. All of these results are standard, and their proofs can be found in \emph{e.g.}, \cite{dai2013ApproximationTheory, efthimiou2014Sphericalharmonics}.

\emph{Three-term recurrence relation}: For each $k \in \N$, we have
\begin{equation}\label{eq:recurrent}
xq_{k}(x) = a_k q_{k+1}(x) + a_{k-1} q_{k-1}(x),
\end{equation}
where 
\begin{equation}\label{eq:recurrent_a}
a_k = \sqrt{\frac{(k+1)(d+k-2)d}{(d+2k)(d+2k-2)}} = \sqrt{k+1}\, (1 + \mathcal{O}(d^{-1})).
\end{equation}
We will repeatedly apply this recurrence relation in our proof of Proposition~\ref{prop:expansion}. 

\emph{Spherical harmonics and the addition theorem}: We recall that a spherical harmonic of degree $k$ is a harmonic and homogeneous polynomial of degree $k$ in $d$ variables that are restricted to $\mathcal{S}^{d-1}$. The set of all degree $k$ harmonics forms a linear subspace, whose dimension is denoted by $\Nk$. We have $\Nk[0] = 1$, $\Nk[1] = d$, and in general, for $k \in \N$,
\begin{equation}\label{eq:Nk}
N_k \bydef  \frac{2k+d-2}{k} {{k+d-3} \choose {k-1}} = \frac{d^k}{k!}(1+\mathcal{O}(1/d)).
\end{equation}

Using the Gram-Schmidt procedure, we can construct an orthonormal set of spherical harmonics. Let $\har{k}{i}(\vx)$, for $i \in [\Nk]$, denote the $i$th degree-$k$ spherical harmonic in this orthonormal set. We then have
\begin{equation}\label{eq:sh_ortho}
\EE_{\vx \sim \unifsp}\har{k}{a}(\vx) \har{k',b}(\vx) = \charfn_{k k'} \charfn_{ab},
\end{equation}
for any $k, k' \in \N_0$ and any $a \in [\Nk], b \in [\Nk[k']]$.

The Gengenbauer polynomials and spherical harmonics are deeply connected. In particular, we have the following identity, which can be viewed as a high-dimensional generalization of the classical addition theorem \cite{coster1991SimpleProofAddition}: for any $\vx, \vy \in \sph$, 
\begin{equation}\label{eq:linearization}
q_{k}(\sqrt{d} \, \vx\tran \vy) = \frac{1}{\sqrt{\Nk}}\sum_{a \in [\Nk]} \har{k}{a}(\vx) \har{k}{a}(\vy).
\end{equation}
In addition, for the special case of $\vx = \vy$, we have
\begin{equation}\label{eq:har_norm}
q_{k}(\sqrt{d}) = \frac{1}{\sqrt{\Nk}}\sum_{a \in [\Nk]} \har{k}{a}^2(\vx) = \sqrt{\Nk}.
\end{equation}
The identity in \eref{linearization} is useful because it allows us to ``linearize'' the term $q_{k}(\sqrt{d} \, \vx\tran \vy)$ as an inner product of two vectors made of the spherical harmonics. Moreover, \eref{sh_ortho} implies that, if $\vx \sim \unifsp$, the vector of spherical harmonics $(\har{k}{1}(\vx), \ldots, \har{k}{\Nk}(\vx))$ is an isotropic random vector.

Using \eref{linearization} and \eref{har_norm}, we can rewrite the matrix $\mA_k$ defined in \eref{Ak} as
\begin{equation}\label{eq:Ak_factor}
\mA_k = \frac{1}{\sqrt{n \Nk}} \SH_k\tran \SH_k - \sqrt{\frac{\Nk}{n}}\, \mI,
\end{equation}
where $\SH_k$ is a $\Nk \times n$ matrix whose entries are the spherical harmonics, \emph{i.e.}
\begin{equation}\label{eq:Yk_def}
(\SH_k)_{ai} = Y_{k, a}(\vx_i),
\end{equation}
and $\set{\vx_i}_{i \in [n]}$ are the data vectors in the definition in \eref{Ak}.

\section{Proof of Proposition~\ref{prop:expansion}}
\label{appendix:expansion}

We first state several simple properties of the set $\dxi_m$ introduced in \eref{dxi}.

\begin{lemma}\label{lemma:dxi_properties}
For $m \in \N_0$, we have
\begin{subequations}
\begin{equation}\label{eq:dxi_p1}
\dxi_m(\xi_1, \xi_2) \in \dxi_{m+1},
\end{equation}
\begin{equation}\label{eq:dxi_p2}
\frac{\xi_1 \xi_2}{\sqrt{d}} \dxi_m(\xi_1, \xi_2) \in \dxi_{m+1},
\end{equation}
\begin{equation}\label{eq:dxi_p3a}
\frac{\xi_1^2}{d} \dxi_m(\xi_1, \xi_2) \in \dxi_{m+2}, \ \frac{\xi_2^2}{d} \dxi_m(\xi_1, \xi_2) \in \dxi_{m+2}
\end{equation}
and
\begin{equation}\label{eq:dxi_p3}
r^2(\xi_1)r^2(\xi_2) \dxi_m(\xi_1, \xi_2) \in \dxi_{m+2},
\end{equation}
\end{subequations}
where $r(\xi)$ is the function defined in \eref{rxi}.
\end{lemma}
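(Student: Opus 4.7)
The plan is to verify each of the four properties by taking an arbitrary element
\[
g(\xi_1,\xi_2) = \sum_{0\le i,j\le m} c_{ij}(d)\,\frac{q_i(\xi_1)\,q_j(\xi_2)}{d^{\max\{i,j\}/2+1}}\in\dxi_m
\]
and explicitly massaging it into the form required by the definition of $\dxi_{m+1}$ or $\dxi_{m+2}$. The key tool throughout is the three-term recurrence \eref{recurrent}, together with the bound $a_k = \mathcal{O}(1)$ from \eref{recurrent_a}.

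For \eref{dxi_p1} the inclusion is immediate: $g$ already has the shape required for membership in $\dxi_{m+1}$ after padding with the zero coefficients $c_{i,m+1}(d) = c_{m+1,j}(d) = 0$. For \eref{dxi_p2} I multiply $g$ by $\xi_1\xi_2/\sqrt{d}$ and apply the recurrence to both factors:
\[
\xi_1 q_i(\xi_1)\cdot\xi_2 q_j(\xi_2) = \bigl(a_iq_{i+1}(\xi_1)+a_{i-1}q_{i-1}(\xi_1)\bigr)\bigl(a_jq_{j+1}(\xi_2)+a_{j-1}q_{j-1}(\xi_2)\bigr),
\]
(with $a_{-1}=0$), producing at most four monomials $q_{i'}(\xi_1)q_{j'}(\xi_2)$ with $i'\in\{i\pm1\}$, $j'\in\{j\pm1\}$, $i',j'\le m+1$, and $\mathcal{O}(1)$ coefficients. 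The denominator becomes $d^{\max\{i,j\}/2+3/2}$, which is $\ge d^{\max\{i',j'\}/2+1}$ because $\max\{i',j'\}\le\max\{i,j\}+1$; the excess powers of $d^{-1/2}$ can be absorbed into still-bounded coefficients, so the result lies in $\dxi_{m+1}$.

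For \eref{dxi_p3a}, applying the recurrence twice gives
\[
\xi_1^2\,q_i(\xi_1) = a_i a_{i+1}q_{i+2}(\xi_1) + (a_i^2+a_{i-1}^2)q_i(\xi_1) + a_{i-1}a_{i-2}q_{i-2}(\xi_1),
\]
with all coefficients $\mathcal{O}(1)$. Multiplying $g$ by $\xi_1^2/d$ then produces a sum over $i'\in\{i-2,i,i+2\}$ of terms $q_{i'}(\xi_1)q_j(\xi_2)$ divided by $d^{\max\{i,j\}/2+2}$, and since $\max\{i',j\}\le\max\{i,j\}+2\le m+2$ the denominator dominates $d^{\max\{i',j\}/2+1}$, placing the result in $\dxi_{m+2}$; the $\xi_2^2/d$ case is symmetric. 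Finally for \eref{dxi_p3}, using the definition of $r$ in \eref{rxi},
\[
r^2(\xi_1)r^2(\xi_2) = (1-1/d)^{-1}\Bigl[1 - \tfrac{\xi_1^2}{d} - \tfrac{\xi_2^2}{d} + \tfrac{\xi_1^2}{d}\tfrac{\xi_2^2}{d}\Bigr],
\]
and $(1-1/d)^{-1}=1+\mathcal{O}(d^{-1})=\mathcal{O}(1)$. Multiplying $g$ by this expansion, the first term is handled by \eref{dxi_p1} (applied twice), the middle two by \eref{dxi_p3a}, and the cross term by applying \eref{dxi_p3a} once in $\xi_1$ and once in $\xi_2$; the union sits in $\dxi_{m+2}$.

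The proof is essentially bookkeeping; the only mild subtlety, and the step I would write out most carefully, is the index accounting in \eref{dxi_p2}: one must check that the extra $d^{-1/2}$ gained from the prefactor correctly compensates for the possible unit increase in $\max\{i,j\}$ caused by the recurrence, so that the resulting denominator matches the $d^{-\max\{i',j'\}/2-1}$ demanded by $\dxi_{m+1}$. Once this arithmetic is laid out, the remaining properties are routine variations.
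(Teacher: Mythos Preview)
Your approach is essentially the same as the paper's: reduce to monomials, apply the three-term recurrence \eref{recurrent}, and track the exponent of $d$ against $\max\{i',j'\}$. The index arithmetic for \eref{dxi_p1}, \eref{dxi_p2}, and \eref{dxi_p3a} is correct.

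There is one slip in your treatment of \eref{dxi_p3}. For the cross term $\tfrac{\xi_1^2}{d}\tfrac{\xi_2^2}{d}\,g$ you propose to apply \eref{dxi_p3a} ``once in $\xi_1$ and once in $\xi_2$.'' Read sequentially, this gives
\[
\tfrac{\xi_1^2}{d}\,g \in \dxi_{m+2}
\quad\Longrightarrow\quad
\tfrac{\xi_2^2}{d}\cdot\bigl[\text{element of }\dxi_{m+2}\bigr] \in \dxi_{m+4},
\]
which overshoots the target class $\dxi_{m+2}$. The paper fixes this by writing $\tfrac{\xi_1^2\xi_2^2}{d^2} = \bigl(\tfrac{\xi_1\xi_2}{\sqrt d}\bigr)^2$ and applying \eref{dxi_p2} twice, landing in $\dxi_{(m+1)+1}=\dxi_{m+2}$. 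Alternatively you can redo the direct computation you used for \eref{dxi_p3a}: expanding $\xi_1^2 q_i$ and $\xi_2^2 q_j$ simultaneously yields indices $i'\le i+2$, $j'\le j+2$ with denominator $d^{\max\{i,j\}/2+3}\ge d^{\max\{i',j'\}/2+1}$, which is what you need. Either repair is immediate; otherwise the argument is complete.
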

\begin{proof}
The property \eref{dxi_p1} follows immediately from the definition of $\dxi_m$. To show \eref{dxi_p2}, \eref{dxi_p3a}, and \eref{dxi_p3}, we can assume without loss of generality that
\begin{equation}\label{eq:monomial}
\dxi_m(\xi_1, \xi_2) = \frac{q_a(\xi_1) q_b(\xi_2)}{d^{\max\set{a, b}/2+1}}
\end{equation}
for some $a, b$ such that $0 \le a, b \le m$. The more general case, where $\dxi_m(\xi_1, \xi_2)$ is a linear combination of terms like \eref{monomial}, can be handled by using the linearity of $\dxi_{m+1}$ and $\dxi_{m+2}$.

The recurrent relation \eref{recurrent} implies that $\xi_1 q_a(\xi_1) = \sum_{i \in [0, a+1]} \mu_i q_i(\xi_1)$ and $\xi_2 q_b(\xi_2) = \sum_{j \in [0, b+1]} \nu_j q_i(\xi_1)$, where $\mu_i = \mathcal{O}(1), \nu_i = \mathcal{O}(1)$ are some fixed expansion coefficients. It follows that
\[
\frac{\xi_1 \xi_2}{\sqrt{d}} \dxi_m(\xi_1, \xi_2) = \frac{\sum_{i=0}^{a+1} \sum_{j = 0}^{b+1} \mu_i \nu_j q_i(\xi_1) q_j(\xi_2)}{d^{\max\set{a+1, b+1}/2+1}} = \frac{\sum_{i=0}^{a+1} \sum_{j = 0}^{b+1} ({\mu_i \nu_j}/{d
^{\pi_{ij}}}) q_i(\xi_1) q_j(\xi_2)}{d^{\max\set{i, j}/2+1}},
\]
where $\pi_{ij} = \max\set{a+1, b+1}/2 - \max\set{i, j}/2 \ge 0$. Thus, we $\frac{\xi_1 \xi_2}{\sqrt{d}} \dxi_m(\xi_1, \xi_2) \in \dxi_{m+1}$.

The properties stated in \eref{dxi_p3a} can be proved similarly. Consider the function in \eref{monomial}. We have $\xi_1^2 q_a(\xi_1) = \sum_{i = 0}^{a+2} \mu_i q_i(\xi_1)$ for some expansion coefficients $\mu_i = \mathcal{O}(1)$. It follows that
\[
({\xi_1^2}/{d}) \dxi_m(\xi_1, \xi_2) = \frac{\sum_{i=0}^{a+2} \frac{\mu_i}{d
^{\pi_{i}}} q_i(\xi_1) q_b(\xi_2)}{d^{\max\set{i, b}/2+1}},
\]
where $\pi_i =  \max\set{a+2, b+2}/2 - \max\set{i, b}/2 \ge 0$. Since $\mu_i / d^{\pi_i} = \mathcal{O}(1)$, we have $(\xi_1^2/d) \dxi_m(\xi_1, \xi_2) \in \dxi_{m+2}$. That $(\xi_2^2/d) \dxi_m(\xi_1, \xi_2) \in \dxi_{m+2}$ follows from analogous arguments.

Recall the definition of $r(\xi)$ in \eref{rxi}. We have
\begin{align}
r^2(\xi_1)r^2(\xi_2) \dxi_m(\xi_1, \xi_2) &= (1-1/d)^{-1} (1 - \xi_1^2/d)(1-\xi_2^2/d) \dxi_m(\xi_1, \xi_2)\nonumber\\
	&= (1 + \mathcal{O}(1/d)) \Big(1 + (\xi_1 \xi_2 /\sqrt{d})^2/d - \xi_1^2/d - \xi_2^2/d\Big) \dxi_m(\xi_1, \xi_2).  \label{eq:dxi_r}
\end{align}
Note that $\dxi_m(\xi_1, \xi_2) \in \dxi_{m+2}$ by \eref{dxi_p1}. Applying the property in \eref{dxi_p2} twice, we have $(\xi_1 \xi_2 /\sqrt{d})^2 \dxi_m(\xi_1, \xi_2) \in \dxi_{m+2}$. By \eref{dxi_p3a}, the last two terms of \eref{dxi_r} also belong to $\dxi_{m+2}$. By the linearity of $\dxi_{m+2}$, we can conclude that the left-hand side of \eref{dxi_p3} indeed belongs to $\dxi_{m+2}$.
\end{proof}

\begin{lemma}
For each $k \in \N$, we have
\begin{equation}\label{eq:recurrent_xiq}
\frac{\xi_1 \xi_2 q_k(\xi_1) q_k(\xi_2)}{d^{(k+1)/2}} = (k+1)\frac{q_{k+1}(\xi_1) q_{k+1}(\xi_2)}{d^{(k+1)/2}} + \sqrt{k(k+1)}\frac{q_{k-1}(\xi_1)q_{k+1}(\xi_2) + q_{k+1}(\xi_1) q_{k-1}(\xi_2)}{d^{(k+1)/2}} + \dxi_{k+1}(\xi_1, \xi_2),
\end{equation}
and
\begin{equation}\label{eq:recurrent_sqrt}
\frac{r^2(\xi_1) r^2(\xi_2) q_{k-1}(\xi_1) q_{k-1}(\xi_2)}{d^{(k-1)/2}} = \frac{q_{k-1}(\xi_1) q_{k-1}(\xi_2)}{d^{(k-1)/2}} - \sqrt{k(k+1)}\frac{q_{k-1}(\xi_1)q_{k+1}(\xi_2) + q_{k+1}(\xi_1) q_{k-1}(\xi_2)}{d^{(k+1)/2}} + \dxi_{k+1}(\xi_1, \xi_2).
\end{equation}
\end{lemma}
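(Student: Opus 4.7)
The plan is to prove both identities by repeated application of the three-term recurrence relation \eqref{eq:recurrent} together with the asymptotics \eqref{eq:recurrent_a}, and then to collect all remainder terms into the set $\dxi_{k+1}$ using Lemma~\ref{lemma:dxi_properties}. The key arithmetic observation that powers everything is $a_k^2 = (k+1) + \mathcal{O}(1/d)$ and $a_k a_{k-1} = \sqrt{k(k+1)} + \mathcal{O}(1/d)$. Whenever a product of two recurrence coefficients appears, I split it as (leading constant) + $\mathcal{O}(1/d)$, and the $\mathcal{O}(1/d)$ part contributes an extra factor of $1/d$, which is exactly what is needed to place that term into $\dxi_{k+1}$.

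For the first identity \eqref{eq:recurrent_xiq}, I would apply \eqref{eq:recurrent} to each of $\xi_1 q_k(\xi_1)$ and $\xi_2 q_k(\xi_2)$ and expand the product:
\begin{align}
\xi_1\xi_2 q_k(\xi_1)q_k(\xi_2) = a_k^2\, q_{k+1}(\xi_1)q_{k+1}(\xi_2) &+ a_k a_{k-1}\bigl[q_{k+1}(\xi_1)q_{k-1}(\xi_2)+q_{k-1}(\xi_1)q_{k+1}(\xi_2)\bigr] \\
&+ a_{k-1}^2\, q_{k-1}(\xi_1)q_{k-1}(\xi_2).
\end{align}
After dividing by $d^{(k+1)/2}$, the $a_k^2$ term yields $(k+1)\,q_{k+1}(\xi_1)q_{k+1}(\xi_2)/d^{(k+1)/2}$ plus an $\mathcal{O}(1/d)$-remainder of the form $c(d) q_{k+1}(\xi_1)q_{k+1}(\xi_2)/d^{(k+1)/2+1}$, which lies in $\dxi_{k+1}$ by definition \eqref{eq:dxi}. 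The cross term produces the coefficient $\sqrt{k(k+1)}$ up to a similar remainder. The last term $a_{k-1}^2 q_{k-1}(\xi_1)q_{k-1}(\xi_2)/d^{(k+1)/2}$ equals $c(d)\, q_{k-1}(\xi_1)q_{k-1}(\xi_2)/d^{(k-1)/2+1}$ with $c(d) = \mathcal{O}(1)$, so it belongs to $\dxi_{k-1}\subset \dxi_{k+1}$ by \eqref{eq:dxi_p1}.

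For the second identity \eqref{eq:recurrent_sqrt}, I would first expand
\[
r^2(\xi_1)r^2(\xi_2) = (1-1/d)^{-1}\bigl(1 - \xi_1^2/d - \xi_2^2/d + \xi_1^2\xi_2^2/d^2\bigr),
\]
and multiply through by $q_{k-1}(\xi_1)q_{k-1}(\xi_2)/d^{(k-1)/2}$. The constant $1$ produces the first term on the right side of \eqref{eq:recurrent_sqrt}. For the terms $-\xi_1^2/d$ and $-\xi_2^2/d$, I would use the recurrence twice to write
\[
\xi_1^2 q_{k-1}(\xi_1) = a_{k-1}a_k\, q_{k+1}(\xi_1) + (a_{k-1}^2 + a_{k-2}^2)\, q_{k-1}(\xi_1) + a_{k-2}a_{k-3}\, q_{k-3}(\xi_1),
\]
and analogously for $\xi_2$. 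The $a_{k-1}a_k$ piece, after dividing by $d^{(k+1)/2}$, contributes exactly $-\sqrt{k(k+1)}\,q_{k+1}(\xi_1)q_{k-1}(\xi_2)/d^{(k+1)/2}$ (up to an $\mathcal{O}(1/d)$ remainder that lies in $\dxi_{k+1}$). The two remaining pieces have polynomial indices $\le k-1$ but are divided by $d^{(k-1)/2+1}$, so again they fit the definition of $\dxi_{k-1}\subset\dxi_{k+1}$. The cross term $\xi_1^2\xi_2^2/d^2$ times $q_{k-1}(\xi_1)q_{k-1}(\xi_2)/d^{(k-1)/2}$ can be handled by two successive applications of \eqref{eq:dxi_p3a} to the element $q_{k-1}(\xi_1)q_{k-1}(\xi_2)/d^{(k-1)/2+1} \in \dxi_{k-1}$, which then places it in $\dxi_{k+1}$. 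Finally, the prefactor $(1-1/d)^{-1} = 1 + \mathcal{O}(1/d)$ yields an additional $\mathcal{O}(1/d)$-correction; multiplying that by each of the terms above only increases the power of $1/d$ by one, so they all remain in $\dxi_{k+1}$ by \eqref{eq:dxi_p1}.

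The main bookkeeping obstacle is to check that every remainder term produced in this process has its polynomial indices bounded by $k+1$ and carries enough powers of $1/d$ to meet the normalization $d^{\max\{i,j\}/2+1}$ in the definition \eqref{eq:dxi}. This is routine once one verifies, term by term, that each recurrence application either raises the max index by at most one (matching the extra $1/\sqrt{d}$ gained from dividing by an additional $\sqrt{d}$) or raises it by two (matching the $1/d$ from $\xi^2/d$). The closure properties \eqref{eq:dxi_p1}--\eqref{eq:dxi_p3} in Lemma~\ref{lemma:dxi_properties}, together with the linearity of $\dxi_{k+1}$, then ensure that the full collection of remainders is captured by a single $\dxi_{k+1}(\xi_1,\xi_2)$ on the right-hand side of each identity.
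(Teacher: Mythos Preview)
Your approach is correct and is essentially the same as the paper's: apply the three-term recurrence \eqref{eq:recurrent}, use $a_k^2=(k+1)+\mathcal{O}(1/d)$ and $a_ka_{k-1}=\sqrt{k(k+1)}+\mathcal{O}(1/d)$, and absorb all lower-order pieces into $\dxi_{k+1}$ via the closure properties of Lemma~\ref{lemma:dxi_properties}.

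One small bookkeeping slip to fix in the second identity: your treatment of the cross term $\frac{\xi_1^2\xi_2^2}{d^2}\cdot\frac{q_{k-1}(\xi_1)q_{k-1}(\xi_2)}{d^{(k-1)/2}}$ does not work as written. Two successive applications of \eqref{eq:dxi_p3a} starting from an element of $\dxi_{k-1}$ land you in $\dxi_{k+3}$, not $\dxi_{k+1}$; and the prefactor you would need for that route is $(\xi_1^2/d)(\xi_2^2/d)$ acting on $q_{k-1,k-1}/d^{(k-1)/2+1}$, which carries one extra power of $1/d$ compared to the actual cross term. The clean fix is to do exactly what you already do for the $-\xi_j^2/d$ pieces: expand $\xi_j^2 q_{k-1}(\xi_j)$ via two applications of \eqref{eq:recurrent} in each variable. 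Every resulting product $q_i(\xi_1)q_j(\xi_2)$ has $i,j\le k+1$ and sits over the denominator $d^{(k-1)/2+2}=d^{(k+1)/2+1}$, which is precisely the normalization required for membership in $\dxi_{k+1}$. (This is also how the paper handles it, by first expanding $r^2(\xi)q_{k-1}(\xi)$ in a single variable and then multiplying.)
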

\begin{proof}
To lighten the notation, we write, for any $a, b \in \N_0$,
\begin{equation}\label{eq:shorthand}
q_{a,b} = q_a(\xi_1)q_b(\xi_2) \quad\text{and}\quad r_{1,2} = r(\xi_1)r(\xi_2).
\end{equation}
To show \eref{recurrent_xiq}, we use the recurrent relation in \eref{recurrent}, which gives us $\xi q_{k}(\xi) = a_k q_{k+1}(\xi) + a_{k-1} q_{k-1}(\xi)$. It follows that
\[
\frac{\xi_1 \xi_2 q_{k,k}}{d^{(k+1)/2}} = a_k^2 \frac{q_{k+1,k+1}}{d^{(k+1)/2}} + a_ka_{k-1}\frac{q_{k-1,k+1} + q_{k+1,k-1}}{d^{(k+1)/2}} + a_{k-1}^2 \frac{q_{k-1,k-1}}{d^{(k-1)/2+1}}.
\]
Note that the last term on the right-hand side of above expression is in $\dxi_{k-1}$ and thus in $\dxi_{k+1}$ [by \eref{dxi_p1}]. From \eref{recurrent_a}, we have $a_k^2 = (k+1) (1 + \mathcal{O}(1/d))$ and $a_k a_{k-1} = \sqrt{k(k+1)}  (1 + \mathcal{O}(1/d))$. Thus, $a_k^2 \frac{q_{k+1,k+1}}{d^{(k+1)/2}} = (k+1) \frac{q_{k+1,k+1}}{d^{(k+1)/2}} + \dxi_{k+1}(\xi_1, \xi_2)$, and similarly, $a_k a_{k-1}\frac{q_{k-1,k+1} + q_{k+1,k-1}}{d^{(k+1)/2}} = \sqrt{k(k+1)}\frac{q_{k-1,k+1} + q_{k+1,k-1}}{d^{(k+1)/2}} + \dxi_{k+1}(\xi_1, \xi_2)$. 

Next, we show \eref{recurrent_sqrt}. Recall from the definition in \eref{rxi} that $\sqrt{1-d^{-1}} r^2(\xi) = 1-\xi^2/d$. Thus,
\begin{align}
\sqrt{1-d^{-1}} r^2(\xi) q_{k-1}(\xi) &= q_{k-1}(\xi) - \big[\xi^2 q_{k-1}(\xi)\big] / d\\
	&= q_{k-1}(\xi) - \big[a_{k-1} a_k q_{k+1}(\xi) + (a_{k-1}^2 + a_{k-2}^2) q_{k-1}(\xi) + a_{k-2}a_{k-3} q_{k-2}(\xi)\big] / d,\label{eq:recurrent_sqrt1}
\end{align}
where to reach \eref{recurrent_sqrt1}, we have applied the recurrent formula \eref{recurrent} twice. Here, we have assumed that $k \ge 3$, but the formula is valid for all $k \in \N$, if we define $a_{-1} = a_{-2} = 0$ and $q_{-1}(\xi) = q_{-2}(\xi) = 0$. Using \eref{recurrent_sqrt1}, we can write
\[
\frac{r_{1,2}^2 q_{k-1, k-1}}{d^{(k-1)/2}} =  [d/(d-1)] \frac{q_{k-1, k-1}}{d^{(k-1)/2}} - [d/(d-1)] a_{k-1}a_k \frac{q_{k-1, k+1} + q_{k+1, k-1}}{d^{(k+1)/2}} + \dxi_{k+1}(\xi_1, \xi_2),
\]
which then leads to \eref{recurrent_sqrt}, as $d/(d-1) = 1 + \mathcal{O}(1/d)$ and $[d/(d-1)]a_{k-1} a_k = \sqrt{k(k+1)} + \mathcal{O}(1/d)$.
\end{proof}

Next, we prove Proposition~\ref{prop:expansion} by induction on the polynomial degree $k$.  Throughout the proof, we use the shorthand notation introduced in \eref{shorthand}. Recall that $q_0(x) = \widetilde{q}_0(x) = 1$ and $q_1(x) = \widetilde{q}_1(x) = x$. It is then straightforward to verify the formula \eref{expansion} for $k = 0, 1$. Specifically, 
\begin{equation}\label{eq:q0_e}
q_0(r_{1,2} x + \frac{\xi_1 \xi_2}{\sqrt{d}}) = \widetilde{q}_0(x)
\end{equation}
and
\begin{equation}\label{eq:q1_e}
q_1(r_{1,2} x + \frac{\xi_1 \xi_2}{\sqrt{d}}) = \widetilde{q}_1(x) r_{1,2} + \widetilde{q}_0(x) \frac{q_{1,1}}{\sqrt{d}}.
\end{equation}

Now we carry out the induction. Assume that \eref{expansion} holds for $k$ and $k-1$, with some $k \ge 1$. To prove it for $k + 1$, we apply the recurrence relation in \eref{recurrent}, which gives us
\begin{align}
a_{k} q_{k+1}(r_{1,2} x + \frac{\xi_1 \xi_2}{\sqrt{d}}) &= (r_{1,2} x + \frac{\xi_1 \xi_2}{\sqrt{d}}) q_{k}(r_{1,2} x + \frac{\xi_1 \xi_2}{\sqrt{d}}) - a_{k-1} q_{k-1}(r_{1,2} x + \frac{\xi_1 \xi_2}{\sqrt{d}})\\
&= (r_{1,2} x + \frac{\xi_1 \xi_2}{\sqrt{d}}) \sum_{m = 0}^{k} \qt_{k - m}(x) r_{1,2}^{k - m}\Big[\sqrt{(k)_m} \frac{q_{m,m}}{d^{m/2}}+ \dxi_{m}(\xi_1, \xi_2)\Big]\\
&\qquad\qquad\qquad - a_{k - 1} \sum_{m = 0}^{k-1} \qt_{k -1 - m}(x) r_{1,2}^{k - 1 - m}\Big[\sqrt{(k - 1)_m} \frac{q_{m,m}}{d^{m/2}}+ \dxi_{m}(\xi_1, \xi_2)\Big],\label{eq:qk_e1}
\end{align}
where in reaching the last step we have used \eref{expansion} to expand $q_{k}(r_{1,2} x + \frac{\xi_1 \xi_2}{\sqrt{d}})$ and $q_{k-1}(r_{1,2} x + \frac{\xi_1 \xi_2}{\sqrt{d}})$.

On the right-hand side of \eref{qk_e1} there are factors related to $x$ in the form of $x \qt_{k - m}(x)$. They are polynomials of $x$, and can thus be rewritten as a linear combination of the orthogonal polynomials. To that end, we first recall from \eref{polynomial_s_d} that $\set{\widetilde{q}_k(x)}$ denote the orthogonal polynomials defined for dimension $d-1$. Similar to \eref{recurrent}, they also satisfy a recurrence relation
\begin{equation}\label{eq:recurrent_dt}
x\qt_{k}(x) = \widetilde{a}_k \qt_{k+1}(x) + \widetilde{a}_{k-1} \qt_{k-1}(x),
\end{equation}
where the coefficients are
\begin{equation}\label{eq:recurrent_at}
\widetilde{a}_k = \sqrt{\frac{(k+1)(d+k-3)(d-1)}{(d+2k-1)(d+2k-3)}} = \sqrt{k+1}\, (1 + \mathcal{O}(1/d)).
\end{equation}
Applying this formula, and by setting $\tilde{a}_{-1} = 0$ and $\qt_{-1}(x) = 0$, we can write
\begin{equation}\label{eq:rec_qt}
x \qt_{k - m}(x) = \tilde{a}_{k-m} \qt_{k+1 - m}(x) + \tilde{a}_{k-1 - m} \qt_{k - 1 - m}(x) \qquad \text{for } 0 \le m \le k.
\end{equation}
Replacing all the factors of $x \qt_{k - m}(x)$ in \eref{qk_e1} by the right-hand side of \eref{rec_qt}, we can rewrite \eref{qk_e1} as a linear combination of the orthogonal polynomials $\set{\qt_{k+1-m}(x)}_{m=0}^{k+1}$, \emph{i.e.},
\begin{equation}\label{eq:qk_e2}
\begin{aligned}
q_{k+1}(r_{1,2} x + \frac{\xi_1 \xi_2}{\sqrt{d}}) &=\sum_{m = 0}^{k} \qt_{k + 1 - m}(x) r_{1,2}^{k +1 - m}\frac{\widetilde{a}_{k-m}}{a_{k} }\Big[\sqrt{(k)_m} \frac{q_{m,m}}{d^{m/2}}+ \dxi_{m}(\xi_1, \xi_2)\Big]\\
&\qquad\qquad+\sum_{m = 0}^{k-1} \qt_{k - 1 - m}(x) r_{1,2}^{k +1 - m}\frac{\widetilde{a}_{k-1-m}}{a_{k}}\Big[\sqrt{(k)_m} \frac{q_{m,m}}{d^{m/2}}+ \dxi_{m}(\xi_1, \xi_2)\Big]\\
&\qquad\qquad+\sum_{m = 0}^{k} \qt_{k - m}(x) r_{1,2}^{k - m}\frac{\xi_1 \xi_2}{a_{k} \sqrt{d}}\Big[\sqrt{(k)_m} \frac{q_{m,m}}{d^{m/2}}+ \dxi_{m}(\xi_1, \xi_2)\Big]\\
&\qquad\qquad-\sum_{m = 0}^{k-1} \qt_{k -1 - m}(x) r_{1,2}^{k - 1 - m}\frac{a_{k-1}}{a_{k}}\Big[\sqrt{(k - 1)_m} \frac{q_{m,m}}{d^{m/2}}+ \dxi_{m}(\xi_1, \xi_2)\Big]\\
&= \sum_{m=0}^{k+1} \qt_{k+1-m}(x) C_m(\xi_1,\xi_2),
\end{aligned}
\end{equation}
where $\set{C_m(\xi_1, \xi_2)}_{m=0}^{k+1}$ are some functions that only depend on $\xi_1, \xi_2$ but not on $x$. Next, we identify the exact expressions for $C_m(\xi_1, \xi_2)$. 

By examining \eref{qk_e2}, we can see that
\begin{equation}\label{eq:C0}
C_0(\xi_1, \xi_2) = r_{1,2}^{k+1} \frac{\widetilde{a}_{k}}{a_{k}}[1+ \dxi_{0}(\xi_1, \xi_2)] = r_{1,2}^{k+1}[1 + \dxi_0(\xi_1, \xi_2)],
\end{equation}
where the last equality follows from \eref{recurrent_a} and \eref{recurrent_at}.
\begin{align}
C_1(\xi_1, \xi_2) &= r_{1,2}^k \frac{\widetilde{a}_{k-1}}{a_k}\Big[\sqrt{k}\frac{q_{1,1}}{\sqrt{d}} + \dxi_1(\xi_1, \xi_2)\Big] + r_{1,2}^k \frac{\xi_1 \xi_2}{a_k\sqrt{d}}\Big[1 + \dxi_0(\xi_1, \xi_2)\Big]\\
&= r_{1,2}^k \frac{\sqrt{k}}{\sqrt{k+1}}  \Big[\sqrt{k} \frac{q_{1,1}}{\sqrt{d}} + \dxi_1(\xi_1, \xi_2)\Big] + r_{1,2}^k \frac{1}{\sqrt{k+1}}\Big[\frac{q_{1,1}}{\sqrt{d}} + \dxi_1(\xi_1, \xi_2)\Big] \qquad \text{by } \eref{recurrent_a}, \eref{recurrent_at} \text{ and } \eref{dxi_p2}\\
&=r_{1,2}^k \Big[\sqrt{k+1} \frac{q_{1,1}}{\sqrt{d}} + \dxi_1(\xi_1, \xi_2)\Big].\label{eq:C1}
\end{align}
For each $m$ in the range $2 \le m \le k+1$, we have
\begin{equation}\label{eq:Cm1}
\begin{aligned}
&C_m(\xi_1, \xi_2) = r_{1,2}^{k +1 - m}\underbrace{\Big[\frac{\widetilde{a}_{k-m}}{a_{k} }\sqrt{(k)_m} \frac{q_{m,m}}{d^{m/2}}+ \dxi_{m}(\xi_1, \xi_2)\Big]}_{\mathcal{F}_1} \\
&\qquad+ r_{1,2}^{k+1-m}\underbrace{\Big[\frac{\widetilde{a}_{k+1-m}}{a_{k}}\sqrt{(k)_{m-2}} \big(r_{1,2}^2\frac{q_{m-2,m-2}}{d^{(m-2)/2}}\big)+ r_{1,2}^2\dxi_{m-2}(\xi_1, \xi_2)\Big]}_{\mathcal{F}_2}\\
&\qquad\qquad+r_{1,2}^{k+1-m}\underbrace{\Big[\frac{1}{a_{k}}\sqrt{(k)_{m-1}} \frac{\xi_1 \xi_2 q_{m-1,m-1}}{d^{m/2}}+ \frac{\xi_1 \xi_2}{\sqrt{d}}\dxi_{m-1}(\xi_1, \xi_2)\Big]}_{\mathcal{F}_3} \\
&\qquad- r_{1,2}^{k+1-m}\underbrace{\Big[\frac{a_{k-1}}{a_k} \sqrt{(k-1)_{m-2}} \frac{q_{m-2,m-2}}{d^{(m-2)/2}} + \dxi_{m-2}(\xi_1, \xi_2)\Big]}_{\mathcal{F}_4}.
\end{aligned}
\end{equation}
Using the formulas for $a_k$ and $\widetilde{a}_k$ in \eref{recurrent_a} and \eref{recurrent_at}, we have
\begin{equation}\label{eq:F1}
\mathcal{F}_1 = \sqrt{\frac{(k+1-m) (k)_m}{k+1}} \frac{q_{m,m}}{d^{m/2}} + \dxi_m(\xi_1, \xi_2) = \frac{k+1-m}{k+1} \sqrt{(k+1)_m} \frac{q_{m,m}}{d^{m/2}} + \dxi_m(\xi_1, \xi_2).
\end{equation}
Applying \eref{recurrent_sqrt} and \eref{dxi_p3} gives us
\begin{equation}\label{eq:F2}
\mathcal{F}_2 = \sqrt{\frac{(k)_{m-1}}{k+1}}\Big(\frac{q_{m-2,m-2}}{d^{(m-2)/2}} - \sqrt{m(m-1)} \frac{q_{m-2, m} + q_{m, m-2}}{d^{m/2}}\Big) + \dxi_m(\xi_1, \xi_2).
\end{equation}
By \eref{recurrent_xiq} and \eref{dxi_p2},
\begin{equation}\label{eq:F3}
\mathcal{F}_3 = \sqrt{\frac{(k)_{m-1}}{k+1}}\Big(m\frac{q_{m,m}}{d^{m/2}} + \sqrt{m(m-1)} \frac{q_{m-2, m} + q_{m, m-2}}{d^{m/2}}\Big) + \dxi_m(\xi_1, \xi_2).
\end{equation}
Similarly, one can verify that
\begin{equation}\label{eq:F4}
\mathcal{F}_4 = \sqrt{\frac{(k)_{m-1}}{k+1}}\frac{q_{m-2,m-2}}{d^{(m-2)/2}} + \dxi_m(\xi_1, \xi_2).
\end{equation}
Substituting \eref{F1}, \eref{F2}, \eref{F3}, \eref{F4} into \eref{Cm1}, we have
\begin{equation}\label{eq:Cm}
C_m(\xi_1, \xi_2) = r_{1,2}^{k +1 - m} \Big(\sqrt{(k+1)_m} \frac{q_{m,m}}{d^{m/2}} + \dxi_m(\xi_1, \xi_2)\Big).
\end{equation}
Note that the expressions in \eref{C0}, \eref{C1} and \eref{Cm} exactly match the right-hand side of \eref{expansion} for $k+1$. Thus, by induction, we can conclude that the formula \eref{expansion} holds for all $k$.

% !TEX root = equivalence.tex

\section{Review of the Marchenko-Pastur Law}
\label{appendix:MP_law}

In this appendix, we review some basic properties of the Marchenko-Pastur (MP) law that will be used in our work. Let $X$ be an $M\times n$ matrix whose entries $X_{\mu i}$  are independent complex-valued random variables satisfying
\begin{equation}\label{eq:cond_X}
\EE X_{\mu i} = 0\qquad \EE \abs{X_{\mu i}}^2= \frac{1}{n}.
\end{equation}
Additionally, $\abs{\sqrt{n} X_{\mu i}}$ has a sufficient number of bounded moments. We also assume that $M$ and $n$ satisfy the bounds
\begin{equation} \label{NM gen}
n^{1/C} \le M \le n^C
\end{equation}
for some positive constant $C$, and define the aspect ratio 
\[
 \rp \bydef \rp_n = \frac Mn,
\]
which may depend on $n$. Now consider an $n \times  n$ matrix 
\begin{equation}\label{eq:gMP}
R = X\tran \Sigma X - \frac{\tr \Sigma}n I,
\end{equation}
where $\Sigma$ is a diagonal $M \times M$ matrix with matrix elements 
$\sigma_i$ on the diagonal. Note that the subtraction by $\frac{\tr \Sigma}n I$ in \eref{gMP} makes sure that the diagonal elements of $R$ are approximately equal to 0. We do this to match the construction in \eref{A_f}.

Assuming that the law of the diagonal entries of $\Sigma$ is given by a probability law $\pi_M = M^{-1} \sum \delta_{\sigma_i}  \to \pi$. The MP law asserts that the limiting Stieltjes transform 
of the eigenvalues of $R$, denoted by $m(z)$, satisfies the self-consistent equation
\begin{equation}\label{eq:m_gMP}
\frac 1 m = -z - \rp \int  \frac{x^2 m} {1 + x m } \pi(\dif x).
\end{equation}
See, \emph{e.g.},  eq. (2.10) in \citet{knowles2016AnisotropicLocalLaws}. (Notice  that  our equation is slightly different due to a constant shift of the eigenvalues). 
 In the special case of $\Sigma = t I$ and $R = t X\tran X - t \rp I$, the above formula reduces to
 \begin{equation}\label{eq:m_MP}
\frac 1 m = -z -  \rp\frac { t^2 m}{1 + t m}.
\end{equation}
The limiting eigenvalue density associated with \eref{m_MP} is given by 
\begin{equation}\label{eq:rho_MP}
\varrho_{\rp,t}  (x)  \dif x \bydef  \frac{\sqrt{\big[(2\sqrt{\rp} +x/t - 1)(2\sqrt{\rp} - x/t + 1)\big]_+}}{2\pi \sign(t)[x +t \rp]}\, \dif x + (1-\rp)_+ \, \delta( x+ t\rp) \dif x.
\end{equation}

As a side note, we remark  that a different convention was used in \citet{bloemendalPrincipalComponentsSample2014}, where $Y$ is an $M\times n$ matrix whose entries $Y_{i \mu}$  are independent  random variables satisfying
%\begin{equation} \label{cond on entries of X}
$\EE Y_{i \mu}\;=\;0\,,\; \EE Y_{i \mu}^2\;=\;\frac{1}{\sqrt {n M}}$. 
%\end{equation}
%The density of $Y\tran Y$ is given by 
%\begin{equation} %\label{S_MP}
%\rho(x)  dx  \;=\; \frac{  \sqrt \phi   \sqrt{ [(x-\gamma_-)(\gamma_+-x)]_+}    } {2\pi x}+  (1-\phi )_+ \, \delta(  x  ) \dif x, \quad 
%\gamma_\pm = \phi^{ 1/2}+\phi^{-1/2} \pm 2
%\end{equation}
One can check easily that $\varrho_{tH+ r} (x)  \dif x = \varrho_H \big (\frac { x-r} t \big ) \frac { \dif x}{\abs{t}}$ where $\varrho_H$ denotes the density of a symmetric matrix $H$. 
Hence  $R=  t  [\sqrt \rp  Y\tran Y-   \rp   I]$ and we have  $\varrho_R (x) \dif x= { \varrho_{Y\tran Y}(\frac { x } { t  \sqrt \rp}  + \sqrt \rp ) }/({ \,\abs{t}\sqrt \rp}) \dif x$. 
Thus our formula is identical to the one that appeared in \cite[eq. (3.3)]{bloemendalPrincipalComponentsSample2014}.

\section{Moment Bounds and Concentration Inequalities}
\label{appendix:concentration}

In this appendix, we derive several moment and concentration inequalities that will be used in our proof.

%\subsection{Moment Bounds}
\begin{lemma}
Let $\xi \sim \taumeasure$, and $q_k(x)$ be the $k$th Gegenbauer polynomial for $k \in \N_0$. For each $p \in \N$, we have
\begin{equation}\label{eq:xi_normp}
\normp{\xi}{p} = (\EE\, \abs{\xi}^{p})^{1/p} \le 2\sqrt{p}
\end{equation}
and
\begin{equation}\label{eq:qk_normp}
\normp{q_k(\xi)}{p} \le \qlp{k}{p}
\end{equation}
where $\qlp{k}{p}$ is some constant that only depends on $k$ and $p$.
\end{lemma}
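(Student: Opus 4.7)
The plan is to handle the two bounds separately, starting with \eqref{eq:xi_normp}. By the rotational symmetry of the uniform measure on $\sph$ noted right after \eqref{eq:tau_measure}, we have $\xi \overset{d}{=} \sqrt{d}\, x_1$, where $x_1$ is the first coordinate of a vector drawn uniformly from $\sph$. Using the Gaussian representation $\vx = \vg/\norm{\vg}$ with $\vg \sim \mathcal{N}(0, I_d)$ and the independence of $\vx$ and $\norm{\vg}$, one gets the closed form
\[
\EE\, \xi^{2k} \;=\; d^k\, \EE\, x_1^{2k} \;=\; \frac{(2k-1)!!\, d^k}{d(d+2)(d+4)\cdots(d+2k-2)} \;\le\; (2k-1)!!
\]
for every $k \in \N$. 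Writing $(2k-1)!! = (2k)!/(2^k k!)$ and bounding $(2k)!/k! = (k+1)(k+2)\cdots(2k) \le (2k)^k$, I would conclude $(2k-1)!! \le k^k$ and hence $\normp{\xi}{2k} \le \sqrt{k}$. For arbitrary $p \ge 1$, I would then invoke monotonicity of $L^p$ norms on a probability space to get $\normp{\xi}{p} \le \normp{\xi}{2\lceil p/2 \rceil} \le \sqrt{\lceil p/2 \rceil} \le \sqrt{p}$, which is sharper than \eqref{eq:xi_normp}.

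For \eqref{eq:qk_normp}, observe that $q_k$ is a polynomial of degree $k$, so we may write $q_k(x) = \sum_{j=0}^k c_{k,j}(d) x^j$. The triangle inequality together with \eqref{eq:xi_normp} yields
\[
\normp{q_k(\xi)}{p} \;\le\; \sum_{j = 0}^k \abs{c_{k,j}(d)}\cdot \normp{\xi}{p}^j \;\le\; \sum_{j=0}^k \abs{c_{k,j}(d)}\, (2\sqrt{p})^j,
\]
so it suffices to produce a bound on $\abs{c_{k,j}(d)}$ that is uniform in $d$ (depending only on $k$). I would obtain such a bound by induction on $k$ using the three-term recurrence \eqref{eq:recurrent} in the form $q_{k+1}(x) = [x q_k(x) - a_{k-1} q_{k-1}(x)]/a_k$, starting from $q_0 \equiv 1$ and $q_1(x) = x$. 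By the asymptotics \eqref{eq:recurrent_a}, the coefficients $a_k$ satisfy $a_k = \sqrt{k+1}(1 + \mathcal{O}(1/d))$ and are uniformly bounded above and below away from zero for all $d \ge d_0(k)$. Hence each $c_{k+1,j}(d)$ is a bounded linear combination of $c_{k,j-1}(d)$ and $c_{k-1,j}(d)$, and a routine induction produces $\max_j \abs{c_{k,j}(d)} \le M_k$ for some constant $M_k$ depending on $k$ alone. Plugging back in gives $\normp{q_k(\xi)}{p} \le (k+1) M_k (2\sqrt{p})^k \eqqcolon \qlp{k}{p}$.

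The only non-routine point is justifying that the monomial expansion coefficients $c_{k,j}(d)$ are bounded uniformly in $d$; this is what the uniform bounds on $a_k$ and $a_k^{-1}$ from \eqref{eq:recurrent_a} buy us. An alternative would be to leverage that $q_k$ converges to the $k$th (normalized) Hermite polynomial as $d \to \infty$, as discussed around \eqref{eq:co_Hermite}, and use a compactness or weak-convergence argument; but the direct recurrence-based bound gives an explicit, more quantitative constant and fits naturally within the notation already set up in Appendix~\ref{appendix:ortho_poly}.
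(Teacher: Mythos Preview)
Your approach is correct in spirit but differs from the paper's, and there is one small slip to fix.

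For \eqref{eq:xi_normp}, your Gaussian-representation argument is cleaner than the paper's, which instead writes out the density of $\xi$ explicitly, evaluates $\EE|\xi|^p$ as a ratio of gamma functions, and then invokes Stirling's formula to land on the bound $(4p)^{p/2}$. Your route through even moments and $L^p$-monotonicity is more elementary and yields the sharper constant $\sqrt{p}$.

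For \eqref{eq:qk_normp}, the paper takes a different decomposition: it writes $q_k(\xi) = c_k \xi^k + \sum_{i<k} b_{k,i}\, q_i(\xi)$, uses orthogonality to identify $b_{k,i} = -c_k\,\EE[\xi^k q_i(\xi)]$, bounds these via Cauchy--Schwarz and \eqref{eq:xi_normp}, and then obtains a recursion for $\normp{q_k(\xi)}{p}$ in terms of $\normp{q_i(\xi)}{p}$ for $i<k$. The only nontrivial ingredient is a uniform-in-$d$ bound on the leading coefficient $c_k$, which the paper gets from the product formula $c_k = \prod_{i<k} a_i^{-1}$. Your monomial-expansion approach is equally valid and arguably more direct; both proofs ultimately rest on the same uniform control of the recurrence coefficients $a_k$. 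The paper's route has the minor advantage that it only needs $c_k$ bounded (i.e.\ $a_k$ bounded below), whereas yours also needs $a_k$ bounded above, but that is immediate from \eqref{eq:recurrent_a}.

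One correction: in your displayed inequality you wrote $\normp{\xi}{p}^j$ where you need $\normp{\xi^j}{p} = \normp{\xi}{jp}^j$; these are not equal (the latter is larger by $L^p$-monotonicity). The fix is painless---replace $(2\sqrt{p})^j$ by $(2\sqrt{jp})^j \le (2\sqrt{kp})^j$---and the conclusion survives with a slightly larger $C(k,p)$. Also, your ``for all $d \ge d_0(k)$'' qualifier leaves finitely many small dimensions unaddressed; you should either note that those finitely many cases can be absorbed into $M_k$, or observe directly from the exact formula \eqref{eq:recurrent_a} that $a_k$ is bounded above and below uniformly for all $d \ge 3$.
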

\begin{remark}
Both the probability measure $\taumeasure$ and the Gegenbauer polynomials $q_k(x)$ depend on the dimension $d$. However, the upper bounds in \eref{xi_normp} and \eref{qk_normp} hold uniformly for all $d$.
\end{remark}
\begin{proof}
The probability distribution of $\xi$ is given by
\begin{equation}\label{eq:xi_pdf}
w_d(\xi) = \begin{cases}\frac{\Gamma(d/2)}{\sqrt{d \pi} \, \Gamma((d-1)/2)} (1-\xi^2/d)^{\frac{d-3}{2}} &\text{if } \abs{\xi} \le \sqrt{d}\\
0 &\text{otherwise},
\end{cases}
\end{equation}
where $\Gamma(\cdot)$ is the gamma function. Thus, we have
\begin{align}
\EE\, \abs{\xi}^p &= \frac{2\,\Gamma(d/2)}{\sqrt{d \pi} \, \Gamma((d-1)/2)} \int_0^{\sqrt{d}} \xi^p (1-\xi^2/d)^{\frac{d-3}{2}} \,\mathrm{d}\xi\\
&= \frac{d^{p/2}\,\Gamma(d/2)\,\Gamma((p+1)/2)}{\Gamma((d+p)/2)\,\Gamma(1/2)}\label{eq:xi_moments}\\
&\le (4p)^{p/2},
\end{align}
where the second line uses the relationship between the beta function and the gamma function \cite{artin2015GammaFunction}, and the last line is obtained by applying Stirling's formula for the gamma function \cite{jameson2015SimpleProofStirling}.

Next, we show \eref{qk_normp}. Two special cases are easy: for $k = 0$, we have $\normp{q_0(\xi)}{p} = 1$; for $p = 1$, $\normp{q_k(\xi)}{1} \le \normp{q_k(\xi)}{2} = 1$. Thus, in what follows we assume $k \ge 1$ and $p \ge 2$. Denote by $c_k$ the leading coefficient of the polynomial $q_k(\xi)$. Since $q_k(\xi) - c_k \xi^k$ is a polynomial of degree $k-1$, it can be written as linear combination of the lower order Gegenbauer polynomials, \emph{i.e.},
\[
q_k(\xi) = c_k \xi^k + \sum_{i=0}^{k-1} b_{k, i} q_i(\xi),
\]
where $\set{b_{k,i}}_i$ are the expansion coefficients. By using the orthogonality of the Gegenbauer polynomials (see \eref{ortho_poly}), we have, for $0 \le i \le k-1$,
\[
0 = \EE [q_k(\xi) q_i(\xi)] = c_k \EE [\xi^k q_i(\xi)] + b_{k, i},
\]
and thus
\[
q_k(\xi) = c_k \xi^k - c_k \sum_{i=0}^{k-1} \EE [\xi^k q_i(\xi)] q_i(\xi).
\]
From the triangular inequality,
\begin{equation}\label{eq:qk_normp1}
\normp{q_k(\xi)}{p} \le c_k \normp{\xi^k}{p} + c_k \sum_{i=0}^{k-1} \abs{\EE [\xi^k q_i(\xi)]} \cdot \normp{q_i(\xi)}{p}.
\end{equation}
Applying the Cauchy-Schwarz inequality, we have
\begin{equation}\label{eq:qk_normp2}
\abs{\EE [\xi^k q_i(\xi)]} \le [\EE\, \xi^{2k} \, \EE\, q_i^2(\xi)]^{1/2} \le  (8k)^{k/2},
\end{equation}
where the last inequality is due to \eref{xi_normp} and the fact that $\EE q_i^2(\xi) = 1$. In addition, $\normp{\xi^k}{p} \le (4pk)^{k/2}$ by \eref{xi_normp}. Substituting these two bounds into \eref{qk_normp1} then gives us, for $p \ge 2$,
\begin{equation}\label{eq:qk_normp3}
\normp{q_k(\xi)}{p} \le c_k (4pk)^{k/2}\Big(1 + \sum_{i=0}^{k-1} \normp{q_i(\xi)}{p}\Big).
\end{equation}
We now provide a bound for $c_k$, the leading coefficient of $q_k(\xi)$. Recall from \eref{Gegenbauer_low_degree} that $c_1 = 1$ and $c_2 = (1/\sqrt{2}) \sqrt{(d+2)/(d-1)}$. A general formula for $c_k$ can be obtained by examining the three-term recurrence relation in \eref{recurrent}, which gives us $c_{k+1} = c_k/a_k$ for every $k \in \N$. This implies that $c_k = (a_0 a_1 a_2 \ldots a_{k-1})^{-1}$, and from the explicit formula for $a_k$ in \eref{recurrent_a},
\begin{equation}
c_k = \prod_{i=0}^{k-1} \Big(\frac{1+(2/d)i}{1+i}\Big)^{1/2}\Big(1+\frac{i}{i+d-2}\Big)^{1/2}\le 2^{k/2}
\end{equation}
for all $d \ge 2$. Using this estimate in \eref{qk_normp3} leads to
\[
\normp{q_k(\xi)}{p} \le (8pk)^{k/2}\Big(1 + \sum_{i=0}^{k-1} \normp{q_i(\xi)}{p}\Big), \qquad \text{for all } k \in \N.
\]
Starting from $\normp{q_0(\xi)}{p} = 1$, we can apply the above bound recursively to verify \eref{qk_normp}.                                                    
\end{proof}

In the discussion of the equivalence principle for general nonlinear kernel functions in \sref{general_f} and \sref{gaussian}, we examine three closely-related probability distributions: the standard normal distribution $\mathcal{N}(0, 1)$, the probability measure $\taumeasure$ as defined in \eref{tau_measure}, and the distribution of the random variable 
\begin{equation}\label{eq:bxi}
\breve{\xi}_d \bydef \sqrt{d} \vx\tran \vy,
\end{equation}
where $\vx, \vy \in \R^d$ are two independent random vectors sampled from $\mathcal{N}(0, I_d/d)$. It can be readily verified that, as $d \to \infty$, the latter two distributions converge to $\mathcal{N}(0, 1)$.

\begin{lemma}\label{lemma:w_wd_wdb}
Let $f: \R \mapsto \R$ be a function. If the following conditions hold: (a) $\int f^2(x) w(x) \dif x < \infty$, and (b) there exist positive constants $c_1, c_2, c_3$ such that $\abs{f(x)} < c_1 e^{c_2 \abs{x}}$ when $\abs{x} \ge c_3$, then
\begin{equation}\label{eq:f2_w_wd}
\int_{\R} f^2(x) \abs{w(x) - w_d(x)} \dif x \xrightarrow{d\to\infty} 0
\end{equation}
and
\begin{equation}\label{eq:f2_w_wdb}
\int_{\R} f^2(x) \abs{w(x) - \breve{w}_d(x)} \dif x \xrightarrow{d\to\infty} 0.
\end{equation}
Here, $w(x)$ denotes the probability density function of $\mathcal{N}(0, 1)$, $w_d(x)$ denotes the density function of the probability measure $\taumeasure$ as defined in \eref{tau_measure}, and $\breve{w}_d(x)$ represents the probability density function of the random variable $\breve{\xi}_d$ defined in \eref{bxi}.
\end{lemma}
\begin{proof}
We begin with \eref{f2_w_wd}. Let $M$ be a constant in the interval $[c_3, \sqrt{d/2}]$. We split the integration in \eref{f2_w_wd} into two parts:
\begin{align}
&\int_{\R} f^2(x) \abs{w(x) - w_d(x)} \dif x = \Big(\int_{\abs{x} \le M} + \int_{\abs{x} \ge M}\Big) f^2(x) \abs{w(x) - w_d(x)} \dif x\\
&\qquad\qquad\le \sup_{\abs{x} \le M} \abs{1 - w_d(x)/w(x)} \cdot \int_{\abs{x} \le M} f^2(x) w(x) \dif x + 2 c_1^2 \int_{x \ge M} e^{2 c_2 x} [w(x) + w_d(x)] \dif x.\label{eq:f2_w_wd1}
\end{align}
In reaching the second step, we have used the condition that $\abs{f(x)} < c_1 e^{c_2 \abs{x}}$ for $x \ge M$, and we have also used the property that the densities $w(x)$ and $w_d(x)$ are both even functions.

A closed-form expression of $w_d(x)$ can be found in \eref{xi_pdf}. By applying Stirling's formula for the gamma function \cite{jameson2015SimpleProofStirling} and Taylor's expansion for $\log(1- x^2/d)$, it is straightforward to verify that $w_d(x)/w(x) = [1 + \mathcal{O}(1/d)] e^{3 x^2 /(2d)}$ over the interval $\abs{x} \le M$, where $M$ is some constant satisfying $M < \sqrt{d/2}$. Consequently, we have
\begin{equation}\label{eq:w_wd_ratio}
\sup_{\abs{x} \le M} \abs{1 - w_d(x)/w(x)} \xrightarrow{d\to\infty} 0.
\end{equation}
Once more, using the closed-form expression in \eref{xi_pdf}, we can directly confirm that $w_d(x) \le [e^{3/2} + \mathcal{O}(1/d)] w(x)$ for all $x \in \R$. Therefore, the second term on the right-hand side of \eref{f2_w_wd1} can be bounded as follows:
\begin{equation}\label{eq:w_wd_tail}
\int_{x \ge M} e^{2 c_2 x} [w(x) + w_d(x)] \dif x \le \mathcal{O}(1) \int_{x \ge M} e^{2c_2 x} w(x) \dif x \le \mathcal{O}(1) e^{-M^2/4},
\end{equation}
where the final inequality follows from the Cauchy-Schwartz inequality and standard Gaussian tail bounds. By assumption, $\int f^2(x) w(x) \dif x < \infty$. Upon substituting \eref{w_wd_ratio} and \eref{w_wd_tail} into \eref{f2_w_wd1}, we have 
\begin{equation}
\limsup_{d \to \infty} \int f^2(x) \abs{w(x) - w_d(x)} \dif x \le \mathcal{O}(1) e^{-M^2/4}.
\end{equation}
Since the constant $M$ can be chosen arbitrarily, we have then demonstrated \eref{f2_w_wd}.

Next, we verify the expression in \eref{f2_w_wdb}. Let $\varphi(t) \bydef \EE[e^{\mathrm{i} t \breve{\xi}_d}]$ represent the characteristic function of the random variable $\breve{\xi}_d$ defined in \eref{bxi}. It is straightforward to verify that $\varphi(t) = (\EE[e^{\mathrm{i} t \sqrt{d} x_1 y_1}])^d = (1 + t^2/d)^{-d/2}$. By applying the inversion formula for characteristic functions, we obtain
\begin{align}
&\abs{w(x) - \breve{w}_d(x)} \le \frac{1}{2\pi} \abs{\int_{\R} e^{-\mathrm{i} tx} (e^{-t^2/2} - (1 + t^2/d)^{-d/2}) \dif t}\\
&\quad\le  \frac{1}{2\pi} \int_{\R} \abs{e^{-t^2/2} - (1 + t^2/d)^{-d/2}} \dif t\\
&\quad\le  \frac{1}{2\pi} \int_{\abs{t} \le \sqrt{4\log d}} \abs{(1 + t^2/d)^{-d/2} - e^{-t^2/2}} \dif t+ \frac{1}{\pi}\Big(\int_{\sqrt{4\log d} \le t \le \sqrt{d}} + \int_{t \ge \sqrt{d}}\Big) (1 + t^2/d)^{-d/2} \dif t.\label{eq:w_wdb1}
\end{align}
In reaching the final step, we have used the inequality $\log(1+t^2/d) \le t^2/d$, which then implies that $(1+t^2/d)^{-d/2} \ge e^{-t^2/2}$. It is straightforward to verify via Taylor's expansion that 
\begin{equation}\label{eq:w_wdb_i1}
\sup_{\abs{t} \le \sqrt{4\log d}} \abs{e^{t^2/2}(1 + t^2/d)^{-d/2} - 1}  = \mathcal{O}\Big(\frac{\log^2 d}{d}\Big).
\end{equation}
We then have:
\begin{equation}
\int_{\abs{t} \le \sqrt{4\log d}} \abs{(1 + t^2/d)^{-d/2} - e^{-t^2/2}} \dif t = \mathcal{O}\Big(\frac{\log^2 d}{d}\Big).
\end{equation}
To bound the second integral on the right-hand side of \eref{w_wdb1}, we use the inequality $\log(1 + t^2/d) \ge (\log2) t^2/d$ for $0 \le t \le \sqrt{d}$. It follows that
\begin{equation}\label{eq:w_wdb_i2}
\int_{\sqrt{4\log d} \le t \le \sqrt{d}} (1 + t^2/d)^{-d/2} \dif t \le \int_{\sqrt{4\log d} \le t \le \sqrt{d}} e^{-(\log2)  t^2/2} \dif t = \mathcal{O}(d^{-1}),
\end{equation}
where the final step uses standard Gaussian tail bounds. For the third integral on the right-hand side of \eref{w_wdb1}, we obtain
\begin{equation}\label{eq:w_wdb_i3}
\int_{t \ge \sqrt{d}} (1 + t^2/d)^{-d/2} \dif t = \sqrt{d} \int_1^\infty (1+t^2)^{-d/2} \dif t \le \sqrt{d} \int_1^\infty t^{-d} \dif t = \mathcal{O}(d^{-1/2}).
\end{equation}
By combining the bounds \eref{w_wdb_i1}, \eref{w_wdb_i2}, and \eref{w_wdb_i3} for the three integrals, we can establish that
\begin{equation}\label{eq:w_wdb}
\sup_{x \in \R} \, \abs{w(x) - \breve{w}_d(x)} = \mathcal{O}(d^{-1/2}).
\end{equation}
Let $M_d$ be a positive number that depends on $d$. We divide the integral in \eref{f2_w_wdb} into two parts, following a similar approach as in \eref{f2_w_wd1}. This results in:
\begin{align}
\int_{-\infty}^\infty f^2(x) \abs{w(x) - \breve{w}_d(x)} \dif x &\le \sup_{\abs{x} \le M_d} \abs{1 - \breve{w}_d(x)/w(x)} \cdot \int_{\abs{x} \le M_d} f^2(x) w(x) \dif x\label{eq:w_wdb_1}\\
&\qquad\qquad+ 2 c_1^2 \int_{x \ge M_d} e^{2 c_2 x} w(x) \dif x + 2 c_1^2 \int_{x \ge M_d} e^{2 c_2 x} \breve{w}_d(x) \dif x\\
&\le \mathcal{O}(d^{-1/2})e^{M_d^2/2}  + \mathcal{O}(1) e^{-M_d^2/4} + 2 c_1^2 \int_{x \ge M_d} e^{2 c_2 x} \breve{w}_d(x) \dif x,\label{eq:w_wdb_2}
\end{align}
where in obtaining the last inequality, we have utilized \eref{w_wdb}, the assumption that $\int f^2(x) w(x) \dif x < \infty$, and the estimate provided in \eref{w_wd_tail} to bound the first two terms on the right-hand side of \eref{w_wdb_1}. By applying the Cauchy-Schwarz inequality, we have:
\begin{equation}\label{eq:w_wdb_3}
\int_{x \ge M_d} e^{2 c_2 x} \breve{w}_d(x) \dif x \le \big[\EE e^{4c_2 \breve{\xi}_d} \cdot \P(\breve{\xi}_d \ge M_d)\big]^{1/2} \le \big[\EE e^{4c_2 \breve{\xi}_d}\cdot \EE \breve{\xi}_d^2\big]^{1/2}/M_d = \mathcal{O}(M_d^{-1}),
\end{equation}
with the final step due to the identities $\EE e^{4c_2 \breve{\xi}_d} = (1 - 16c_2^2/d)^{-d/2}$ and $\EE \breve{\xi}_d^2 = 1$. The desired result in \eref{f2_w_wdb} then follows by substituting \eref{w_wdb_3} into \eref{w_wdb_2}, and setting $M_d = \sqrt{\log d}/2$.
\end{proof}

%By symmetry, all the odd moments of $\xi$ are equal to 0. To compute the even moments, we can use the definition of the beta function and get
%\begin{equation}\label{eq:xi_moment}
%\EE \xi^{2k} = \frac{d^k\Gamma(k+1/2)\Gamma(d/2)}{\Gamma(k+d/2)\sqrt{\pi}} = \frac{(2k-1)!!}{\prod_{0 \le j < k} (1+2j / d)} =  (2k-1)!!(1+\mathcal{O}(d^{-1})).
%\end{equation}

In the following discussion, we derive several useful moment and high probability bounds for linear and quadratic functions of independent random variables. We first recall the following estimates, whose proof can be found in \cite[Lemma 7.8, Lemma 7.9]{erdos2017Dynamicalapproach}.
\begin{lemma}\label{lemma:linear_quadratic_lp}
Let $(X_i)_{i \in [n]}$ and $(Y_j)_{j \in [m]}$ be independent families of random variables, and $(b_i)_{i \in [n]}$ and $(c_{ij})_{i \in [n], j \in [m]}$ be deterministic complex-valued coefficients; here $n, m \in \N$. Suppose that all entries of $(X_i)$ and $(Y_j)$ are independent and satisfy
\begin{equation}
\EE X = \EE Y = 0, \qquad \EE \abs{X}^2 = \EE \abs{Y}^2 = 1, \qquad \normp{X}{p} \le \kappa_{p},\qquad \normp{Y}{p} \le \gamma_p,
\end{equation}
for all $p \in \N$ and some constants $\kappa_p, \gamma_p$. Then, we have
\begin{subequations}
\begin{equation}\label{eq:linear_lp}
\normp{\textstyle\sum_i b_i X_i}{p} \le (C p)^{1/2} \kappa_p \Big(\sum_i \abs{b_i}^2\Big)^{1/2}
\end{equation}
and
\begin{equation}\label{eq:quadratic_lp}
\normp{\textstyle\sum_{i,j} c_{ij} X_i Y_j}{p} \le C p \kappa_p \gamma_p \Big(\sum_{i,j} \abs{c_{ij}}^2\Big)^{1/2},
\end{equation}
\end{subequations}
where $C$ is an absolute constant.
\end{lemma}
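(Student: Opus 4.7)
My plan is to prove both bounds by the standard martingale/Burkholder route, handling the linear estimate first and then reducing the quadratic estimate to it by conditioning.

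\textbf{Linear bound.} Set $S \bydef \sum_i b_i X_i$. Since the $X_i$ are independent with mean zero, $S$ is the terminal value of a martingale with increments $b_i X_i$. The Burkholder-Davis-Gundy inequality gives
\[
\normp{S}{p} \le (C p)^{1/2} \normp{\big(\textstyle\sum_i \abs{b_i X_i}^2\big)^{1/2}}{p},
\]
where the explicit $\sqrt{p}$ constant comes from the standard martingale square-function inequality. Squaring and applying Minkowski in $L^{p/2}$ to the nonnegative sum $\sum_i \abs{b_i}^2 \abs{X_i}^2$,
\[
\normp{\big(\textstyle\sum_i \abs{b_i X_i}^2\big)^{1/2}}{p}^2 = \norms{\textstyle\sum_i \abs{b_i}^2 \abs{X_i}^2}_{L^{p/2}} \le \sum_i \abs{b_i}^2 \normp{X_i}{p}^2 \le \kappa_p^2 \sum_i \abs{b_i}^2,
\]
which combined with the previous display gives \eref{linear_lp}.

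\textbf{Quadratic bound.} For fixed $Y = (Y_j)$, the sum $\sum_{i,j} c_{ij} X_i Y_j = \sum_i \tilde{b}_i(Y) X_i$ is a mean-zero linear form in the independent family $(X_i)$, with random coefficients $\tilde{b}_i(Y) \bydef \sum_j c_{ij} Y_j$. Applying \eref{linear_lp} conditionally on $Y$ and then taking the $L^p$ norm in $Y$,
\[
\normp{\textstyle\sum_{i,j} c_{ij} X_i Y_j}{p} \le (Cp)^{1/2} \kappa_p \, \normp{\big(\textstyle\sum_i \absb{\tilde{b}_i(Y)}^2\big)^{1/2}}{p}.
\]
Squaring the last factor and applying Minkowski in $L^{p/2}$ again,
\[
\normp{\big(\textstyle\sum_i \absb{\tilde{b}_i(Y)}^2\big)^{1/2}}{p}^2 = \norms{\textstyle\sum_i \absb{\tilde{b}_i(Y)}^2}_{L^{p/2}} \le \sum_i \normp{\tilde{b}_i(Y)}{p}^2.
\]
Each $\tilde{b}_i(Y) = \sum_j c_{ij} Y_j$ is itself a mean-zero linear form in the independent family $(Y_j)$, so a second application of \eref{linear_lp} yields $\normp{\tilde{b}_i(Y)}{p}^2 \le C p\, \gamma_p^2 \sum_j \abs{c_{ij}}^2$. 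Substituting back gives \eref{quadratic_lp}.

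\textbf{Remarks on difficulty.} The only nontrivial ingredient is the martingale square-function inequality with its explicit $\sqrt{p}$ dependence; everything else is two applications of Minkowski's inequality and the independence structure. I would expect the main care to be taken in tracking the constants through the two conditioning steps, and in noting that the independence of $(X_i)$ from $(Y_j)$ is precisely what legitimizes treating $\tilde{b}_i(Y)$ as a deterministic coefficient vector under the conditional $L^p$ norm. Because the statement is credited to standard references, the write-up can remain at the sketch level above.
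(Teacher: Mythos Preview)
Your proposal is correct and follows exactly the standard route: the paper does not give its own proof but cites \cite[Lemmas~7.8--7.9]{erdos2017Dynamicalapproach}, where the argument is precisely Burkholder's martingale inequality with the explicit $\sqrt{p}$ constant for the linear form, followed by the conditioning-plus-Minkowski reduction you describe for the bilinear form. The only small point to flag in a polished write-up is that the Minkowski step in $L^{p/2}$ needs $p\ge 2$; for $p=1$ the bound follows directly from $\normp{S}{1}\le\normp{S}{2}$.
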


\begin{proposition}\label{prop:quadratic_qab}
Let $(c_{ij})_{i, j \in [n]}$ be a deterministic matrix with complex-valued coefficients, and $(\xi_i)_{i \in [n]}$ be an independent family of random variables with $\xi_i  \sim \taumeasure$. For every $a, b \in \N$, we have
\begin{equation}\label{eq:quadratic_qab}
\normp{\textstyle \sum_{i, j} c_{ij} q_a(\xi_i) q_b(\xi_j) - (\sum_i c_{ii})\delta_{ab}}{p} \le C(a, b, p) \big(\textstyle\sum_{i,j} \abs{c_{ij}}^2\big)^{1/2},
\end{equation}
where $C(a, b, p)$ is a quantity that depends on $a$, $b$, and $p$, but not on $n$.
\end{proposition}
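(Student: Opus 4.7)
The plan is to center the quadratic form, split it into diagonal and off-diagonal contributions, and bound each piece by appealing to Lemma~\ref{lemma:linear_quadratic_lp}. Set $Y_i := q_a(\xi_i)$ and $Z_i := q_b(\xi_i)$. Since $a,b\in\N = \set{1,2,\ldots}$, the orthonormality relation \eref{ortho_poly} applied against $q_0 \equiv 1$ gives $\EE Y_i = \EE Z_i = 0$, while $\EE[Y_i Z_i] = \delta_{ab}$. Hence the subtracted quantity $\delta_{ab}\sum_i c_{ii}$ coincides with $\EE\sum_{i,j} c_{ij} Y_i Z_j$, and
\begin{equation*}
X := \sum_{i,j} c_{ij} Y_i Z_j - \delta_{ab}\sum_i c_{ii} = \underbrace{\sum_i c_{ii}\big(Y_i Z_i - \delta_{ab}\big)}_{=:\,X_{\mathrm{d}}} + \underbrace{\sum_{i\neq j} c_{ij} Y_i Z_j}_{=:\,X_{\mathrm{o}}}
\end{equation*}
is mean-zero. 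It then suffices to bound $\norm{X_{\mathrm{d}}}_{L^p}$ and $\norm{X_{\mathrm{o}}}_{L^p}$ separately and combine them by the triangle inequality.

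For $X_{\mathrm{d}}$, the summands are i.i.d.\ and mean-zero, and Cauchy--Schwarz together with the moment bound \eref{qk_normp} gives $\norm{Y_i Z_i - \delta_{ab}}_{L^p} \le 2\,C(a,2p)\,C(b,2p)$. Invoking \eref{linear_lp} of Lemma~\ref{lemma:linear_quadratic_lp} with coefficients $b_i = c_{ii}$ then bounds $\norm{X_{\mathrm{d}}}_{L^p}$ by $C_1(a,b,p)\,(\sum_i |c_{ii}|^2)^{1/2}$. For $X_{\mathrm{o}}$ the obstruction to directly applying Lemma~\ref{lemma:linear_quadratic_lp} is that $Y_i$ and $Z_i$ share the variable $\xi_i$, so $\set{Y_i}$ and $\set{Z_j}$ do not form independent families. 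I would resolve this by a standard decoupling step: introducing an independent copy $(\xi_i')$ of $(\xi_i)$ and setting $Z_j' := q_b(\xi_j')$, the de~la~Pe\~na--Montgomery-Smith decoupling inequality for off-diagonal (``tetrahedral'') forms yields a constant $K_p$ depending only on $p$ with
\begin{equation*}
\EE \abs{X_{\mathrm{o}}}^p \;\le\; K_p \cdot \EE \absB{\textstyle\sum_{i\neq j} c_{ij} Y_i Z_j'}^{p}.
\end{equation*}
The decoupled form on the right involves two independent families of i.i.d.\ mean-zero variables whose $L^p$ norms are controlled by \eref{qk_normp}, so \eref{quadratic_lp} of Lemma~\ref{lemma:linear_quadratic_lp} (applied with coefficients $c_{ij}\charfn(i\neq j)$) gives the matching bound $C_2(a,b,p)(\sum_{i,j}|c_{ij}|^2)^{1/2}$.

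The only non-routine ingredient, and hence the main obstacle, is justifying the decoupling step; I intend to import it as a black box from the probability literature. If a self-contained argument is preferred, the cleanest alternative is a martingale-difference decomposition with respect to $\mathcal{F}_k = \sigma(\xi_1,\ldots,\xi_k)$: the differences take the form $M_k = c_{kk}(Y_k Z_k - \delta_{ab}) + Z_k \sum_{i<k} c_{ik} Y_i + Y_k \sum_{j<k} c_{kj} Z_j$, and applying Burkholder's (or Rosenthal's) inequality followed by \eref{qk_normp} and Cauchy--Schwarz yields the same bound, at the cost of more bookkeeping. A third route is a direct expansion of $\EE\abs{X}^{2p}$ for even $2p$, grouping the $4p$ summation indices by the partition induced by coinciding values and exploiting that each atom of the partition must contribute a nonzero mixed moment $\EE[q_a(\xi)^s q_b(\xi)^t]$ (uniformly bounded in $d$ by \eref{qk_normp}); the $n$-dependence drops out because only partitions whose atoms all have size $\ge 2$ contribute, and the dominant pairing configurations produce exactly $(\sum_{i,j}|c_{ij}|^2)^p$.
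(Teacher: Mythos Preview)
Your proposal is correct and follows the same overall strategy as the paper: split into diagonal and off-diagonal parts, handle the diagonal via \eref{linear_lp}, and handle the off-diagonal via decoupling followed by \eref{quadratic_lp}. The one substantive difference is in the decoupling step. You invoke the de~la~Pe\~na--Montgomery-Smith inequality as a black box (introducing an independent copy $(\xi_i')$), whereas the paper gives a short self-contained argument: for $i\neq j$ one has $1 = \tfrac{1}{2^{n-2}}\sum_{I\subset[n]}\charfn(i\in I)\charfn(j\in I^c)$, so the off-diagonal sum is an average over subsets $I$ of bilinear forms $\sum_{i\in I,\,j\in I^c} c_{ij} q_a(\xi_i) q_b(\xi_j)$, to each of which \eref{quadratic_lp} applies directly since $\{q_a(\xi_i)\}_{i\in I}$ and $\{q_b(\xi_j)\}_{j\in I^c}$ are independent families. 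The paper's route avoids the external citation and keeps the argument elementary; your route is equally valid and perhaps more recognizable to readers familiar with decoupling literature.
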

\begin{proof}
By the triangular inequality, we have
\begin{equation}\label{eq:quadratic_qab_triangle}
\normp{\textstyle \sum_{i, j} c_{ij} q_a(\xi_i) q_b(\xi_j) - (\sum_i c_{ii})\delta_{ab}}{p} \le \normp{\textstyle \sum_{i\neq j} c_{ij} q_a(\xi_i) q_b(\xi_j)}{p} + \normp{\textstyle \sum_{i} c_{ii} \big(q_a(\xi_i) q_b(\xi_i) - \delta_{ab}\big)}{p}.
\end{equation}
To bound the first term on the right-hand side, we use the standard decoupling technique. Observe that, for every $i \neq j$, the following identity holds:
\begin{equation}
1 = \frac{1}{Z_n} \sum_{I \subset [n]} \charfn(i \in I) \charfn(j \in I^c),
\end{equation}
where the sum ranges over all subsets of $[n]$, and $Z_n = 2^{n-2}$. Applying this identity and the triangular inequality allows us to write
\begin{equation}\label{eq:quadratic_qab1}
\normp{\textstyle \sum_{i\neq j} c_{ij} q_a(\xi_i) q_b(\xi_j)}{p} \le \frac{1}{Z_n} \sum_{I \subset [n]} \normp{\textstyle \sum_{i \in I, j \in I^c} c_{ij} q_a(\xi_i) q_b(\xi_j)}{p}.
\end{equation}
Note that the families of random variables $\set{q_a(\xi_i)}_{i \in I}$ and $\set{q_b(\xi_j)}_{j \in I^c}$ have mean-zero independent components with unit variances; they are also mutually independent. We can then apply \eref{quadratic_lp} to bound each term in the sum in \eref{quadratic_qab1}. Moreover, the sum involves a total of $2^n-2$ terms. Thus, 
\begin{equation}\label{eq:quadratic_qab2}
\normp{\textstyle \sum_{i\neq j} c_{ij} q_a(\xi_i) q_b(\xi_j)}{p} \le 4Cp \normp{q_a(\xi)}{p}\normp{q_b(\xi)}{p}\Big(\sum_{i\neq j} \abs{c_{ij}}^2\Big)^{1/2}.
\end{equation}

Now we bound the second term on the right-hand side of \eref{quadratic_qab_triangle}. Write $X_i = q_a(\xi_i) q_b(\xi_i) - \delta_{ab}$ and $\sigma = \normp{X_i}{2}$. Then $(X_i / \sigma)_i$ is a family of independent random variables that satisfy the condition of Lemma~\ref{lemma:linear_quadratic_lp}. Using \eref{linear_lp} gives us
\begin{align}
\normp{\textstyle \sum_{i} c_{ii} \big(q_a(\xi_i) q_b(\xi_i) - \delta_{ab}\big)}{p} &= \sigma \normp{\textstyle \sum_{i} c_{ii} (X_i/\sigma)}{p}\\
&\le (CP)^{1/2} \normp{X_i}{p} \big(\textstyle\sum_{i} \abs{c_{ii}}^2\big)^{1/2}.
\end{align}
By the triangular inequality (in the first line) and the Cauchy-Schwarz inequality (in the third line),
\begin{align}
\normp{X_i}{p} &\le \normp{q_a(\xi_i) q_b(\xi_i)}{p} + 1\\
&= (\EE \abs{q_a(\xi_i)}^p \abs{q_b(\xi_i)}^p)^{1/p} + 1\\
&\le \normp{q_a(\xi_i)}{2p} \normp{q_b(\xi_i)}{2p} + 1.\label{eq:quadratic_qab3}
\end{align}
Substituting \eref{quadratic_qab2} and \eref{quadratic_qab3} into \eref{quadratic_qab_triangle}, and by applying the moment bound in \eref{qk_normp}, we complete the proof of the statement in \eref{quadratic_qab}.
\end{proof}

The moment estimates obtained above can be easily turned into high probability bounds, as follows.
\begin{proposition}\label{eq:pointwise_hpb}
Suppose $n = \mathcal{O}(d^\ell)$ for some fixed $\ell \in \N$. Let $(\xi_i)_{i \in [n]}$ be an independent family of random variables with $\xi_i  \sim \taumeasure$, and let $A_k$ be the matrix defined in \eref{Ak}. For each fixed $k \in \N_0$, we have the bounds
\begin{align}
\max_{i \in [n]}\, \abs{q_k(\xi_i)} &\prec 1,\label{eq:qk_hpb}\\
\norm{A_k}_\infty &\prec \frac{1}{\sqrt{n}},\label{eq:Ak_inf_hpb}\\
\max_{i \in [n]}\, \abs{r^k(\xi_i) - 1} &\prec \frac{1}{d},\label{eq:rxi_hpb}
\end{align}
where $r(\xi)$ is the function defined in \eref{rxi}.
\end{proposition}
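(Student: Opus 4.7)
The plan is to derive all three bounds from the dimension-uniform $L^p$ moment estimate $\normp{q_k(\xi)}{p}\le \qlp{k}{p}$ of \eqref{eq:qk_normp}, combined with Markov's inequality and a union bound over the polynomially many indices involved. No extra concentration machinery is needed, because stochastic domination only requires $d^\varepsilon$-type slack in the threshold, and this is comfortably affordable once the moment order $p$ is taken large enough relative to $\varepsilon$, $D$, and $\ell$.

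First I would handle \eqref{eq:qk_hpb}. Markov applied to $\abs{q_k(\xi_i)}^p$ gives $\P(\abs{q_k(\xi_i)}>d^\varepsilon)\le \qlp{k}{p}^p\, d^{-\varepsilon p}$, and a union bound over $i\in[n]$ with $n=\mathcal{O}(d^\ell)$ yields $\P(\max_i\abs{q_k(\xi_i)}>d^\varepsilon)\le \qlp{k}{p}^p\, d^{\ell-\varepsilon p}$. Choosing $p$ larger than $(\ell+D)/\varepsilon$ delivers the required bound $d^{-D}$ for all sufficiently large $d$. For \eqref{eq:Ak_inf_hpb}, I would use the rotational invariance of $\unifsp$: for each fixed $i\ne j$, $\sqrt{d}\,\vx_i\tran\vx_j\sim\taumeasure$, so $\sqrt{n}\,(A_k)_{ij}\eqd q_k(\xi)$ with $\xi\sim\taumeasure$. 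The same Markov estimate as above applies entry by entry, and a union bound over the at most $n^2=\mathcal{O}(d^{2\ell})$ off-diagonal positions (the diagonal is zero by construction) closes the argument.

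For \eqref{eq:rxi_hpb}, I would combine the $k=1$ instance of \eqref{eq:qk_hpb} — which reads $\max_i\abs{\xi_i}=\max_i\abs{q_1(\xi_i)}\prec 1$ — with the explicit formula \eqref{eq:rxi}. The factor $(1-1/d)^{-1/4}=1+\mathcal{O}(1/d)$ is deterministic, while $\max_i \xi_i^2/d\prec 1/d$ together with the elementary bound $\abs{(1-x)^{1/2}-1}\le \abs{x}$ for $\abs{x}\le 1/2$ gives $(1-\xi_i^2/d)^{1/2}=1+\mathcal{O}_\prec(1/d)$ uniformly in $i$. Multiplying the two factors yields $r(\xi_i)=1+\mathcal{O}_\prec(1/d)$, and a single Taylor expansion of $x\mapsto x^k$ around $x=1$ promotes this to $r^k(\xi_i)-1=\mathcal{O}_\prec(1/d)$, again uniformly in $i\in[n-1]$.

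The main potential obstacle is minor: one must check that the constants $\qlp{k}{p}$ in \eqref{eq:qk_normp} are genuinely independent of $d$ (which the preceding appendix has verified), so that the choice $p=p(\varepsilon,D,\ell)$ is legitimate. Beyond this, the proof is an entirely routine combination of uniform moment bounds, Markov's inequality, and a union bound.
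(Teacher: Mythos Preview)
Your proposal is correct and follows essentially the same approach as the paper: moment bound \eqref{eq:qk_normp} plus Markov and a union bound for \eqref{eq:qk_hpb} and \eqref{eq:Ak_inf_hpb}, and for \eqref{eq:rxi_hpb} the deterministic estimate $\abs{r(\xi)-1}\le \xi^2/d+\mathcal{O}(1/d)$ combined with $\max_i\abs{\xi_i}\prec 1$, followed by a finite expansion of $r^k-1$ in powers of $r-1$. The only cosmetic difference is that the paper writes the last step via the binomial identity $r^k-1=\sum_{t=1}^k\binom{k}{t}(r-1)^t$ rather than calling it a Taylor expansion, but this is the same computation.
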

\begin{proof}
For any $\epsilon > 0$ and $p \in \N$,
\begin{align}
\P(\textstyle \max_i \, \abs{q_k(\xi_i)} \ge d^\epsilon) &\le n \, \P(\,\abs{q_k(\xi)} \ge d^\epsilon)\\
&\le C d^\ell \, \P(\, \abs{q_k(\xi)}^p \ge d^{\epsilon p})\\
&\le C d^{\ell - \epsilon p} \qlp{k}{p},
\end{align}
where the last step is by the Markov inequality and by the moment bound in \eref{qk_normp}. For any $D > 0$, we can always find $p = p(\epsilon, D)$ such that $C d^{\ell - \epsilon p} \qlp{k}{p} < d^{-D}$ for all sufficiently large $d$. Since $\epsilon$ and $D$ were arbitrary, we have verified \eref{qk_hpb}. To show \eref{Ak_inf_hpb}, observe that the law of $(A_k)_{ij}$, for any $i \neq j$, is equal to that of $q_k(\xi)/\sqrt{n}$ with $\xi \sim \taumeasure$. By \eref{qk_hpb}, we have $\abs{(A_k)_{ij}} \prec 1/\sqrt{n}$, uniformly over $\set{i, j \in [n]: i \neq j}$. Applying the union bound over $\mathcal{O}(d^{2\ell})$ entries of $A_k$ then yields \eref{Ak_inf_hpb}.

Now we prove \eref{rxi_hpb}. The case of $k = 0$ is trivial, so we assume $k \ge 1$. From the definition in \eref{rxi}, $r(\xi) = (1-\xi^2/d)^{1/2} + \mathcal{O}(1/d)$, and thus
\begin{align}
\abs{r(\xi) - 1} &\le \abs{1 - (1- \xi^2/d)^{1/2}} + \mathcal{O}(1/d)\\
&\le \xi^2/d + \mathcal{O}(1/d).
\end{align}
Combining this deterministic bound with the high-probability bound in \eref{qk_hpb}, we have that, for $\xi_i \sim \taumeasure$, 
\begin{equation}\label{eq:rxi_1}
r(\xi_i) - 1 = \mathcal{O}_\prec(1/d).
\end{equation}
For each $k \in \N$,
\[
\abs{r^k(\xi_i) - 1} \le \sum_{t=1}^k {k\choose{t}} \abs{r(\xi_i) - 1}^t.
\]
Applying \eref{rxi_1} and the union bound then allows us to conclude \eref{rxi_hpb}.
\end{proof}

\begin{proposition}\label{prop:rxi_f1f2}
Suppose that $n = \mathcal{O}(d^\ell)$ for some fixed $\ell \in \N$. Let $(\xi_i)_{i \in [n]}$ be an independent family of random variables with $\xi_i  \sim \taumeasure$. For every $k, t \in N_0$, and every function $\dxi_t(\xi_1, \xi_2)$ from the function class defined in \eref{dxi}, we can write
\begin{equation}\label{eq:rxi_f1f2}
r^k(\xi_1) r^k(\xi_2) \dxi_t(\xi_1, \xi_2) = f_1(\xi_1, \xi_2) + f_2(\xi_1, \xi_2),
\end{equation}
where the two functions $f_1, f_2$ are such that
\begin{equation}\label{eq:rxi_f1}
f_1(\xi_1, \xi_2) \in \dxi_{t + 2\ell}
\end{equation}
and
\begin{equation}\label{eq:rxi_f2}
\max_{a,b \in [n]} \abs{f_2(\xi_a, \xi_b)} \prec \frac{1}{d \sqrt{n}}.
\end{equation}
\end{proposition}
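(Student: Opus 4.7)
The plan is to Taylor expand $r^k(\xi_1) r^k(\xi_2)$ in the variables $\xi_1^2/d$ and $\xi_2^2/d$ up to an order tailored to $\ell$, and then show that the polynomial part (multiplied by $\dxi_t$) lands in $\dxi_{t+2\ell}$ while the remainder (multiplied by $\dxi_t$) is $\prec 1/(d\sqrt{n})$ uniformly over $a, b \in [n]$.

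Concretely, starting from $r^k(\xi) = (1-1/d)^{-k/4}(1-\xi^2/d)^{k/2}$, I would Taylor expand $(1-y)^{k/2}$ around $y=0$ to order $M := \lfloor \ell/2 \rfloor$ and write $(1-y)^{k/2} = P(y) + R(y)$ with $P(y) = \sum_{i=0}^M \binom{k/2}{i}(-y)^i$. The Lagrange remainder satisfies $|R(y)| \le C_{k,M}\, y^{M+1}$ for $y \in [0, 1/2]$, because $1 - \theta y \in [1/2, 1]$ on that range. By \eref{qk_hpb}, $\max_a |\xi_a| \prec 1$, so on the relevant high-probability event $y_a := \xi_a^2/d \le 1/2$, and consequently $\max_a |R(\xi_a^2/d)| \prec d^{-(M+1)}$. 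Setting $\Phi(\xi_1, \xi_2) := (1-1/d)^{-k/2} P(\xi_1^2/d)\, P(\xi_2^2/d)$ and $\Psi := r^k(\xi_1) r^k(\xi_2) - \Phi$, the three cross-terms of $\Psi$ are each controlled by $|P(\xi^2/d)| = 1 + \mathcal{O}_\prec(1/d) \prec 1$ together with the bound on $R$, giving $\max_{a,b} |\Psi(\xi_a, \xi_b)| \prec d^{-(M+1)}$.

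I would then set $f_1 := \Phi \cdot \dxi_t$ and $f_2 := \Psi \cdot \dxi_t$. For $f_1$: since $\Phi$ is a linear combination of monomials $(\xi_1^2/d)^i (\xi_2^2/d)^j$ with $\mathcal{O}(1)$ coefficients and indices $0 \le i, j \le M$, iterating property \eref{dxi_p3a} of Lemma~\ref{lemma:dxi_properties} gives $(\xi_1^2/d)^i (\xi_2^2/d)^j \cdot \dxi_t \in \dxi_{t + 2(i+j)}$. Since $i + j \le 2M = 2\lfloor \ell/2 \rfloor \le \ell$, each summand lies in $\dxi_{t+2\ell}$, and by linearity of the class so does $f_1$. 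For $f_2$: the definition of $\dxi_t$ in \eref{dxi} combined with \eref{qk_hpb} immediately gives $\max_{a,b} |\dxi_t(\xi_a, \xi_b)| \prec 1/d$, so together with $\max_{a,b} |\Psi(\xi_a, \xi_b)| \prec d^{-(M+1)}$ this yields $\max_{a,b} |f_2(\xi_a, \xi_b)| \prec d^{-(M+2)}$. Finally, $M + 2 \ge \lfloor \ell/2 \rfloor + 2 > 1 + \ell/2$ and $\sqrt{n} \asymp d^{\ell/2}$ together imply $d^{-(M+2)} \prec 1/(d \sqrt{n})$, completing the argument.

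The only subtlety is calibrating the truncation order $M$: taking $M$ too large causes $\Phi$ to be a higher-degree polynomial whose product with $\dxi_t$ would only belong to $\dxi_{t+4M}$ and overshoot $\dxi_{t+2\ell}$, while taking $M$ too small leaves $\Psi$ too large to absorb into the $1/(d\sqrt{n})$ tolerance after multiplying by $\dxi_t$. Choosing $M = \lfloor \ell/2 \rfloor$ threads both constraints, and the only probabilistic input needed is the single high-probability estimate \eref{qk_hpb}, which simultaneously validates the Taylor remainder on the event $\xi^2/d \le 1/2$ and supplies the $\prec 1/d$ bound on $\dxi_t$.
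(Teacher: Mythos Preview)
Your proposal is correct and follows essentially the same approach as the paper: Taylor expand $(1-y)^{k/2}$ to a suitable finite order, let $f_1$ be the polynomial part times $\dxi_t$ (pushed into $\dxi_{t+2\ell}$ via repeated applications of \eref{dxi_p3a}), and bound the remainder part $f_2$ using the Lagrange form together with \eref{qk_hpb}. The only cosmetic difference is the truncation order---the paper takes $\nu = \lceil \ell/2 \rceil$ terms (degree $\nu-1$) whereas you take degree $M = \lfloor \ell/2 \rfloor$; both choices satisfy the two competing constraints you identified.
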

%\hty{ This one is basically trivial because $r= (1-\xi^2/d)^{-1/2}$ and $d$ is large. But I don't understand why you need to expand so such a high order. } 
\begin{proof}
The special case when $k = 0$ is trivial, as we can simply choose $f_1(\xi_1, \xi_2) = \dxi_t(\xi_1, \xi_2)$ and $f_2(\xi_1, \xi_2) = 0$. In what follows we assume $k \ge 1$. For any $\nu \in \N$, consider the $\nu$-th order Taylor expansion of the function $(1-x)^{k/2}$ around $x = 0$. This allows us to write
\begin{equation}\label{eq:Taylor}
(1-x)^{k/2} = P_\nu(x) + R_\nu(x),
\end{equation}
where
\begin{equation}\label{eq:Taylor_P}
P_\nu(x) = \sum_{i = 0}^{\nu - 1} \frac{(-1)^i (k/2)_i}{i!} x^i,
\end{equation}
and $R_\nu(x)$ is the remainder term. (In \eref{Taylor_P}, $(k/2)_i = (k/2)(k/2-1) \ldots (k/2-i+1)$ denotes the falling factorial.) For any $0 \le x \le 1$, we can verify from the Lagrange form of the the remainder that
\begin{equation}\label{eq:Taylor_R}
\abs{R_\nu(x)} \le (k/2)_\nu \max\set{(1-x)^{ (k/2)_{\nu+1}}, 1} x^\nu.
\end{equation}
Now recall the definition of $r(\xi)$ in \eref{rxi}. By using \eref{Taylor}, we can indeed decompose $r^k(\xi_1) r^k(\xi_2) \dxi_t(\xi_1, \xi_2)$ into the form of \eref{rxi_f1f2}, where
\[
f_1(\xi_1, \xi_2) = (1-1/d)^{-k/2} P_\nu(\xi_1^2/d) P_\nu(\xi_2^2 / d) \dxi_t(\xi_1, \xi_2)
\]
and
\[
f_2(\xi_1, \xi_2) = (1-1/d)^{-k/2} \big[R_\nu(\xi_1^2/d) P_\nu(\xi_2^2/d) + P_\nu(\xi_1^2/d) R_\nu(\xi_2^2/d) + R_\nu(\xi_1^2/d) R_\nu(\xi_2^2/d)\big] \dxi_t(\xi_1, \xi_2).
\]
Note that $(1-1/d)^{-k/2} = 1 + \mathcal{O}(1/d)$. For an independent family of random variables $\set{\xi_a}_{a \in [n]}$ with $\xi \sim \taumeasure$, we can use \eref{qk_hpb} to verify that $\max_a \abs{P_\nu(\xi_a^2/d)} \prec 1$. Similarly, by the definition in \eref{dxi} and \eref{qk_hpb}, we have $\max_{a,b} \abs{\dxi_t(\xi_a, \xi_b)} \prec 1/d$. It follows from these estimates that
\begin{align}\label{eq:f2_bnd}
\max_{a,b} \, \abs{f_2(\xi_a, \xi_b)} \prec \frac{1}{d}(1 + \max_a \vert{R_\nu(\xi_a^2/d)}\vert) \max_a \vert{R_\nu(\xi_a^2/d)}\vert.
\end{align}
Next, we establish a high-probability upper bound for $\max_a \vert{R_\nu(\xi_a^2/d)}\vert$. For any $a \in [n]$, $\epsilon \in (0, 1)$, and $D > 0$, and for sufficiently large $d$, we have
\begin{align}
\P(\vert{R_\nu(\xi_a^2/d)}\vert \ge d^{\epsilon - \nu}) &\le \P((1-\xi_a^2/d)^{(k/2)_{\nu+1}} \xi_a^{2\nu} \ge d^{\epsilon/2}) + \P(\xi_a^{2\nu} \ge d^{\epsilon/2})\\
&\le \P((1-\xi_a^2/d)^{(k/2)_{\nu+1}} \xi_a^{2\nu} \ge d^{\epsilon/2}, \xi_a^2 < d/2) + \P(\xi_a^{2} \ge d/2) + \P(\xi_a^{2\nu} \ge d^{\epsilon/2})\\
&\le \P(\xi_a^{2\nu} \ge \min\{2^{(k/2)_{\nu+1}}, 1\} d^{\epsilon/2}) + \P(\xi_a^{2} \ge d/2) + \P(\xi_a^{2\nu} \ge d^{\epsilon/2})\\
&\le d^{-D}.
\end{align}
In the first step, we use the deterministic bound in \eref{Taylor_R} and the fact that $(k/2)_\nu < d^{\epsilon/2}$ for all sufficiently large $d$. The last step follows from the property that $\xi_a = \mathcal{O}_\prec(1)$. Since the above inequality is uniform in $a \in [n]$, we have $\max_a \vert{R_\nu(\xi_a^2/d)}\vert \prec d^{-\nu}$. Substituting this bound into \eref{f2_bnd} and choosing $\nu = \lceil \ell/2 \rceil$,   we then reach the statement in \eref{rxi_f2}.

To show \eref{rxi_f1}, we use the definition of the Taylor polynomial in \eref{Taylor_P}, which gives us
\[
f_1(\xi_1, \xi_2) = (1-1/d)^{-k/2} \sum_{0 \le i, j \le \nu-1} c_i c_j (\xi_1^2/d)^i (\xi_2^2/d)^j \dxi_t(\xi_1, \xi_2),
\]
where $\set{c_i}$ are some fixed coefficients. The statement in \eref{rxi_f1} then follows from a repeated application of the property in \eref{dxi_p3a}.
\end{proof}

\begin{proposition}\label{prop:quadratic_hpb}
Let $(b_i)_{i \in [n]}$, $(c_{ij})_{i, j \in [n]}$ and $(\xi_i)_{i \in [n]}$ be three independent families of random variables. Moreover, $(\xi_i)$ is i.i.d., with $\xi_i  \sim \taumeasure$. Suppose that
\begin{equation}\label{eq:dominance_psi}
\big(\textstyle\sum_{i} \abs{b_i}^2\big)^{1/2} \prec \Psi_b \quad\text{and}\quad\big(\textstyle\sum_{i,j} \abs{c_{ij}}^2\big)^{1/2} \prec \Psi_c
\end{equation}
for some random variables $\Psi_b$ and $\Psi_c$. Then, for every fixed $L \in \N$, and any $a, b \in [L]$, we have
\begin{subequations}
\begin{equation}\label{eq:linear_hpb}
\abs{\textstyle \sum_{i} b_{i} q_a(\xi_i) }\prec \Psi_b,
\end{equation}
and
\begin{equation}
\abs{\textstyle \sum_{i, j} c_{ij} q_a(\xi_i) q_b(\xi_j) - (\sum_i c_{ii})\delta_{ab}} \prec \Psi_c.\label{eq:quadratic_hpb}
\end{equation}
\end{subequations}
\end{proposition}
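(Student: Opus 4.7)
The plan is to combine the conditional $L^p$ moment estimates of Lemma~\ref{lemma:linear_quadratic_lp} and Proposition~\ref{prop:quadratic_qab} with Markov's inequality, exploiting the independence of $(\xi_i)$ from the coefficient families $(b_i)$ and $(c_{ij})$. Since $a \in [L]$ forces $a \ge 1$, the orthogonality relation \eref{ortho_poly} gives $\EE q_a(\xi) = \EE[q_a(\xi)q_0(\xi)] = 0$ and $\EE q_a^2(\xi) = 1$, while \eref{qk_normp} provides uniform moment bounds $\normp{q_a(\xi)}{p} \le \qlp{a}{p}$. Conditioning on $(b_i)$ and applying \eref{linear_lp} then yields
\begin{equation}\label{eq:hpb_plan_lin}
\EE\Big[\absb{\textstyle\sum_i b_i q_a(\xi_i)}^p \,\Big|\, (b_i)\Big] \le (Cp)^{p/2}\,\qlp{a}{p}^{\,p}\,\Big(\textstyle\sum_i \abs{b_i}^2\Big)^{p/2},
\end{equation}
and conditioning on $(c_{ij})$ and applying Proposition~\ref{prop:quadratic_qab} produces the analogous quadratic moment bound with constant $C(a,b,p)$ in place of $(Cp)^{1/2}\qlp{a}{p}$.

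To turn these conditional moment estimates into the claimed high-probability bounds, I would fix arbitrary $\varepsilon > 0$ and $D > 0$, write $X \bydef \sum_i b_i q_a(\xi_i)$, and define $\Omega \bydef \{(\sum_i \abs{b_i}^2)^{1/2} \le d^{\varepsilon/2}\Psi_b\}$; the dominance hypothesis \eref{dominance_psi} guarantees $\P(\Omega^c) \le d^{-D-1}$ for sufficiently large $d$. Let $\mathcal{G}$ be the $\sigma$-algebra generated by $(b_i)$ and $\Psi_b$; since $(\xi_i)$ is independent of $\mathcal{G}$, the estimate \eref{hpb_plan_lin} remains valid with $(b_i)$ replaced by $\mathcal{G}$, while $\Psi_b$ and $\charfn_\Omega$ are both $\mathcal{G}$-measurable. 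Conditional Markov then yields
\[
\P\Big(\abs{X} > d^\varepsilon \Psi_b,\, \Omega\Big) \le \EE\Big[\charfn_\Omega\, d^{-\varepsilon p}\,\Psi_b^{-p}\,\EE\big[\abs{X}^p \,\big|\, \mathcal{G}\big]\Big] \le (Cp)^{p/2}\,\qlp{a}{p}^{\,p}\,d^{-\varepsilon p/2},
\]
where the last step uses \eref{hpb_plan_lin} together with the pathwise bound $\Psi_b^{-p}(\sum_i \abs{b_i}^2)^{p/2} \le d^{\varepsilon p/2}$ valid on $\Omega$. Choosing $p = p(\varepsilon, D, a)$ sufficiently large drives the right-hand side below $d^{-D-1}$, so that $\P(\abs{X} > d^\varepsilon \Psi_b) \le 2d^{-D}$ for large $d$; since $\varepsilon$ and $D$ are arbitrary, this proves \eref{linear_hpb}. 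The quadratic estimate \eref{quadratic_hpb} follows by the identical argument, with Proposition~\ref{prop:quadratic_qab} in place of \eref{linear_lp} and $\Psi_c, (c_{ij})$ in place of $\Psi_b, (b_i)$.

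There is no substantive obstacle here: the $L^p$ moment estimates of Lemma~\ref{lemma:linear_quadratic_lp} and Proposition~\ref{prop:quadratic_qab} already carry the probabilistic content, and this proposition is a routine instance of the standard $L^p$-to-high-probability passage used throughout local law proofs. The only point requiring minor care is ensuring that the $\sigma$-algebra $\mathcal{G}$ is rich enough to render both $\Psi_b$ and $\Omega$ measurable while preserving independence from $(\xi_i)$; this is automatic from the stated joint independence of the three families $(b_i), (c_{ij}), (\xi_i)$.
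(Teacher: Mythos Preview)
Your proof is essentially the same as the paper's: both split on the good event $\Omega = \{(\sum_i |b_i|^2)^{1/2} \le d^{\varepsilon/2}\Psi_b\}$, use the stochastic dominance hypothesis to control $\P(\Omega^c)$, and then invoke the conditional $L^p$ moment bounds from Lemma~\ref{lemma:linear_quadratic_lp} and Proposition~\ref{prop:quadratic_qab} together with Markov's inequality on $\Omega$.

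There is one small technical point worth noting. Your argument conditions on the $\sigma$-algebra $\mathcal{G}$ generated by $(b_i)$ \emph{and} $\Psi_b$, and you assert that $(\xi_i)$ is independent of $\mathcal{G}$. But the hypotheses only give joint independence of $(b_i)$, $(c_{ij})$, and $(\xi_i)$; nothing is assumed about the measurability of $\Psi_b$, so your claim that ``this is automatic'' is not quite right as stated. The paper sidesteps this entirely via a containment argument: on $\Omega$ one has $d^{\varepsilon/2}(\sum_i|b_i|^2)^{1/2} \le d^\varepsilon \Psi_b$, hence
\[
\P\big(|X| > d^\varepsilon \Psi_b,\ \Omega\big) \le \P\Big(|X| > d^{\varepsilon/2}\big(\textstyle\sum_i |b_i|^2\big)^{1/2}\Big),
\]
and the right-hand side no longer involves $\Psi_b$ at all. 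Now one conditions only on $(b_i)$, applies Markov with the conditional moment bound \eref{hpb_plan_lin}, and obtains the deterministic upper bound $(Cp)^{p/2}\qlp{a}{p}^{\,p}\,d^{-\varepsilon p/2}$ without ever needing $\Psi_b$ to be measurable or independent of anything. This is a cleaner route and worth adopting; otherwise your argument is correct.
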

%\yml{Need to fix the notation about a,b}
\begin{proof}
For any $\epsilon > 0$, $D > 0$,
\begin{align}
&\P\Big(\abs{\textstyle \sum_{i, j} c_{ij} q_a(\xi_i) q_b(\xi_j) - (\sum_i c_{ii})\delta_{ab}} \ge d^\epsilon \Psi_c\Big)\\
&\qquad\le \P\Big(\abs{\textstyle \sum_{i, j} c_{ij} q_a(\xi_i) q_b(\xi_j) - (\sum_i c_{ii})\delta_{ab}} \ge d^\epsilon \Psi_c \text{ and } \big(\textstyle\sum_{i,j} \abs{c_{ij}}^2\big)^{1/2} \le d^{\epsilon/2} \Psi_c\Big)\\
&\qquad\qquad\qquad+\P\Big(\big(\textstyle\sum_{i,j} \abs{c_{ij}}^2\big)^{1/2} > d^{\epsilon/2} \Psi_c\Big)\\
&\qquad\le \P\Big(\abs{\textstyle \sum_{i, j} c_{ij} q_a(\xi_i) q_b(\xi_j) - (\sum_i c_{ii})\delta_{ab}} \ge d^{\epsilon/2} \Big(\textstyle\sum_{i,j} \abs{c_{ij}}^2\big)^{1/2}\Big) + d^{-(D+1)}\\
&\qquad\le \big(\frac{C(a, b, p)}{d^{\epsilon/2}}\Big)^p + d^{-(D+1)}.
\end{align}
In the second step, we have used the definition of the stochastic dominance inequality \eref{dominance_psi} with parameters $\epsilon/2$ and $D+1$. The last step follows from the Markov inequality and the moment bound \eref{quadratic_qab}. For any $D > 0$, there is a large enough $p$ such that $\epsilon p/2 \ge D+1$. Thus, the right-hand side can be bounded by $d^{-D}$ for all sufficiently large $d$. This then established the bound in \eref{quadratic_hpb}. 

The proof of \eref{linear_hpb} follows exactly the same arguments. The only difference is that, instead of using the moment bound in \eref{quadratic_qab}, we appeal to the bound in \eref{linear_lp} and the moment estimate of $q_a(\xi)$ given in \eref{qk_normp} when applying the Markov inequality. We omit the details.
\end{proof}

%This last equality shows that the moments of $\xi$ match those of $\mathcal{N}(0, 1)$, up to a small correction factor of $\mathcal{O}(1/d)$. Thus, the law of $\xi$ converges to that of standard normal random variable as $d \to \infty$. 
%
%For large $d$, the law of $\xi$ is close to that of a standard Gaussian. It follows that $\set{q_{k,d}(x)}$ are closely related to the standard Hermite polynomials, denoted by $\set{p_k(x)}$.

% !TEX root = equivalence.tex

\section{The Operator Norms of $\set{A_k}$}
\label{appendix:op_norm}

In this appendix, we present some high probability bounds for the operator norms of the matrices $\set{\mA_k}$ defined in \eref{Ak}. 
%We will handle the low-order cases, for $0 \le k < \ell$, and the higher-order cases, for $k \ge \ell$, separately, by using different proof techniques.

\subsection{The Low-Order Components}
\label{appendix:low_order}

We start with the low-order components, \emph{i.e.}, those $\mA_k$ with $0 \le k < \ell$.

\begin{lemma}\label{lemma:op_spike}
Suppose that $n = \alpha d^\ell + o(d^\ell)$ for some $\ell \in \N$ and $\alpha > 0$. For each $k \in \N_0$ with $k < \ell$, let $\lambda_1(\mA_k) \ge \lambda_2(\mA_k) \ge \ldots \ge \lambda_n(\mA_k)$ denote the eigenvalues of $\mA_k$ in non-increasing order. For all sufficiently large $d$, we have
\begin{equation}\label{eq:Ak_spike}
\max_{i \in [\Nk]} \abs{\lambda_i(\mA_k) - \sqrt{n/\Nk}} = \mathcal{O}_\prec(1)
\end{equation}
and
\begin{equation}\label{eq:Ak_e0}
\lambda_i(\mA_k) = -\sqrt{\Nk/{n}} \qquad \text{for all } i > \Nk.
\end{equation}
\end{lemma}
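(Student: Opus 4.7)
The plan is to exploit the factorization \eref{Ak_factor}, which gives
\[
A_k + \sqrt{N_k/n}\, I = \frac{1}{\sqrt{n N_k}}\, \SH_k\tran \SH_k,
\]
where $\SH_k \in \R^{N_k \times n}$ is the spherical harmonic matrix defined in \eref{Yk_def}. Since the right-hand side is positive semi-definite with rank at most $N_k$, at least $n - N_k$ of its eigenvalues vanish, so at least $n - N_k$ eigenvalues of $A_k$ are exactly $-\sqrt{N_k/n}$. These must be the smallest eigenvalues, because every other eigenvalue of $A_k$ equals $-\sqrt{N_k/n}$ plus a nonnegative quantity; hence they occupy positions $i > N_k$ in the non-increasing ordering, proving \eref{Ak_e0} deterministically.

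For \eref{Ak_spike}, the nonzero eigenvalues of $\SH_k\tran \SH_k$ coincide with those of the smaller $N_k \times N_k$ Gram matrix $\SH_k \SH_k\tran$, so the top $N_k$ eigenvalues of $A_k$ take the form $\sqrt{n/N_k}\, \lambda_i(M) - \sqrt{N_k/n}$ with $M \bydef \frac{1}{n} \SH_k \SH_k\tran$. Since $\sqrt{N_k/n} = \mathcal{O}(d^{(k-\ell)/2}) = o(1)$ for $k < \ell$, the claim reduces to the operator-norm concentration estimate
\[
\norms{M - I}_\mathsf{op} \prec \sqrt{N_k / n}.
\]
To establish it, I would apply the matrix Bernstein inequality to $M - I = \frac{1}{n} \sum_{i=1}^n (\vs_i \vs_i\tran - I)$, where $\vs_i \bydef (\har{k}{1}(\vx_i), \ldots, \har{k}{N_k}(\vx_i))\tran$ denotes the $i$th column of $\SH_k$. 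The summands are independent by the independence of $\set{\vx_i}_{i \in [n]}$ and mean-zero by the orthonormality relation \eref{sh_ortho}. Two clean deterministic inputs then make the Bernstein step immediate: the addition formula \eref{har_norm} yields $\norm{\vs_i}^2 = N_k$ exactly, giving the uniform bound $\norms{\vs_i \vs_i\tran - I}_\mathsf{op} \le N_k + 1$; and the identity $(\vs_i \vs_i\tran)^2 = \norm{\vs_i}^2 \vs_i \vs_i\tran$ yields $\EE[(\vs_i \vs_i\tran - I)^2] = (N_k - 1) I$, so the matrix variance is $(N_k-1)/n\cdot I$. With threshold $t = d^\varepsilon \sqrt{N_k/n}$, matrix Bernstein delivers a tail bound of order $N_k \exp(-c d^{2\varepsilon})$, which beats any polynomial rate in $d^{-1}$ and therefore yields the desired stochastic domination.

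The structural step (identifying the degenerate eigenvalues at $-\sqrt{N_k/n}$ and converting to a Gram matrix) is essentially free once one invokes the addition formula, and the main work lies in the concentration estimate. The only mild subtlety is that the entries within a single column $\vs_i$ are dependent --- they are merely uncorrelated spherical harmonics of the same random point --- so a direct Bai-Yin-style argument for i.i.d. sample covariance matrices is not available. Matrix Bernstein sidesteps this issue entirely, requiring only independence across columns together with the deterministic identity $\norm{\vs_i}^2 = N_k$ supplied by \eref{har_norm}, which makes the estimate robust to the internal dependence of the harmonic vectors.
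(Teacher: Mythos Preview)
Your proposal is correct and follows essentially the same route as the paper: reduce via the factorization \eref{Ak_factor} to the $N_k\times N_k$ Gram matrix $\SH_k\SH_k\tran$, read off \eref{Ak_e0} from the rank deficiency, and then apply the matrix Bernstein inequality to the i.i.d.\ sum $\sum_i(\vs_i\vs_i\tran - I)$ using the deterministic inputs $\norm{\vs_i}^2=N_k$ from \eref{har_norm} and $\EE\,\vs_i\vs_i\tran=I$ from \eref{sh_ortho}. The only cosmetic differences are that the paper works with $\SH_k\SH_k\tran - nI$ before normalizing (giving $K=N_k-1$ and $\sigma^2=n(N_k-1)$) whereas you scale by $1/n$ first, and that the sharp deterministic bound is $N_k-1$ rather than $N_k+1$; neither affects the argument.
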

\begin{remark}
Recall from \eref{Nk} that $\Nk =  {d^k}/{k!}+\mathcal{O}(d^{k-1})$. Thus, the condition $k < \ell$ ensures that $\Nk \ll n$ for all sufficiently large $d$. The lemma asserts that the spectrum of $\mA_k$ can be separated into two distinct clusters: the top $\Nk$ eigenvalues reside within an interval of width $\mathcal{O}_\prec(1)$, centered around $\sqrt{n/\Nk} = \sqrt{ k! \ratio d^{\ell-k}}(1+o(1))$; meanwhile, the remaining $n - \Nk$ eigenvalues are all equal to $\sqrt{\Nk/n} = {o}(1)$.
\end{remark}
\begin{proof}
Consider the factorized representation of $\mA_k$ in \eref{Ak_factor}. Observe that the positive-semidefinite matrix $\mY_k\tran \mY_k$ is rank-deficient. In fact, $\rank(\mY_k\tran \mY_k) = \Nk \ll n$, which then immediately gives us \eref{Ak_e0}.

Next, we study the top $\Nk$ eigenvalues of $\mA_k$. In what follows, we use $\lambda_i(\mM)$ to represent the $i$th largest eigenvalue of any symmetric matrix $\mM$. By \eref{Ak_factor}, and for each $i \in [\Nk]$, we have
\[
\lambda_i(\mA_k) + \sqrt{\Nk/n}= \frac{1}{\sqrt{n \Nk}} \lambda_i(\SH_k\tran \SH_k)  = \frac{1}{\sqrt{n \Nk}} \lambda_i(\SH_k \SH_k\tran).
\]
It follows that
\[
\abs{\lambda_i(\mA_k) - \sqrt{n/\Nk}} \le \frac{1}{\sqrt{n \Nk}}\abs{\lambda_i(\SH_k \SH_k\tran - n \mI)} + \sqrt{\Nk/n}
\]
and thus
\begin{equation}\label{eq:Ak_spike1}
\max_{i \in [\Nk]} \abs{\lambda_i(\mA_k) - \sqrt{n/\Nk}} \le \frac{1}{\sqrt{n \Nk}}\norm{\SH_k \SH_k\tran - n \mI}_\mathsf{op} + \mathcal{O}(d^{-(\ell-k)/2}).
\end{equation}

%\hty{ I guess all we try to say here is that the non-zero eigenvalues of $\mY_k \mY\tran_k$ and $\mY\tran_k \mY_k$ are the same. 

%Then by definition, all non-zero eigenvalues 
%\begin{equation}\label{eq:Ak_factor}
%\mA_k = \frac{1}{\sqrt{n \Nk}} \mY_k\tran \mY_k - \sqrt{{\Nk}/{n}}\, \mI,
%\end{equation}} 
The case of $k = 0$ is straightforward: $\SH_0 = (1, 1, \ldots, 1)$ represents a constant row vector of length $n$, and thus $\SH_0 \SH_0\tran - n = 0$. For $1 \le k < \ell$, we bound the operator norm of $\SH_k \SH_k\tran - n \mI$ by using the matrix Bernstein inequality. To that end, we first define a sequence of matrices $\set{\mZ_i = \vu_i \vu_i\tran - \mI}_{i \in [n]}$ where
\[
\vu_i = (\har{k}{1}(\vx_i), \har{k}{2}(\vx_i), \ldots, \har{k}{\Nk}(\vx_i))
\]
is an $\Nk$-dimensional random vector comprising of spherical harmonics. One can verify from the definition of $\SH_k$ in \eref{Yk_def} that
\begin{equation}\label{eq:sum_Z}
\SH_k \SH\tran_k - n \mI = \sum_{i \in [n]} \mZ_i.
\end{equation}
Since the data vectors $\vx_i \sim_\text{iid} \unifsp$, the matrices $\set{\mZ_i}$ are independent. By \eref{sh_ortho}, $\vu_i$ is an isotropic random vector and thus $\EE \mZ_i = \mat{0}$. Moreover, \eref{har_norm} implies that $\norm{\vu_i} = \sqrt{\Nk}$. It follows that
\begin{equation}\label{eq:def_K_Bernstein}
K \bydef \norm{\mZ_i}_\mathsf{op} = \Nk - 1
\end{equation}
and
\begin{equation}\label{eq:def_sigma_Bernstein}
\sigma^2 \bydef \norm{\textstyle\sum_{i \in [n]} \EE \mZ_i^2}_\mathsf{op} = \norm{\textstyle\sum_{i \in [n]} \EE [(\Nk-2) \vu_i \vu_i\tran + \mI]}_\mathsf{op} = n(\Nk - 1).
\end{equation}
For every $t > 0$, the standard matrix Bernstein inequality \cite{tropp2015IntroductionMatrix} gives us
\begin{align}
\P\Big(\frac{1}{\sqrt{n \Nk}}\norm{\textstyle\sum_{i \in [n]} \mZ_i}_\mathsf{op} \ge t\Big)  & \le 2\Nk \exp\Big(-\frac{n \Nk t^2/2}{\sigma^2 + K \sqrt{n \Nk} (t/3)}\Big)\\
&\le 2 \Nk \exp\Big(-\frac{t^2/2}{1 + \sqrt{\Nk/n} (t/3)}\Big),
\end{align}
where to reach the second inequality we have used the expressions in \eref{def_K_Bernstein} and \eref{def_sigma_Bernstein}. For every $0 < \epsilon < 1/2$, let $t = d^\epsilon$. Since $\Nk = \mathcal{O}(d^k)$ with $k < \ell$, we have $\sqrt{\Nk/n} (t/3) < 1$ for all sufficiently large $d$. It follows that, for every $D > 0$,
\[
\P\Big(\frac{1}{\sqrt{n \Nk}}\norm{\textstyle\sum_{i \in [n]} \mZ_i}_\mathsf{op} \ge t\Big) \le d^{-D}
\]
for sufficiently large $d$, and thus $\frac{1}{\sqrt{n \Nk}}\norm{\textstyle\sum_{i \in [n]} \mZ_i}_\mathsf{op} = \mathcal{O}_\prec(1)$. The statement \eref{Ak_spike} of the lemma then follows from \eref{sum_Z} and \eref{Ak_spike1}.
\end{proof}

\subsection{Moment Calculations for the High-Order Components}

Next, we bound the operator norms of the high-order matrices, \emph{i.e.}, those $\mA_k$ with $k \ge \ell$. The approach taken in our proof of Lemma~\ref{lemma:op_spike}, which is based on the matrix Bernstein inequality, can be modified to show that $\norm{\mA_\ell}_\mathsf{op} = \mathcal{O}_\prec(1)$, but it does not seem to yield useful results for cases when $k > \ell$. In what follows, we control the operator norm of $A_k$ by using the standard moment method. Observe that, for each $\epsilon > 0$ and $p \in \N$, we have
\begin{align}
\P(\,\norm{\mA_k}_\mathsf{op} \ge d^\epsilon) &= \P(\,\norm{\mA_k}_\mathsf{op}^{2p} \ge d^{2\epsilon p})\\
&\le d^{-2\epsilon p} \, \EE \norm{\mA_k}_\mathsf{op}^{2p}\\
&\le d^{-2\epsilon p} \, \EE\tr(\mA_k^{2p}).\label{eq:op_reg_1}
\end{align}
Thus, to control the probability on the left-hand side, it suffices to bound $\EE\tr(\mA_k^{2p})$ for a sufficiently large integer $p$. Bounds for the moments have been previously studied in the literature. \citet{cheng2013SpectrumRandom} provides a bound for the 4th moment $\EE\tr(\mA_k^{4})$. For general $p \in \N$, \citet[Lemma 3]{ghorbani2020Linearizedtwolayers} showed that
\begin{equation}\label{eq:moment_previous}
\EE\tr(\mA_k^{2p}) \le (Cp)^{3p} n(1 + d^{p(k-\ell) + \ell - 2k})
\end{equation}
for some positive constant $C$. By substituting \eref{moment_previous} into \eref{op_reg_1}, we observe that, when $k > \ell$, this moment bound is not sufficient to make \eref{op_reg_1} smaller than $d^{-D}$ for every $D > 0$. As a result, we cannot use \eref{moment_previous} to show $\norm{\mA_k}_\mathsf{op} \prec 1$. In this work, we adapt the moment calculations from \cite{ghorbani2020Linearizedtwolayers} and derive a new bound for $\EE\tr(\mA_k^{2P})$ that is tight up to its leading order (see Lemma~\ref{lemma:trace_bnd}).

As in \cite{ghorbani2020Linearizedtwolayers}, our moment calculations hinge on the following property of Gegenbauer polynomials.
\begin{lemma}
Let $\vz \sim \unifsp$. For each $k \in \N_0$ and any deterministic vectors $\vx, \vy$ in $\sph$, we have
\begin{equation}\label{eq:m_single}
\EE_{\vz} q_k(\sqrt{d} \vx\tran \vz) q_k(\sqrt{d} \vz\tran \vy) = \frac{1}{\sqrt{\Nk}} q_k(\sqrt{d} \vx\tran \vy).
\end{equation}
%As a special case, when $\vx = \vy$,
%\begin{equation}\label{eq:m_double}
%\EE_{\vz} q_k^2(\sqrt{d} \vx\tran \vz) = 1.
%\end{equation}
\end{lemma}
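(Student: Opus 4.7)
The natural plan is to invoke the addition theorem for Gegenbauer polynomials, stated earlier in the excerpt as \eref{linearization}, and then reduce the expectation to an orthogonality relation for spherical harmonics. Specifically, applying \eref{linearization} to both factors on the left-hand side, I would write
\begin{equation}
q_k(\sqrt{d}\, \vx\tran \vz)\, q_k(\sqrt{d}\, \vz\tran \vy) = \frac{1}{N_k} \sum_{a, b \in [N_k]} Y_{k,a}(\vx)\, Y_{k,a}(\vz)\, Y_{k,b}(\vz)\, Y_{k,b}(\vy).
\end{equation}
Since $\vx, \vy$ are deterministic, the only randomness under $\EE_\vz$ lies in the factors $Y_{k,a}(\vz)\, Y_{k,b}(\vz)$.

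Next, I would exchange expectation and the finite sum, and use the orthonormality relation \eref{sh_ortho} with $k = k'$, which gives $\EE_{\vz} Y_{k,a}(\vz)\, Y_{k,b}(\vz) = \charfn_{ab}$. This collapses the double sum to a single one, yielding
\begin{equation}
\EE_{\vz}\, q_k(\sqrt{d}\, \vx\tran \vz)\, q_k(\sqrt{d}\, \vz\tran \vy) = \frac{1}{N_k} \sum_{a \in [N_k]} Y_{k,a}(\vx)\, Y_{k,a}(\vy).
\end{equation}

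Finally, I would apply \eref{linearization} once more, this time in the reverse direction, to recognize the right-hand side as $\frac{1}{\sqrt{N_k}}\, q_k(\sqrt{d}\, \vx\tran \vy)$, completing the proof. There is no real obstacle here: the argument is a two-line manipulation that packages and unpackages the addition theorem around a single application of orthonormality of the spherical harmonic basis. The only minor care needed is to make sure the expectation is exchanged with the finite double sum (which is trivially justified since it has $N_k^2$ terms and the random variables $Y_{k,a}(\vz)$ are bounded in $L^2$), and to keep the normalization factors $1/N_k$ versus $1/\sqrt{N_k}$ straight.
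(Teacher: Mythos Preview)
Your proposal is correct and follows essentially the same approach as the paper: expand both Gegenbauer factors via the addition theorem \eref{linearization}, apply the orthonormality relation \eref{sh_ortho} to collapse the double sum, and then re-apply \eref{linearization} to identify the result.
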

\begin{proof}
Using \eref{linearization}, we can write
\begin{align}
\EE_{\vz} q_k(\sqrt{d} \vx\tran \vz) q_k(\sqrt{d} \vz\tran \vy) &= \frac{1}{\Nk} \sum_{a, b \in [\Nk]} \har{k}{a}(\vx) \har{k}{b}(\vy) \big[\EE \har{k}{a}(\vz)\har{k}{b}(\vz)\big]\\
&= \frac{1}{\Nk} \sum_{a, b \in [\Nk]} \har{k}{a}(\vx) \har{k}{b}(\vy) \delta_{ab} \qquad [\text{by } \eref{sh_ortho}]\\
&= \frac{1}{\Nk} \sum_{a\in [\Nk]} \har{k}{a}(\vx) \har{k}{a}(\vy).
\end{align}%The identity \eref{m_double} follows from \eref{m_single} and \eref{har_norm}.
Applying \eref{linearization} one more time then gives us \eref{m_single}.
\end{proof}

\begin{lemma}\label{lemma:moment_unique}
Let $\vx_1, \ldots, \vx_n \sim_\text{iid} \unifsp$. For $k \in \N_0$ and $i, j \in [n]$, write 
\begin{equation}\label{eq:Q_shorthand}
Q_{ij} \bydef q_k(\sqrt{d} \vx_{i}\tran \vx_{j}).
\end{equation}
Then, for every length $2p$ sequence of indices $\vi = [i_1, i_2, \ldots, i_{2p}] \in [n]^{2p}$, we have
\begin{equation}\label{eq:E_cycle}
\abs{\EE \,Q_{i_1 i_2}Q_{i_2 i_3}\ldots Q_{i_{2p-1}i_{2p}}Q_{i_{2p} i_1}} \le [\EE_{\xi \sim \taumeasure} q_k^{2p}(\xi)] \Nk^{p-\nu(\vi)+1},
\end{equation}
where $\nu(\vi) \bydef \#\set{i_1, i_2, \ldots, i_{2p}}$ is the number of distinct indices in $\vi$, and $\taumeasure$ is the probability measure defined in \eref{tau_measure}.
\end{lemma}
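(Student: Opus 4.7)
The plan is to prove the bound by strong induction on the length of the closed walk. Since the reduction steps below shorten the walk by exactly one position each, I would first extend the claim to closed walks of arbitrary length $r \ge 1$ (not only even $r = 2p$) with the generalized bound
\[
\Big|\EE \prod_{s=1}^{r} Q_{i_s i_{s+1}}\Big| \le \mu_r \cdot \Nk^{r/2 - \nu + 1}, \qquad \mu_r := \EE_{\xi \sim \taumeasure}\big[q_k^{2\lceil r/2\rceil}(\xi)\big].
\]
Two elementary facts about $\mu_r$ will be used throughout: $\mu_r \ge 1$ for $r \ge 1$ and $\mu_{r-1} \le \mu_r$ for $r \ge 2$, both of which follow from Jensen's inequality applied to moments of $|q_k|$ using the normalization $\EE q_k^2(\xi) = 1$ from \eref{ortho_poly}. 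The base cases $r = 1, 2$ reduce to direct computation using \eref{har_norm} and orthonormality.

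For the inductive step I would distinguish three cases. First, if $i_s = i_{s+1}$ for some $s$ (a loop edge), then \eref{har_norm} gives $Q_{i_s i_{s+1}} = q_k(\sqrt{d}) = \sqrt{\Nk}$; factoring this constant out leaves a closed walk of length $r - 1$ with the same number $\nu$ of distinct indices, and combining the inductive bound with $\sqrt{\Nk}$ and the monotonicity $\mu_{r-1} \le \mu_r$ yields the target exponent. Second, if no loop edge is present but some index $v$ has $m_v = 1$ (occurring only at position $s$, necessarily with distinct non-loop neighbors $i_{s-1}, i_{s+1}$), the identity \eref{m_single} gives $\EE_{\vx_v}[Q_{i_{s-1}, v} Q_{v, i_{s+1}}] = \Nk^{-1/2} Q_{i_{s-1}, i_{s+1}}$; the remainder of the product is independent of $\vx_v$, so the problem reduces to a closed walk of length $r - 1$ with $\nu - 1$ distinct vertices (the newly created edge may itself be a loop, which is harmless since it will be absorbed at the next iteration by the first case). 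Third, if neither reduction applies, then no edge is a loop and every vertex appears at least twice, so $r = \sum_v m_v \ge 2\nu$ and hence $r/2 - \nu + 1 \ge 1$. Applying H\"{o}lder's inequality with uniform weight $1/r$ on the $r$ non-loop factors gives
\[
\Big|\EE \prod_{s=1}^r Q_{i_s i_{s+1}}\Big| \le \prod_{s=1}^r \big(\EE|Q_{i_s i_{s+1}}|^r\big)^{1/r} = \EE|q_k(\xi)|^r \le \mu_r,
\]
and the trivial inequality $\Nk^{r/2 - \nu + 1} \ge 1$ closes the estimate.

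The main obstacle, as I see it, is the bookkeeping in the second case: one must verify carefully that after integrating out $\vx_v$ the remaining product is indeed of the advertised form (a closed walk of length $r - 1$ on $\nu - 1$ distinct vertices, possibly with a freshly created loop at the edge $(i_{s-1}, i_{s+1})$) and that the arithmetic $\Nk^{-1/2} \cdot \Nk^{(r-1)/2 - (\nu - 1) + 1} = \Nk^{r/2 - \nu + 1}$ matches the target exponent. A secondary subtlety is verifying the monotonicity $\mu_{r-1} \le \mu_r$ across both parities of $r$; the choice of the ceiling $2\lceil r/2 \rceil$ in the definition of $\mu_r$ is precisely what makes this work, and it also ensures $\mu_{2p} = \EE q_k^{2p}(\xi)$ so that the specialization to the original lemma statement is immediate.
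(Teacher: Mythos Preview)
Your proposal is correct and follows essentially the same approach as the paper: both arguments use the two reduction moves (removing a loop edge via \eref{har_norm}, which contributes $\sqrt{\Nk}$, and integrating out a singleton vertex via \eref{m_single}, which contributes $\Nk^{-1/2}$) and then close with H\"{o}lder's inequality once no further reduction applies. The only organizational difference is that the paper performs all reductions iteratively and tracks the total counts $n_A, n_B$ to get $M(\vi) = \Nk^{(n_B - n_A)/2} M(\bar{\vi})$ before applying H\"{o}lder at the terminal sequence $\bar{\vi}$, whereas you frame the same process as a strong induction on the walk length and introduce the auxiliary moments $\mu_r = \EE q_k^{2\lceil r/2\rceil}(\xi)$ to accommodate the odd-length walks that arise along the way; this is a cosmetic repackaging rather than a different idea.
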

\begin{remark}
This lemma is essentially a restatement of \cite[Lemma 3]{ghorbani2020Linearizedtwolayers}, after one adjusts for the different definitions and scalings used in that work and this paper. For the sake of readability and completeness, we present the results in a form that is more convenient for our subsequent arguments, and provide a slightly simplified proof below.
\end{remark}
\begin{proof}
We can view the product $Q_{i_1 i_2}Q_{i_2 i_3}\ldots Q_{i_{2p-1}i_{2p}}Q_{i_{2p} i_1}$ graphically, as a length $2p$ cycle on the vertex set $[n]$. First, consider two special cases: (I) $\nu(\vi) = 1$, \emph{i.e.}, all the indices are identical. Recall from \eref{har_norm} that
\begin{equation}\label{eq:m_double2}
Q_{ii} = \sqrt{\Nk} \quad \text{for all } i.
\end{equation}
Thus
\begin{equation}\label{eq:m_repeat}
\EE Q_{i_1 i_2}Q_{i_2 i_3}\ldots Q_{i_{2p}, i_1} =  Q_{i_1 i_1}^{2p} = \Nk^p.
\end{equation}
(II) $\nu(\vi) = 2p$, \emph{i.e.}, all the indices are distinct. Mapping the indices to the canonical set $[2p]$, we have
\begin{align}
\EE Q_{12}Q_{23}Q_{34}\ldots Q_{2p, 1} &= \EE_{\vx_1, \vx_3, \ldots, \vx_{2p}}\Big([\EE_{\vx_2} Q_{12} Q_{23}] Q_{34} \ldots Q_{2p, 1}\Big)\\
&= \frac{1}{\sqrt{\Nk}} \EE_{\vx_1, \vx_3, \ldots, \vx_{2p}}(Q_{13}Q_{34} \ldots Q_{2p, 1}),\label{eq:conditioning}
\end{align}
where the last step follows from \eref{m_single}.
The above reduction procedure can be applied repeatedly: at each step, it reduces the length of the cycle by $1$ and produces an extra factor of $1/\sqrt{\Nk}$. Doing this for $2p-1$ times then gives us
\begin{equation}\label{eq:m_unique}
\EE Q_{12}Q_{23}\ldots Q_{2p, 1} = \Nk^{-(2p-1)/2} \EE Q_{11} = \Nk^{1-p},
\end{equation}
where the last equality is due to \eref{m_double2}. Note that, since the Gegenbauer polynomials are normalized [recall \eref{ortho_poly}], we have 
\begin{equation}\label{eq:qk_moment_2p}
\EE q_k^{2p}(\xi) \ge (\EE q_k^2(\xi))^p = 1.
\end{equation}
Thus, with \eref{m_repeat} and \eref{m_unique}, we have verified the bound in \eref{E_cycle} for the cases of $\nu(\vi) = 1$ and $\nu(\vi) = 2p$, respectively.

In fact, the procedures leading to \eref{m_repeat} and \eref{m_unique} are special cases of a general and systematic reduction process. Suppose we are given a length $t$ cycle on indices $\vi = (i_1, i_2, \ldots, i_t)$. To simplify the notation, define
\begin{equation}\label{eq:M_i}
M(\vi) = \EE\, Q_{i_1 i_2} Q_{i_2 i_3} \ldots Q_{i_{t-1} i_t} Q_{i_t i_1}.
\end{equation}
We now introduce two reduction mappings, each of which converts a length $t$ cycle to a length $(t-1)$ cycle.

\emph{Type-A reduction}: Given an index sequence $\vi = (i_1, i_2, \ldots, i_t)$ for $t \ge 2$. Suppose there exists at least one $j \in [t]$ such that $i_j \neq i_a$ for all $a \in [t] \setminus \set{j}$. In other words, the index $i_j$ appears exactly once in the cycle. (It is possible that there are more than one such ``singleton'' indices in the sequence, in which case we will choose any one of them.) We remove $i_j$ from the original sequence $\vi$, and call the resulting length $t-1$ sequence $R_A(\vi)$. More precisely,
\[
R_A(\vi) = (i_1, \ldots, i_{j-1}, i_{j+1}, \ldots, i_t)
\]
where the indices $j-1$ and $j+1$ are interpreted modulo $[t]$. Using the same conditioning technique that leads to \eref{conditioning}, we can easily verify that
\begin{equation}\label{eq:M_A}
M(\vi) = \frac{1}{\sqrt{\Nk}} M(R_A(\vi)).
\end{equation}
Thus, a type-A reduction step $R_A(\vi)$ reduces the cycle length by 1 and contributes a factor of $1/\sqrt{\Nk}$.

\emph{Type-B reduction}: Given an index sequence $\vi = (i_1, i_2, \ldots, i_t)$ for $t \ge 2$. Suppose there exists at least one $j \in [t]$ such that $i_{j+1} = i_j$, where $j+1$ is to be interpreted modulo $[t]$. (If more than one such indices exist, we choose any one of them.) We define
\[
R_B(\vi) = (i_1, \ldots, i_{j-1}, i_{j+1}, \ldots, i_t)
\]
as a length $t-1$ sequence obtained by removing $i_j$ from $\vi$. By \eref{m_double2}, we must have
\begin{equation}\label{eq:M_B}
M(\vi) = \sqrt{\Nk} M(R_B(\vi)).
\end{equation}
Thus, a type-B reduction step $R_B(\vi)$ reduces the cycle length by 1 and contributes a factor of $\sqrt{\Nk}$.

Given an index sequence $\vi = (i_1, i_2, \ldots, i_t)$, we can simplify it by iteratively using the Type-A and Type-B reductions, until it cannot be further simplified. Clearly, this process stops after a finite number of steps. Let $\bar{\vi} = (\bar{i}_1, \bar{i}_2, \ldots, \bar{i}_{{t}(\bar{\vi})})$ denote the final outcome of the iterative reduction process.

\begin{example}
As an illustration, let us consider two concrete index sequences and demonstrate how they can be simplified though the above process.
\begin{enumerate}
\item[(a)] $\vi = (1, 1, 2, 2, 2, 3)$: We can start by using a Type-A step to remove the ``singleton'' 3. Then, the ``redundant'' copies of $1$ and $2$ can be removed by applying the Type-B step three times. This then gives us a shorter sequence $(1, 2)$, in which both $1$ and $2$ are singletons. Removing $1$ by a Type-A step gives us the final output $\bar{\vi} = (2)$.

\item[(b)] $\vi = (1, 2, 1, 2, 2, 3)$: Removing the ``singleton'' 3 (via Type-A reduction) and one redundant copy of $2$ (via Type-B reduction), we get $\bar{\vi} = (1, 2, 1, 2)$, which cannot be further simplified.
\end{enumerate}
\end{example}

By \eref{M_A} and \eref{M_B}, we must have
\begin{equation}\label{eq:M_AB}
M(\vi) = \Nk^{(n_B - n_A)/2} M(\bar{\vi}),
\end{equation}
where $n_A$ (resp. $n_B$) is the total number of Type-A (resp. Type-B) steps used in the reduction process that leads to the final outcome $\bar{\vi}$. Denote by $t(\vi)$ and $\nu(\vi)$ the length and number of unique indices in $\vi$, respectively. We also define $t(\bar{\vi})$ and $\nu(\bar{\vi})$ in the same way for the simplified sequence $\bar{\vi}$. By construction, we must have 
\[
n_A + n_B = t(\vi) - t(\bar{\vi}).
\]
One can also check that
\[
n_A = \nu(\vi) - \nu(\bar{\vi}).
\]
Substituting these two identities into \eref{M_AB} then gives us
\begin{equation}\label{eq:M_AB2}
M(\vi) = \Nk^{(t(\vi) - 2\nu(\vi) + 2 \nu(\bar{\vi}) - t(\bar{\vi}))/2} M(\bar{\vi}),
\end{equation}

Again, by examining the constructions of the Type-A and Type-B reduction steps, we can conclude that there can only be two possibilities for the final output $\bar{\vi}$: 

\emph{Case 1}: $t(\bar{\vi}) = \nu(\bar{\vi}) = 1$, \emph{i.e.}, $\bar{i} = (\bar{i}_1)$ is a length 1 cycle. Recall from \eref{m_double2} that $M(\bar{\vi}) = \sqrt{\Nk}$. It then follows from \eref{M_AB2} that
\[
M(\vi) = \Nk^{(t(\vi) - 2\nu(\vi) + 2)/2}.
\]
By applying this identity to a sequence of length $t(\vi) = 2p$ and recalling \eref{qk_moment_2p}, we reach the statement in \eref{E_cycle}.

\emph{Case 2}: The only other possibility is for $\bar{\vi}$ to contain at least two unique indices, \emph{i.e.}, $\nu(\bar{\vi}) \ge 2$. Moreover, every unique index must appear at least twice in the cycle, as otherwise the sequence $\bar{\vi}$ can be further simplified by a Type-A step. These two conditions imply $t(\bar{\vi}) \ge 2 \nu(\bar{\vi})$, in which case \eref{M_AB} gives us
\begin{equation}\label{eq:M_a}
\abs{M(\vi)} \le \Nk^{(t(\vi) - 2\nu(\vi))/2} \abs{M(\bar{\vi})}.
\end{equation}
Moreover, consecutive indices in $\bar{\vi} = (\bar{i}_1, \bar{i}_2, \ldots, \bar{i}_{t(\bar{\vi})})$ cannot have repetitions, \emph{i.e.}, $\bar{i}_j \neq \bar{i}_{j+1}$ for all $j \in [t(\bar{\vi})]$ with $j +1$ to be interpreted as modulo $t(\bar{\vi})$. (If this were not true, then $\bar{\vi}$ would not be the final output as it could be further simplified by a Type-B step.) This condition allows us to apply H\"{o}lder's inequality to get
\begin{equation}\label{eq:M_b}
\abs{M(\bar{\vi})} = \abs{\EE Q_{\bar{i}_1 \bar{i}_2} Q_{\bar{i}_2 \bar{i}_3} \ldots Q_{\bar{i}_{t(\bar{\vi})} i_1}} \le \EE_{\xi \sim \taumeasure} \abs{q_k(\xi)}^{t(\bar{\vi})}.
\end{equation}
Now consider a sequence $\vi$ with length $t(\vi) = 2p$. Since $t(\bar{\vi}) \le 2p$, we can use H\"{o}lder's inequality to further bound the right-hand side as
\begin{equation}\label{eq:M_c}
\EE_{\xi \sim \taumeasure} \abs{q_k(\xi)}^{t(\bar{\vi})} \le [\EE_{\xi \sim \taumeasure} q_k^{2p}(\xi)]^{t(\bar{\vi}) / (2p)} \le \EE_{\xi \sim \taumeasure} q_k^{2p}(\xi),
\end{equation}
where the last step follows from  \eref{qk_moment_2p}, \emph{i.e.}, $ \EE_{\xi \sim \taumeasure} q_k^{2p} \ge 1$.  By substituting \eref{M_b} and \eref{M_c} into \eref{M_a}, we can verify the bound in \eref{E_cycle}.
\end{proof}

\begin{lemma}\label{lemma:trace_bnd}
For every $k \in \N_0$ and $p \in \N$, we have
\begin{equation}\label{eq:trace_bnd}
\EE\tr(\mA_k^{2p}) \le n (2p)^{2p-2}  \big[p/n + \sum_{i = 0}^{p-1} ({n}/{\Nk})^i\big] \cdot \EE_{\xi \sim \taumeasure} q_k^{2p}(\xi).
\end{equation}
\end{lemma}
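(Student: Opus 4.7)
The plan is a standard moment-method computation resting on Lemma~\ref{lemma:moment_unique}. I would begin by expanding the trace as a cyclic sum
\[
\tr(A_k^{2p}) = \frac{1}{n^p}\sum_{\vec i \in \mathcal I} Q_{i_1 i_2} Q_{i_2 i_3}\cdots Q_{i_{2p} i_1},
\]
where $Q_{ij}$ is as in \eref{Q_shorthand} and $\mathcal I = \{\vec i \in [n]^{2p} : i_j \neq i_{j+1} \text{ for all } j \text{ cyclically}\}$; the consecutive-distinct restriction is forced by the $\charfn(i\ne j)$ factor in the definition of $A_k$. Taking expectations, applying Lemma~\ref{lemma:moment_unique} term by term, and grouping by the number $\nu = \nu(\vec i)$ of distinct entries then yields the preliminary estimate
\[
\EE\tr(A_k^{2p}) \le \frac{\EE_{\xi \sim \taumeasure} q_k^{2p}(\xi)}{n^p}\sum_{\nu=2}^{2p} T(2p,\nu)\, \Nk^{p-\nu+1},
\]
where $T(2p,\nu) = \#\{\vec i \in \mathcal I : \nu(\vec i) = \nu\}$. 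The range starts at $\nu = 2$ because the consecutive-distinct constraint rules out $\nu = 1$.

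The combinatorial core of the proof is then the estimate of $T(2p,\nu)$. Writing $T(2p,\nu) = \binom{n}{\nu}\, c(2p,\nu)$, the factor $c(2p,\nu)$ counts cyclic sequences of length $2p$ on $\nu$ labeled symbols that use each symbol at least once and have consecutive entries distinct. Each such sequence encodes a closed walk of length $2p$ on $K_\nu$ whose edge multiset has all even vertex degrees, so a standard Wigner-style walk-count — fix a spanning tree of the underlying simple graph, distribute the remaining $2p - 2(\nu-1)$ ``excess'' edge slots, and use Cayley's count $\nu^{\nu-2}$ for the labeled trees together with a Catalan-type bound on the Dyck-path encodings of the traversal — delivers a uniform estimate of the shape $c(2p,\nu) \le \nu!\,(2p)^{2p-2}$, whence $T(2p,\nu) \le n^\nu (2p)^{2p-2}$. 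Substituting and reindexing by $i = \nu - p - 1$ converts the contribution of walks with $\nu \ge p+1$ into the geometric sum $n(2p)^{2p-2}\sum_{i=0}^{p-1}(n/\Nk)^i$, which is exactly the main bracket on the right-hand side; contributions from walks with $\nu \le p$ (whose induced multigraphs necessarily contain cycles) and small corrections coming from the gap between $\binom{n}{\nu}$ and $n^\nu/\nu!$ should then be absorbed into the residual $p/n$ term after dividing through by $n$.

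The main obstacle is that the bound of Lemma~\ref{lemma:moment_unique}, as stated, is too weak for sequences with small $\nu$ once $\Nk \gg n$: the alternating sequence $abab\cdots ab$ has $\nu = 2$ and actual expected product $\EE q_k^{2p}(\xi)$, yet the statement $\Nk^{p-\nu+1}\EE q_k^{2p}(\xi)$ overestimates this by a factor of $\Nk^{p-1}$, which would violate the claimed bound whenever $k > \ell$. The fix is to use not the statement but rather the sharper ``Case~2'' estimate extracted from its proof, namely
\[
\abs{M(\vec i)} \le \Nk^{(2p - 2\nu(\vec i) + 2\nu(\bar{\vec i}) - t(\bar{\vec i}))/2}\, \EE q_k^{2p}(\xi),
\]
whenever the reduction terminates at an irreducible core $\bar{\vec i}$ with $t(\bar{\vec i}) \ge 2\nu(\bar{\vec i})$, and to classify $\vec i$ by the pair $(\nu(\bar{\vec i}), t(\bar{\vec i}))$ rather than by $\nu(\vec i)$ alone. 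Carrying out this two-parameter enumeration — separating ``tree-reducible'' walks (Case~1, responsible for the leading $i=0$ Marchenko--Pastur-like term) from ``cycle-rich'' walks (Case~2, responsible for the higher $i$'s and the $p/n$ correction) and summing over shapes — is the technical step that delivers the precise right-hand side of the lemma.
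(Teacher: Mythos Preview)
Your setup and diagnosis are correct: the trace expansion, the grouping by $\nu=\nu(\vec i)$, and the observation that the raw bound $\Nk^{p-\nu+1}$ from Lemma~\ref{lemma:moment_unique} is useless for small $\nu$ when $\Nk\gg n$ are all exactly right. But your proposed remedy is far more elaborate than what is needed, and you leave its execution as an unproved ``technical step.''

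The paper's fix is much simpler. For sequences $\vec i\in\mathcal C$ one always has the trivial H\"older bound
\[
|M(\vec i)|=\bigl|\EE\,Q_{i_1i_2}\cdots Q_{i_{2p}i_1}\bigr|\le \EE_{\xi\sim\taumeasure} q_k^{2p}(\xi),
\]
since consecutive indices are distinct and each factor is distributed as $q_k(\xi)$. The paper simply uses this bound for $\nu\le p+1$ and uses Lemma~\ref{lemma:moment_unique} only for $\nu\ge p+2$; there is no need to open up the reduction argument or to enumerate by $(\nu(\bar{\vec i}),t(\bar{\vec i}))$. Likewise, the walk-counting via Cayley's formula and Dyck encodings is unnecessary: the crude estimate $|\mathcal C_\nu|\le (2p)^{2p-2}n^\nu$ (choose the $\nu$ values, then assign positions) is all that is used. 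Summing then gives
\[
\frac{1}{n^p}\Bigl(\sum_{\nu=1}^{p+1}(2p)^{2p-2}n^{\nu}+\sum_{\nu=p+2}^{2p}(2p)^{2p-2}n^{\nu}\Nk^{p-\nu+1}\Bigr)\EE q_k^{2p}(\xi),
\]
and the two sums collapse directly to $n(2p)^{2p-2}\bigl[p/n+\sum_{i=0}^{p-1}(n/\Nk)^i\bigr]$, with the $p/n$ term absorbing the contributions from $\nu\le p$.

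So the gap in your proposal is not a wrong idea but a missed shortcut: you overlooked that H\"older alone handles the small-$\nu$ regime, and as a result you deferred the hardest part of your argument to an unexecuted two-parameter enumeration.
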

\begin{proof}
Using the shorthand notation in \eref{Q_shorthand}, we can write $A_k(i, j) = \frac{1}{\sqrt{n}} Q_{ij} \cdot (1-\delta_{ij})$ for $i, j \in [n]$. Let 
\[
\mathcal{C} \bydef \set{\vi = (i_1, i_2, \ldots, i_{2p}) \in [n]^{2p}: i_j \neq i_{j+1} \text{ for all } j \in [2p-1] \text{ and } i_1 \neq i_{2p}}
\]
denote the set of all length $2p$ cycles in which no two consecutive indices are equal. We then have
\begin{align}
\EE\tr(\mA_k^{2p}) &= \frac{1}{n^p} \sum_{\vi \in \mathcal{C}} \EE\, Q_{i_1 i_2} Q_{i_2 i_3} \ldots Q_{i_{2p-1} i_{2p}}Q_{i_{2p} i_1}\\
&= \frac{1}{n^p} \Big(\sum_{\vi \in \mathcal{C}_1} M(\vi) + \sum_{\vi \in \mathcal{C}_2} M(\vi) + \ldots + \sum_{\vi \in \mathcal{C}_{2p}} M(\vi)\Big),\label{eq:trace_bnd_a}
\end{align}
where, for each $a \in [2p]$,
\[
\mathcal{C}_a \bydef \mathcal{C} \cap \set{\vi \in [n]^{2p}: \nu(\vi) = a}
\]
and $M(\vi) = \EE\, Q_{i_1 i_2} Q_{i_2 i_3} \ldots Q_{i_{2p-1} i_{2p}}Q_{i_{2p} i_1}$. The terms $M(\vi)$ in \eref{trace_bnd_a} can be bounded in two different ways. First, recall that, for any $i_j \neq i_{j+1}$, we have $Q_{i_j i_{j+1}} \sim q_k(\xi)$ with $\xi \sim \taumeasure$. Thus, for every $\vi \in \mathcal{C}$, we can apply H\"{o}lder's inequality to get
\begin{equation}\label{eq:M_bnd_1}
\abs{M(\vi)} \le \EE q_k^{2p}(\xi).
\end{equation}
On the other hand, for each $a \in [2p]$, Lemma~\ref{lemma:moment_unique} gives us
\begin{equation}\label{eq:M_bnd_2}
\abs{M(\vi)} \le \EE q_k^{2p}(\xi) \Nk^{p - a + 1} \quad \text{for all } \vi \in \mathcal{C}_a.
\end{equation}
We use \eref{M_bnd_1} to bound the $M(\vi)$ terms in $\mathcal{C}_1, \mathcal{C}_2, \ldots, \mathcal{C}_{p+1}$ and use \eref{M_bnd_2} for the $M(\vi)$ terms in $\mathcal{C}_{p+2}, \ldots, \mathcal{C}_{2p}$. It then follows from \eref{trace_bnd_a} that
\begin{equation}\label{eq:trace_bnd1}
\abs{\EE\tr(\mA_k^{2p})} \le \frac{1}{n^p}  \Big(\sum_{a = 1}^{p+1} \abs{\mathcal{C}_a} + \sum_{a=p+2}^{2p}\abs{\mathcal{C}_a} \Nk^{p - a + 1}\Big) \EE q_k^{2p}(\xi),
\end{equation}
where $\abs{\mathcal{C}_a}$ denotes the cardinality of $\mathcal{C}_a$. For each $a \in [2p]$, we have
\[
 \abs{\mathcal{C}_a} \le a^{2p-2} n(n-1)\ldots (n-a+1) \le (2p)^{2p-2} n^a.
 \]
 This is a crude bound, but it is sufficient for our purpose. Substituting this bound into \eref{trace_bnd1} give us
\[
\abs{\EE\tr(\mA_k^{2p})} \le (2p)^{2p-2} \Big(\sum_{a = 1}^{p+1} n^{a-p} + n \sum_{a=p+2}^{2p}({n}/{\Nk})^{a - p -1}\Big) \EE q_k^{2p}(\xi),
\]
which then implies \eref{trace_bnd}.
\end{proof}

\subsection{Bounds on the Operator Norms}

\begin{proposition}\label{prop:Ak_op_bnd}
Under the conditions of Theorem~\ref{thm:equivalence}, we have, for every $k \in \N_0$,
%\[
%\norm{\mA_k}_\mathsf{op} = \begin{cases} \mathcal{O}_{\prec}(1) &\text{if } k \ge \ell\\
%\frac{\sqrt n}{\sqrt{\Nk}} + \mathcal{O}_{\prec}(1)  &\text{if } 0 \le k < \ell,
%\end{cases}
%\]
\begin{equation}\label{eq:Ak_op_bnd}
\norm{\mA_k}_\mathsf{op} \prec d^{\max\{\ell - k, 0\}/2}.
\end{equation}
\end{proposition}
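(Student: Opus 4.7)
The plan is to split the argument into two regimes according to whether $k < \ell$ or $k \geq \ell$, and to handle them with the two distinct tools already developed in the appendix: the matrix-Bernstein analysis of Lemma~\ref{lemma:op_spike} for the low-order case, and the moment bound of Lemma~\ref{lemma:trace_bnd} combined with Markov's inequality for the high-order case.

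For $k < \ell$, the statement to prove is $\norm{A_k}_\mathsf{op} \prec d^{(\ell-k)/2}$. Lemma~\ref{lemma:op_spike} already gives a very explicit description of the spectrum of $A_k$: the top $N_k$ eigenvalues cluster around $\sqrt{n/N_k}$ with fluctuation $\mathcal{O}_\prec(1)$, while the remaining $n - N_k$ eigenvalues are all exactly equal to $-\sqrt{N_k/n}$. Since $n = \kappa d^\ell + o(d^\ell)$ and $N_k = d^k/k! + \mathcal{O}(d^{k-1})$ by \eref{Nk}, the quantity $\sqrt{n/N_k} = \mathcal{O}(d^{(\ell-k)/2})$ and $\sqrt{N_k/n} = \mathcal{O}(d^{-(\ell-k)/2}) = \mathcal{O}(1)$. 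Thus the maximum absolute eigenvalue is dominated by the top cluster and satisfies $\norm{A_k}_\mathsf{op} = \sqrt{n/N_k} + \mathcal{O}_\prec(1) \prec d^{(\ell-k)/2}$, as required.

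For $k \geq \ell$, the statement reduces to $\norm{A_k}_\mathsf{op} \prec 1$. Here I would apply Lemma~\ref{lemma:trace_bnd}, which gives
\[
\EE \tr(A_k^{2p}) \le n(2p)^{2p-2}\Big[p/n + \sum_{i=0}^{p-1}(n/N_k)^i\Big] \EE_{\xi \sim \taumeasure} q_k^{2p}(\xi).
\]
Under the assumption $n/d^\ell \to \kappa$ and $N_k \asymp d^k$ with $k \geq \ell$, we have $n/N_k = \mathcal{O}(1)$ (specifically $n/N_\ell \to \kappa \ell!$ when $k = \ell$ and $n/N_k \to 0$ when $k > \ell$), so the bracketed factor is bounded by a constant $C(k,\kappa)^p$ depending only on $k$ and $\kappa$. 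Together with the moment bound \eref{qk_normp}, which yields $\EE q_k^{2p}(\xi) \le \qlp{k}{2p}^{2p}$, we get
\[
\EE \tr(A_k^{2p}) \le n \cdot K(k,\kappa,p)^{p}
\]
for some constant $K(k,\kappa,p)$ depending on $k$, $\kappa$, and $p$ but not on $d$.

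The finish is a standard Markov-inequality argument: for any $\varepsilon > 0$ and $D > 0$,
\[
\P(\norm{A_k}_\mathsf{op} \ge d^\varepsilon) \le d^{-2\varepsilon p} \EE \norm{A_k}_\mathsf{op}^{2p} \le d^{-2\varepsilon p} \EE \tr(A_k^{2p}) \le d^{-2\varepsilon p}\, n \cdot K(k,\kappa,p)^{p}.
\]
Since $n \le C d^\ell$, picking $p$ large enough that $2\varepsilon p > \ell + D$ makes the right-hand side at most $d^{-D}$ for all sufficiently large $d$, which is exactly the definition of $\norm{A_k}_\mathsf{op} \prec 1$. I do not expect serious obstacles in this argument since the heavy lifting is carried by Lemmas~\ref{lemma:op_spike} and \ref{lemma:trace_bnd}; the only subtlety is verifying that the bracketed factor in the trace bound remains of polynomial size in $p$ uniformly in $d$, which follows immediately from the scaling $n/N_k = \mathcal{O}(1)$ valid precisely when $k \ge \ell$.
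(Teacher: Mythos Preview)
Your proposal is correct and follows essentially the same approach as the paper: split into $k<\ell$ and $k\ge\ell$, use Lemma~\ref{lemma:op_spike} for the low-order case, and combine Lemma~\ref{lemma:trace_bnd} with Markov's inequality for the high-order case. The paper's argument is identical in structure and detail.
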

\begin{proof}
The statement of the proposition means that $\norm{A_k}_\mathsf{op} \prec 1$ for all the high-order components with $k \ge \ell$. In contrast, for the low-order components, $\norm{A_k}_\mathsf{op} \prec d^{(\ell-k)/2}$ when $0 \le k < \ell$.

To show \eref{Ak_op_bnd} for the case of $k \ge \ell$, suppose that we are given an arbitrary $\epsilon > 0$. Substituting \eref{trace_bnd} into \eref{op_reg_1} yields
\begin{align}
\P(\,\norm{\mA_k}_\mathsf{op} \ge d^\epsilon) &\le d^{-2\epsilon p} \, n (2p)^{2p-2}  \big[p/n + \sum_{i = 0}^{p-1} ({n}/{\Nk})^i\big] \cdot \EE_{\xi \sim \taumeasure} q_k^{2p}(\xi)
\end{align}
Recall that $n = \alpha d^\ell + o(d^\ell)$ and $k \ge \ell$. In addition, we have $\Nk = \frac{d^k}{k!}(1+\mathcal{O}(1/d))$. Thus, for sufficiently large $d$, we have, by Markov's inequality, 
\[
\P(\,\norm{\mA_k}_\mathsf{op} \ge d^\epsilon) \le C(\alpha, k, p) d^{\ell - 2\epsilon p},
\]
where $C(\alpha, k, p)$ is some constant that depends on $\alpha, k, p$ but not on $d$. For any $D > 0$, choose a $p$ such that $\ell - 2 \epsilon p < -D - 1$. It follows that
\[
\P(\,\norm{\mA_k}_\mathsf{op} \ge d^\epsilon) \le d^{-D}
\]
for all sufficiently large $d$.

Next, we consider the low-order components, with $0 \le k < \ell$. Lemma~\ref{lemma:op_spike} implies that
\begin{equation}
\norm{A}_\mathsf{op} \le \sqrt{n/N_k} + \mathcal{O}_\prec(1) \prec d^{(\ell-k)/2},
\end{equation}
where the second step uses the fact that $n = \alpha n^\ell + o(n^\ell)$ and $\Nk = d^k/k!(1+\mathcal{O}(1/d))$.
\end{proof}

% !TEX root = equivalence.tex

\section{Perturbations of Resolvents and Stieltjes Transforms}
\label{appendix:resolvent_identities}

In this appendix, we recall some standard properties of the resolvent and the Stieltjes transform of Hermitian matrices. Let $H$ be an $n \times n$ Hermitian matrix. Its resolvent is denoted by $G(z) \bydef (H - zI)^{-1}$, where $z \in \C$ is the spectral parameter with $\eta = \im[z] > 0$. From the trivial inequality
\[
\abs{\frac{1}{\lambda - z}} \le \frac{1}{\eta}\qquad \text{for all } \lambda \in \R,
\]
one can easily verify the following bounds on the operator and the entry-wise $\ell_\infty$ norms of $G(z)$:
\begin{align}
\norm{G(z)}_\mathsf{op} &\le \frac{1}{\eta}\label{eq:G_op}\\
\norm{G(z)}_\infty &\le \frac{1}{\eta}.\label{eq:G_inf}
\end{align}
In our proof, we will also need the \emph{Ward identity} \cite[Lemma 8.3]{erdos2017Dynamicalapproach}:
\begin{equation}\label{eq:Ward}
\sum_{ij} \abs{G_{ij}(z)}^2 = \frac{\im(\tr G(z))}{\eta}.
\end{equation}

Let $s(z)$ be the Stieltjes transform of the empirical spectral distribution of $H$. Since $s(z) = \frac{1}{n} \tr G(z)$, \eref{G_inf} immediately implies the pointwise bound
\begin{equation}\label{eq:sH_bnd}
\abs{s(z)} \le \frac{1}{\eta}.
\end{equation}
One can also easily check that
\begin{equation}\label{eq:dsdz}
\abs{s(z_1) - s(z_2)} \le \frac{\abs{z_1 - z_2}}{\im(z_1) \im(z_2)}.
\end{equation}

Both the resolvent and the Stieltjes transform are stable with respect to matrix perturbations. In our proof of Theorem~\ref{thm:equivalence}, we will use the following standard perturbation estimates.
\begin{lemma}\label{lemma:sG_perturbation}
Let $H_1, H_2$ be $n \times n$ Hermitian matrices, and $G_1(z), G_2(z)$ their resolvents. Then,
\begin{equation}\label{eq:resolvent_id}
G_1(z) = G_2(z) - G_2(z) (H_1 - H_2) G_1(z).
\end{equation}
Moreover, for the Stieltjes transforms of $H_1$ and $H_2$, we have
\begin{align}
\abs{s_1(z) - s_2(z)} &\le \frac{\norm{H_1 - H_2}_\mathsf{F}}{\sqrt{n}\, \eta^2} \le \frac{\norm{H_1 - H_2}_\mathsf{op}}{\eta^2}\label{eq:s_diff}\\
\intertext{and}
\abs{s_1(z) - s_2(z)} &\le \frac{C \rank(H_1 - H_2)}{n \cdot \eta},\label{eq:s_diff_rank}
\end{align}
where $C$ is an absolute constant.
\end{lemma}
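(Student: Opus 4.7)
The plan is to prove the three assertions in the order given. The first is a purely algebraic identity, and the two Stieltjes-transform estimates then follow from combining that identity with two complementary ways of controlling the trace of $G_1 - G_2$: a Hilbert--Schmidt bound for \eqref{eq:s_diff}, and a rank bound for \eqref{eq:s_diff_rank}.

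For the resolvent identity \eqref{eq:resolvent_id}, I would multiply both sides on the left by $G_2^{-1} = H_2 - zI$. The right-hand side becomes $I - (H_1 - H_2) G_1$, so the claim reduces to verifying that $(H_2 - zI) G_1 + (H_1 - H_2) G_1 = I$, which simplifies to $(H_1 - zI) G_1 = I$, i.e.\ the defining relation of $G_1$.

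For \eqref{eq:s_diff}, the resolvent identity together with the cyclic property of the trace gives
\begin{equation*}
s_1(z) - s_2(z) = \tfrac{1}{n} \tr(G_1 - G_2) = -\tfrac{1}{n} \tr\bigl((H_1 - H_2) G_1 G_2\bigr).
\end{equation*}
The Cauchy--Schwarz inequality $\abs{\tr(AB)} \le \norm{A}_\mathrm{F} \norm{B}_\mathrm{F}$ then bounds the right-hand side by $\tfrac{1}{n} \norm{H_1 - H_2}_\mathrm{F} \norm{G_1 G_2}_\mathrm{F}$. Submultiplicativity yields $\norm{G_1 G_2}_\mathrm{F} \le \norm{G_1}_\mathrm{op} \norm{G_2}_\mathrm{F} \le \sqrt{n}/\eta^2$, where I use the standard resolvent bounds $\norm{G_j}_\mathrm{op} \le \eta^{-1}$ and $\norm{G_j}_\mathrm{F}^2 = \sum_i \abs{\lambda_i - z}^{-2} \le n/\eta^2$.

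For \eqref{eq:s_diff_rank}, the key observation is that $G_1 - G_2 = -G_2 (H_1 - H_2) G_1$ has rank at most $r \bydef \rank(H_1 - H_2)$, since multiplication by any matrix cannot increase rank. Hence $G_1 - G_2$ has at most $r$ nonzero singular values, each bounded in magnitude by $\norm{G_1 - G_2}_\mathrm{op} \le \norm{G_1}_\mathrm{op} + \norm{G_2}_\mathrm{op} \le 2/\eta$, so $\abs{\tr(G_1 - G_2)} \le 2r/\eta$ and dividing by $n$ yields the bound with $C = 2$. No substantive obstacle is anticipated in this lemma; the only subtle point is in \eqref{eq:s_diff}, where one resolvent factor must receive the operator-norm bound and the other the Hilbert--Schmidt bound, in order to produce the $\sqrt{n}$ improvement in the denominator that distinguishes it from a trivial pointwise estimate.
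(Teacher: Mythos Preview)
Your proof is correct. For the resolvent identity and for \eqref{eq:s_diff} you argue essentially as the paper does; the only cosmetic difference is that you bound $\norm{G_1 G_2}_\mathrm{F} \le \norm{G_1}_\mathrm{op}\norm{G_2}_\mathrm{F}$ whereas the paper uses $\norm{G_1 G_2}_\mathrm{F} \le \sqrt{n}\,\norm{G_1 G_2}_\mathrm{op}$, both yielding $\sqrt{n}/\eta^2$.

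For \eqref{eq:s_diff_rank} you take a genuinely different route. The paper reduces to the rank-one case by a unitary rotation to $H_1 - H_2 = a\,\ve_1\ve_1\tran$, observes that the $(n-1)\times(n-1)$ minors of $H_1$ and $H_2$ then coincide, invokes Cauchy interlacing (via \cite[Lemma 7.5]{erdos2017Dynamicalapproach}) to compare each $s_j$ with the Stieltjes transform of the common minor, and finally telescopes for general rank. Your argument bypasses all of this: since $G_1 - G_2 = -G_2(H_1 - H_2)G_1$ has rank at most $r$, its trace is bounded in absolute value by its nuclear norm, which is at most $r\,\norm{G_1 - G_2}_\mathrm{op} \le 2r/\eta$. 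This is shorter, avoids the external interlacing lemma, and produces the explicit constant $C = 2$. The paper's approach, on the other hand, connects the estimate to the interlacing machinery used elsewhere in the random-matrix literature, which may be conceptually natural in that setting even if less direct here.
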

\begin{proof}
The formula in \eref{resolvent_id} can be easily verified by using the identities $I = (H_1 - z I) G_1 = (H_2-zI + H_1 - H2) G_1$ and $G_2 (H_2 - zI) = I$. Applying \eref{resolvent_id} gives us
\begin{align}
\abs{s_1(z) - s_2(z)} &= \frac{1}{n} \abs{\tr [G_2(z) (H_1 - H_2) G_1(z)]}\\
&=\frac{1}{n} \abs{\tr [G_1(z)G_2(z) (H_1 - H_2)]}\\
&\le \frac{1}{n} \norm{G_1(z) G_2(z)}_\mathsf{F} \norm{H_1 - H_2}_\mathsf{F},\label{eq:s_diff0}
\end{align}
where the last step uses the Cauchy-Schwarz inequality. By using \eref{G_op} in the last step, we have
\[
\norm{G_1(z) G_2(z)}_\mathsf{F} \le \sqrt{n} \norm{G_1(z) G_2(z)}_\mathsf{op} \le  \sqrt{n} \norm{G_1(z)}_\mathsf{op} \norm{G_1(z)}_\mathsf{op} \le \frac{\sqrt{n}}{\eta^2}.
\]
Substituting this bound into \eref{s_diff0} then leads to the first inequality in \eref{s_diff}. The second inequality in \eref{s_diff} then follows immediately from the fact that $\norm{H_1 - H_2}_\mathsf{F} \le \sqrt{n} \norm{H_1 - H_2}_\mathsf{op}$.

Next, we show \eref{s_diff_rank}. First consider the special case when $\rank(H_1 - H_2) = 1$. As the eigenvalues are invariant under unitary transforms, we can assume without loss of generality that
\begin{equation}\label{eq:H1H2_rankone}
H_1 - H_2 = a \ve_1 \ve_1\tran,
\end{equation}
for some $a \neq 0$. Let $H_1^{[1]}$ (resp. $H_2^{[1]}$) denote the $(n-1)\times(n-1)$ minor matrix obtained by removing the first column and row of $H_1$ (resp. $H_2$). By \eref{H1H2_rankone}, $H_1^{[1]} = H_2^{[1]}$. Let $s^{[1]}(z)$ denote the Stieltjes transform of the eigenvalues of $H_1^{[1]}$. We have
\begin{equation}\label{eq:s_diff_rank_one}
\abs{s_1(z) - s_2(z)} \le \abs{s_1(z) - s^{[1]}(z)} + \abs{s^{[1]}(z) - s_2(z)} \le \frac{C}{n \cdot \eta},
\end{equation}
where the second step uses \cite[Lemma 7.5]{erdos2017Dynamicalapproach}, and $C$ is an absolute constant. For the case when $\rank(H_1 - H_2) > 1$, we can always write $s_1(z) - s_2(z)$ as a telescoping sum of rank-one perturbations. The inequality in \eref{s_diff_rank} can then be obtained by applying the triangular inequality and by using the result for the rank-one case.
\end{proof}

\begin{proof}[Proof of Lemma~\ref{lemma:omit_low_order}]
By using the factorized representation of $A_k$ in \eref{Ak_factor}, we can write the low-order term as
\begin{equation}\label{eq:A_L_fact}
\Al = \sum_{0 \le k < \ell} \frac{\co_k}{\sqrt{n \Nk}} \SH_k\tran \SH_k - \sigma_d I,
\end{equation}
where $\SH_k$ is an $\Nk \times n$ matrix, with $\Nk = \mathcal{O}(d^k)$, and $\sigma_d \bydef \sum_{0 \le k < \ell} \co_k \sqrt{\Nk / n}$. Since $n \asymp d^\ell$, we have $\sigma_d = \mathcal{O}(d^{-1/2})$ and that
\begin{equation}\label{eq:AL_rank}
\rank(\Al + \sigma_d I) \le \sum_{0 \le k < \ell} \Nk = \mathcal{O}(d^{\ell-1}).
\end{equation}
Write $\breve{A} \bydef A + \sigma_d I = (\Al + \sigma_d I) + \Ah$, whose Stieltjes transform is denoted by $\breve{s}(z)$. By the triangular inequality,
\begin{align}
\abs{s(z) - \hat{s}(z)} &\le \abs{s(z) - \breve{s}(z)} + \abs{\breve{s}(z) - \hat{s}(z)}\\
&\le \frac{\sigma_d}{\eta^2} + \frac{C \rank(\breve{A} - \Ah)}{n \cdot \eta},\label{eq:s_diff_LH1}
\end{align}
where the last step follows from the perturbation inequalities in \eref{s_diff} and \eref{s_diff_rank}. By substituting the estimates $\sigma_d = \mathcal{O}(d^{-1/2})$ and \eref{AL_rank} into \eref{s_diff_LH1}, we complete the proof.
\end{proof}

We conclude this appendix by establishing a general comparison inequality that will be utilized in our proofs for both Theorem~\ref{thm:equivalence_f} and Theorem~\ref{thm:equivalence_fg}. Given that the former focuses on data vectors sampled from the spherical distribution, while the latter pertains to isotropic Gaussian data vectors, we present a general model in the following lemma that can accommodate both scenarios.

\begin{lemma}\label{lemma:s1s2_general}
Let $\vx_1, \ldots, \vx_n \in \R^d$ be a set of independent and identically distributed vectors sampled from a probability distribution $\pi$. Consider a random matrix $A \in \R^{n \times n}$ with entries
\begin{equation}\label{eq:A_f_general}
A_{ij} \bydef \begin{cases} \frac{1}{\sqrt{n}} \, f_d(\sqrt{d}\, \vx_i\tran \vx_j) &\text{if } i \ne j\\
0&\text{if } i = j
\end{cases},
\end{equation}
where $f_d: \R \mapsto \R$ is a function. Let $\hat A$ be another matrix constructed in the same way as in \eref{A_f_general}, but with $f_d$ replaced by a different function $\hat f_d$. For $z \in C_+$ with $\eta = \im[z]$, let $s(z)$ and $\hat{s}(z)$ denote the Stieltjes transforms of $A$ and $\hat A$, respectively. Then, we have
\begin{equation}\label{eq:s1s2_general}
\abs{s(z) - \hat s(z)} \le \frac{1}{\eta^2} \big[\EE_{\vx, \vy} \big(f_d(\sqrt{d} \, \vx\tran \vy) - \hat f_d(\sqrt{d}\, \vx\tran \vy)\big)^2\big]^{1/2} + \mathcal{O}_\prec\Big(\frac{1}{\eta\sqrt{n}}\Big),
\end{equation}
where $\vx, \vy \in \R^d$ are two independent vectors sampled from the distribution $\pi$.
\end{lemma}
\begin{proof}
By the triangular inequality,
\begin{align}
\abs{s(z) - \hat s(z)} &\le \abs{\EE s(z) - \EE \hat s(z)} + \abs{s(z) - \EE s(z)} + \abs{\hat s(z) - \EE \hat s(z)}\\
&\le \EE \abs{s(z) - \hat s(z)} + \abs{s(z) - \EE s(z)} + \abs{\hat s(z) - \EE \hat s(z)}.\label{eq:s1s2_triangle}
\end{align}
Next, we bound each term on the right-hand side. For the first term, we apply the perturbation estimate \eref{s_diff} in Lemma~\ref{lemma:sG_perturbation} and H\"{o}lder's inequality, which result in the following expression:
\begin{align}
\EE \abs{s(z) - \hat s(z)} &\le \frac{1}{\sqrt{n} \eta^2} \EE \norms{A - \hat A}_\mathrm{F}\\
&\le \frac{1}{\sqrt{n} \eta^2} (\EE \norms{A - \hat A}_\mathrm{F}^2)^{1/2}\\
&\le \frac{1}{\eta^2} \big[\EE_{\vx, \vy} \big(f_d(\sqrt{d} \vx\tran \vy) - \hat f_d(\sqrt{d} \vx\tran \vy)\big)^2\big]^{1/2}.\label{eq:s1s2_moment}
\end{align}
To reach the last step, we have used the fact that, for any $i, j \in [n]$ with $i \neq j$, the probability distribution of $\sqrt{n} (A_{ij} - \hat A_{ij})$ equals to that of $f_d(\sqrt{d}\, \vx\tran \vy) - \hat f_d(\sqrt{d}\, \vx\tran \vy)$, for two independent vectors $\vx, \vy$ sampled from the distribution $\pi$. 

The second and the third terms on the right-hand side of \eref{s1s2_triangle} can be bounded by standard concentration arguments in random matrix theory. One approach is to view $s(z)$ and $\hat s(z)$, for fixed $z$, as functions of the independent vectors $\set{\vx_i}_{i \in [n]}$. It is straightforward to verify that these functions have bounded variations. This then allows one to apply McDiarmid's inequality to bound the deviation of $s(z)$ and $\hat s(z)$ from their corresponding expectations. Another approach, as used in \cite{cheng2013SpectrumRandom}, is to apply the Burkholder's inequality \cite[Lemma 2.12]{baiSpectralAnalysisLarge2010} to directly bound the moments of the difference between $s(z), \hat s(z)$ and their corresponding expectations. By \cite[Lemma 2.4]{cheng2013SpectrumRandom}, for any $p \in \N$,
\[
\EE \abs{s(z) - \EE s(z)}^{2p} \le \frac{C_p}{\eta^{2p}n^p },
\]
where $C_p$ is a constant that depends on $p$. For any $\varepsilon > 0$, applying Markov's inequality gives us
\begin{align}
\P\big(\abs{s(z) - \EE s(z)} \ge \frac{d^{\varepsilon}}{\eta \sqrt{n}}\big) = \P\big(\abs{s(z) - \EE s(z)}^{2p} \ge \frac{d^{2\varepsilon p}}{\eta^{2p} n^p}\big) \le C_p d^{-2\varepsilon p}.
\end{align}
For any $D > 0$, we can always find a $p$ such that the right-hand side of the above inequality is less than $d^{-D}$ for all sufficiently large $d$. It follows that 
\begin{equation}\label{eq:sz_concentration}
s(z) = \EE s(z) + \mathcal{O}_\prec\Big(\frac{1}{\eta \sqrt{n}}\Big).
\end{equation}
Using the same steps, we can also show:
\begin{equation}\label{eq:hat_sz_concentration}
\hat s(z) = \EE \hat s(z) + \mathcal{O}_\prec\Big(\frac{1}{\eta \sqrt{n}}\Big).
\end{equation}
By substituting \eref{s1s2_moment}, \eref{sz_concentration}, and \eref{hat_sz_concentration} into \eref{s1s2_triangle}, we complete the proof.
\end{proof}

% !TEX root = equivalence.tex

\section{Stability Analysis of the Self-Consistent Equation}
\label{appendix:stability}

In this appendix, we present a stability analysis of the self-consistent equation in \eref{m_equation} satisfied by the limiting Stieltjes transform. We start by rewriting \eref{m_equation} as
\begin{equation}\label{eq:m_equation_frac}
\frac 1 {m} = -z -  \frac {\coa m}{1 + \cob m} - \coc m.
\end{equation}
Now suppose $s \in C_+$ is an approximate solution to \eref{m_equation_frac} such that
\begin{equation}\label{eq:m_equation_approx}
\frac 1 {s} = -z -  \frac {\coa s}{1 + \cob s} - \coc s + \et,
\end{equation}
with some error term $\et$. We show in the following proposition that $s \approx m$ as long as the error $\et$ is small.

\begin{proposition}\label{prop:stability}
For any $z \in C_+$, there exists a unique solution $m \in C_+$ to \eref{m_equation_frac}. Moreover, let $s \in C_+$ be an approximate solution to \eref{m_equation} in the sense of \eref{m_equation_approx}, with the error term satisfying
\begin{equation}\label{eq:fp_small_error}
\abs{\et} \le \frac\eta 2, \qquad \eta = \im[z].
\end{equation}
We have
\begin{equation}\label{eq:m_equation_stability}
\abs{s - m} \le \frac{4\abs{\et}}{\eta^2}.
\end{equation}
\end{proposition}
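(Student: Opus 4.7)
The plan is to subtract the two equations, extract imaginary-part identities, and deploy a reverse Cauchy--Schwarz inequality in indefinite (Lorentzian) signature to lower-bound the ``slope'' factor that multiplies $m-s$.

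First I would subtract \eqref{eq:m_equation_frac} from \eqref{eq:m_equation_approx}; using the telescoping identity $\frac{\coa m}{1+\cob m} - \frac{\coa s}{1+\cob s} = \frac{\coa(m-s)}{(1+\cob s)(1+\cob m)}$, a short computation yields $(m-s)\,K(s,m) = \et$, where $K(s,m) \bydef \frac{1}{sm} - \frac{\coa}{(1+\cob s)(1+\cob m)} - \coc$. The entire proof then reduces to lower-bounding $|K(s,m)|$.

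Next I would take $\im(\cdot)$ of \eqref{eq:m_equation_frac} and of \eqref{eq:m_equation_approx}. Using $\im(w/(1+\cob w)) = \im(w)/|1+\cob w|^2$ for $w\in\C_+$ together with $\im(1/w)=-\im(w)/|w|^2$, one arrives at the identities $\im(m)\big[1/|m|^2 - \coa/|1+\cob m|^2 - \coc\big] = \eta$ and $\im(s)\big[1/|s|^2 - \coa/|1+\cob s|^2 - \coc\big] = \eta - \im(\et)$. Since $|\et|\le\eta/2$, both bracketed quantities are strictly positive; moreover, dropping the two non-positive terms inside each bracket yields the a priori bounds $|m|\le 1/\eta$ and $|s|\le 2/\eta$, whence $\im(s)\,\im(m)\le 2/\eta^2$.

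The crucial step is to realize $K(s,m)$ as a Lorentzian inner product. Equipping $\C^3$ with the Hermitian form $\langle u,v\rangle = u_1\bar v_1 - u_2\bar v_2 - u_3\bar v_3$ of signature $(1,2)$, I would set $u=(1/s,\,\sqrt{\coa}/(1+\cob s),\,\sqrt{\coc})$ and $v=(1/\bar m,\,\sqrt{\coa}/\overline{(1+\cob m)},\,\sqrt{\coc})$. A direct check gives $\langle u,v\rangle = K(s,m)$, while $\langle u,u\rangle$ and $\langle v,v\rangle$ are precisely the positive bracketed quantities of the preceding paragraph. The reverse Cauchy--Schwarz inequality for time-like vectors---which follows from the fact that the orthogonal complement of a time-like vector is space-like, so the residual of the Lorentzian projection $v = \lambda u + w$ satisfies $\langle w,w\rangle\le 0$---then gives $|K(s,m)|^2 \ge \langle u,u\rangle\langle v,v\rangle = \frac{\eta-\im(\et)}{\im(s)}\cdot\frac{\eta}{\im(m)} \ge \frac{\eta^2/2}{2/\eta^2} = \frac{\eta^4}{4}$, so $|K(s,m)|\ge\eta^2/2$. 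The identity $(m-s)K(s,m)=\et$ then yields $|m-s|\le 2|\et|/\eta^2$, which is even sharper than \eqref{eq:m_equation_stability}. Uniqueness comes for free: if $m_1,m_2\in\C_+$ both satisfy \eqref{eq:m_equation_frac}, setting $\et=0$ in the above gives $(m_1-m_2)K(m_1,m_2)=0$ with $K(m_1,m_2)\ne 0$ (by the reverse Cauchy--Schwarz bound applied without the $\im(\et)$ term), forcing $m_1=m_2$.

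The step I expect to be the main obstacle is spotting the signature-$(1,2)$ structure of $K(s,m)$ and identifying the correct vectors $u,v$ (in particular the need to conjugate $m$ in the second slot so that the sesquilinear form reproduces the holomorphic $K$); once this is in place, the remainder of the argument is essentially automatic algebraic bookkeeping.
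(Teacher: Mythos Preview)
Your proof is correct and takes a genuinely different route from the paper's. The paper invokes the integral representation of $m$ and $s$ against the Marchenko--Pastur density $\varrho_{\mathrm{MP}}$ (via the identification \eqref{eq:Gamma_rho}), takes the difference of the two integrals, and then applies the \emph{standard} Cauchy--Schwarz inequality to the resulting cross-term $\int\frac{\varrho_{\mathrm{MP}}(\dif x)}{(x-z-\coc m)(x-z-\coc s+\et)}$, controlling each factor through the imaginary-part identities \eqref{eq:im_mz}--\eqref{eq:im_s}. Your argument, by contrast, stays purely algebraic: you isolate the slope factor $K(s,m)$ directly from the subtracted equation and then lower-bound $|K(s,m)|$ via a \emph{reverse} Cauchy--Schwarz inequality for a Hermitian form of signature $(1,2)$. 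This avoids any measure-theoretic input (no need to know that \eqref{eq:m_equation_frac} encodes a free convolution or admits an integral representation) and yields the sharper constant $2$ in place of $4$ in \eqref{eq:m_equation_stability}. The Lorentzian trick is slick; note that it relies on $\coa,\coc\ge 0$ and $\cob\in\R$, all of which hold here since $\coa=\co_\ell^2$, $\coc=\sum_{k>\ell}\co_k^2$, and $\cob=\co_\ell\sqrt{\ell!\ratio}$. One minor omission: you verify uniqueness but do not address existence, though the paper itself also just cites known results for that half of the statement.
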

\begin{proof}
The existence and uniqueness of the solution $m \in \C_+$ is a well-known result. One way to establish this is to observe that $\eref{m_equation_frac}$ is just the free additive convolution of the Marchenko-Pastur law and the semicircle law. See, \emph{e.g.}, \cite{biane1997freeconvolution} for more details. An alternative approach, as used in \cite[Lemma A.1]{cheng2013SpectrumRandom}, is to rewrite \eref{m_equation_frac} as a third-degree polynomial in $m$ and to analyze the roots of that polynomial.

Next, we establish the stability bound in \eref{m_equation_stability}. Recall the family of MP density functions given in \eref{rho_MP}, with two shape parameters $\rp$ and $t$. In what follows, we set $\rp = \coa/\cob^2$ and $t = \cob$, and write $\varrho_\mathrm{MP} \bydef \varrho_{\rp, t}$. The Stieltjes transform of $\varrho_\mathrm{MP}$ is specified as the unique solution to \eref{m_MP}. 
  Substituting our particular choices of $\rp$ and $t$ into \eref{m_MP}, and by the uniqueness of the solution, we obtain the following characterization: for any $z, \Gamma \in C_+$, 
\begin{equation}\label{eq:Gamma_rho}
\frac{1}{\Gamma} = -z - \frac{\coa \Gamma}{1 + \cob \Gamma} \qquad \Leftrightarrow \qquad \Gamma = \int \frac{\varrho_\mathrm{MP}(\dif x)}{x - z}.
\end{equation}
Using \eref{Gamma_rho} and \eref{m_equation_frac}, we can now write the exact solution $m$ as
\begin{equation}\label{eq:m_MP1}
m =  \int \frac{\varrho_\mathrm{MP}(\dif x) }{  x  - z -  \coc m }.
\end{equation}
By the assumption in \eref{fp_small_error}, $\im[z +\coc s - \omega] \ge \eta/2$. Thus, it follows from \eref{Gamma_rho} and \eref{m_equation_approx} that
\begin{equation}\label{eq:s_MP}
s = \int \frac{\varrho_\mathrm{MP}(\dif x) }{  x  - z -  \coc s + \omega }.
\end{equation}
Computing the imaginary part of the equation for $m$, we get 
\begin{equation}\label{eq:im_mz}
\im[m] =  \int   \frac{ \eta + \coc \im[ m]}{  \abs{x  - z -  \coc m}^2 } \,\varrho_\mathrm{MP}(\dif x),
\end{equation}
which also gives us the trivial bound $\im[m] \le 1/\eta$. Similarly, 
\begin{align}
\im[s] &=  \int   \frac{ \eta + \coc \im[s] - \im[\et]}{  \abs{x  - z -  \coc s + \et}^2 }\, \varrho_\mathrm{MP}(\dif x)\label{eq:im_s0}\\
&\ge \int   \frac{ \eta/2 + \coc \im[s]}{  \abs{x  - z -  \coc s + \et}^2 }\, \varrho_\mathrm{MP}(\dif x),\label{eq:im_s}
\end{align}
where the second step uses \eref{fp_small_error}. Note that \eref{im_s0} and \eref{fp_small_error} also imply that $\im[s] \le 2/\eta$. Taking the difference of \eref{m_MP1} and \eref{s_MP} gives us
 \begin{align} \label{eq:imms_0}
  (s-m) \Big [ 1- \int \frac{\coc \, \varrho_\mathrm{MP}(\dif x) }{  (x  - z -  \coc m)  (x  - z -  \coc s + \et)} \Big ]=  \int \frac{ -\et \,\varrho_\mathrm{MP}(\dif x)  }{  (x  - z -  \coc m)  (x  - z - \coc s + \et)}.
\end{align} 
By the Cauchy-Schwarz inequality in the first step, and by \eref{im_mz}, \eref{im_s} in the second step, 
\begin{align}
\abs{  \int \frac{\varrho_\mathrm{MP}(\dif x) }{  (x  - z -  \coc m)  (x  - z -  \coc s + \et)} } &  \le \Big [  \int \frac{\varrho_\mathrm{MP}(\dif x) }{  \abs{x  - z -  \coc m}^2} 
 \int \frac{\varrho_\mathrm{MP}(\dif x) }{  \abs{x  - z -  \coc s+ \et }^2} \Big ]^{1/2} \\
 & \le  \Big [ \frac { \im [m]} { \eta + \coc \im [m]}  \cdot \frac {\im[s]} { \eta/2 + \coc \im[s]} \Big ]^{1/2}\\
 &\le \frac{1}{\eta^2/4 + \coc}, \label{eq:imms_1}
\end{align} 
where the last step uses the bound that $\max\set{\im[m], \im[s]} \le 2/\eta$. This allows us to get
\begin{equation} \label{eq:imms_2}
\abs{ 1- \int \frac{\coc \, \varrho_\mathrm{MP}(\dif x) }{  (x  - z -  m)  (x  - z -  s + \et)}}  \ge 
 1-    \frac { \coc } {\eta^2/4 + \coc} = \frac{\eta^2}{\eta^2 + 4 \coc}.
\end{equation} 
Substituting \eref{imms_1} and \eref{imms_2} into \eref{imms_0}, we then reach the statement \eref{m_equation_stability} of the proposition.
\end{proof}

%Suppose that $z, s \in \C_+$ satisfies \eref{m_equation} approximately, \emph{i.e.},
%\begin{equation}\label{eq:m_equation_approx}
%\Big(z + \frac{\coa s}{1 + \cob s} + \coc s\Big) + \frac 1 s  = \et,
%\end{equation}
%for some $\et$.
%Suppose that $\Im z + \coc s - \et > 0$. 
%Then by definition, we have 
%\begin{equation}\label{eq:fp}
%s(z) =  \int \Big[ x- (z + \coc s - \et)   \Big]^{-1} \dif\rho_\mathrm{MP}(x),
%\end{equation}
%where $\rho_\mathrm{MP}$ is the MP law associated with the equation $(z + \frac{\coa s}{1 + \cob s} ) + \frac 1 s=0$. 
%From now on, we assume that 
%\begin{equation}\label{eq:fp}
%\eta= \Im z \ge 4  |\et|.
%\end{equation}
%
%Similarly to the previous argument, we have 
% \begin{align} \label{eqn:mfcdef}
%  \int   \frac{ 1}{  |x  - z -  s(z)+ \et|^2 } \varrho_\mathrm{MP}(\dif x) \le  \frac {|\Im \et| +  \Im s (z) } { \eta + \Im s (z) - \Im \et} \le 1
%\end{align} 
%Since 
% \begin{align} %\label{eqn:mfcdef}
%\Big |  \int \frac{  \varrho_\mathrm{MP}(\dif x)  }{  (x  - z -  m)  (x  - z -  s + \et)} \Big | \le  \int \frac{ \varrho_\mathrm{MP}(\dif x)  }{ 2 |x  - z -  m|^2 } +   \int \frac{ \varrho_\mathrm{MP}(\dif x)  }{ 2 |x  - z -  s + \et|^2 } 
%\le 1, 
%\end{align} 
%we have 
% \begin{align} \label{eqn:mfcdef}
%  |s-m| \le  \frac { \et} { \frac { \eta} {2( \eta + \Im m)} +  \frac { \eta- 2 |\Im \et| } { 2(\eta + \Im s (z) - \im \et)}} 
%  \le \et M \eta^{1}.
%\end{align}
%

% !TEX root = equivalence.tex

\section{Proof of Theorem~\ref{thm:equivalence_fg}}
\label{appendix:proof_fg}

Recall the condition \eref{cok_summable_b} in Assumption~\ref{assumption:fg}. For any fixed $c > 0$, we can always choose an integer $L \ge \ell + 1$ such that
\begin{equation}\label{eq:co_conv_g}
\sigma^2 - (\eta^4 c^2/96) \le \textstyle\sum_{0 \le k \le L - 1} \co_k^2 \le \sigma^2.
\end{equation}
Now, we define two polynomial functions:
\begin{equation}\label{eq:finite_h_approx}
\bar f_{d}(x) = \textstyle\sum_{0 \le k \le L-1} \co_k h_k(x) + \hat \co_{L} h_{L}(x)\end{equation}
and
\begin{equation}\label{eq:finite_g_approx}
\hat f_d(x) = \textstyle\sum_{0 \le k \le L-1} \co_k q_k(x) + \hat \co_{L} q_{L}(x).
\end{equation}
In the above expressions, $\hat \co_{L} \bydef (\sigma^2 - \sum_{0 \le k \le L-1} \co_k^2)^{1/2}$, with $\set{\mu_k}$ and $\sigma^2$ being the constants introduced in Assumption~\ref{assumption:fg}. Note that \eref{finite_h_approx} and \eref{finite_g_approx} differ in the polynomials they use, with the former employing the Hermite polynomials $\set{h_k(x)}_k$, while the latter utilizing the Gegenbauer polynomials $\set{q_k(x)}_k$.

Throughout our subsequent discussions, we will adopt a convenient shorthand notation: for any two functions $f_1(x)$ and $f_2(x)$, we will use $\inprod{f_1, f_2} \bydef \EE[f_1(g) f_2(g)]$ and $\norm{f_1} \bydef [\EE f^2_1(g)]^{1/2}$, where $g \sim \mathcal{N}(0, 1)$. Utilizing the estimate provided in \eref{qk_hk}, it is straightforward to confirm that:
\begin{equation}\label{eq:fh_f_diff}
\norms{\bar f_d - \hat f_d} = \mathcal{O}_L(d^{-1}).
\end{equation}
By using the property that the Hermite polynomials are orthonormal [see \eref{ortho_poly_Hermite}], we obtain
\begin{align}
\norms{f_d - \bar f_d}^2 &= \textstyle\sum_{0 \le k \le L-1} (\inprod{f_d, h_k} - \co_k)^2 + (\inprod{f_d, h_L} - \hat \co_L)^2 + (\,\norm{f_d}^2 - \sum_{0 \le k \le L} \inprod{f_d, h_k}^2)\\
&\le (\co_L - \hat \co_L)^2 + (\sigma^2 - \textstyle\sum_{0 \le k \le L} \co_k^2) + \eta^4 c^2/96,\label{eq:fghb_0}
\end{align}
where the last inequality holds for all sufficiently large $d$, due to the conditions \eref{cok_limit_b} and \eref{cosigma_b} given in Assumption~\ref{assumption:fg}. By \eref{co_conv_g}, $\hat \co_L^2 \le \eta^4 c^2/96$ and $\co_L^2 \le \eta^4 c^2/96$. We can then further bound the right-hand side of \eref{fghb_0} as follows:
\begin{equation}\label{eq:f_approx_g}
\norms{f_d - \bar f_d}^2 \le 2 \co_L^2 + 2 \hat \co_L^2 + (\sigma^2 - \textstyle\sum_{0 \le k \le L} \co_k^2) + \eta^4 c^2/96 \le \eta^4 c^2/16.
\end{equation}
Applying the triangular inequality and the estimate presented in \eref{fh_f_diff}, we obtain:
\begin{equation}\label{eq:fg_fdfh}
\norms{f_d - \hat f_d}^2 = \int_{\R} [f_d(x) - \hat f_d(x)]^2 w(x) \dif x  \le \eta^4 c^2 / 8,
\end{equation}
for all sufficient large $d$, where the function $w(x)$ in the integral denotes the probability density function of $\mathcal{N}(0, 1)$. We aim to show that the integral in \eref{fg_fdfh} remains small if we replace $w(x)$ by $\breve{w}_d(x)$. The latter represents the density function of the random variable $\breve{\xi}_d$ defined in \eref{bxi}. To that end, we first express
\begin{equation}\label{eq:w_wdb_small}
\abs{\int_{\R} [f_d(x) - \hat f_d(x)]^2 [w(x) - \breve{w}_d(x)] \dif x} \le 2 \int f_d^2(x) \abs{w(x) - \breve{w}_d(x)} \dif x + 2 \int \hat f_d^2(x) \abs{w(x) - \breve{w}_d(x)} \dif x.
\end{equation}
Due to the condition in \eref{weight_w_wdb}, the first term on the right-hand side of \eref{w_wdb_small} converges to $0$ as $d \to \infty$. For the second term, we observe that $\hat f_d(x)$ is a polynomial with bounded coefficients, and each monomial is a function that satisfies the conditions in Lemma~\ref{lemma:w_wd_wdb} found in Appendix~\ref{appendix:concentration}. Consequently, we can employ \eref{f2_w_wdb} to demonstrate that the second term on the right-hand side of \eref{w_wdb_small} also converges to $0$ as $d \to \infty$. By combining \eref{fg_fdfh} with \eref{w_wdb_small}, we can confirm that
\begin{equation}\label{eq:f_fh_b}
\int_{\R} [f_d(x) - \hat f_d(x)]^2 w_d(x) \dif x  \le \eta^4 c^2 / 4
\end{equation}
for all sufficiently large $d$.

Let $\hat \Gm$ (resp. $\hat A$) be a matrix constructed according to \eref{G_f} [resp. \eref{A_f}] but with the function $f_d$ replaced by $\hat f_d$. The Stieltjes transforms of these matrices are denoted by $s_{\hat \Gm}$ and $s_{\hat A}$, respectively. Due to the decomposition \eref{g_decomp}, the three matrices $\Gm, \hat \Gm$ and $\hat A$ are constructed in the same probability space. Let $m(z) \in \C_+$ be the solution to \eref{m_equation_f}. Using the triangular inequality, we have
\begin{equation}\label{eq:fg_triangle}
\abs{s_{\Gm}(z) - m(z)} \le \abs{s_{\Gm}(z) - s_{\hat \Gm}(z)} + \abs{s_{\hat \Gm}(z) - s_{\hat A}(z)} +\abs{s_{\hat A}(z) - m(z)}.
\end{equation}
Next, we proceed to bound each term on the right-hand side of the above inequality. Let $\breve{\xi}_d$ be the random variable defined in \eref{bxi}. We observe that the off-diagonal elements of $J$ have the same (marginal) distribution as that of $f_d(\breve{\xi}_d)$, and the off-diagonal elements of $\hat J$ have the same (marginal) distribution as that of $\hat f_d(\breve{\xi}_d)$. By applying Lemma~\ref{lemma:s1s2_general} in Appendix~\ref{appendix:resolvent_identities}, we get
\begin{align}
\abs{s_{\Gm}(z) - s_{\hat \Gm}(z)} &\le \frac{1}{\eta^2} [\EE (f_d(\breve{\xi}_d) - \hat f_d(\breve{\xi}_d))^2]^{1/2} + \mathcal{O}_\prec\Big(\frac{1}{\eta\sqrt{n}}\Big)\\
&\le \frac{c}{2} + \mathcal{O}_\prec\Big(\frac{1}{\eta\sqrt{n}}\Big),\label{eq:sgm1}
\end{align}
where the last step follows from \eref{f_fh_b}. To bound the second term on the right-hand side of \eref{fg_triangle}, we note that $\hat f_d$ is a degree-$L$ polynomial. If we write it in terms of the monomial basis, \emph{i.e.}, $\hat f_d = \sum_{0 \le k \le L} c_k x^k$, then the monomial coefficients can be bounded by
\begin{equation}\label{eq:ck_bnd}
\max_{0 \le k \le L} \abs{c_k} \le \max\set{\abs{\co_0}, \abs{\co_1}, \ldots, \abs{\co_{L-1}}, \hat \co_L} \mathcal{O}_L(1) \le \mathcal{O}_L(1),
\end{equation}
where the first inequality is a consequence of the asymptotic consistency of the Gegenbauer polynomials with the Hermite polynomials [see \eref{qk_hk}], and the second inequality follows from the condition \eref{cok_summable_b} in Assumption~\ref{assumption:fg}. By employing \eref{ck_bnd} and invoking Proposition~\ref{prop:sg_polynomial}, we obtain
\begin{equation}\label{eq:sgm2}
\abs{s_{\hat \Gm}(z) - s_{\hat A}(z)} \le \frac{C_L}{d \eta} + \mathcal{O}_\prec\Big( \frac{\max_{0 \le k \le L} \abs{c_k} }{\eta^2 \sqrt{d}}\Big) \prec \frac{1}{\sqrt{d}}.
\end{equation}
Finally, as $\hat f_d$ is a linear combination of $L+1$ Gegenbauer polynomials, we can apply Theorem~\ref{thm:equivalence} to obtain
\begin{equation}\label{eq:sgm3}
\abs{s_{\hat A}(z) - m(z)} \prec \frac{1}{\sqrt{d}}.
\end{equation}
Substituting \eref{sgm1}, \eref{sgm2}, and \eref{sgm3} into \eref{fg_triangle} then gives us
\begin{equation}
\abs{s_\Gm(z) - m(z)} \le c/2 +  \mathcal{O}_\prec\Big(\frac{1}{\sqrt{d}}\Big).
\end{equation}
Thus, for any $D > 0$,
\[
\P(\,\abs{s_{\Gm}(z) - m(z)} > c) < d^{-D}
\]
for all sufficiently large $d$. Choosing any fixed $D > 1$, we can apply the Borel-Cantelli lemma to conclude that $s_{\Gm}(z)$ converges to $m(z)$ almost surely.

\bibliographystyle{abbrvnat}
\bibliography{refs}

\begin{thebibliography}{39}
\providecommand{\natexlab}[1]{#1}
\providecommand{\url}[1]{\texttt{#1}}
\expandafter\ifx\csname urlstyle\endcsname\relax
  \providecommand{\doi}[1]{doi: #1}\else
  \providecommand{\doi}{doi: \begingroup \urlstyle{rm}\Url}\fi

\bibitem[Artin(2015)]{artin2015GammaFunction}
E.~Artin.
\newblock \emph{The Gamma Function}.
\newblock Dover Books on Mathematics. {Dover Publications, Inc}, {Mineola, New
  York}, dover edition edition, 2015.
\newblock ISBN 978-0-486-78978-1.

\bibitem[Bai and Silverstein(2010)]{baiSpectralAnalysisLarge2010}
Z.~Bai and J.~W. Silverstein.
\newblock \emph{Spectral {{Analysis}} of {{Large Dimensional Random
  Matrices}}}.
\newblock Springer {{Series}} in {{Statistics}}. {Springer New York}, {New
  York, NY}, 2010.

\bibitem[Belkin and Niyogi(2003)]{belkin2003LaplacianEigenmaps}
M.~Belkin and P.~Niyogi.
\newblock Laplacian {{Eigenmaps}} for {{Dimensionality Reduction}} and {{Data
  Representation}}.
\newblock \emph{Neural Computation}, 15\penalty0 (6):\penalty0 1373--1396, June
  2003.

\bibitem[Biane(1997)]{biane1997freeconvolution}
P.~Biane.
\newblock On the free convolution with a semi-circular distribution.
\newblock \emph{Indiana University Mathematics Journal}, 46\penalty0
  (3):\penalty0 0--0, 1997.
\newblock ISSN 0022-2518.

\bibitem[Bickel and Levina(2008)]{bickel2008Covarianceregularization}
P.~J. Bickel and E.~Levina.
\newblock Covariance regularization by thresholding.
\newblock \emph{The Annals of Statistics}, 36\penalty0 (6):\penalty0
  2577--2604, 2008.

\bibitem[Bloemendal et~al.(2014)Bloemendal, Knowles, Yau, and
  Yin]{bloemendalPrincipalComponentsSample2014}
A.~Bloemendal, A.~Knowles, H.-T. Yau, and J.~Yin.
\newblock On the principal components of sample covariance matrices.
\newblock \emph{arXiv:1404.0788 [math-ph, stat]}, Apr. 2014.

\bibitem[Bloemendal et~al.(2015)Bloemendal, Erdos, Knowles, Yau, and
  Yin]{bloemendal2015IsotropicLocal}
A.~Bloemendal, L.~Erdos, A.~Knowles, H.-T. Yau, and J.~Yin.
\newblock Isotropic {{Local Laws}} for {{Sample Covariance}} and {{Generalized
  Wigner Matrices}}.
\newblock \emph{arXiv:1308.5729 [math-ph]}, July 2015.

\bibitem[Bordelon et~al.(2021)Bordelon, Canatar, and
  Pehlevan]{bordelon2021SpectrumDependent}
B.~Bordelon, A.~Canatar, and C.~Pehlevan.
\newblock Spectrum {{Dependent Learning Curves}} in {{Kernel Regression}} and
  {{Wide Neural Networks}}.
\newblock \emph{arXiv:2002.02561 [cs, stat]}, Feb. 2021.

\bibitem[Cheng and Singer(2013)]{cheng2013SpectrumRandom}
X.~Cheng and A.~Singer.
\newblock The spectrum of random inner-product kernel matrices.
\newblock \emph{Random Matrices: Theory and Applications}, 02\penalty0 (04),
  Oct. 2013.

\bibitem[Coster and Hart(1991)]{coster1991SimpleProofAddition}
J.~Coster and H.~B. Hart.
\newblock A simple proof of the addition theorem for spherical harmonics.
\newblock \emph{American Journal of Physics}, 59\penalty0 (4):\penalty0
  371--373, Apr. 1991.

\bibitem[Dai and Xu(2013)]{dai2013ApproximationTheory}
F.~Dai and Y.~Xu.
\newblock \emph{Approximation {{Theory}} and {{Harmonic Analysis}} on
  {{Spheres}} and {{Balls}}}.
\newblock Springer {{Monographs}} in {{Mathematics}}. {Springer New York}, {New
  York, NY}, 2013.

\bibitem[Deshpande and Montanari(2014)]{deshpande2014Sparsepca}
Y.~Deshpande and A.~Montanari.
\newblock Sparse pca via covariance thresholding.
\newblock In \emph{Advances in {{Neural Information Processing Systems}}},
  pages 334--342, 2014.

\bibitem[Do and Vu(2013)]{do2013spectrumrandom}
Y.~Do and V.~Vu.
\newblock The spectrum of random kernel matrices: Universality results for
  rough and varying kernels.
\newblock \emph{arXiv:1206.3763 [math]}, May 2013.

\bibitem[Efthimiou and Frye(2014)]{efthimiou2014Sphericalharmonics}
C.~Efthimiou and C.~Frye.
\newblock \emph{Spherical Harmonics in p Dimensions}.
\newblock {World Scientific}, {New Jersey}, 2014.
\newblock ISBN 978-981-4596-69-5.

\bibitem[El~Karoui(2010)]{elkaroui2010spectrumkernel}
N.~El~Karoui.
\newblock The spectrum of kernel random matrices.
\newblock \emph{The Annals of Statistics}, 38\penalty0 (1):\penalty0 1--50,
  Feb. 2010.
\newblock ISSN 0090-5364.
\newblock \doi{10.1214/08-AOS648}.

\bibitem[Erd{\H o}s and Yau(2017)]{erdos2017Dynamicalapproach}
L.~Erd{\H o}s and H.-T. Yau.
\newblock \emph{Dynamical Approach to Random Matrix Theory}.
\newblock The AMS and the Courant Institute of Mathematical Sciences at New
  York University, 2017.

\bibitem[Erdos et~al.(2013)Erdos, Knowles, Yau, and
  Yin]{erdos2013LocalSemicircle}
L.~Erdos, A.~Knowles, H.-T. Yau, and J.~Yin.
\newblock The {{Local Semicircle Law}} for a {{General Class}} of {{Random
  Matrices}}.
\newblock \emph{arXiv:1212.0164 [math-ph]}, May 2013.

\bibitem[Fan and Montanari(2017)]{fan2017SpectralNorm}
Z.~Fan and A.~Montanari.
\newblock The {{Spectral Norm}} of {{Random Inner-Product Kernel Matrices}}.
\newblock \emph{arXiv:1507.05343 [math, stat]}, Feb. 2017.

\bibitem[Gerace et~al.(2021)Gerace, Loureiro, Krzakala, M{\'e}zard, and
  Zdeborov{\'a}]{gerace2021GeneralisationErrorLearning}
F.~Gerace, B.~Loureiro, F.~Krzakala, M.~M{\'e}zard, and L.~Zdeborov{\'a}.
\newblock Generalisation error in learning with random features and the hidden
  manifold model.
\newblock \emph{Journal of Statistical Mechanics: Theory and Experiment},
  2021\penalty0 (12):\penalty0 124013, Dec. 2021.
\newblock ISSN 1742-5468.
\newblock \doi{10.1088/1742-5468/ac3ae6}.

\bibitem[Ghorbani et~al.(2020)Ghorbani, Mei, Misiakiewicz, and
  Montanari]{ghorbani2020Linearizedtwolayers}
B.~Ghorbani, S.~Mei, T.~Misiakiewicz, and A.~Montanari.
\newblock Linearized two-layers neural networks in high dimension.
\newblock \emph{arXiv:1904.12191 [cs, math, stat]}, Feb. 2020.

\bibitem[Goldt et~al.(2020)Goldt, Loureiro, Reeves, Krzakala, M{\'e}zard, and
  Zdeborov{\'a}]{goldtGaussianEquivalenceGenerative2020}
S.~Goldt, B.~Loureiro, G.~Reeves, F.~Krzakala, M.~M{\'e}zard, and
  L.~Zdeborov{\'a}.
\newblock The {{Gaussian}} equivalence of generative models for learning with
  shallow neural networks.
\newblock \emph{arXiv:2006.14709 [cond-mat, stat]}, Dec. 2020.

\bibitem[Hastie et~al.(2020)Hastie, Montanari, Rosset, and
  Tibshirani]{hastie2020SurprisesHighDimensional}
T.~Hastie, A.~Montanari, S.~Rosset, and R.~J. Tibshirani.
\newblock Surprises in {{High-Dimensional Ridgeless Least Squares
  Interpolation}}.
\newblock \emph{arXiv:1903.08560 [cs, math, stat]}, Dec. 2020.

\bibitem[Hu and Lu(2021)]{hu2021UniversalityLaws}
H.~Hu and Y.~M. Lu.
\newblock Universality {{Laws}} for {{High-Dimensional Learning}} with {{Random
  Features}}.
\newblock \emph{arXiv:2009.07669 [cs, math]}, Mar. 2021.

\bibitem[Ismail(2005)]{ismailClassicalQuantumOrthogonal2005}
M.~E.~H. Ismail.
\newblock \emph{Classical and {{Quantum Orthogonal Polynomials}} in {{One
  Variable}}}.
\newblock {Cambridge University Press}, first edition, Nov. 2005.

\bibitem[Jameson(2015)]{jameson2015SimpleProofStirling}
G.~J.~O. Jameson.
\newblock A simple proof of {{Stirling}}'s formula for the gamma function.
\newblock \emph{The Mathematical Gazette}, 99\penalty0 (544):\penalty0 68--74,
  Mar. 2015.

\bibitem[Knowles and Yin(2016)]{knowles2016AnisotropicLocalLaws}
A.~Knowles and J.~Yin.
\newblock Anisotropic local laws for random matrices.
\newblock \emph{arXiv:1410.3516 [math-ph]}, Aug. 2016.

\bibitem[Koltchinskii and Gin{\'e}(2000)]{koltchinskii2000RandomMatrix}
V.~Koltchinskii and E.~Gin{\'e}.
\newblock Random {{Matrix Approximation}} of {{Spectra}} of {{Integral
  Operators}}.
\newblock \emph{Bernoulli}, 6\penalty0 (1):\penalty0 113, Feb. 2000.
\newblock ISSN 13507265.
\newblock \doi{10.2307/3318636}.

\bibitem[Louart et~al.(2017)Louart, Liao, and Couillet]{louart2017RandomMatrix}
C.~Louart, Z.~Liao, and R.~Couillet.
\newblock A {{Random Matrix Approach}} to {{Neural Networks}}.
\newblock \emph{arXiv:1702.05419 [cs, math]}, June 2017.

\bibitem[Loureiro et~al.(2022)Loureiro, Gerbelot, Cui, Goldt, Krzakala,
  M{\'e}zard, and Zdeborov{\'a}]{loureiro2022LearningCurvesGeneric}
B.~Loureiro, C.~Gerbelot, H.~Cui, S.~Goldt, F.~Krzakala, M.~M{\'e}zard, and
  L.~Zdeborov{\'a}.
\newblock Learning curves of generic features maps for realistic datasets with
  a teacher-student model.
\newblock \emph{Journal of Statistical Mechanics: Theory and Experiment},
  2022\penalty0 (11):\penalty0 114001, Nov. 2022.
\newblock ISSN 1742-5468.
\newblock \doi{10.1088/1742-5468/ac9825}.

\bibitem[Mei and Montanari(2019)]{mei2019generalizationerror}
S.~Mei and A.~Montanari.
\newblock The generalization error of random features regression: {{Precise}}
  asymptotics and double descent curve.
\newblock \emph{arXiv:1908.05355 [math, stat]}, Oct. 2019.

\bibitem[Misiakiewicz(2022)]{misiakiewicz2022Spectruminnerproduct}
T.~Misiakiewicz.
\newblock Spectrum of inner-product kernel matrices in the polynomial regime
  and multiple descent phenomenon in kernel ridge regression.
\newblock \emph{arXiv:2204.10425 [math, stat]}, Apr. 2022.

\bibitem[Mnih and Salakhutdinov(2007)]{mnih2007ProbabilisticMatrix}
A.~Mnih and R.~R. Salakhutdinov.
\newblock Probabilistic {{Matrix Factorization}}.
\newblock In J.~Platt, D.~Koller, Y.~Singer, and S.~Roweis, editors,
  \emph{Advances in Neural Information Processing Systems}, volume~29, 2007.

\bibitem[Opper and Urbanczik(2001)]{opper2001Supportvector}
M.~Opper and R.~Urbanczik.
\newblock Support vector machines learning noisy polynomial rules.
\newblock \emph{Physica A}, page~9, 2001.

\bibitem[Pennington and Worah(2019)]{penningtonNonlinearRandomMatrix2019}
J.~Pennington and P.~Worah.
\newblock Nonlinear random matrix theory for deep learning.
\newblock \emph{Journal of Statistical Mechanics: Theory and Experiment},
  2019\penalty0 (12):\penalty0 124005, Dec. 2019.
\newblock ISSN 1742-5468.
\newblock \doi{10.1088/1742-5468/ab3bc3}.

\bibitem[Rahimi and Recht(2007)]{rahimi2007RandomFeatures}
A.~Rahimi and B.~Recht.
\newblock Random {{Features}} for {{Large-Scale Kernel Machines}}.
\newblock In \emph{Proceedings of the 20th International Conference on Neural
  Information Processing Systems}, page~8, 2007.

\bibitem[Sch{\"o}lkopf and Smola(2002)]{scholkopf2002Learningkernels}
B.~Sch{\"o}lkopf and A.~J. Smola.
\newblock \emph{Learning with Kernels: Support Vector Machines, Regularization,
  Optimization, and Beyond}.
\newblock Adaptive Computation and Machine Learning. {MIT Press}, {Cambridge,
  Mass}, 2002.

\bibitem[Sch{\"o}lkopf et~al.(1998)Sch{\"o}lkopf, Smola, and
  M{\"u}ller]{scholkopf1998NonlinearComponent}
B.~Sch{\"o}lkopf, A.~Smola, and K.-R. M{\"u}ller.
\newblock Nonlinear {{Component Analysis}} as a {{Kernel Eigenvalue Problem}}.
\newblock \emph{Neural Computation}, 10\penalty0 (5):\penalty0 1299--1319, July
  1998.
\newblock ISSN 0899-7667, 1530-888X.

\bibitem[Tropp(2015)]{tropp2015IntroductionMatrix}
J.~A. Tropp.
\newblock An {{Introduction}} to {{Matrix Concentration Inequalities}}.
\newblock \emph{arXiv:1501.01571 [cs, math, stat]}, Jan. 2015.

\bibitem[Vershynin(2018)]{vershynin2018HighdimensionalProbabilityIntroduction}
R.~Vershynin.
\newblock \emph{High-Dimensional Probability: An Introduction with Applications
  in Data Science}.
\newblock Number~47 in Cambridge Series in Statistical and Probabilistic
  Mathematics. {Cambridge University Press}, {Cambridge ; New York, NY}, 2018.

\end{thebibliography}

\end{document}